\documentclass{article}

\newcounter{asmcount}

\usepackage{pdfsync}
\usepackage{amsmath, amsthm,amssymb,bbm,multirow,bm,mathrsfs,xfrac}
\usepackage[top=1.2in, bottom=1.2in, left=1in, right=1in]{geometry}
\usepackage{paralist}
\usepackage[hidelinks]{hyperref}

\newtheorem{theorem}{Theorem}[section]
\newtheorem{lemma}{Lemma}[section]
\newtheorem{proposition}{Proposition}[section]
\newtheorem{corollary}{Corollary}[section]

\newtheorem{assumption}{Assumption}[section]
\theoremstyle{definition}
\newtheorem{remark}{Remark}[section]
\newtheorem{definition}{Definition}[section]

\title{The Limit of Stationary Distributions of Many-Server Queues in the Halfin-Whitt Regime}

\author{Reza Aghajani\footnote{University of California San Diego, Department of Mathematics. {\tt  maghajani@ucsd.edu}} and Kavita Ramanan\footnote{Brown University, Division of Applied Mathematics. {\tt kavita\_ramanan@brown.edu} }}

\date{}

\begin{document}

\newcommand{\R}[0] {\mathbb{R}}
\newcommand{\N}[0] {\mathbb{N}}
\newcommand{\Z}[0] {\mathbb{Z}}
\newcommand{\hc}[0] {[0,\infty)}
\newcommand{\ho}[0] {(0,\infty)}

\def\Lone{\mathbb{L}^1 }
\def\Loneloc{\mathbb{L}^1_{\text{loc}}}
\def\Ltwo{\mathbb{L}^2 }
\def\Linf{\mathbb{L}^\infty}
\def\Hone{\mathbb{H}^1 }
\def\Htwo{\mathbb{H}^2 }

\newcommand{\W}[0]{\mathbb{W}}
\newcommand{\C}[0] {\mathbb{C}_{\mathbb{R}}[0,\infty) }
\newcommand{\D}[0] {\mathbb{D}[0,\infty) }
\newcommand{\Dx}[1] {\mathbb{D}_{#1}[0,\infty) }
\newcommand{\cx}[0] {\mathbb{X}}

\newcommand{\emF}[0]{\mathbb{M}_F[0,\infty)}
\newcommand{\emD}[0]{\mathbb{M}_D[0,\infty)}
\newcommand{\emX}[0]{\mathbb{M}_1(\mathcal{X})}

\newcommand{\Y}[0] {\mathbb{Y}}
\newcommand{\Vn}[0]{\mathbb{V}^{(N)}}
\newcommand{\V}[0] {\mathbb{V}}
\newcommand{\F}[0] {\mathcal{F}}
\newcommand{\K}[0] {\mathcal{K}}
\def\Tau{\mathcal{T}}

\newcommand{\Ept}[1]{\mathbb{E}\left[#1\right]}
\newcommand{\Eptil}[1]{\mathbb{E}[#1]}
\newcommand{\Prob}[1]{\mathbb{P}\left\{#1\right\}}
\newcommand{\Probil}[1]{\mathbb{P}\{#1\}}
\newcommand{\deq}[0]{\overset{(d)}{=}}
\newcommand{\dleq}[0]{\overset{d}{\leq}}
\def\law{\mathscr{L}aw}

\def\f1{\mathbf{1}}
\def\id{\textbf{Id}}
\def\half{\frac{1}{2}}
\newcommand{\indicone}[1]{\mathbbm{1}_{#1}}
\newcommand{\indic}[2]{\mathbbm{1}_{#1}\left(#2\right)}
\newcommand{\indicil}[2]{\mathbbm{1}_{#1}(#2)}

\def\filt{\mathcal{F}}
\def\filtn{\mathcal{F}^{(N)}}
\def\tfiltn{\tilde{\mathcal{F}}^{(N)}}
\def\filtnT{\mathcal{F}^{(N),\mathcal{T}}}

\def\Gen{G_E^{(N)}}
\def\gen{g_E^{(N)}}
\def\bGen{\overline G_E^{(N)}}
\def\ren{R^{(N)}}
\def\renbar{\overline R^{(N)}}
\def\tren{\tilde R^{(N)}}
\def\rens{R^{(N)}_*}

\def\xn{X^{(N)}}
\def\xnbar{\overline{X}^{(N)}}
\def\xnhat{\widehat{X}^{(N)}}
\def\xnp{X^{(N)+}}
\def\xnpbar{\overline{X}^{(N)+}}
\def\xnphat{\widehat{X}^{(N)+}}
\def\xnmhat{\widehat{X}^{(N)-}}

\def\Qnhat{\widehat L^{(N)}}

\def\zetan{\zeta^{(N)}}
\def\zetanp{\zeta^{\prime(N)}}
\def\zetanpp{\zeta^{\prime\prime(N)}}

\def\upsn{\Upsilon^{(N)}}

\def\tn{\tau^{(N)}}
\def\an{m^{(N)}}
\def\dtn{\theta^{(N)}}
\def\xin{\xi^{(N)}}

\def\lambdan{\lambda^{(N)}}
\def\lambdanbar{\overline\lambda^{(N)}}
\def\en{E^{(N)}}
\def\enbar{\overline{E}^{(N)}}
\def\enhat{\widehat{E}^{(N)}}
\def\kn{K^{(N)}}
\def\knbar{\overline{K}^{(N)}}
\def\knhat{\widehat{K}^{(N)}}
\def\dn{D^{(N)}}
\def\dnbar{\overline{D}^{(N)}}
\def\dnhat{\widehat{D}^{(N)}}
\def\delayn{u^{(N)}_0}
\def\delay{u_0}
\def\j0{j_0^{(N)}}

\def\inhat{\widehat{I}^{(N)}}
\def\I{\mathcal{I}}

\def\servicen{\mathcal{S}^{(N)}}
\def\agen{a^{(N)}}
\def\ageni{a^{(N),i}}
\def\agei{a^{i}}
\newcommand{\agenX}[1]{a^{(N),#1}}
\def\age{a}

\def\nun{\nu^{(N)}}
\def\nunbar{\overline{\nu}^{(N)}}
\def\nunhat{\widehat{\nu}^{(N)}}

\def\qn{L^{(N)}}
\def\tqn{\tilde{L}^{(N)}}

\def\zn{Z^{(N)}}
\def\znbar{\overline Z^{(N)}}
\def\znhat{\widehat Z^{(N)}}
\def\znphat{\widehat Z^{\prime(N)}}
\def\znpphat{\widehat Z^{\prime\prime(N)}}

\def\yn{Y^{(N)}}
\def\ynbar{\overline Y^{(N)}}
\def\ynhat{\widehat Y^{(N)}}
\def\vn{V^{(N)}}
\def\vnbar{\overline V^{(N)}}
\def\vnhat{\widehat V^{(N)}}
\def\tvn{\tilde V^{(N)}}
\def\Qn{Q^{(N)}}

\def\Mn{\mathcal{M}^{(N)}}
\def\Mnhat{\widehat{\mathcal{M}}^{(N)}}
\def\M{\mathcal{M}}
\def\Hn{\mathcal{H}^{(N)}}
\def\Hnbar{\overline{\mathcal{H}}^{(N)}}
\def\Hnhat{\widehat{\mathcal{H}}^{(N)}}

\def\pin{\pi^{(N)}}
\def\pinhat{\widehat \pi^{(N)}}

\def\arrivaln{\alpha^{(N)}}
\def\entryn{\beta^{(N)}}
\def\departn{\gamma^{(N)}}
\def\stationn{\kappa^{(N)}}

\def\endsup{L}
\def\endsupzero{\ell_0}
\def\supint{[0,\endsup)}

\newcommand{\twopartdef}[4]{	\left\{		\begin{array}{ll}	#1 & \mbox{if } #2 \\	#3 & \mbox{if } #4	 \end{array}	\right.}
\newcommand{\twopartdefoth}[3]{	\left\{		\begin{array}{ll}	#1 & \mbox{if } #2 \\	#3 & \mbox{otherwise. } 	 \end{array}	\right.}
\newcommand{\threepartdef}[6]{	\left\{		\begin{array}{ll}	#1 & \mbox{if } #2  \\	#3 & \mbox{if  }#4	\\#5 & \mbox{if }#6  \end{array}	\right.}

\newcommand{\bs}{\mathbf{S}}
\newcommand{\bp}{\mathbf{p}}
\newcommand{\bz}{\mathbf{0}}
\newcommand{\ba}{\bm{\alpha}}
\newcommand{\bmu}{\bm{\mu}}
\newcommand{\bnu}{\bm{\nu}}

\maketitle

\begin{abstract}
  We consider the so-called GI/GI/N queue, in which a  stream of jobs with independent and identically distributed  service times arrive as a renewal process to a common queue that is served by $N$ identical parallel servers in a first-come-first-serve manner.  We introduce a new  representation for the state of the system and, under suitable  conditions on the service and interarrival distributions,  establish convergence of the corresponding sequence of centered and scaled stationary distributions in the so-called Halfin-Whitt asymptotic regime. In particular, this resolves an open question posed by Halfin and Whitt in 1981. We also characterize the limit as the stationary distribution of an infinite-dimensional two-component Markov process that is  the unique solution to a certain stochastic partial differential equation. Previous results were essentially restricted to exponential service distributions or service distributions with finite support, for  which the corresponding limit process admits a reduced finite-dimensional Markovian representation. We develop a different approach to deal with  the general case when the Markovian representation of the limit is truly infinite-dimensional. This approach is  more broadly applicable to a larger class of networks.

\end{abstract}

\section{Introduction.}

\subsection{Background and motivation.}

A model  of a many-server system that arises in many applications is the so-called  GI/GI/N queue,  in  which jobs  with independent and identically distributed  (i.i.d.) service times arrive as a renewal process to a common queue that is processed by $N$ indistinguishable parallel servers.   When a job arrives, it is processed by a server chosen uniformly at random amongst the idle servers or, if all servers are busy, it joins the queue.   Servers process jobs from  the queue in a First-Come-First-Serve (FCFS)  manner and do not idle when there is a job waiting in the queue.  Motivated by applications in call centers, data centers and health care \cite{BroEtAl05,BerEtAl09,CayVer03}, a  particular focus in recent years has been on the characterization of steady state quantities such as the stationary  distribution of the total number of jobs in system (which includes those waiting in queue and those in service), and the stationary  probability that the  queue is non-empty or equivalently, that a job has a strictly positive wait time.  An exact computation of these quantities is in general not feasible for large systems and the goal, rather, has  been to obtain provably good approximations that are accurate in the limit as $N$, the number of servers, goes to infinity.

When the service distribution is exponential,  Halfin and Whitt \cite[Theorem 2]{HalWhi81} identified the correct asymptotic
regime that would lead to a meaningful approximation, that is, one in which the limit of the  stationary probability that the queue is non-empty lies strictly between zero and one.  Specifically, they showed that if the traffic intensity (i.e., ratio of the mean arrival rate to the mean service rate) of the system with $N$ servers has the form $1 - \beta N^{-1/2} + o(N^{-1/2})$ for some $\beta > 0$, then under natural assumptions on the initial conditions,  the sequence of centered and renormalized processes $\xnhat = (\xn - N)/\sqrt{N}$, where $\xn$ represents the total number of jobs in the system,  converges weakly (on every finite time interval) to a positive recurrent one-dimensional diffusion $X$ that  has a continuous piecewise linear drift. Moreover, they also showed in \cite[Proposition 1 and Corollary 2]{HalWhi81} that  as $N$ goes to infinity, the  sequence of stationary  distributions of $\xnhat$ converges to the unique stationary  distribution of $X$.  Since the exact form of this stationary distribution  can be easily calculated, this provides a useful  explicit approximation for the steady state probability of an $N$-server queue being strictly positive for large $N$.  In particular, this approximation can be used to determine the number of servers required to achieve a certain quality of service in the system.  The asymptotic scaling for the traffic intensity described above  is commonly  referred to as the Halfin-Whitt asymptotic regime.

However, statistical analyses of many-server queueing systems arising in real-world applications have shown that service distributions are typically not exponential \cite{BroEtAl05,CheEtAl14,GuaKoz14,Kol84}, thus motivating the need to extend the steady-state approximation result mentioned above to the case of general, non-exponential service distributions. The convergence, on finite time intervals, of the sequence of diffusion-scaled process $\{\xnhat\}$  has been established for various classes of service distributions (see, e.g., \cite{PuhRei00,ManMom08,GamMom08,Ree09,PuhRee10,Ree09,KasRam13}).  In contrast, results on the corresponding steady state distribution are limited to a few special classes of distributions (see  Section \ref{sec_intro_pw} for details).    A significant recent advance is the work of Gamarnik and Goldberg \cite{GamGol13}, which uses general bounds for the FCFS GI/GI/N queue to show that when the service distribution $G$ has a finite $(2+\epsilon)$ moment and satisfies other  technical conditions (see assumption $T_0$ therein),   the sequence of diffusion-scaled stationary  queue lengths is tight. One of the difficulties  in going beyond tightness and establishing convergence of the sequence of stationary distributions arises from the fact that prior to this work, in  the general non-exponential case, no candidate limit had been identified.   In particular, a natural candidate for the limit is the stationary distribution of the limit of the sequence of diffusion-scaled processes.  However, for non-exponential service distributions, this limit is typically no longer Markovian, and thus fewer tools are available to analyze or even establish existence of the stationary distribution of the limit process. More recently, an infinite-dimensional Markovian representation of the state was introduced in \cite{KasRam11,KanRam10} and a corresponding ``diffusion-scaled''  convergence result was established in \cite{KasRam13}. However, the limit process lies in the rather complicated state space $\R \times \mathbb{H}_{-2}$ (where $\mathbb{H}_{-2}$ is a certain distribution space) and is not a homogeneous Markov process on its own, although it can be made one when augmented to include a third component (see \cite[Remark 9.7]{KasRam13}). Thus, this limit process  appears  not to be easily amenable to analysis.

In this paper, we overcome these obstacles by introducing a novel representation of the state of the GI/GI/N queue,
which allows us to resolve the open problem stated in \cite{HalWhi81} for a large class of service distributions.  A detailed description of our approach is given in Section \ref{sec_intro_cont}.
We believe that our approach will be useful for the analysis of a broad class of many-server stochastic networks,
which are naturally modelled by infinite-dimensional stochastic processes.

\subsection{Main Results and Contributions.}\label{sec_intro_cont}

We introduce a different representation  $\yn_t=(\xn_t,\zn_t)$ for  the state of the $N$-server queue at time $t$, where we append to $\xn_t$ a function-valued component $\zn_t$. Roughly speaking, for $r > 0$, $\zn_t(r)$ represents the expected conditional number of jobs that entered service by time $t$ and are still in  service at time $t+r$, given the ages of jobs in service at time $t$. When the service distribution satisfies certain smoothness properties (see Assumption \ref{asm_service}), we show that $\zn$ takes values in the Hilbert space $\Hone\ho$, the space of square integrable functions on $(0,\infty)$ that have a square integrable weak derivative (see, e.g., \cite[Section 5.2.1]{evans} for the definition of a weak derivative). Although not necessarily Markovian on its own, the key feature of this representation is that the appended component $\zn$  contains just enough additional information to ensure a type of convergence  (see the discussion below)  of the scaled state descriptor  $\ynhat=(\xnhat,\znhat)$ to a tractable Markov process $Y$, which we call the \textit{diffusion model}.
Further, with  this choice of state space, it was shown in \cite[Proposition 4.18 and Theorem 3.8]{AghRam15spde} that
the diffusion model $Y$,  which takes values in a closed subspace $\Y$ of the Hilbert space $\R \times \Hone \ho$,  can  be  characterized as the unique solution to a certain stochastic partial differential equation (SPDE) in, and  has at most one stationary distribution.

Our first result,  Theorem \ref{thm_ergodic}, shows that for each $N\in\N$, the centered and renormalized state process $\ynhat$ is ergodic and has  stationary distribution $\pinhat$.   While the proof of this result is similar in flavor to the proof of ergodicity of the measure-valued representation obtained in \cite{KanRam12}, it entails some new  technicalities, including upgrading  classical stability results for the GI/GI/N queue \cite{Asm03}  to the state space $\Y$ of $\ynhat$ (see Proposition \ref{prop_Vergodic} and Appendix \ref{apx_vergodic}). Then, under an additional  finite $(3+\epsilon)$ moment assumption on the service distribution, we establish  our main result,  Theorem \ref{thm_interchange},  that the sequence  $\{\pinhat\}$ of stationary  distributions converges, as $N \rightarrow \infty$, to the unique  invariant distribution of the diffusion model $Y=(X,Z)$. In particular, this establishes the desired convergence of the stationary distributions of the sequence $\{\xnhat\}$, and identifies the limit  as the $X$-marginal of the unique invariant distribution of $Y$, thus resolving the open problem stated in \cite{HalWhi81} (and posed again in \cite{GamGol13}) for a large class of service distributions.  In addition, it also characterizes the convergence of stationary distributions of $\{ \znhat\}$, which could be useful for understanding the stationary distribution of other quantities of interest such as the workload process.

The proof of our convergence result involves three main steps.  The first step, summarized in Proposition \ref{thm_tightness},  is to establish  tightness of the sequence of stationary distributions $\{ \pinhat\}$ in $\Y$.  This constitutes one of the most technical parts of the paper, and involves an analysis of the dynamical equations governing $\znhat$,  the verification of uniform (in  $N$ and $t$) tightness criteria for several $\Hone(0,\infty)$-valued processes, and a uniform $\mathbb{L}^1$-bound on the stationary distributions of the sequence of (centered and scaled) queue length processes $\{(\xnhat)^+\}$ (see Corollary \ref{xnphat_bound}).  The latter constitutes a crucial strengthening of the  tightness result for   the stationary queue length processes proved in \cite{GamGol13}, and may be of independent interest. The second step, carried out in Section \ref{sec_conv_ic}, entails showing that any subsequential limit of $\{ \pinhat\}$ must be an  invariant distribution of  $Y$. For this we first show that for every $t \geq 0$, the finite-dimensional projections of $\ynhat_t$  converge in distribution, as $N \rightarrow \infty$, to those of the marginal $Y_t$ of the diffusion model, under certain convergence assumptions on a corresponding sequence of (augmented) initial conditions (where the augmentation is required because $\ynhat$ is not necessarily a Markov process on its own).
 The Hilbert structure of $\Hone \ho$  plays a crucial role in this proof  (see Proposition \ref{prop_subseq}).
 We then show  (in Proposition \ref{prop_yinfbar}) that  the assumptions on the initial conditions are satisfied when the sequence of laws of the initial conditions is equal to  (a converging subsequence of)  $\{\pinhat\}$.  The third and last step invokes the uniqueness of the invariant distribution of the diffusion model $Y$, which was established in \cite[Theorem 3.8]{AghRam15spde} and restated here as Proposition \ref{thm_spde}. In particular, we complement this  uniqueness result by establishing existence of the invariant distribution (see Corollary \ref{cor_exists}).

In summary,  techniques introduced in this work that are potentially useful for the study of a larger class of networks include:

\begin{enumerate}
\item
The new state representation  $\yn_t=(\xn_t,\zn_t)$ for the $N$-server queue, and a characterization of its dynamics.
It should be emphasized that the right choice of state space for the process is not obvious. Under general assumptions, the process $\znhat$ could also be viewed as lying in several other spaces (see Remark \ref{rem-statespace}). However, the space $\Hone \ho$ seems to be the most suitable choice that allows one to simultaneously carry out all the required steps under general, physically relevant assumptions on the service time distribution. Indeed, similar (though slightly more complex) representations have been shown to be useful in the analysis of the hydrodynamic limit of a seemingly unrelated load balancing model in \cite{AghRam16hydrodynamic,AghRam15pde}.

\item
Tightness characterization and estimates for $\Hone (0,\infty)$-valued random elements,
 and the proof of the uniform $\mathbb{L}^1$ bound on the queue lengths.
\item
The overall proof structure, which uses augmented initial conditions to deal with the fact that
$\ynhat$ is not Markovian on its own, and the exploitation of the Hilbert structure of $\Y$ to establish a
certain weak type of convergence of $\ynhat_t$ to $Y$. The latter result, when combined with the tightness of $\{\pinhat\}$ and the uniqueness of the invariant measure of the diffusion model $Y$, is used to establish the main result on the convergence of $\{\pinhat\}$.
\end{enumerate}

Moreover, our result provides a framework for potentially gaining further qualitative insight into and developing
numerically approximations to
the limiting stationary distribution of the queue lengths (and other quantities encoded in the state representation).
 Indeed,  since the diffusion model $Y$ is Markov (albeit infinite-dimensional), its invariant distribution can be characterized by studying the adjoint equation associated with this Markov process, much in the spirit of what has been done for finite-dimensional diffusion approximations of stochastic networks (see, e.g., \cite{HarWil87,DaiHar92,KanRam14}).  Such an investigation is relegated to future work.

\subsection{Relation to prior work.}\label{sec_intro_pw}

Diffusion-scale approximations of steady-state distributions of stochastic networks have mainly been established in settings where the diffusion limit is finite-dimensional, such as multi-class queueing networks in the conventional heavy traffic regime, where the diffusion limit is a finite-dimensional reflected Brownian motion.   In contrast, truly infinite-dimensional  limits arise in the setting of many-server queues with general service distributions in the Halfin-Whitt regime \cite{KriPuh97, DecMoy08,  PanWhi10, KasRam13, PuhRee10, ReeTal15}. In the simpler setting of infinite-server queues,  under the assumption that the hazard rate of the service distribution is an infinitely differentiable
function with all its derivatives bounded, the limit process was shown in \cite{DecMoy08, ReeTal15} to be an infinite-dimensional Ornstein-Uhlenbeck process on the space of tempered distributions (see also the sub-critical diffusion limit in \cite{KasRam13} and alternative representations in \cite{KriPuh97} under weaker conditions on the service distribution). As a consequence,  in this case, the limit process has an explicit Gaussian invariant distribution \cite{ReeTal15}.  However, the study of many-sever queues is considerably more complicated.  Limits of the scaled steady-state distributions of  many-server networks in the Halfin-Whitt regime  (i.e., over the infinite horizon) have hitherto been established for just a few classes of service distributions for which the limit admits a reduced finite-dimensional Markovian representation.  Specifically, Whitt \cite{Whi05} extended the result for exponential service distributions in \cite{HalWhi81} to the $H_2^*$ service time distribution (i.e., a mixture of an exponential distribution and a point mass at 0), the case of deterministic service times is considered in  \cite{JelManMom04}, and the latter result was generalized in \cite{GamMom08} to service distributions with finite support.

\subsection{Some common notation.}\label{sec_notation}

For $a, b \in \R$, let $a \wedge b$ and $a \vee b$ denote the minimum  and maximum of $a$ and $b$, respectively. Also,  $a^+\doteq a\vee 0$ and $a^-\doteq-(a\wedge 0)$. For a set $B$, $\indic{B}{\cdot}$ is the indicator function of the set $B$ (i.e., $\indic{B}{x} = 1$ if $x \in B$ and $\indic{B}{x} = 0$ otherwise). Moreover, with a slight abuse of notation, on every domain $V$, $\f1$ denotes the constant function equal to $1$ on $V$.

\subsubsection{Function spaces.}
\label{sec_notation1}

For  $n\in\N$ and  $V\subset\R^n$, $\mathbb{C}(V)$, $\mathbb{C}_b(V)$ and $\mathbb{C}_c(V)$ are respectively, the spaces of real-valued continuous functions on $V$, bounded continuous functions on $V$ and continuous functions with compact support on $V$.   When $V = [0,\infty)$, we will write $\mathbb{C}[0,\infty)$ for $\mathbb{C}([0,\infty))$ and analogously for the other spaces. We use $\|f\|_\infty$ to denote the supremum of $|f(s)|$, $s \in V$. For $f\in\mathbb{C}\hc$ and $T \in \hc$, $\|f\|_T$ denotes the supremum of $|f(s)|$, $s \in [0,T]$.  A function $f$ defined on $[0,\infty)$ is said to be locally bounded if  $\|f\|_T < \infty$ for every $T < \infty$.   Also, $\mathbb{C}^0\hc$ denotes the subspace of  functions $f\in\mathbb{C}\hc$ with $f(0)=0$, $\mathbb{C}^1\hc$ denotes the set of differentiable functions $f \in \mathbb{C}\hc$ whose derivative,  denoted by $f^\prime$, is continuous on $\hc$, and $\mathbb{C}^1_b\hc$ is the subset of functions in $\mathbb{C}^1\hc$ that are bounded and have bounded derivative.  Moreover, for every Polish space $\cx$, $\mathbb{C}(\hc;\cx)$ denotes  the set of continuous $\cx$-valued functions on $\hc$. Also,  $\mathbb{D}(\hc;\mathbb{X})$ is the set of $\mathbb{X}$-valued functions on $\hc$ which are right continuous and have finite left limits at every point in  $(0,\infty)$. Endowed with the Skorokhod topology, $\mathbb{D}(\hc;\mathbb{X})$  is a complete separable metric space. However, we sometimes also equip $\mathbb{D}(\hc;\mathbb{X})$ with the topology of uniform convergence on compact sets.  When $\mathbb{X} = \R$, we simply write $\mathbb{D}\hc$ for $\mathbb{D}(\hc;\R)$, and $\mathbb{D}^0 \hc$ for the subset of functions $f$ in $\mathbb{D}\hc$ with $f(0) = 0$.    Also, we use $\id$ to denote the identity function on $\hc$, $\id(t)=t$ for all $t \geq 0$.

Let $\mathbb{L}^1\ho,$ $\mathbb{L}^2\ho,$ and $\mathbb{L}^\infty\ho,$ denote, respectively, the spaces of integrable, square-integrable and essentially bounded functions on $\ho$ with their corresponding standard norms. The space $\Ltwo \ho$ is a Hilbert space with the inner product
\[
    \langle f,g\rangle_{\Ltwo}=\int_0^\infty f(x)g(x)dx.
\]
Also, $\mathbb{L}^1_{\text{loc}}\ho$  denotes the space of locally integrable functions on $\hc.$  The  space $\Hone\ho$  denotes the space of square integrable functions $f$ on $\ho$ whose weak derivative $f^\prime$ exists and is also square integrable. $\Hone\ho$ equipped with the inner product
\begin{equation}\label{H_ip}
    \langle f,g\rangle_{\Hone}=\langle f,g\rangle_{\Ltwo}+\langle f',g'\rangle_{\Ltwo},
\end{equation}
and the corresponding norm $\|f\|_{\Hone} = \left(\|f\|_{\Ltwo\ho}^2 + \|f^\prime\|_{\Ltwo\ho}^2\right)^{\frac{1}{2}},$ is a separable Banach space, and hence, a Polish space (see, e.g., \cite[Proposition 8.1 on p.\ 203]{Bre2011}). Throughout the paper, we may refer to the weak derivative of a function $f\in\Hone\ho$ as just the derivative of $f$. Also, recall that every function $f\in\Hone\ho$ is almost everywhere equal to an absolutely continuous function whose density coincides  almost everywhere with the weak derivative of $f$  \cite[Problem 5 on p.\ 290]{evans}.

\subsubsection{Measure spaces.}
\label{sec_notation2}
For every subset $V$ of $\R$ or $\R^2$, endowed with the Borel sigma-algebra, let $\mathbb{M}_{F}(V)$  be the space of finite positive  measures in $V$. For $\mu \in \mathbb {M}_F(V)$ and any bounded Borel-measurable function $f$ on $V$, we denote the integral of $f$ with respect to $\mu$  by
\[
    \mu(f)  \doteq \int_{V}  f(x) \mu (dx).
\]
Extending this notation to signed measures,  for every measure $\mu$ with representation $\mu=\mu^+-\mu^-;\mu^+,\mu^-\in \mathbb{M}_F(V)$, define $\mu(f)\doteq\mu^+(f)-\mu^-(f)$. We equip $\mathbb{M}_F (V)$ and $\mathbb{M}_{\leq 1}(V)$ with the weak topology: $\mu_n\Rightarrow \mu$ if and only if $\mu_n(f) \to \mu(f)$ for all $f\in\mathbb{C}_b(V).$  Recall that  Prohorov's metric $d_P$ on $\mathbb{M}_{F}(V)$ \cite[p.\ 72]{BillingsleyBook} induces the same topology  and $(\mathbb{M}_{F}(V),d_P)$ is a Polish space \cite[Theorem 6.8, Chap. 1]{BillingsleyBook}.  We also denote by $\mathbb{M}_{D}(V)$ the subspace of $\mathbb{M}_{F}(V),$ consisting of measures of the form $\sum_{i=1}^m\delta_{x_i}$ for some $m\in\N$ and $x_i\in V,i=1,...,m.$  Given any Polish space $\cx$ and  $\cx$-valued random elements $Z^1$ and $Z^2$,  $\law (Z^1)$ denotes the distribution of $Z^1$ on $\cx$ and $Z^1 \deq Z^2$ means $\law (Z^1) = \law (Z^2)$.

\section{Assumptions and main results.}\label{sec_main}

Throughout the paper, the superscript $N$ will be used to  refer to
quantities associated with the system with $N$ servers.

\subsection{Main assumptions.}\label{sec_asmumptions}

Let $\en$ denote the cumulative arrival process, that is, for all $t\geq0$, $\en_t$ represents the number of jobs that arrived to the system in the interval $[0,t]$.

\begin{assumption}\label{asm_arrival}
 There exists $\beta > 0$ such that for every $N\in\N$, the cumulative arrival process $\en$ satisfies $\en_t=\tilde E(\lambdan t),$ $t \geq 0$,  where
\begin{equation}\label{def_lambdan}
      \lambdan\doteq N-\beta\sqrt{N},
\end{equation}
and $\tilde E$ is a renewal process with delay given by a random variable $\tilde u_0$, and (i.i.d.) renewal times $\{\tilde u_n;n\in\N\}$,  with common cumulative distribution function (c.d.f.) $\tilde G_E$, which has mean $1$ and finite variance  $\sigma^2$. We further assume that $\tilde G_E$ is non-lattice and has full support (i.e., $\tilde G_E(x)<1$ for all $x>0$) and finite $(2+\epsilon)$ moment, for some $\epsilon>0.$
\end{assumption}

Assumption \ref{asm_arrival} implies that $\en$ is a renewal process with delay $\delayn=\tilde u_0/\lambdan$ and inter-arrival times $u^{(N)}_n=\tilde u_n/\lambdan,n\in\N,$ with c.d.f. $\Gen(x)\doteq\tilde G_E(\lambdan x),$ mean $1/\lambdan$, and finite $(2+\epsilon)$ moment. Since the service distribution has mean $1$ by Assumption \ref{asm_service}.a below, the condition \eqref{def_lambdan} captures the Halfin-Whitt asymptotic regime, where the traffic intensity scales as $N-\mathcal{O}(\sqrt{N})$.

\begin{remark}
The full support assumption imposed on the inter-arrival distribution is only used to show ergodicity of a Markovian state representation of the sub-critical GI/GI/N queue (in the proof of Proposition \ref{prop_Vergodic}). However, this is not a necessary condition for ergodicity, and our main results will continue to hold under weaker conditions on the inter-arrival distribution, as long as they guarantee  ergodicity of the state process $\{\yn_t;t\geq0\}$ introduced below.
\end{remark}

The sequence of service times $\{v_j;j\in\Z\}$ is assumed to be i.i.d. with  common c.d.f.\ $G$,  and independent of the arrival process. Define $\overline G \doteq 1-G$ to be the complementary c.d.f.  We start by imposing  smoothness conditions on  $G$ that ensure that  $\yn$ lies in a sufficiently nice state space.

\renewcommand{\theenumi}{\alph{enumi}}
\begin{assumption}\label{asm_service}
The c.d.f. $G$ of the service time distribution satisfies the following conditions:
\begin{enumerate}
    \item $G$ has a finite mean equal to one, and is continuously differentiable with derivative $g$. \label{asm_g}
    \item \label{asm_h} The  hazard rate function $h$ of the service distribution lies in $\mathbb{C}_b^1\hc$, where we recall
    \begin{equation}\label{def_h}
        h(x)\doteq \frac{g(x)}{\overline G(x)}.
    \end{equation}
    \setcounter{asmcount}{1}
    \end{enumerate}
\end{assumption}

\begin{remark} \label{remark_HH2}
Note that $g$ is the probability density function (p.d.f.)  of the service time distribution.
We list some important consequences of Assumption  \ref{asm_service}.
\begin{enumerate}
    \item  Note that $\overline G\in\Ltwo\ho$ since $\overline{G} \leq 1$ and Assumption \ref{asm_service}.\ref{asm_g} implies that $\overline G$ is integrable.
    \item   Assumption \ref{asm_service}.\ref{asm_h} implies that \[H\doteq \sup_{x\in\hc}h(x)<\infty.\] Since the hazard rate function $h$ is never integrable on its support, this implies that the support of $G$ must be $[0,\infty)$. Moreover,  since $g=\overline G h$ and $h'=g'/\overline G+g^2/\overline G^2$, Assumption \ref{asm_service}.\ref{asm_h} also implies that  $g$ lies in $\Ltwo \ho$, is differentiable with a continuous derivative $g^\prime$,  and
        \[
            H_2\doteq \sup_{x\in\hc} |h_2(x)| < \infty, \quad\text{ where }\quad\quad h_2(x)\doteq\frac{g^\prime(x)}{\overline G(x)}=h'(x)-h^2(x)\;;x\geq0.
        \]
        Furthermore, $g^\prime$  also lies in $\Ltwo\ho$, and hence, $\overline G$ and $g$ both lie in $\Hone\ho.$
\end{enumerate}
\end{remark}

For the main result, we will impose some additional smoothness and moment conditions.

\begin{assumption}\label{asm_serviceP}
The service time distribution satisfies the following conditions.
\begin{enumerate}
      \item $G$ has finite $(3+\epsilon)$ moment for some $\epsilon>0$, that is, $\overline G(x)=\mathcal{O}(x^{-(3+\epsilon)})$, as $x\to \infty.$ \label{asm_moment3}
      \item $g'$ has a bounded weak derivative $g''$ which satisfies $g''(x)=\mathcal{O}(x^{-(2+\epsilon)})$ as $x\to\infty$. \label{asm_gpp}
\end{enumerate}
\end{assumption}

\begin{remark}\label{remark_GL2}
For some intermediate results, we will only need to assume a finite $(2+\epsilon)$ moment for some $\epsilon > 0$.  Indeed, note that under this condition (and therefore, under Assumption \ref{asm_serviceP}.\ref{asm_moment3}),  $\int_\cdot^\infty \overline G(x)dx$ is integrable and, since it is bounded, also lies in $\mathbb{L}^2 \ho$.    Moreover, the stronger moment condition in Assumption \ref{asm_serviceP}.\ref{asm_moment3} ensures that the function $\int_\cdot^\infty \int_s^\infty \overline G(x)dx\;ds$ is also integrable.
\end{remark}

\begin{remark}
\label{remark-restrictive}
 It is easily verified (see Appendix \ref{apver}) that Assumptions  \ref{asm_service} and \ref{asm_serviceP} hold for a large class of distributions of interest including phase-type distributions, Gamma distributions with shape parameter $\alpha\geq 3$,  Lomax distributions (generalized Pareto distributions with location parameter $\mu=0$) with shape parameter $\alpha>3$, and the log-normal distribution, which has been empirically observed to be a good fit for service distributions arising in applications \cite[Section 4.3]{BroEtAl05}.
The tightness result in \cite{GamGol13} also required Assumption \ref{asm_arrival} and a technical condition that holds under Assumption \ref{asm_service} and a finite $(2+\epsilon)$ moment. We require a slightly stronger moment assumption and some additional smoothness of the c.d.f.
Thus, our assumptions appear to be not too restrictive, although it would be of interest to
see if the technical boundedness ad smoothness conditions can be further relaxed.
\end{remark}

\subsection{State representation and main results.}\label{sec_main_result}

For every $t\geq0$, let $\xn_t$ be the total number of jobs in the system. As mentioned in the introduction, we introduce a novel representation for the GI/GI/N queue, in which we append  to $\xn_t$ a function-valued state variable $\zn_t$.
To define this state variable,  for every $t\geq0$, let $\servicen(t)$ denote the set of indices of jobs that are  in service at time $t$, and for every job $j\in\servicen(t)$, let $\agen_j(t)$ be the age of that job at time $t$, which is defined to be the amount of time that the job has been in service up to time $t$. For  $t  \geq 0$, define
\begin{equation}\label{def_Zn_temp}
    \zn_t(r)\doteq \sum_{j\in\servicen(t)}\frac{\overline G(\agen_j(t)+r)}{\overline G(\agen_j(t))}\;;\quad\quad r\geq0.
\end{equation}
Note that for every job $j$ in service at time $t$, that is,  $j\in\servicen(t)$, the quantity ${\overline G(\agen_j(t)+r)}/{\overline G(\agen_j(t))}$ is the conditional probability that the job $j$ has not yet departed the network by  time $t+r$, given  $\agen_j(t)$, its age at time $t$.  Therefore, $\zn_t(r)$ is, roughly speaking, the conditional expected number of jobs that received service at time $t$ and did not complete service by time  $t+r$, given the ages of all jobs in service at time $t$. As shown in Theorem \ref{thm_ergodic},  under Assumption \ref{asm_service}, $\zn_t$ takes values in $\Hone \ho$ for each $t$.
We represent the state of the GI/GI/N system by the process $\{\yn_t;t\geq0\}$, defined by
\begin{equation}\label{def_yn}
  \yn_t\doteq(\xn_t,\zn_t), \quad t \geq 0.
\end{equation}
The connection between this  representation and that used in \cite{KasRam11,KasRam13}  is explained in Section \ref{sec_pre_Y}.

\begin{remark}
  For the special case of exponentially distributed service times, $\overline G(r)=e^{-r}$, in which case  $\zn_t(r)$ simplifies to
  \[
  \zn_t(r)=\sum_{j\in\servicen(t)}\frac{e^{-\agen_j(t)-r}}{e^{-\agen_j(t)}}=\sum_{j\in\servicen(t)}e^{-r} = e^{-r}\left| \servicen(t) \right| = e^{-r}(\xn(t)\wedge N),
  \]
  where the last equality uses the fact that the number of jobs in service at time $t$ is equal to $\xn(t)\wedge N$. In this case, the appended variable $\zn$ has no additional information compared to $\xn$.
\end{remark}

Next, define $\overline Y \doteq (\overline X,\overline Z)$ by setting
\begin{equation}\label{def_Ybar}
     \overline X\doteq 1, \quad\quad \overline Z(r)\doteq \int_0^\infty\overline G(x+r)dx;\;\; r\geq0.
\end{equation}
As shown in Corollary \ref{cor_ybar}, $\overline Y$ is the so-called fluid limit of the state process, which is the limit (on every finite interval) of the process $\yn/N$, as $N \rightarrow \infty$. Next,  for $F=X, Z, Y$, define  the centered and diffusion-scaled versions of $F$  as follows:
\begin{equation}\label{def_Ynhat}
    \widehat F^{(N)}\doteq\frac{F^{(N)}-N\overline F}{\sqrt{N}}.
\end{equation}

Our first main result, Theorem \ref{thm_ergodic}, identifies the state space of $\ynhat$ and the long-time behavior of $\ynhat$.  Recall (see, e.g., \cite[Lemma 3.3]{AghRam15spde}) that every function $f\in\Hone\ho$ has a (unique) representative $f^*$, that is,   $f= f^*$ a.e. on $\ho$,  such that $f^*$ is continuous on the closed interval $\hc$.  With a slight abuse of notation, for $r\in\hc$ (and in particular, for $r=0$), $f(r)$ denotes the evaluation of the continuous representative $f^*$ at $r$. Let
\begin{equation}\label{def_Yspace}
    \Y \doteq \left\{ (x,f) \in \R \times \Hone(0,\infty):  f(0) =  x \wedge 0 \right\},
\end{equation}
It is easy to see that $\Y$ is a closed subspace of $\R \times \Hone(0,\infty)$ and hence, is  a Polish space  (see \cite[Corollary 3.4]{AghRam15spde}). We equip $\Y$ with the corresponding Borel $\sigma$-algebra.

\begin{theorem}\label{thm_ergodic}
Suppose Assumption \ref{asm_service} holds and  the service distribution has a finite $(2+\epsilon)$ moment for some $\epsilon > 0$.   Then  for every $N\in\N$ and $t\geq0$,  $\ynhat_t$ is a  $\Y$-valued random element.  If, in addition,  Assumption \ref{asm_arrival} holds, then there exists a probability measure $\pinhat$ on $\Y$ such that for every initial condition $(\delayn,\xn_0,\agen_j(0);j\in\servicen(0))$,  $\widehat Y^{(N)}_t\Rightarrow\pinhat$ in $\Y$ as $t\to\infty$.
\end{theorem}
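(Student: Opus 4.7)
The first assertion, that $\ynhat_t$ is a $\Y$-valued random element, should follow from the regularity of $\overline G$ guaranteed by Assumption \ref{asm_service}. For each $a\geq 0$, the function $r\mapsto \overline G(a+r)/\overline G(a)$ is a translate of $\overline G$ rescaled by a positive constant; since $\overline G\in\Hone\ho$ by Remark \ref{remark_HH2} and translation preserves membership in $\Hone\ho$, every summand in \eqref{def_Zn_temp} lies in $\Hone\ho$, and so does the finite sum $\zn_t$. Moreover $\overline Z(r)=\int_r^\infty \overline G(x)\,dx$ is bounded and integrable under the $(2+\epsilon)$ moment assumption (hence in $\Ltwo\ho$), and its weak derivative equals $-\overline G\in\Ltwo\ho$, so $\overline Z\in\Hone\ho$; consequently $\znhat_t=(\zn_t-N\overline Z)/\sqrt{N}$ lies in $\Hone\ho$. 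Evaluating the continuous representative of $\zn_t$ at $r=0$ gives $\zn_t(0)=|\servicen(t)|=\xn_t\wedge N$, and centering by $N\overline Z(0)=N$ and scaling by $\sqrt{N}$ produces exactly $\znhat_t(0)=\xnhat_t\wedge 0$, which is the defining constraint of $\Y$.

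For ergodicity, the plan is to embed $\ynhat_t$ as a measurable functional of a genuinely Markov process $\vn_t$, apply classical positive-recurrence theory to $\vn$, and push the resulting stationary law forward to $\Y$. A natural Markov state is $\vn_t=(\zetan_t,\{\agen_j(t)\}_{j\in\servicen(t)},\xn_t)$, where $\zetan_t$ is the residual time until the next arrival and the ages are stored as an unordered multiset (equivalently, as an element of $\emD\hc$); because service durations are i.i.d.\ and revealed only when a job enters service, the FCFS discipline keeps $\vn$ Markov. Under Assumption \ref{asm_arrival}, the traffic intensity equals $\lambdan/N=1-\beta/\sqrt{N}<1$, and the inter-arrival distribution is non-lattice with full support, so the classical Asmussen-type stability theory for GI/GI/N (see \cite{Asm03}) yields positive Harris recurrence of $\vn$, the existence of a unique stationary distribution on its natural state space, and total-variation convergence of $\law(\vn_t)$ to it from any initial condition.

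The final step is to transfer this convergence to $\Y$. Defining $\pinhat$ as the image of the stationary law of $\vn$ under the measurable map $\vn_t\mapsto\ynhat_t$, and using that total-variation convergence survives push-forward by any measurable map, one obtains $\law(\ynhat_t)\to\pinhat$ in total variation and hence weakly in $\Y$. The main obstacle — and the content of Proposition \ref{prop_Vergodic} — is to verify that one really has a well-defined map into the strong $\Hone$-topology on the second coordinate of $\Y$; here the Sobolev regularity supplied by Assumption \ref{asm_service}.\ref{asm_h}, in particular the uniform boundedness of $h$ and $h_2$ recorded in Remark \ref{remark_HH2}, provides uniform-in-$a$ estimates on $\|\overline G(a+\cdot)/\overline G(a)\|_{\Hone\ho}$ and on its modulus of continuity in $a$, so that the age-to-$\Hone$ map is continuous and $\pinhat$ is a bona fide Borel probability measure on $\Y$.
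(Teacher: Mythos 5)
Your proposal follows essentially the same route as the paper: show $\zn_t\in\Hone\ho$ via the Sobolev regularity of $\overline G$ guaranteed by Assumption \ref{asm_service}, verify the constraint $\znhat_t(0)=\xnhat_t\wedge 0$ to land in $\Y$, then realize $\ynhat_t$ as a functional of an ergodic Markov state and push the stationary law forward. Two tactical points, however, diverge from what the paper actually establishes, and one of them is a genuine overreach.

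First, you claim total-variation convergence of $\law(\vn_t)$ and then remark that measurability of the state map suffices to conclude. The paper's Proposition \ref{prop_Vergodic} asserts only weak convergence $\vn_t\Rightarrow\vn_\infty$: the proof in Appendix \ref{apx_vergodic} applies regenerative-process theory to the process with \emph{backward} recurrence time and then transfers to the \emph{forward} recurrence time $\ren_t$ through an integral identity on distributions, so the final mode of convergence obtained is distributional, not total variation. Under mere weak convergence, a measurable map is not enough — one must verify continuity of the map $\widehat{\Psi}:\Vn\to\Y$ (at least $\pinhat$-a.e.), which is precisely Lemma \ref{lem_Tprops}. You gesture at this continuity but your stated mechanism (TV convergence plus measurable pushforward) would bypass it entirely, which is inconsistent with then calling the continuity "the main obstacle." If you want to retain the TV claim you must prove it; otherwise drop it and rely on the continuous mapping theorem as the paper does.

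Second, your final paragraph attributes the continuity verification to Proposition \ref{prop_Vergodic}, but that proposition is solely about ergodicity of $\vn$ on $\Vn$; the continuity of $\mathcal{T}$ and $\widehat{\Psi}$ into $\R\times\Hone\ho$ is the content of Lemma \ref{lem_Tprops}. The continuity proof there is not about uniform-in-$a$ Sobolev bounds per se, but about $\Ltwo$-continuity of translations and of the map $a\mapsto 1/\overline G(a)$ on compacts, applied to finite sums of Dirac masses, together with the fact that $\overline Z\in\Hone\ho$ under the $(2+\epsilon)$ moment assumption. Your remark that "translation preserves membership in $\Hone$" establishes well-definedness, but to invoke the continuous mapping theorem you need the stronger fact that $\mu\mapsto\mathcal{T}\mu$ is continuous from $\emD$ to $\Hone\ho$; be explicit that this is where the argument resides.
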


Theorem \ref{thm_ergodic} is proved in  Section \ref{sec_pre_ergodic}. The second assertion of  Theorem \ref{thm_ergodic} is  stated for the  augmented initial condition $(\delayn,\xn_0,\agen_j(0)$; $j\in\servicen(0))$, as opposed to just $\yn (0)$,  because $\yn$ is not necessarily Markov on its own, but its evolution is completely determined by the augmented initial condition. The question of determining whether (the $\xnhat$-marginal of) the sequence $\{\pinhat\}$ converges to a limit was first posed in \cite{HalWhi81}. It was shown in \cite{GamGol13} that under Assumption \ref{asm_arrival},  (a slightly weaker condition than) Assumption \ref{asm_service}.\ref{asm_g} and a finite $(2+\epsilon)$ assumption on the service distribution, that the $\xnhat$-marginal of $\{\pinhat\}$ is tight, but the question of convergence and the characterization of the limit remained open. We resolve this question under additional conditions on the service time distribution.

\begin{theorem}\label{thm_interchange}
    Suppose Assumptions \ref{asm_arrival}-\ref{asm_serviceP} hold. Then there exists a probability measure $\pi$ on $\Y$ such that $\pinhat\Rightarrow \pi$ in $\Y$ as $N\to\infty$. Furthermore, $\pi$ is the unique invariant distribution of a $\Y$-valued Feller Markov family $\{P^y;y\in\Y\}$.
\end{theorem}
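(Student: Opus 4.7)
The plan is to follow the three-step outline described in Section \ref{sec_intro_cont}: tightness of $\{\pinhat\}$, identification of every subsequential limit as an invariant distribution of the diffusion model $Y$, and a uniqueness argument to conclude that the full sequence converges. Throughout we use that by Theorem \ref{thm_ergodic} each $\pinhat$ is well defined as the stationary law of $\ynhat$ on $\Y$, so it suffices to analyze convergence of these stationary laws.

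First, I would invoke Proposition \ref{thm_tightness} to obtain tightness of $\{\pinhat\}$ in the Polish space $\Y = \{(x,f)\in\R\times\Hone\ho:f(0)=x\wedge 0\}$. The crucial input is the uniform $\mathbb{L}^1$ bound on the stationary queue-length marginals $\{(\xnhat)^+\}$ summarized in Corollary \ref{xnphat_bound}, which upgrades the tightness result of \cite{GamGol13}, together with uniform-in-$N$ tightness estimates for the $\Hone\ho$-valued component $\znhat$ that rely on the dynamical description of $\zn$ and on Assumptions \ref{asm_service}--\ref{asm_serviceP} (the finite $(3+\epsilon)$ moment enters through the integrability of $\int_\cdot^\infty\int_s^\infty\overline G(x)\,dx\,ds$ as noted in Remark \ref{remark_GL2}). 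Tightness then allows me to extract a convergent subsequence $\pi^{(N_k)}\Rightarrow \pi^*$ along some sequence $N_k\to\infty$.

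Second, I would identify any such subsequential limit $\pi^*$ as an invariant distribution for the diffusion-model Markov family $\{P^y;y\in\Y\}$. The non-Markovian nature of $\ynhat$ is handled by the augmented-initial-condition device from Section \ref{sec_conv_ic}: along the subsequence, start the $N_k$-th system in its stationary regime (so the augmented state $(\delta^{(N_k)},\xn_0,\agen_j(0);j\in\servicen(0))$ has its stationary law consistent with $\pi^{(N_k)}$), and use Proposition \ref{prop_yinfbar} to verify that the required convergence assumptions on the sequence of augmented initial conditions hold. Proposition \ref{prop_subseq}, whose proof exploits the Hilbert-space structure of $\Hone\ho$, then yields finite-dimensional weak convergence $\ynhat_t \Rightarrow Y_t$ for every fixed $t\geq 0$, where $Y_t$ is the diffusion model started from the appropriate initial law. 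Since, by stationarity under $\pinhat$, we have $\law(\ynhat_t)=\pi^{(N_k)}$ for every $t$, passing to the limit gives $\law(Y_t)=\pi^*$ for every $t\geq 0$, which is precisely the statement that $\pi^*$ is invariant for $\{P^y\}$.

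Finally, I would conclude by appealing to Proposition \ref{thm_spde} (the uniqueness part of \cite[Theorem 3.8]{AghRam15spde}), which asserts that the Feller Markov family $\{P^y;y\in\Y\}$ admits at most one invariant distribution. Combined with the existence provided by Step 2 (recorded as Corollary \ref{cor_exists}), this forces every subsequential limit $\pi^*$ to coincide with a single distribution $\pi$; a standard subsequence argument then upgrades subsequential to full convergence $\pinhat\Rightarrow \pi$ in $\Y$. The main obstacle in this program is Step 2: the process $\ynhat$ is not Markov on its own, so one must carefully propagate the stationary law through the augmented initial conditions, verify the hypotheses of Proposition \ref{prop_subseq}, and leverage the Hilbert structure of $\Hone\ho$ to obtain the required weak convergence. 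Steps 1 and 3 are essentially bookkeeping once the $\mathbb{L}^1$ queue-length bound and the SPDE uniqueness from \cite{AghRam15spde} are in hand.
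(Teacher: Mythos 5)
Your proposal is correct and follows essentially the same route as the paper: tightness from Proposition \ref{thm_tightness}, identification of subsequential limits as invariant laws by starting each system at $\vn_\infty$ and combining Proposition \ref{prop_yinfbar} with the finite-dimensional convergence of Proposition \ref{prop_subseq} (exactly the content of Lemma \ref{lem_subseq} and Corollary \ref{cor_exists}), and then uniqueness from Proposition \ref{thm_spde} with a standard subsequence argument to upgrade to full convergence. The only minor gloss is that passing from convergence of the projections $\bigl(\xnhat_t,\langle \znhat_t,e_j\rangle_{\Hone}\bigr)$ to $\law(Y_t)=\pi^*$ requires noting that finite-dimensional projections onto the basis $\{e_j\}$ determine a law on $\Y$, which is how the paper concludes $Y_0\deq Y_t$.
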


Theorem \ref{thm_interchange} is proved in  Section \ref{sec_conv_proof}. Here, $P^y$ is the law of a process called the diffusion model $Y$ with initial condition $y$, which was introduced and studied extensively in \cite{AghRam15spde}. We recall its definition and properties in Section \ref{sec_spde}.

\begin{remark}
\label{rem-statespace}
  The choice of state representation is somewhat subtle.  Under our assumptions,
the process $\znhat$ could also be viewed as lying in the spaces
$\mathbb{W}^{1,1}(0,\infty)$, $\mathbb{C}[0,\infty)$ and $\mathbb{C}^1[0,\infty)$.
However, the choice $\Hone (0,\infty)$ seems to be the one that allows one to simultaneously
establish Theorems \ref{thm_ergodic} and \ref{thm_interchange}, and also  Theorems 3.7 and 3.8 of \cite{AghRam15spde}, which respectively, characterize the diffusion model and establish uniqueness of its invariant distribution.
\end{remark}

\section{Ergodicity of the state process.}\label{sec_pre}

In Section \ref{sec_pre_V}, we  describe the Markovian state representation  $\vn$
introduced in \cite{KasRam11} and establish its ergodicity. In Section \ref{sec_pre_Y} we show that $\yn$ is a  continuous functional of $\vn$, and then use this  in Section \ref{sec_pre_ergodic} to prove Theorem \ref{thm_ergodic}.

\subsection{The Markovian state descriptor $\vn$.} \label{sec_pre_V}

Define $\ren$ to be  the forward recurrence time  of the arrival process $\en$
\begin{equation} \label{ReDef}
    \ren_t\doteq \inf\{u>t: \en_u>\en_t\}-t, \quad t\geq0.
\end{equation}
Note that $\ren_t$ is the amount of time after $t$ that one must wait for  the next arrival, and
$\ren_0=\delayn$ (where $\delayn$ is as defined in Assumption \ref{asm_arrival}).

In \cite{KasRam11}, the GI/GI/N queue was represented by a three-component state descriptor $\vn=(\vn_t;t\geq0)$,
\begin{equation}\label{def_V}
    \vn_t\doteq(\ren_t,\xn_t,\nun_t),
\end{equation}
where  the second component, $\xn_t$, as defined earlier, is the number of jobs in the system at time $t\geq0$, and the third component, $\nun_t$, is a finite measure on $\hc$ that is the sum of delta masses, each at the age  (defined to be the amount of time  spent thus far in service)  of a job  in service at time $t$.  Given $N\in\N$, for every $t\geq0$, $\vn_t$ takes values in the space
\begin{equation}\label{def_Vspace}
    \Vn\doteq\{(r,x,\mu)\in\R_+\times \N\times \emD: x\wedge N =\mu(\f1) \},
\end{equation}
where $\emD$ is the measure space $\emD$  defined in Section \ref{sec_notation}.
When $\en$ is Poisson, the first component $\ren$ can be omitted.

We now express $\vn$ in terms of certain primitives of the queueing system; a more explicit construction is given in  \cite[Appendix A]{KanRam10}. Jobs are indexed by $j\in\Z$. A job $j$ arrives to the system at time $\arrivaln_j$, and if an idle server is available at that time, the job immediately enters service. Otherwise, the job joins the back of the queue and enters service at a later time $\entryn_j$. When its service is completed, the job departs from the system at time $\departn_j\doteq\entryn_j+v_j$, where $v_j$ is its service time requirement. Jobs that are initially in service are assigned non-positive indices $j\leq0$ and   other jobs, which are either initially in queue or arrived after time $0$, are assigned positive indices $j\geq1$, in the order of their arrival. Note that there are initially $\xn_0$ jobs in the system, and by the non-idling assumption, $\xn_0\wedge N$ of them are initially in service. Hence,  if the system is non-empty, $\j0\doteq-(\xn_0\wedge N)+1$ is the smallest job index at time zero.

For every job $j$, the age process $\{\agen_j(t);t\geq0\}$ can be expressed as follows: for $t\geq0$,
\begin{equation}
    \agen_j(t) = \twopartdefoth{[t-\entryn_j(t)]\vee 0}{t\leq \entryn_j+v_j, }{v_j}
\end{equation}
In other words,  the age of a job is zero before service entry, then grows linearly with rate $1$ until it is equal to  $v_j$  at the departure time, and remains constant afterwards. Also, define $\{\kn_t;t\geq0\}$ to be the cumulative service entry process, that is, $\kn_t$ is the total number of jobs that entered service during the interval $[0,t]$. Note that $\kn_t$ is also  the largest index of any  job that has entered service by  time $t$. The  measure $\nun_t$  can be expressed as
\begin{equation}\label{def_nun}
    \nun_t  = \sum_{j=\j0}^{\kn_t} \delta_{\agen_j(t)} 1_{\{\agen_j(t)<v_j\}}.
\end{equation}

Recall from Section \ref{sec_notation2} that for a function $f$ on $(0,\infty)$, $\nun_t(f)$ represents the integral of $f$ with respect to $\nun_t$, and also that $\f1$ is the function that is identically equal to $1$.  For every $t\geq0$, $\nun_t(\f1)$ is the number of jobs in service at time $t$, and therefore, the non-idling assumption implies the following relation:
\begin{equation}\label{number_in_service}
    \nun_t(\f1)=N\wedge \xn_t,
\end{equation}
or equivalently,
\begin{equation} \label{noIdlingcondition}
    N-\nun_t(\f1)=(N-\xn_t)^+.
\end{equation}

Although the results of this paper do not depend on the particular rule used to assign jobs to servers, for technical purposes, we define the station process $\{\stationn_j(t);t\geq0\}$   for every job $j$ as follows: $\stationn_j(t)$ is equal to the index $i\in\{1,...,N\}$ of the server at which the job $j$ receives/received service if it has already entered service, and is equal to $0$, otherwise. For $t\geq0$ define
\[
    \tfiltn_t\doteq \sigma\left( \ren(s),\agen_j(s),\stationn_j(s); s\in[0,t], j=-N+1,...-1,0,1,...\right),
\]
and let $\{\filtn_t;t\geq0\}$ be the associated right-continuous filtration that is complete with respect to $\mathbb{P}.$ It was shown in  \cite[Lemmas A.1 and B.1]{KanRam10} that  $\{\vn_t,\filtn_t;t\geq 0\}$ is a c\`adl\`ag Feller Markov process.

\begin{proposition}\label{prop_Vergodic}
Let Assumptions \ref{asm_arrival} and \ref{asm_service} hold. Then $\{ \vn_t,\filtn_t;t\geq 0\}$ is an ergodic Markov process.  In particular, there exists a $\Vn$-valued random element $\vn_\infty\doteq (\ren_\infty,\xn_\infty,\nun_\infty)$ such that for any initial condition $\vn_0$, $\vn_t \Rightarrow \vn_\infty$ in $\V$ as $t \rightarrow \infty$, and
when initialized at $\vn_\infty$, $\{\vn_t, t \geq 0\}$ is a stationary process.
\end{proposition}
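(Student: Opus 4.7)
The plan is to exhibit an explicit regenerative structure for $\vn$ and combine it with classical positive-recurrence results for the subcritical GI/GI/N queue. Since \cite{KanRam10} already establishes that $\{\vn_t,\filtn_t;t\geq 0\}$ is a c\`adl\`ag Feller Markov process on $\Vn$, only the convergence in distribution (together with the implicit existence of a unique invariant law) remains to be shown.

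The natural renewal points are the arrival instants at which an incoming job finds an empty system: set $\tau_0\doteq 0$ and recursively
\[
\tau_{k+1}\doteq \inf\{\arrivaln_j>\tau_k:\xn_{\arrivaln_j^-}=0\}.
\]
Immediately after $\tau_k$ the post-jump state equals $(u_k,1,\delta_0)$, where $u_k$ is a fresh $\Gen$-distributed inter-arrival time independent of $\filtn_{\tau_k^-}$. Hence $\vn$ has an explicit atom, the excursions on $[\tau_k,\tau_{k+1})$ are i.i.d., and classical regenerative process theory (see e.g.\ \cite[Ch.\ VI]{Asm03}) reduces ergodicity to two cycle-length properties: $\mathbb{E}[\tau_1-\tau_0]<\infty$, and $\law(\tau_1-\tau_0)$ is non-lattice.

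Finite mean cycle length is the classical positive-recurrence statement for the subcritical GI/GI/N queue: under $\lambdan<N$ the standard workload/waiting-time stability analysis in \cite[Ch.\ XII]{Asm03} gives finite expected hitting time of the empty state from any initial condition, and the full-support assumption on $\tilde G_E$ ensures that successive visits to the atom are separated by integrable inter-arrival times. The non-lattice property of $\tilde G_E$ transfers to $\tau_1-\tau_0$ because every cycle ends with a $\Gen$-distributed inter-arrival time as an additive, independent component. Blackwell's renewal theorem then yields $\vn_t\Rightarrow\vn_\infty$ as $t\to\infty$ for every initial condition, with the process being stationary when started from $\law(\vn_\infty)$.

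The main technical obstacle is the lift to the full Markovian state space $\Vn$: the classical results cited above are phrased for the scalar number-in-queue or workload process, whereas $\vn$ also carries the measure-valued age component $\nun$. Controlling the tail of the first emptying time uniformly in the initial age configuration requires a coupling argument that uses the bounded hazard rate of Assumption \ref{asm_service}.\ref{asm_h} to dominate the residual service times of the (at most $N$) jobs initially in service by i.i.d.\ exponentials with rate $H$, so that the initial measure does not inflate the time-to-empty beyond what subcritical renewal arguments can handle. This is why the detailed verification is relegated to Appendix \ref{apx_vergodic}.
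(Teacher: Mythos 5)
Your overall plan---exploiting a regenerative structure at arrival epochs that find the system empty, together with the stability analysis in \cite[Chapter~XII]{Asm03}---is essentially the route the paper takes in Appendix~\ref{apx_vergodic}. The paper differs mainly in that it first passes to the backward recurrence time $\tilde R$, so that the state at each regeneration epoch is the \emph{deterministic} point $(0,0,\mathbf{0})$, and only at the end converts to the forward recurrence time $R$ via the conditional law $\Prob{\ren_t>y\mid \tilde R_t=x}=\overline G_E(x+y)/\overline G_E(x)$. Your post-jump state $(u_k,1,\delta_0)$ is random, so while the excursions it delimits are indeed i.i.d.\ and the regenerative argument goes through, it is not a point atom; the paper's device makes the appeal to the regenerative and stationary-distribution results \cite[Theorems~VI.1.2 and~VII.3.2]{Asm03} cleaner.

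The concrete gap is in the coupling you invoke to control the initial age configuration: the direction of the stochastic domination is reversed. Under Assumption~\ref{asm_service}.\ref{asm_h} one has $h\leq H$, hence for any age $a$ the conditional residual service time has survival function $\overline G(a+x)/\overline G(a)=\exp\bigl(-\int_a^{a+x}h(u)\,du\bigr)\geq e^{-Hx}$, so it stochastically \emph{dominates} an exponential of rate $H$ rather than being dominated by one. An exponential upper bound on residual lifetimes would require a uniform \emph{lower} bound on the hazard rate, which is not assumed and would exclude precisely the heavy-tailed service distributions (e.g.\ the Lomax and log-normal families) that the paper wants to cover. Fortunately this coupling is unnecessary: once \cite[Corollary~XII.2.5]{Asm03} (using $\rho^{(N)}<1$ and the condition $\sup\{x:G(x)=0\}<\sup\{x:\overline G_E(x)>0\}=\infty$, which holds by the full-support assumption) yields that the empty-system arrival epochs $\{T_k\}$ form an aperiodic nonterminating renewal sequence, and \cite[Corollary~XII.2.8(a)]{Asm03} gives $\mathbb{E}_0[T_1]<\infty$, the convergence $\vn_t\Rightarrow\vn_\infty$ from an arbitrary initial condition follows from the delayed regenerative-process limit theorem \cite[Theorem~VI.1.2]{Asm03} without any uniform control over the time to the first regeneration.
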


This result is similar to existing results in the literature,  for example, ergodicity of an analogous state process in the presence of reneging follows from \cite[Theorem 7.1]{KanRam12}, and ergodicity of another Markovian representation was established in \cite[Section XII.2]{Asm03}.   However, since Proposition \ref{prop_Vergodic}  is not an immediate corollary of these results, a complete justification   is provided in   Appendix \ref{apx_vergodic}.

\subsection{Relation between $\vn$ and $\yn$.}\label{sec_pre_Y}

We now show that the representation $\yn$ defined in \eqref{def_yn} can be written as a functional of the Markovian state variable $\vn$. For brevity of notation, define the following family of operators  $\{\Phi_t,t\geq 0\}$ on the space of real-valued functions on $[0,\infty)$: for  $t\geq 0$,
\begin{equation}\label{PhiDef}
    (\Phi_tf)(x)=f(x+t)\;\frac{\overline{G}(x+t)}{\overline{G}(x)},\quad x\in[0,\infty).
\end{equation}
It is straightforward to see that $\Phi_0 f = f$, $\Phi_t$ maps the space of bounded continuous functions into itself, and the family of operators $\{\Phi_t, t \geq 0\}$ forms a semigroup:
\begin{equation}\label{semigroup}
  \Phi_t\Phi_s=\Phi_{t+s}, \quad\quad t,s\geq 0.
\end{equation}
Now, from definitions \eqref{def_Zn_temp} and \eqref{def_nun} of $\zn$ and $\nun$, respectively, we see that
 $\zn_t(\cdot)=\nun_t(\Phi_\cdot\f1)$. Thus, if we let $\mathcal{T}$ be the mapping on $\emF$ given by
\begin{equation}\label{def_Tmap}
    (\mathcal{T}\mu)(r) :=\mu(\Phi_r\f1),\quad\quad r\geq0,
\end{equation}
then  we can write
\begin{equation}\label{zn_nun_relation}
    \zn_t(\cdot)=\mathcal{T}\nun_t=\nun_t(\Phi_\cdot\f1),
\end{equation}
and hence,
\begin{equation}\label{yn_nun_relation}
    \yn_t=(\xn_t,\mathcal{T}\nun_t).
\end{equation}

Next, recall the definitions of $\overline X$ and $\overline Z$ in \eqref{def_Ybar}, and define
\begin{equation}\label{def_hPhi}
    \widehat{\Psi}(\alpha,x,\mu)\doteq\left(\frac{x-N\overline X}{\sqrt{N}}, \frac{{\mathcal{T}(\mu)-N\overline Z}}{\sqrt{N}} \right),
\qquad  (\alpha, x, \mu) \in \cup_{N \in \mathbb{N}} \Vn.
\end{equation}
In view of \eqref{yn_nun_relation} and \eqref{def_hPhi},  $\ynhat_t$ can be written as
\begin{equation}\label{ynhat_representation}
    \ynhat_t=\left(\frac{\xn_t-N\overline X}{\sqrt{N}}, \frac{{\mathcal{T}(\nun_t)-N\overline Z}}{\sqrt{N}} \right)=\widehat{\Psi}(\vn_t), \quad\quad       t\geq0.
\end{equation}

\begin{lemma}\label{lem_Tprops}
Suppose Assumption \ref{asm_service} holds.  Then, for every $\mu\in\emD$, $\mathcal{T}\mu$ lies in $\Hone\ho$ and has derivative
\begin{equation}\label{Tmap_prime}
    (\mathcal{T}\mu)'(r)=-\mu(\Phi_r h),\quad\quad r\geq0.
\end{equation}
If, in addition, the service distribution has a finite $(2+\epsilon)$ moment for some $\epsilon > 0$,      then the mapping $\mathcal{T}:\emD\to\Hone\ho$ is continuous and $\widehat{\Psi}$ is a continuous mapping from  $\cup_{N \in \mathbb{N}} \Vn \subset \R \times \N \times  \emD$ to $\R\times\Hone\ho$.
\end{lemma}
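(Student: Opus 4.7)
My plan is to treat the three assertions in turn, handling the first by explicit computation, the second by a reduction that isolates the continuity of a single atom, and the third by composing with continuous affine operations.

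First, to show $\mathcal{T}\mu \in \mathbb{H}^1\ho$ for every $\mu=\sum_{i=1}^m\delta_{a_i} \in \emD$, I would simply write
\[
    \mathcal{T}\mu(r)=\sum_{i=1}^m \frac{\overline G(a_i+r)}{\overline G(a_i)},
\]
and argue that each summand lies in $\Hone\ho$. By Remark \ref{remark_HH2}, $\overline G\in\Hone\ho$, and the same remark implies that $G$ has full support so that $\overline G(a_i)>0$; thus each summand is a translate of an $\Hone$ function divided by a positive constant, hence in $\Hone\ho$. The formula for the weak derivative comes from the chain rule combined with the identity $g=h\overline G$: the weak derivative of $\overline G(a+r)/\overline G(a)$ in $r$ is $-g(a+r)/\overline G(a)=-h(a+r)\overline G(a+r)/\overline G(a)=-(\Phi_r h)(a)$. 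Summing over atoms gives \eqref{Tmap_prime}.

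For continuity of $\mathcal{T}:\emD\to\Hone\ho$, I would start by noting that weak convergence of finite measures preserves total mass, so if $\mu_n\to\mu$ weakly in $\emD$, then the integer totals $\mu_n((0,\infty))$ and $\mu((0,\infty))$ agree eventually; call this common value $m$. Ordering the atoms, write $\mu_n=\sum_{i=1}^m\delta_{a^n_i}$ and $\mu=\sum_{i=1}^m\delta_{a_i}$ with $a^n_i\leq a^n_{i+1}$ and $a_i\leq a_{i+1}$. Standard weak-convergence arguments for finitely-supported discrete measures (convergence of order statistics) give $a^n_i\to a_i$ for each $i$. Then by the triangle inequality in $\Hone\ho$ the problem reduces to showing that the single-atom map
\[
    a\mapsto\frac{\overline G(a+\cdot)}{\overline G(a)}
\]
is continuous from $\hc$ to $\Hone\ho$. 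This follows from continuity of translation in $\Ltwo\ho$ applied to $\overline G$ and to $g=-\overline G{\,}'$ (both in $\Ltwo\ho$ by Remark \ref{remark_HH2}), together with continuity and positivity of the scalar $\overline G(a)$. The $(2+\epsilon)$ moment hypothesis is not strictly needed for this step, but it enters via Remark \ref{remark_GL2} to guarantee $\overline Z\in\Hone\ho$ so that $\widehat\Psi$ is properly valued in $\R\times\Hone\ho$.

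Finally, continuity of $\widehat\Psi$ on each $\Vn$ is immediate once $\mathcal{T}$ is continuous: the coordinate projection $(\alpha,x,\mu)\mapsto x$ and the affine operations $x\mapsto (x-N)/\sqrt N$ and $f\mapsto (f-N\overline Z)/\sqrt N$ are continuous, and Remark \ref{remark_GL2} ensures the centering $\overline Z$ lies in $\Hone\ho$ under the $(2+\epsilon)$ moment condition.

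I expect the main obstacle to be the continuity of $\mathcal{T}$, specifically the step that upgrades pointwise convergence $\mu_n(\Phi_r\f1)\to\mu(\Phi_r\f1)$ to $\Hone$-norm convergence. A brute-force dominated-convergence approach in $\Ltwo(dr)$ is blocked because a weakly convergent sequence in $\emD$ need not admit a uniform $\Ltwo(dr)$ majorant for $|\mathcal{T}\mu_n|^2$—tails of $\mu_n$ can keep $\mathcal{T}\mu_n(r)$ of constant order over long stretches of $r$ in a way that depends on $n$. The atom-matching reduction is what lets me sidestep this difficulty, trading the general weak-convergence statement for a finite sum of classical $\Ltwo$ translation-continuity statements.
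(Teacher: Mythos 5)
Your proposal is correct and follows essentially the same route as the paper: explicit computation of $\mathcal{T}\mu$ and its weak derivative atom by atom (using $\overline G, g\in\Ltwo\ho$ from Remark \ref{remark_HH2}), continuity of $\mathcal{T}$ via matching the finitely many unit atoms and reducing to $\Ltwo$-continuity of translation applied to $\overline G$ and $g$, and continuity of $\widehat{\Psi}$ by composing with affine operations once the moment condition gives $\overline Z\in\Hone\ho$. Your observations about where the $(2+\epsilon)$ moment enters and why a crude dominated-convergence argument is unnecessary are consistent with the paper's proof, which carries out the same triangle-inequality estimate you describe.
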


\begin{proof}
See Appendix   \ref{apx_props}.
\end{proof}

\subsection{Proof of Theorem \ref{thm_ergodic}.}\label{sec_pre_ergodic}

Fix $t \geq 0$ and $N \in \mathbb{N}$.  Since almost surely, $\nun_t$ takes values in $\emD$ by \eqref{def_nun} and $\zn_t = {\mathcal T}(\nun_t)$ by \eqref{zn_nun_relation},  the first assertion of Lemma \ref{lem_Tprops} shows that almost surely,  $\zn _t$ takes values in $\Hone \ho$.  The third assertion of Lemma \ref{lem_Tprops} and the fact that almost surely, $\vn_t$ is  $\Vn$-valued then imply that almost surely, $\ynhat_t$ takes values in $\mathbb{R} \times \Hone \ho$. Substituting $r=0$ in \eqref{zn_nun_relation}, we have $ \zn_t(0)=\nun_t(\f1)$, and hence, the  non-idling condition \eqref{number_in_service}  can be written as $\zn_t(0)=N\wedge \xn_t$. Also, since the distribution $G$ has mean equal to one, we have $\overline Z(0)=\int_0^\infty\overline G(x)dx =1.$ Therefore,
\begin{equation}\label{XZrelation}
        \znhat_t(0)=\frac{\xn_t\wedge N-N}{\sqrt{N}}=\frac{(\xn_t-N)\wedge 0}{\sqrt{N}} =\xnhat_t\wedge 0= -(\xnhat_t)^-,
\end{equation}
and therefore almost surely, $\ynhat_t$ takes values in $\Y$.

Next,  given $\vn_\infty = (R_\infty^{(N)}, \xn_\infty, \nun_\infty)$ as in Proposition \ref{prop_Vergodic}, define
\[
  \left(\xnhat_\infty,\znhat_\infty\right)\doteq \widehat{\Psi}(\vn_\infty) =\left(\frac{\xn_\infty-N\overline X}{\sqrt{N}}, \frac{\mathcal{T}(\nun_\infty)-N\overline Z}{\sqrt{N}}\right).
\]
We claim that the second assertion of Theorem \ref{thm_ergodic} holds with
\begin{equation}\label{def_pnhat}
  \pinhat\doteq\law(\xnhat_\infty,\znhat_\infty)= \law\left(\widehat{\Psi}(\vn_\infty)\right).
\end{equation}
To see this, note that by Proposition \ref{prop_Vergodic}, $\vn_t$ converges in distribution to $\vn_\infty$ as $t\to\infty$. Given the representation \eqref{ynhat_representation} of $\ynhat_t$ and the continuity of $\widehat{\Psi}$ established in Lemma \ref{lem_Tprops},  the  continuous mapping theorem then implies that $\ynhat_t \Rightarrow \pinhat$ as $t \rightarrow \infty$.

\section{Dynamics of the state process.}\label{sec_dynamics}

In Section \ref{sec_pre_fluid} we recall fluid limit results for $\vn$ that were established in \cite{KasRam11}
and state the corresponding fluid limit theorem for $\yn$.  In Section \ref{sec_pre_aux} we describe the equations governing the dynamics of $\vnhat$ and use this in Section \ref{sec_pre_ynhat} to formulate the equations governing the dynamics of $\ynhat$.  These results are used in the proof of Theorem \ref{thm_interchange}.

\subsection{A fluid limit theorem.}
\label{sec_pre_fluid}

A fluid limit theorem was established in \cite{KasRam11} for  $(\xn,\nun)$ under certain assumptions on the sequence of initial conditions.  In the Halfin-Whitt asymptotic regime, captured by \eqref{def_lambdan} due to the fact that the service distribution has unit mean, the load $\rho^{(N)}\doteq \lambda^{N}/N$ converges to $1$ as $N\to\infty$.  This is referred to as the \textit{critical case} in \cite{KasRam11}.  For $F=X,\nu,E,\lambda,D, K$, define the fluid scaling
\begin{equation}
\label{def-barnot}
  \overline F^{(N)}\doteq\frac{F^{(N)}}{N}.
\end{equation}
Also, recall from \eqref{def_Ybar} that $\overline X=1$,  define
\begin{equation}\label{def_ebar}
    \overline E \doteq\id,
\end{equation}
where $\id$ is the identity function on $\hc$, $\id(t)=t$ for all $t$, and define the measure $\overline\nu\in\emF$ as
\begin{equation}\label{def_nubar}
    \overline \nu(A)\doteq \int_A\overline G(x)dx,\quad\quad  A\in\mathcal{B}\hc.
\end{equation}

\begin{lemma}[Fluid Limit]\label{prop_fluid}
Suppose Assumptions \ref{asm_arrival} and \ref{asm_service}.\ref{asm_g} hold. Then,
\begin{enumerate}
    \item  as $N\to\infty$, $\enbar\to\overline E$ almost surely in $\mathbb{D}[0,\infty)$, and  $\Eptil{\enbar_t}\to t$ for every $t\geq0$;   \label{eLLN}
    \item if the sequence of initial conditions satisfy
        \begin{equation}\label{asm_ic}
            (\xnbar_0,\nunbar_0)\Rightarrow (\overline X,\overline \nu) \text{ in } \R\times\emF,\quad\text{ and }\quad \Eptil{\xnbar_0}\to\overline X,
        \end{equation}
        then  $(\xnbar_t, \nunbar_t)\Rightarrow(\overline X,\overline\nu)$ in $\R\times\emF$ for every $t\geq0$. \label{vLLN}
\end{enumerate}
\end{lemma}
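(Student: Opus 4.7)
The plan is to treat the two parts separately; part (\ref{eLLN}) reduces to classical renewal theory, while part (\ref{vLLN}) amounts to identifying the constant trajectory $(\overline X,\overline\nu)$ as the unique invariant solution of the fluid equations whose process-level limit theorem is already established in \cite{KasRam11}.

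For part (\ref{eLLN}), by Assumption \ref{asm_arrival} we have $\en_t = \tilde E(\lambdan t)$, where $\tilde E$ is a delayed renewal process with unit-mean inter-renewal times and finite $(2+\epsilon)$ moment. The functional strong law of large numbers for renewal processes yields $\tilde E(s)/s \to 1$ almost surely, uniformly on compact subsets of $\hc$. Since $\lambdan/N = 1 - \beta/\sqrt{N} \to 1$ by \eqref{def_lambdan}, a time-change gives $\enbar_t = \tilde E(\lambdan t)/N \to t$ almost surely, uniformly on compacts, which in turn implies convergence in $\mathbb{D}\hc$ because the limit $\overline E = \id$ is continuous. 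For the expectation, the elementary renewal theorem gives $\Eptil{\tilde E(s)}/s \to 1$, and hence $\Eptil{\enbar_t} = \Eptil{\tilde E(\lambdan t)}/N \to t$ for every $t \geq 0$.

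For part (\ref{vLLN}), the plan is to invoke the process-level fluid limit theorem of \cite{KasRam11}, which under Assumption \ref{asm_service}.\ref{asm_g} and the initial-condition convergence \eqref{asm_ic} establishes convergence in distribution of $(\xnbar,\nunbar)$, as processes, to the unique deterministic solution of the fluid equations associated with the GI/GI/N queue. In the critical case $\lambdan/N \to 1$, the non-idling condition forces $\overline X(t) \equiv 1$, so all arriving fluid mass enters service immediately and the fluid entry process is $\overline K(t) = \overline E(t) = t$. The semigroup form of the measure fluid equation then asserts, for each $f \in \mathbb{C}_b\hc$ and $t \geq 0$,
\[
\overline\nu_t(f) = \overline\nu_0(\Phi_t f) + \int_0^t (\Phi_{t-s}f)(0)\,ds,
\]
and a direct calculation using \eqref{PhiDef} and \eqref{def_nubar} confirms that the constant trajectory $\overline\nu_t \equiv \overline\nu$ satisfies this: the first term equals $\int_t^\infty f(y)\overline G(y)\,dy$, and after the change of variable $u = t-s$ the integral equals $\int_0^t f(u)\overline G(u)\,du$, with the two summing to $\overline\nu(f)$. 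The non-idling identity $\overline\nu(\f1) = \int_0^\infty \overline G(x)\,dx = 1 = \overline X$ is immediate from the unit-mean assumption on $G$. Combined with uniqueness of fluid solutions from \cite{KasRam11}, this identifies the fluid limit as the constant $(\overline X,\overline\nu)$, and the claimed marginal convergence at every $t \geq 0$ follows from the process-level convergence to a continuous (indeed, constant) limit.

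The main obstacle conceptually lies in the process-level fluid limit theorem itself: tightness in $\mathbb{D}(\hc;\R \times \emF)$, characterization of subsequential limits as solutions of the fluid equations (via martingale arguments and the many-server dynamics), and uniqueness of those solutions. All of this has already been carried out in \cite{KasRam11} under Assumption \ref{asm_service}.\ref{asm_g}. Granted that input, what remains here is the purely algebraic identification of $(\overline X,\overline\nu)$ as the invariant solution, which is the short calculation sketched above.
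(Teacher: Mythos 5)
Your proposal is correct and follows essentially the same route as the paper: for part (a), time-change together with the functional strong law and the elementary renewal theorem; for part (b), invoking the process-level fluid limit theorem of \cite{KasRam11} under the initial-condition assumption \eqref{asm_ic} and then identifying $(\overline X,\overline\nu)$ as the fixed point of the fluid equations. The only cosmetic difference is that you verify the fixed-point property by an explicit calculation, whereas the paper cites Remark 3.8 of \cite{KasRam11} for this, and the paper additionally notes (in passing) that a Skorokhod representation argument is needed to pass from the distributional convergence in \eqref{asm_ic} to the almost-sure form required by Assumption~1 of \cite{KasRam11}.
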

\begin{proof}
For part \ref{eLLN}, by the definitions of $\en$ and $\tilde E$ in Assumption \ref{asm_arrival} we have
\[
    \enbar_t= \frac{\en_t}{N}=\frac{\lambdan}{N}\frac{\tilde E(\lambdan t)}{\lambdan},\quad\quad t\geq0.
\]
Since $\lambdan=N-\beta\sqrt{N}$, $\lambdan\uparrow\infty$ and $\lambdan/N\to 1$, by the functional Law of Large Numbers for renewal processes (see, e.g., \cite[Theorem 5.10]{CheYao2001}), the second term on the right-hand side of the display above converges to $\overline E=\id$ almost surely in $\D$. Also, for every $t\geq0$,
\[
    \lim_{N\to\infty}\Ept{\enbar_t}=\lim_{N\to\infty} \frac{\lambdan}{N}\Ept{\frac{\tilde E(\lambdan t)}{\lambdan}}=t,
\]
where the last equality is due to the elementary renewal theorem (see e.g. \cite[Proposition V.1.4]{Asm03}).

For the proof of part \ref{vLLN}, note that Assumption 1 in \cite{KasRam11} holds by part a. and the fact that the initial conditions satisfy \eqref{asm_ic} (note that  Assumption 1 in \cite{KasRam11} in fact requires the convergence in \eqref{asm_ic} to hold almost surely on some probability space, but by invoking the  Skorokhod representation theorem, this can  be replaced by distributional convergence since we only want to conclude distributional convergence.)
On the other hand, Assumption 2 in \cite{KasRam11} holds because $h$ is continuous by Assumption \ref{asm_service}.\ref{asm_h}. Thus, by Theorem 3.7 in \cite{KasRam11}, $(\xnbar,\nunbar)$ converges in distribution to the unique solution to the fluid equations (3.4) - (3.7) therein with initial conditions $(\overline E,\overline X,\overline \nu)$. The result then follows from the fact that $(\overline E,\overline X,\overline \nu)$ is a fixed point of the fluid equations (see Remark 3.8 in \cite{KasRam11}).
\end{proof}

\begin{corollary}\label{cor_ybar}
Suppose Assumptions \ref{asm_arrival} and \ref{asm_service}  hold, and the service distribution has a finite $(2+\epsilon)$ moment for some $\epsilon > 0$.  If the sequence of initial conditions satisfy \eqref{asm_ic}, then  $\ynbar=(\xnbar,\znbar)\Rightarrow \overline Y=(\overline X,\overline Z)$ in $\R\times\Hone\ho$, as $N\to\infty$.
\end{corollary}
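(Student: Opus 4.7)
The plan is to exhibit $\znbar_t$ as a linear image of the age-measure $\nunbar_t$ and then transfer the fluid limit from Lemma \ref{prop_fluid} into $\Hone\ho$ via the continuous mapping theorem. First, since the map $\mu \mapsto \mathcal{T}\mu$ defined in \eqref{def_Tmap} is linear in its measure argument, dividing the identity \eqref{zn_nun_relation} by $N$ gives $\znbar_t = \mathcal{T}\nunbar_t$ for each $t \geq 0$ and $N \in \N$. Moreover, substituting $\overline\nu(dx) = \overline G(x)\,dx$ into \eqref{def_Tmap} and using $(\Phi_r\f1)(x) = \overline G(x+r)/\overline G(x)$ yields $\mathcal{T}\overline\nu(r) = \int_0^\infty \overline G(x+r)\,dx = \overline Z(r)$, so $\overline Y = (\overline X, \mathcal{T}\overline\nu)$. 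Under the stated assumptions on the initial data, Lemma \ref{prop_fluid}.\ref{vLLN} gives the joint convergence $(\xnbar_t, \nunbar_t) \Rightarrow (\overline X, \overline\nu)$ in $\R \times \emF$ for every $t \geq 0$.

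It therefore suffices to establish that the map $(x, \mu) \mapsto (x, \mathcal{T}\mu)$ from $\R \times \emF$ into $\R \times \Hone\ho$ is continuous at the point $(\overline X, \overline\nu)$; the corollary then follows from the continuous mapping theorem. Lemma \ref{lem_Tprops} asserts continuity of $\mathcal{T}$ on $\emD$, but here $\overline\nu \notin \emD$ (it has a density with respect to Lebesgue measure), so one cannot invoke it verbatim. I would instead adapt the argument in Appendix \ref{apx_props} to the larger ambient space $\emF$. Under Assumption \ref{asm_service} and Remark \ref{remark_HH2}, the kernels $x \mapsto (\Phi_r\f1)(x)$ and $x \mapsto (\Phi_r h)(x)$ are continuous and uniformly bounded (by $1$ and $H$, respectively) for every $r \geq 0$; hence, if $\mu_n \Rightarrow \overline\nu$ in $\emF$, the definition of weak convergence together with \eqref{Tmap_prime} yields the pointwise convergences $(\mathcal{T}\mu_n)(r) \to \overline Z(r)$ and $(\mathcal{T}\mu_n)'(r) \to \overline Z'(r)$ for each $r \geq 0$.

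The main obstacle, and where I expect the technical work to sit, is upgrading these pointwise convergences to convergence in $\Ltwo\ho$, which in turn gives the desired $\Hone\ho$ convergence. The uniform bound $(\mathcal{T}\mu_n)(r) \leq \mu_n(\f1)$ is bounded in $r$ but not integrable, so one must extract the correct tail decay. The finite $(2+\epsilon)$ moment assumption, invoked through Remark \ref{remark_GL2}, ensures that the envelope $r \mapsto \int_r^\infty \overline G(u)\,du$ lies in $\Ltwo\ho$; combined with the uniform bound $\mu_n(\f1) \leq C$ (which follows from the convergence $\xnbar_t \Rightarrow \overline X = 1$) and tightness of $\{\mu_n\}$ on $\hc$ implied by weak convergence in $\emF$, a dominated convergence argument then upgrades the pointwise convergence to $\Ltwo\ho$ convergence of $\mathcal{T}\mu_n$ to $\overline Z$. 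The analogous argument for the weak derivatives uses boundedness of $h$ from Remark \ref{remark_HH2} and the same tail control to conclude $(\mathcal{T}\mu_n)' \to \overline Z'$ in $\Ltwo\ho$, completing the continuity step and hence the proof.
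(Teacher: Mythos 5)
Your overall route is the same as the paper's: the paper's proof of Corollary \ref{cor_ybar} reduces to showing $\znbar\Rightarrow\overline Z$ (since the limit $(\overline X,\overline Z)$ is deterministic), writes $\znbar=\mathcal{T}(\nunbar)$ and $\overline Z=\mathcal{T}(\overline\nu)$, and concludes from the fluid limit of Lemma \ref{prop_fluid}, the continuity of $\mathcal{T}$ in Lemma \ref{lem_Tprops} and the continuous mapping theorem. Your reduction, the identity $\mathcal{T}\overline\nu=\overline Z$, and the appeal to Lemma \ref{prop_fluid} all match, and you are right to flag that $\overline\nu\notin\emD$ (nor are the rescaled measures $\nunbar_t$ elements of $\emD$), so Lemma \ref{lem_Tprops} cannot be invoked completely verbatim; that is a legitimate observation about where care is needed.

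The gap is in your proposed repair. Continuity of $(x,\mu)\mapsto(x,\mathcal{T}\mu)$ at $(\overline X,\overline\nu)$ on $\R\times\emF$ with the weak topology is \emph{false} under the standing assumptions, so no dominated convergence argument can establish it. Weak convergence in $\emF$ gives no control of the mass of $\mu_n$ at large ages weighted by $1/\overline G$, and the kernel has unbounded $\Hone$ norm: $\|\mathcal{T}\delta_x\|_{\Ltwo\ho}^2=\overline G(x)^{-2}\int_x^\infty\overline G(u)^2\,du$, which for power-law tails admitted by the assumptions (e.g., Lomax with shape $\alpha>3$, see Lemma \ref{lem_verify}) grows linearly in $x$. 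Consequently $\mu_n\doteq\overline\nu+n^{-1}\delta_{x_n}$ with $x_n=n^4$ converges weakly to $\overline\nu$ in $\emF$, while $\|\mathcal{T}\mu_n-\overline Z\|_{\Ltwo}\asymp n^{-1}\sqrt{x_n}=n\to\infty$. This is exactly where your dominated convergence sketch breaks: after splitting at a tightness level $K$, the contribution of the mass above $K$ is bounded by $\epsilon$ uniformly in $r$ but does not decay in $r$ (for heavy tails $\overline G(x+r)/\overline G(x)\to1$ as $x\to\infty$ for each fixed $r$), so it has no small $\Ltwo\ho$ norm and there is no square-integrable envelope. Thus the continuous mapping theorem cannot be applied at $(\overline X,\overline\nu)$ viewed as a point of $\R\times\emF$; to close the argument you must either work on the domain where the continuity of Lemma \ref{lem_Tprops} (suitably extended to the scaled atomic measures) is actually available, as the paper does, or use quantitative information about $\nunbar_t$ beyond its mere weak convergence — for instance uniform bounds on $\nunbar_t$ integrated against the unbounded weight $x\mapsto\bigl(\overline G(x)^{-2}\int_x^\infty\overline G(u)^2\,du\bigr)^{1/2}$, in the spirit of the estimate \eqref{nunbar_bound} — neither of which your sketch supplies.
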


\begin{proof}
Since $\xnbar \Rightarrow \overline X$ by Lemma \ref{prop_fluid} and the pair $(\overline X, \overline Z)$ is deterministic, it only remains to show that $\znbar \Rightarrow \overline Z$.  Given that $\znbar = {\mathcal T} (\nunbar)$ and $\overline Z = \mathcal{T}(\overline \nu)$, with ${\mathcal T}$ defined in \eqref{def_Tmap}, this follows from the convergence $\nunbar \Rightarrow \overline \nu$ established in Lemma \ref{prop_fluid}, the continuity of ${\mathcal T}$ established in Lemma \ref{lem_Tprops} and the continuous mapping theorem.
\end{proof}

\subsection{Dynamics of $\vn$ and auxiliary processes.} 	\label{sec_pre_aux}

We now summarize the relevant equations describing the dynamics of the Markovian state processes $\vn$ and $\vnhat$ that were established in  \cite[Section 5.1]{KasRam11} and \cite[Section 6]{KasRam13}.  Let $\{\dn_t;t\geq0\}$ be the cumulative departure process, that is, $\dn_t$ denotes the total number of departures from the system during $[0,t]$.  For any bounded continuous function $\varphi$ defined on $[0,\infty)\times\hc$, define the $\varphi$-weighted departure process as
\[
    \Qn_\varphi(t) \doteq \sum_{j=\j0}^{\kn_t} \indic{[\departn_j, \infty)}{t} \varphi(v_j,\departn_j),
\]
where recall from Section \ref{sec_pre_V}  that $\departn_j$ is the time of departure of job $j$. As shown in \cite[Corollary 5.5]{KasRam11},  the $\{\filtn_t\}$-compensator of $\Qn_{\varphi}$ is the process
\[
    A^{(N)}_\varphi(t) \doteq \int_0^t\nun_s\left(\varphi(\cdot,s)h\right)ds, \quad t\in[0,\infty),
\]
and the c\`adl\`ag  local $\{\filtn_t\}$-martingale $M_\varphi^{(N)}\doteq \Qn_\varphi- A_\varphi^{(N)}$ has  predictable quadratic variation $\langle M^{(N)}_\varphi\rangle_t = A^{(N)}_{\varphi^2}(t)$. (For definitions of quadratic variation process $\langle M \rangle$ and cross variation process $\langle M,N\rangle$ of martingales $M$ and $N$, see \cite[Definition 5.3 and Definition 5.5 of Chapter 1]{KarShr91}.) Now for every $t\in[0,\infty)$ and every Borel set $B\in\mathcal{B}[0,\infty)$, define
\begin{equation}\label{MartingaleMeasureNDef}
    \Mn_t(B)\doteq M^{(N)}_{\indicone{B}}(t).
\end{equation}
It is shown in Corollary 4.3 of \cite{KasRam13} that $\Mn=\{\Mn_t(B); t\in [0,\infty),B\in \mathcal{B}_0[0,\infty)\}$ is an orthogonal martingale measure with covariance functional
\begin{equation*}
    \langle \Mn(B),\Mn(\tilde B) \rangle_t = \int_0^t \nun_s(\indicone{B\cap \tilde B}h)ds,  \quad B, \tilde B \in {\mathcal B} (\R_+).
\end{equation*}
The stochastic integral of a continuous and bounded function $\varphi:\hc\times\hc\mapsto\R$ with respect to $\Mn$,  denoted  $\Mn(\varphi)$, is itself a martingale with predictable quadratic variation process
\begin{equation}\label{CoVar_mn}
  \langle \Mn(\varphi)\rangle_t=\int_0^t\nun_s(\varphi^2(\cdot,s)h)\;ds,\quad\quad t\geq0.
\end{equation}
For definitions and properties of martingale measures and space-time white noise, and the corresponding stochastic integrals, we refer the reader to  \cite[Chapters 1 and 2]{WalshBook}.

Next, define the family of operators $\Psi_t,t\geq 0$, taking functions on $[0,\infty)$ to functions on $[0,\infty)\times[0,\infty)$, as follows: for every $t\geq 0$,
\begin{equation}\label{PsiDef}
    (\Psi_tf)(x,s)=f(x+(t-s)^+)\;\frac{\overline{G}(x+(t-s)^+)}{\overline{G}(x)}, \quad \quad (x,s)\in[0,\infty)\times [0,\infty).
\end{equation}
In particular, $\Psi_t$ maps the space of continuous bounded functions into itself, and has the property that for every $t,s\geq0$ and function $f$ on $\hc$, $\|\Psi_tf\|_\infty \leq \|f\|_\infty$, and
\begin{equation}\label{groupproperty}
    (\Psi_s\Phi_tf)(x,u)=(\Psi_{s+t}f)(x,u),\quad (x,u)\in[0,\infty)\times[0,s].
\end{equation}
Then, for $f\in\mathbb{C}_b[0,\infty)$ and $t>0$, define the stochastic convolution integral $\Hn_t(f)$ by
\begin{align}\label{def_Hn}
    \Hn_t(f)\doteq \M^{(N)}_t(\Psi_t f).
\end{align}

We have now introduced the required notation to describe the dynamics of each $N$-server system. Using equation (6.14) and definitions (5.10) (with $\nunhat$ replaced by $\nun$) and (6.11) (with $\widehat{\mathcal{K}}^{(N)}$ and $\knhat$ replaced by $\mathcal{K}^{(N)}$ and $\kn$, respectively)  in \cite{KasRam13}, for every $t\geq 0$ and every bounded, absolutely continuous function $f$, we have
\begin{equation}\label{nuNfEQ}
    \nun_t(f)= \nun_0(\Phi_tf) - \Hn_t(f)+\int_0^t(f\overline G)(t-s)d \kn_s,
\end{equation}
where $\kn$ satisfies the mass balance equation (see (6.3) and (6.4) of \cite{KasRam13})
\begin{equation}\label{kNEQ}
    \kn_t=-\left(\xn_t-N\right)^+ +\left(\xn_0-N\right)^+   +\en_t.
\end{equation}

\subsection{Equations governing the evolution of $\ynhat$.}
\label{sec_pre_ynhat}

To express the equations governing the evolution of $\ynhat$, we introduce the  centered many-server (CMS) mapping (for the so-called critical case) from \cite[Definition 5.4]{KasRam13}.  Recall  $\mathbb{D}^0[0,\infty)$ is the space of functions $f \in \mathbb{D}[0,\infty)$ with $f(0) = 0$.

\begin{definition}(\textbf{Centered Many-Server Mapping})\label{def_cmsm}
Given $(\eta,x_0,\zeta)\in \mathbb{D}^0[0,\infty) \times \R \times \mathbb{D}[0,\infty)$ with $\zeta(0)=x_0\wedge 0$, a pair $(\kappa,x) \in \mathbb{D}^0[0,\infty)\times \mathbb{D}[0,\infty)$ is said to solve the centered many-server equations associated with $(\eta,x_0,\zeta)$ if it satisfies,  for all $t\geq0$,
\begin{align}
    \label{cms_abstract1}x(t)\wedge0 &= \zeta(t) + \kappa(t) -\int_0^t{g(t-s)\kappa(s)ds},\\
    \label{cms_abstract2}\kappa(t) &=  \eta(t)-x^+(t) +x_0^+.
\end{align}
When such a solution exists and is unique for an input $(\eta,x_0,\zeta)$, the mapping $\Lambda:\mathbb{D}^0[0,\infty) \times \R \times \mathbb{D}[0,\infty) \mapsto \mathbb{D}^0[0,\infty)\times \mathbb{D}[0,\infty)$ that takes $(\eta,x_0,\zeta)$ to the solution $(\kappa,x)$ is called the Centered Many-Server (CSM) Mapping. The set of $(\eta,x_0,\zeta)$ for which a solution exists is denoted by $\text{dom}(\Lambda)$.
\end{definition}

\begin{lemma}\label{lem_Lambda}
    The mapping $\Lambda$ is single-valued on $\text{dom}(\Lambda)$, is  continuous with respect to the topology of uniform convergence on compact sets on $\mathbb{D}[0,\infty)$ and is measurable with respect to the Skorokhod topology on $\mathbb{D}[0,\infty)$. Furthermore, for every $(\eta,x_0,\zeta)\in \mathbb{C}^0[0,\infty) \times \R \times \mathbb{C}^0[0,\infty)$ with $\zeta(0)=x_0\wedge 0$, $(\eta,x_0,\zeta)\in\text{dom}(\Lambda)$ and $\Lambda(\eta,x_0,\zeta) \in \mathbb{C}[0,\infty)\times \mathbb{C}[0,\infty)$.
\end{lemma}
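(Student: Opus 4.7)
The plan is to reduce the two-equation system \eqref{cms_abstract1}--\eqref{cms_abstract2} to a single Volterra-type integral equation for $x$ alone, and then treat it by a standard contraction/Gr\"onwall argument. Writing $x = x^+ - x^-$ and using $x\wedge 0 = -x^-$ in \eqref{cms_abstract1}, then eliminating $\kappa$ via \eqref{cms_abstract2}, one obtains
\[
x(t) \;=\; \Theta(t) + \int_0^t g(t-s)\,x^+(s)\,ds,
\]
with $\Theta(t) \doteq \zeta(t)+\eta(t)+x_0^+-\int_0^t g(t-s)(\eta(s)+x_0^+)\,ds$ an explicit functional of $(\eta,x_0,\zeta)$. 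Once $x$ is known, $\kappa$ is recovered as $\kappa(t) = \eta(t)+x_0^+ - x^+(t)$, and the compatibility $\zeta(0)=x_0\wedge 0$ yields $x(0)=x_0$ and $\kappa(0)=0$.

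First I would prove single-valuedness. By Remark \ref{remark_HH2}, $g$ is bounded and continuous, so using $|a^+-b^+|\leq |a-b|$ the map $x\mapsto \Theta + g*x^+$ on $\mathbb{D}[0,T]$ is Lipschitz with constant $\|g\|_\infty T$ in sup norm. Equipping $\mathbb{D}[0,T]$ with the Bielecki norm $\|x\|_\lambda \doteq \sup_{t\leq T} e^{-\lambda t}|x(t)|$ for $\lambda > \|g\|_\infty$ turns it into a contraction, so the Banach fixed-point theorem gives a unique solution on every $[0,T]$, and hence on $\hc$. Alternatively, Gr\"onwall's inequality applied to the difference of any two putative solutions yields uniqueness directly, which is all that is needed for single-valuedness on $\text{dom}(\Lambda)$.

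Next, for continuity under the topology of uniform convergence on compacts, suppose $(\eta_n,x_{0,n},\zeta_n)\to (\eta,x_0,\zeta)$ uniformly on each $[0,T]$, with each triple in $\text{dom}(\Lambda)$ giving $(\kappa_n,x_n)$. Since $g$ is bounded, $\Theta_n\to\Theta$ uniformly on $[0,T]$, and the bound
\[
|x_n(t)-x(t)| \;\leq\; \|\Theta_n-\Theta\|_T + \|g\|_\infty\int_0^t|x_n(s)-x(s)|\,ds
\]
together with Gr\"onwall yields $\|x_n-x\|_T\to 0$, and then $\|\kappa_n-\kappa\|_T\to 0$ follows from the recovery formula. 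When the inputs are continuous, $\Theta$ is continuous (convolution of a continuous bounded kernel with a continuous function), so the unique fixed point $x$ is continuous as a uniform limit of continuity-preserving Picard iterates; then $\kappa = \eta+x_0^+-x^+$ is continuous with $\kappa(0)=0$, showing $(\eta,x_0,\zeta)\in\text{dom}(\Lambda)$ and $\Lambda(\eta,x_0,\zeta)\in\mathbb{C}\hc\times\mathbb{C}\hc$.

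The last and most delicate point is Borel measurability with respect to the Skorokhod topology, since the Borel $\sigma$-algebras generated by the uniform and Skorokhod topologies on $\mathbb{D}\hc$ do not coincide in general. Here I would argue one coordinate at a time: the Skorokhod Borel $\sigma$-algebra on $\mathbb{D}\hc$ agrees with the $\sigma$-algebra generated by the evaluation maps $f\mapsto f(t)$, $t\geq 0$. For each fixed $t$, the successive Picard iterates $x^{(k)}(t)$ are built from finitely many pointwise evaluations of the input and Lebesgue integrals of bounded measurable kernels, and hence are Skorokhod-Borel measurable functions of $(\eta,x_0,\zeta)$; the fixed point $x(t)$ is their pointwise limit and inherits measurability, and the same applies to $\kappa(t)$. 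Since this holds for every $t$, $\Lambda$ is Borel measurable. The overall fixed-point analysis is routine given Assumption \ref{asm_service}; the one step that I expect to require the most care is precisely this topological bookkeeping separating the uniform and Skorokhod Borel structures.
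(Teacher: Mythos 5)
Your proposal is correct, but takes a genuinely different route from the paper. The paper's proof of Lemma \ref{lem_Lambda} is a pure citation argument: single-valuedness and continuity are quoted from Proposition 7.3 of \cite{KasRam13}, measurability from Lemma 7.4 of \cite{KasRam13}, and continuity of the output for continuous inputs from Lemma 4.10 of \cite{AghRam15spde}. You instead give a self-contained proof: you eliminate $\kappa$ from \eqref{cms_abstract1}--\eqref{cms_abstract2} using $x\wedge 0 = x - x^+$ to obtain the Volterra-type fixed-point equation $x = \Theta + g * x^+$, then run a Bielecki-norm contraction (for existence and uniqueness of the fixed point, hence single-valuedness and in fact existence of solutions for all c\`adl\`ag inputs with the compatibility condition $\zeta(0)=x_0\wedge 0$), Gr\"onwall (for continuity in the uniform-on-compacts topology), stability of continuity under Picard iteration and uniform limits (for the last assertion), and the coincidence of the Skorokhod Borel $\sigma$-algebra with the evaluation $\sigma$-algebra combined with pointwise measurability of the iterates (for measurability). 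This buys you a self-contained, elementary treatment that makes the relevant structure of the CMS map explicit; the paper's route is shorter but opaque without the two external references. Two small points worth making more explicit if you were to write this out in full: (i) the Banach fixed-point argument uses that $\mathbb{D}[0,T]$ is complete under the sup (equivalently, Bielecki) norm, which should be stated, and (ii) the measurability argument via Riemann-sum approximation of $\int_0^t g(t-s)\eta(s)\,ds$ and of the subsequent Picard iterates relies on c\`adl\`ag integrands being Riemann integrable with Riemann sums that depend on only finitely many evaluations; this is the crux of passing from the evaluation $\sigma$-algebra to measurability of the convolution functionals, and deserves one sentence of justification.
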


\begin{proof}
 Proposition 7.3 of \cite{KasRam13} shows that $\Lambda$ is single-valued and continuous, and  the measurability claim follows from \cite[Lemma 7.4]{KasRam13}. The last property holds by \cite[Lemma 4.10]{AghRam15spde}.
\end{proof}

Define $\overline K\doteq \id$, $\overline\M=0$ and $\overline{\cal H}=0$. Recall the definition \eqref{def_Ynhat} of $\widehat{H}^{(N)}$ for $H=X,Z,Y$, and define $\widehat{H}^{(N)}$ likewise for $H=E,K,\M,\mathcal{H}$.

\begin{proposition}\label{prop_Zn}
  Suppose Assumptions \ref{asm_arrival} and \ref{asm_service} hold, the service distribution has a finite $(2+\epsilon)$ moment for some $\epsilon > 0$ and  the initial conditions satisfy \eqref{asm_ic}. Then, for every $N\in\N$,
\begin{equation}\label{xk}
    (\knhat,\xnhat)=\Lambda(\enhat, \xnhat_0,\znhat_0-\Hnhat(\f1)),
\end{equation}
almost surely, where $\Lambda$ is the CMS mapping defined in Definition \ref{def_cmsm}, and in particular,
\begin{equation}\label{khatNdef}
    \knhat_t=\enhat_t-\xnphat_t+\xnphat_0, \quad\quad t\geq0.
\end{equation}
Also, for every $N\in\N$ and $t\geq0$, $\znhat_t$ lies in $\Hone\ho$ and satisfies
\begin{equation}\label{Zhat_dK}
    \znhat_t(r) =  \znhat_0(t+r) -  \Mnhat_t(\Psi_{t+r}\f1)+\knhat_t\overline G(r)-\int_0^t\knhat_s g(t-s+r)ds,\quad r\geq0,
\end{equation}
and its derivative $(\znhat_t)'$ satisfies
\begin{equation}\label{Zphat_dK}
    (\znhat_t)'(r) =  (\znhat_0)'(t+r) +  \Mnhat_t(\Psi_{t+r}h)-\knhat_tg(r)-\int_0^t \knhat_sg'(t-s+r)ds,\quad r\geq0.
\end{equation}
\end{proposition}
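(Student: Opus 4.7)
The plan is to deduce everything from three ingredients: the mass-balance \eqref{kNEQ}, the dynamical identity \eqref{nuNfEQ} for $\nun$, and the relation $\zn_t(r) = \nun_t(\Phi_r\f1)$ from \eqref{zn_nun_relation}. Identity \eqref{khatNdef} is immediate: divide \eqref{kNEQ} by $\sqrt{N}$, use $\overline K = \id$ so that $(\kn_t - Nt)/\sqrt{N} = \knhat_t$, and use $\overline X = 1$ to identify $(\xn - N)^+/\sqrt{N}$ with $\xnphat$. This already supplies the CMS equation \eqref{cms_abstract2}.

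For \eqref{Zhat_dK}, I would substitute $f = \Phi_r\f1$ into \eqref{nuNfEQ}: the semigroup property \eqref{semigroup} converts $\nun_0(\Phi_t\Phi_r\f1)$ into $\zn_0(t+r)$, while \eqref{groupproperty} together with the definition \eqref{def_Hn} turns $\Hn_t(\Phi_r\f1)$ into $\Mn_t(\Psi_{t+r}\f1)$ (the martingale measure only sees its integrand on $[0,\infty)\times[0,t]$). The identity $(\Phi_r\f1)(t-s)\overline G(t-s) = \overline G(t-s+r)$ reduces the last term of \eqref{nuNfEQ} to $\int_0^t \overline G(t-s+r)\,d\kn_s$, and since $\kn_0 = 0$, Stieltjes integration by parts yields $\int_0^t \overline G(t-s+r)\,d\kn_s = \overline G(r)\,\kn_t - \int_0^t g(t-s+r)\,\kn_s\,ds$. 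I would then write $\kn_s = Ns + \sqrt{N}\knhat_s$ and, via a short integration by parts in $u = t-s$ using $\overline Z(r) = \int_r^\infty \overline G(y)\,dy$, identify the deterministic part as $N[\overline Z(r) - \overline Z(t+r)]$; after subtracting $N\overline Z(r)$ and dividing by $\sqrt{N}$, \eqref{Zhat_dK} drops out. Membership $\znhat_t\in\Hone\ho$ follows from $\zn_t = \mathcal{T}\nun_t\in\Hone\ho$ (Lemma \ref{lem_Tprops}) and $\overline Z\in\Hone\ho$ (via Remarks \ref{remark_HH2} and \ref{remark_GL2}, since $\overline Z$ is bounded and integrable with weak derivative $-\overline G \in \Ltwo\ho$).

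Equation \eqref{Zphat_dK} is then obtained by weak $r$-differentiation of \eqref{Zhat_dK}, using $\overline G' = -g$, differentiation under the integral for the convolution, and the pointwise identity $\partial_r (\Psi_{t+r}\f1)(x,u) = -(\Psi_{t+r}h)(x,u)$ on $[0,\infty)\times[0,t]$; propagating this through the stochastic integral requires an $\Ltwo$-approximation argument invoking the covariance formula \eqref{CoVar_mn} and the boundedness of $h$ from Assumption \ref{asm_service}.\ref{asm_h}. Finally, for \eqref{cms_abstract1} I would evaluate \eqref{Zhat_dK} at $r = 0$: using $\overline G(0) = 1$, the non-idling relation $\znhat_t(0) = \xnhat_t \wedge 0$ from \eqref{XZrelation}, and $\Hnhat_t(\f1) = \Mnhat_t(\Psi_t\f1)$, this becomes precisely $\xnhat_t\wedge 0 = [\znhat_0(t) - \Hnhat_t(\f1)] + \knhat_t - \int_0^t g(t-s)\knhat_s\,ds$. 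Together with \eqref{khatNdef} and single-valuedness of $\Lambda$ (Lemma \ref{lem_Lambda}), this yields \eqref{xk}. The most delicate steps will be (i) the centering bookkeeping, verifying algebraically that the $\mathcal{O}(N)$ deterministic pieces assemble to exactly $N\overline Z(r)$, and (ii) the weak $r$-differentiation of the stochastic-integral term $\Mnhat_t(\Psi_{t+r}\f1)$, where one must combine the martingale-measure $\Ltwo$-isometry with the smoothness of $h$ to identify $\Mnhat_t(\Psi_{t+r}h)$ as its weak derivative in $\Ltwo\ho$.
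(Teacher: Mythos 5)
Your proposal is correct, and for \eqref{Zhat_dK}, \eqref{khatNdef} and the $\Hone\ho$-membership it follows essentially the paper's own computation (substitute $f=\Phi_r\f1$ into \eqref{nuNfEQ}, use the semigroup identities, integrate by parts, and center). Where you genuinely diverge is in the other two steps. For \eqref{xk}, the paper simply cites Lemma 7.2 of \cite{KasRam13} (where $\znhat_0$ appears as $\mathcal{J}^{\nunhat_0}(\f1)$), whereas you rederive it internally: evaluate \eqref{Zhat_dK} at $r=0$, use $\overline G(0)=1$, the non-idling identity \eqref{XZrelation} and $\Hnhat_t(\f1)=\Mnhat_t(\Psi_t\f1)$ to verify \eqref{cms_abstract1}, pair it with \eqref{cms_abstract2} obtained by scaling \eqref{kNEQ}, and then invoke single-valuedness of $\Lambda$ from Lemma \ref{lem_Lambda}; this is sound (existence of a solution is exhibited by $(\knhat,\xnhat)$ itself, uniqueness is Lemma \ref{lem_Lambda}, and one should just record that the input indeed satisfies $\zeta(0)=\znhat_0(0)-\Hnhat_0(\f1)=\xnhat_0\wedge 0$ and has c\`adl\`ag paths), and it buys a more self-contained argument at the cost of re-proving what \cite{KasRam13} already provides. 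For \eqref{Zphat_dK}, the paper avoids differentiating the stochastic integral altogether: since $h\in\mathbb{C}^1_b\hc$, it substitutes $f=h$ directly into \eqref{nuNfEQ} and uses $(\zn_t)'(r)=-\nun_t(\Phi_r h)$ from Lemma \ref{lem_Tprops}, then centers against $\overline Z'$. Your route---weak $r$-differentiation of \eqref{Zhat_dK}---also works, but the identification of the weak derivative of $r\mapsto\Mnhat_t(\Psi_{t+r}\f1)$ as $-\Mnhat_t(\Psi_{t+r}h)$ is exactly the content of the paper's Lemma \ref{que_Mfubini}, proved there via the stochastic Fubini theorem for orthogonal martingale measures; that is the cleaner justification to use in place of your sketched ``$\Ltwo$-approximation'' step (which, as stated, is the one point you would need to flesh out), and the convolution term is handled by dominated convergence using $|g'|\leq H_2\overline G$ and local boundedness of $\knhat$.
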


\begin{proof}
Since, by \cite[Remark 5.2]{KasRam13},  Assumption 4 in \cite{KasRam13} holds under  Assumption \ref{asm_service} here,  \eqref{xk} follows from \cite[Lemma 7.2]{KasRam13}, on noting that  $\znhat_0$ is denoted by $\mathcal{J}^{\nunhat_0}(\f1)$ in \cite{KasRam13}.  Equation \eqref{khatNdef} holds by \eqref{xk} and the definition of the mapping $\Lambda.$ Next, note that  $\znhat_t$ is $\Hone \ho$-valued by Theorem \ref{thm_ergodic}.  Moreover,  given the representation \eqref{zn_nun_relation} of $\zn_t$, substituting $f=\Phi_r\f1$ in  \eqref{nuNfEQ}   and  using properties \eqref{semigroup} and \eqref{groupproperty} of $\{\Phi_s\}$ and $\{\Psi_s\}$ and \eqref{def_Hn}, we have
\[
    \zn_t(r)=\nun_t(\Phi_r\f1)= \zn_0(t+r) -\Mn_t(\Psi_{t+r}\f1)+\int_0^t\overline G(t-s+r)d\kn_s,\;\;r\geq0.
\]
Also, by definition \eqref{def_Ybar} of $\overline Z$,  recalling that $\overline K_s = s$, we have
\[
    \overline Z_t(r) =\int_r^\infty \overline G(s)ds     =\int_{t+r}^\infty\overline G(s) ds +\int_r^{t+r}\overline G(s)ds     =\overline Z(t+r)+\int_0^t\overline G(t-s+r)d\overline K_s.
\]
Substituting $\zn_t$ and $\overline Z$ from the last two displays in \eqref{def_Ynhat}, recalling the definitions of $\Mnhat$ and $\knhat$, and performing an integration by parts, \eqref{Zhat_dK} follows.

Moreover, by \eqref{zn_nun_relation} and Lemma \ref{lem_Tprops}, the function $r\mapsto\zn_t(r)$ has derivative $(\zn_t)'(r)=-\nun_t(\Phi_rh)$.  Also, since $h \in \mathbb{C}_b^1\hc$ by Assumption \ref{asm_service}.\ref{asm_h}, we
can   substitute $f=h$ in \eqref{nuNfEQ}, and again invoke \eqref{semigroup}, \eqref{groupproperty} and \eqref{def_Hn}, to obtain
\[
    (\zn_t)'(r)= (\zn_0)'(t+r) +\Mn_t(\Psi_{t+r}h)-\int_0^tg(t-s+r)d\kn_s,\;\;r\geq0.
\]
Similarly, since $\overline Z(r)=\int_r^\infty\overline G(x)dx$, $\overline Z$  has derivative $\overline Z'$, where, again using $\overline K_s = s$,
\[
    \overline Z'(r) = -\overline G(r)= -\int_r^\infty g(s)ds     =\overline Z'(t+r)-\int_0^t g(t-s+r)d\overline K_s.
\]
The equation \eqref{Zphat_dK} follows from the last two displays.
\end{proof}

Substituting $\knhat$ from \eqref{khatNdef} in \eqref{Zhat_dK} and using integration by parts, we obtain
\begin{align}\label{ZhatNEQ}
    \znhat_t(r) =&  \znhat_0(t+r)- \Mnhat_t(\Psi_{t+r}\f1) +\int_0^t \overline G (t-s+r)d\enhat_s\\
    & - \xnphat_t\;\overline G(r)- \xnphat_0 \overline{G} (t+r)  + \int_0^t{\xnphat_s g(t-s+r)ds} \notag.
\end{align}

\section{Tightness of stationary distributions.}\label{sec_tight}

Below is the main result of this section.

\begin{proposition}\label{thm_tightness}
Suppose Assumptions \ref{asm_arrival}-\ref{asm_serviceP} hold. Then, $\{\pinhat\}_{N\in\N}$ is tight in $\Y$.
\end{proposition}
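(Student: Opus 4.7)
Since $\Y$ is a closed subspace of $\R\times\Hone\ho$, tightness of $\{\pinhat\}$ in $\Y$ reduces to joint tightness of its $\xnhat$- and $\znhat$-marginals. The non-idling identity $\znhat_t(0)=-(\xnhat_t)^-$ from \eqref{XZrelation}, combined with the continuous embedding $\Hone\ho\hookrightarrow\mathbb{C}_b\hc$, shows that tightness of the $\znhat$-marginal in $\Hone\ho$ automatically delivers tightness of $(\xnhat_\infty)^-$; together with the uniform $\Lone$ bound on $(\xnhat_\infty)^+$ from Corollary \ref{xnphat_bound}, this yields tightness of the $\xnhat$-marginal. Thus the heart of the matter is tightness of the $\znhat$-marginal of $\pinhat$ in $\Hone\ho$.

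For this, I would initialize $\ynhat_0\sim\pinhat$ so that by stationarity $\ynhat_t\sim\pinhat$ for every $t\geq0$, and then derive uniform-in-$N$ second-moment bounds on $\|\znhat_t\|_{\Hone}$ from the representation \eqref{ZhatNEQ} and its derivative counterpart \eqref{Zphat_dK}. Each term would be bounded separately in $\Ltwo(dr)$: the initial-condition pieces $\znhat_0(t+\cdot)$ and $\xnphat_0\overline G(t+\cdot)$ vanish as $t\to\infty$ thanks to $\overline G\in\Ltwo\ho$ (Remark \ref{remark_GL2}); the stochastic integrals $\Mnhat_t(\Psi_{t+\cdot}\f1)$ and $\Mnhat_t(\Psi_{t+\cdot}h)$ are controlled through their predictable quadratic variation \eqref{CoVar_mn}, which under stationarity is $\mathcal{O}(1)$ uniformly in $N$ and $t$ using $h\in\mathbb{C}_b^1\hc$ and the deterministic bound $\nun_s(\f1)\leq N$ (after dividing by the $1/N$ built into $\Mnhat$); the renewal-driven term $\int_0^t\overline G(t-s+\cdot)d\enhat_s$ is handled via the martingale associated with $\enhat$ and the finite variance of inter-arrival times in Assumption \ref{asm_arrival}; and the terms containing $\xnphat_s$ are estimated by convolving $g$ or $\overline G$ against the uniform $\Lone$ bound on $\mathbb{E}_{\pinhat}[(\xnhat)^+]$ from Corollary \ref{xnphat_bound}, invoking $g,g',\overline G\in\Ltwo\ho$ (from Remark \ref{remark_HH2} and Assumption \ref{asm_serviceP}.b). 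This would give $\sup_N\mathbb{E}_{\pinhat}\|\znhat\|^2_{\Hone\ho}<\infty$.

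A bounded second moment alone is not enough for tightness in the infinite-dimensional Hilbert space $\Hone\ho$: one additionally needs (i) a uniform tail estimate $\sup_N\mathbb{E}_{\pinhat}\int_R^\infty(\znhat(r)^2+((\znhat)'(r))^2)dr\to 0$ as $R\to\infty$, and (ii) a compactness mechanism on truncations $[0,R]$, which I would supply through a uniform $\Htwo$-type bound on $\znhat_\infty$ obtained by differentiating \eqref{Zphat_dK} once more and exploiting the boundedness and $\mathcal{O}(x^{-(2+\epsilon)})$ decay of $g''$ in Assumption \ref{asm_serviceP}.b. Ingredient (i) crucially uses the finite $(3+\epsilon)$ moment from Assumption \ref{asm_serviceP}.a, which by Remark \ref{remark_GL2} makes the double-tail $\int_\cdot^\infty\int_s^\infty\overline G(x)dx\,ds$ integrable, so that $r$-integrals of the convolution terms in \eqref{ZhatNEQ}--\eqref{Zphat_dK} over $[R,\infty)$ can be made uniformly small. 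Combined with a standard Hilbert-space tightness criterion (a bounded-in-$\Htwo$ family with uniformly vanishing tails in $\Hone$ is relatively compact in $\Hone\ho$), (i) and (ii) yield the desired tightness of the $\znhat$-marginal.

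The main obstacle I expect is ingredient (i): the mere tightness of $(\xnhat_\infty)^+$ established in \cite{GamGol13} provides no useful control on $\Ltwo(dr)$-averages over $[R,\infty)$ of the convolution $\int_0^t\xnphat_s\,g(t-s+r)ds$, and it is precisely at this step that the strengthened uniform $\Lone$ bound of Corollary \ref{xnphat_bound} becomes indispensable. Exploiting it requires a careful Fubini-type reorganization of the convolution integrals in \eqref{ZhatNEQ}--\eqref{Zphat_dK} together with the finer moment and smoothness assumptions in Assumption \ref{asm_serviceP} to extract genuine $\Hone\ho$-tightness rather than mere $\Ltwo\ho$-tightness.
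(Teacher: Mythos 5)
Your overall architecture matches the paper's: decompose $\znhat_t$ via \eqref{ZhatNEQ} and \eqref{Zphat_dK}, use a Rellich/Sobolev-type tightness criterion for $\Hone\ho$ that requires $\Htwo$-control on bounded intervals plus uniformly vanishing $\Ltwo$-tails of $\zeta$ and $\zeta'$ (this is exactly Proposition~\ref{prop_Htightness} and Lemma~\ref{lem_Htight}), derive the $\Htwo$-control from $g''$ under Assumption~\ref{asm_serviceP}.\ref{asm_gpp}, derive the tail decay from the $(3+\epsilon)$-moment via Remark~\ref{remark_GL2}, and recognize that the uniform $\Lone$ bound of Corollary~\ref{xnphat_bound} is the indispensable new ingredient for the $\xnphat$-driven convolution terms. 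The reduction of the $\xnhat$-marginal to the $\znhat$-marginal via $\znhat_t(0)=-(\xnhat_t)^-$ and the embedding is also exactly what the paper does in Section~\ref{sec_tight_proof}. However, there are two genuine gaps in the execution.

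First, you propose to initialize $\ynhat_0\sim\pinhat$ and exploit stationarity. This creates a circularity that the paper deliberately avoids. Under this initialization the term $\znhat_0(t+\cdot)$ in \eqref{ZhatNEQ} has the very distribution whose $\Hone$-tightness you are trying to establish, and for any \emph{fixed} $t$ this term is not small. Its tail $\|\znhat_0(t+\cdot)\|_{\Hone}=\|\znhat_0\|_{\Hone(t,\infty)}$ goes to zero almost surely as $t\to\infty$ for each $N$, but at an unknown, $N$-dependent rate; to turn this into a tightness statement you would already need uniform-in-$N$ control on the tail of $\znhat_0\sim\pinhat$. The paper sidesteps this entirely by initializing at $\pin_*$ (empty queue, all $N$ servers busy, i.i.d.\ equilibrium ages), which gives three things you forgo: the initial-condition term $\znhat_0(t+\cdot)$ has an \emph{explicit} i.i.d.\ structure yielding the clean second-moment bounds of Lemma~\ref{lem_ICterm}; the term $\xnphat_0\overline G(t+\cdot)$ vanishes identically because $\xn_0=N$; and $\enhat$ is a centered stationary renewal process, which is essential in Lemma~\ref{lem_Eterm}. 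Tightness of the full family $\{\ynhat_t;t\geq0,N\in\N\}$ under $\pin_*$ is then transferred to $\{\pinhat\}$ via the ergodic convergence $\ynhat_t\Rightarrow\pinhat$ of Theorem~\ref{thm_ergodic} and the elementary observation in Remark~\ref{remark_tight}. Your argument is missing this last transfer step and, worse, cannot produce the explicit bounds that drive Lemmas~\ref{lem_ICterm}--\ref{lem_X2term} under the stationary initialization.

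Second, your claim that the quadratic variation \eqref{CoVar_mn} of $\Mnhat_t(\Psi_{t+\cdot}\f1)$ is $\mathcal{O}(1)$ uniformly in $N$ and $t$ ``using $h\in\mathbb{C}_b^1\hc$ and the deterministic bound $\nun_s(\f1)\leq N$'' is incorrect. Those facts only give the trivial bound
$\langle\Mnhat(\Psi_{t+\cdot}\f1)\rangle_t = \int_0^t\nunbar_s\bigl((\Psi_{t+r}\f1)^2h\bigr)ds\le Ht$,
which grows linearly in $t$ and is useless for uniform-in-$t$ tightness. What actually makes this term $\mathcal{O}(1)$ is the decay of $\overline G$ inside the kernel $\Psi_{t+r}$ as the time gap grows, and extracting that decay requires the refined mean estimate $\Ept{\nunbar_t(f)}\leq\int_0^\infty(f\overline G)(x)dx+C_Q\int_0^t|(f\overline G)'(s)|ds$ of Lemma~\ref{lem_nunbar}, which in turn rests on the uniform $\Lone$ bound of Corollary~\ref{xnphat_bound} and the specific initial condition $\pin_*$. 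This is the same finer estimate that supplies the integrable dominating function $R_M$ needed for the tail criterion. Without it, your plan for the martingale term does not close.
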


Recall that by Theorem \ref{thm_ergodic}, for every initial configuration, $\ynhat_t$ converges in distribution to $\pinhat$ as $t\to\infty.$  By Remark \ref{remark_tight} below, to prove Proposition \ref{thm_tightness}, it suffices to show that for some initial condition,  $\{\ynhat_t=(\xnhat_t,\znhat_t) ;t\geq0,N\in\N\}$ is tight in $\Y$. In Section \ref{sec_ic} we describe a specific initial condition, also considered in \cite{GamGol13}, that is convenient for establishing tightness of $\{\ynhat_t;t\geq0,N\in\N\}$. In Section \ref{sec_Xbound}, we recall the bound on the length of the $N$-server queue obtained in \cite{GamGol13}, and establish a uniform $\Lone$ bound for the scaled queue length process $(\xnhat)^+$.  Our bound is stronger than the bound obtained in \cite[Theorem 1]{GamGol13}, and is required to establish the tightness of the family $\{\znhat_t;t\geq0,N\in\N\}$ in Section \ref{sec_Ztight}. The proof of Proposition \ref{thm_tightness} is concluded in Section \ref{sec_tight_proof}.

\begin{remark}\label{remark_tightness}
We recall  from \eqref{ynhat_representation} and \eqref{def_V} that our representation is related to that of
\cite{KasRam13} through the relation $\ynhat_t = \widehat{\Psi}(  R_t^{(N)}, X_t^{(N)}, \nu_t^{(N)})$, where $\widehat{\Psi}$ is a continuous map from  a subspace  of $\R \times \N \times \mathbb{M}_D \hc$ to $\R \times \Hone \hc$.
 It is worth  emphasizing that nevertheless, tightness of the sequences $\{\ynhat_t\}_{N\in\N}$
 and $\{\pinhat\}_{N\in\N}$
does not follow from results in \cite{KasRam13} on the tightness of the diffusion-scaled  version of the sequence
$\{(R_t^{(N)}, X_t^{(N)}, \nu_t^{(N)})\}$ because the latter tightness was proved with respect to a different topology.   Specifically,  in \cite{KasRam13} the tightness of $\nunhat_t$ was proved
 only in the distribution space $\mathbb{H}_{-2}\ho$,  whereas the continuity of the map $\widehat{\Psi}$ with respect to its third argument, $\nu_t^{(N)}$, holds with respect to the topology of the measure space $\mathbb{M}_D\hc$.
Furthermore, the work \cite{KasRam13} did not consider existence or tightness of corresponding stationary  distributions.
\end{remark}

\begin{remark}\label{remark_tight}
Suppose a family $\{\xi^{(N)}_t;t\geq0,N\in\N\}$ of random variables taking values in a Polish space $\cal X$ is tight, and for every $N\in\N$, $\xi^{(N)}_t\Rightarrow \xi^{(N)}_\infty$ in $\cal X$ as $t\to\infty$.   Then,  for every $\epsilon>0$, there exists a compact subset $K_\epsilon\subset\cal X$ such that $\sup_{N\in\N,\;t\geq0} \Prob{\xi^{(N)}_t\in K^c_\epsilon}<\epsilon.$
Since $K^c_\epsilon$ is open, by the portmanteau theorem for weak convergence, for every $N\in\N$ we have
\[\Prob{\xi^{(N)}_\infty\in K^c_\epsilon} \leq \limsup\limits_{t\to\infty} \Prob{\xi^{(N)}_t\in K^c_\epsilon} \leq \sup_{t\geq0}\Prob{\xi^{(N)}_t\in K^c_\epsilon},\]
and hence,
\[\sup_{N\in\N}\Prob{\xi^{(N)}_\infty\in K^c_\epsilon}\leq \sup_{N\in\N,\;t\geq0} \Prob{\xi^{(N)}_t\in K^c_\epsilon}<\epsilon.\]
Therefore, the sequence $\{\xi^{(N)}_\infty\}_{N\in\N}$ is also tight in $\cal X$.
\end{remark}

\subsection{A special initial condition.}\label{sec_ic}
Recall from Assumption \ref{asm_arrival} that the arrival process $\en$ is a renewal process with inter-arrival distribution $\Gen$ and delay $\delayn$. Define $\rens$ by
\[
    \Prob{\rens \in A} = \int_A \bGen(x)dx, \quad\quad   A\in\mathcal{B}\hc.
\]
When the delay $\delayn$, and hence the initial forward recurrence time $\ren_0$, has the same distribution as $\rens$, the arrival process $\en$ becomes a stationary renewal process (see  \cite[Theorem V.3.3]{Asm03}). Next, define $\xn_*\doteq N$. Also, let $\{a_j^*;j\in\N\}$ be a sequence of i.i.d. random variables with common distribution $\Probil{a_j^* \in A} = \int_A \overline G(x)dx$ for all $A\in\mathcal{B}\hc,$ and define
\begin{equation}\label{def_nuns}
    \nun_*\doteq \sum_{j=1}^N \delta_{a^*_j}.
\end{equation}
Finally, define  $V^{(N)}_* = (\xn_*, \nun_*)$, and
\begin{equation}\label{def_pis}
  \pin_*\doteq \law(\rens,\xn_*,\nun_*).
\end{equation}
In summary, when the initial condition $\vn_0$ is distributed as $\pin_*$,  the arrival process is stationary, the queue is empty and all jobs are in service with independent initial ages distributed according to the so-called residual distribution of $G$, which has p.d.f.\ $\overline G(x)$.

We now establish a fluid limit theorem;  recall the fluid scaling notation introduced in \eqref{def-barnot}.

\begin{lemma}
If $(\xn_0,\nun_0)=(\xn_*,\nun_*)$ for every $N$, then  $\{(\xnbar_0,\nunbar_0)\}$ satisfies \eqref{asm_ic}.
\end{lemma}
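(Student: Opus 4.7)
The claim is essentially a law of large numbers statement, and my plan is to handle the two components of $\overline Y_0^{(N)}$ separately, observing that each reduces to an elementary fact.

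For the $X$-component, note that by construction $\xn_* = N$, so $\xnbar_0 = \xn_*/N = 1$ is deterministic, and $\overline X = 1$ by the definition \eqref{def_Ybar}. Hence $\xnbar_0 = \overline X$ identically, from which both the distributional convergence $\xnbar_0 \Rightarrow \overline X$ in $\R$ and the expectation convergence $\Eptil{\xnbar_0} \to \overline X$ follow trivially.

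For the measure-valued component, the plan is to recognize $\nunbar_0 = N^{-1}\sum_{j=1}^N \delta_{a_j^*}$ as the empirical measure of $N$ i.i.d. samples $a_1^*, \ldots, a_N^*$ drawn from the probability measure on $\hc$ with density $\overline G$. Since the service distribution has mean one by Assumption \ref{asm_service}.\ref{asm_g}, $\int_0^\infty \overline G(x)\,dx = 1$, so this is indeed a probability measure, and in fact it coincides with $\overline \nu$ as defined in \eqref{def_nubar}. By the classical Glivenko–Cantelli theorem for empirical measures on a Polish space (equivalently, Varadarajan's theorem, see, e.g., \cite[Theorem 11.4.1]{DudleyBook}), $\nunbar_0 \Rightarrow \overline \nu$ in $\mathbb{M}_F\hc$ almost surely as $N \to \infty$, which implies convergence in distribution.

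Combining the joint convergence of the two components (trivial for $\xnbar_0$ since it is deterministic), we conclude $(\xnbar_0, \nunbar_0) \Rightarrow (\overline X, \overline \nu)$ in $\R \times \emF$. There is no real obstacle here; the only bookkeeping point is to verify that the density $\overline G$ defines a probability measure, which follows directly from the unit-mean assumption on $G$, and to ensure weak convergence of empirical measures implies weak convergence in $\mathbb{M}_F\hc$ (with the topology defined in Section \ref{sec_notation2}), which is immediate since both topologies are characterized by integration against $\mathbb{C}_b\hc$-functions.
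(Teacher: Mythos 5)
Your proof is correct and follows essentially the same route as the paper: the $X$-component is trivially deterministic and equal to $\overline X$, and the measure component is handled by almost sure weak convergence of the empirical measure of i.i.d. samples with density $\overline G$ to $\overline\nu$. The only cosmetic difference is that you cite Varadarajan's theorem explicitly, whereas the paper applies the strong law of large numbers to $\nunbar_*(f)$ for each $f\in\mathbb{C}_b\hc$ and implicitly upgrades to a.s.\ weak convergence of the measures; these are the same argument.
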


\begin{proof}
Note that $\xnbar_*=1$, and, by \eqref{def_Ybar}, $\overline{X} \equiv 1$.  Also, for every  $f \in \mathbb{C}_b[0,\infty)$, $\nunbar_*(f)=\sum_{j=1}^N f(a^*_j)/N$. Thus, almost surely,
\[
    \nunbar_*(f)\to \Ept{f(a_1^*)} = \int_0^\infty f(x)\overline G(x)dx =\overline \nu(f),
\]
where the last equality holds by \eqref{def_nubar}.   Therefore,
we have  $(\nunbar, \overline{X}^{(N)}) \to (\overline \nu, \overline{X})$ in $\emF$, almost surely, and hence, in distribution.
\end{proof}

\subsection{A uniform $\mathbb{L}^1$-bound on the stationary queue length.}\label{sec_Xbound}

We now establish a uniform $\mathbb{L}^1$-bound on  $\{(\xnhat_t)^+, N \in \mathbb{N}, t \geq 0\}$.  We  first obtain such a bound for a related family of random variables, whose terms were shown to stochastically dominate the corresponding terms of $\{(\xnhat_t)^+, N \in \mathbb{N}, t \geq 0\}$ in \cite{GamGol13}.   Define
\begin{equation}\label{def_Qnhat}
	\Qnhat\doteq\frac{1}{\sqrt{N}}\sup_{t\geq 0}\left(A^{(N)}(t)-\sum_{i=1}^{N} D_i(t)\right),
\end{equation}
where $A^{(N)}$ is a stationary renewal process with renewal time distribution $\Gen$, and $D_i; i=1,...,N,$ are i.i.d. stationary  renewal processes independent of $A^{(N)}$, with renewal distribution $G$.   Note that since $G$ has a p.d.f., $D_i(0) = 0$ for every $i$.

\begin{proposition}\label{prop-tempQ}
Suppose Assumptions  \ref{asm_arrival}-\ref{asm_service} hold, the service distribution has a finite $(2+\epsilon)$ moment for some $\epsilon  > 0$, and $\law(\vn_0) = \pin_*$. Then, there exists a constant $C_Q<\infty$ such that
\begin{equation}\label{temp_Q}
    \sup_{N\in\N}\Ept{\Qnhat}\leq C_Q.
\end{equation}
\end{proposition}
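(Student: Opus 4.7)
My plan is to establish an $N$-uniform polynomial tail estimate $\Probil{\Qnhat > a} \leq \varphi(a)$ with $\varphi$ integrable on $\hc$, and then recover the $\Lone$-bound by integration: $\Eptil{\Qnhat} = \int_0^\infty \Probil{\Qnhat > a}\,da \leq \int_0^\infty \varphi(a)\,da$. The starting point is a centering identity: since $A^{(N)}$ is a stationary renewal process of rate $\lambdan$ and each $D_i$ is a stationary renewal process of rate $1$, we have $\Eptil{A^{(N)}(t) - \sum_{i=1}^N D_i(t)} = \lambdan t - Nt = -\beta\sqrt{N}\,t$. Writing
\[
    S^{(N)}(t) := A^{(N)}(t) - \sum_{i=1}^N D_i(t) = M^{(N)}(t) - \beta\sqrt{N}\,t,
\]
where $M^{(N)}(t) := (A^{(N)}(t) - \lambdan t) - \sum_{i=1}^N (D_i(t) - t)$ is centered with stationary increments, we have $\Qnhat = \frac{1}{\sqrt{N}}\sup_{t \geq 0}[M^{(N)}(t) - \beta\sqrt{N}\,t]$, i.e., the running supremum of a centered stationary-increment process with linear drift $-\beta$ in the diffusion scaling.

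The next step is to obtain moment estimates on $M^{(N)}$. Under Assumption \ref{asm_arrival} and the $(2+\epsilon)$-moment hypothesis on $G$, classical variance asymptotics for centered stationary renewal processes (see, e.g., \cite[Section V.3]{Asm03}) yield a constant $C_\star$, independent of $N$, such that
\[
    \Eptil{(M^{(N)}(t+s) - M^{(N)}(t))^2} \leq C_\star N(1+s), \qquad s,t \geq 0,
\]
where the factor $N$ reflects the superposition of $N$ independent centered service renewal processes. Using Rosenthal's inequality for sums of i.i.d.\ mean-zero processes, I would upgrade this to a $(2+\epsilon)$-th moment bound of the form $\Eptil{|M^{(N)}(t+s) - M^{(N)}(t)|^{2+\epsilon}} \leq C(N(1+s))^{1+\epsilon/2}$.

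With these moment bounds in hand, I would discretize time at the integers and use the monotonicity of $\sum_i D_i$: for $t \in [k, k+1)$,
\[
    S^{(N)}(t) \leq M^{(N)}(k) - \beta\sqrt{N}\,k + \sup_{s \in [0,1]}[M^{(N)}(k+s) - M^{(N)}(k)].
\]
A union bound over $k$ then reduces the tail estimate to two pieces: (i) a Markov inequality at the $(2+\epsilon)$-th moment on $M^{(N)}(k) - \beta\sqrt{N}\,k$, summed separately over the pre-drift regime $k \lesssim a/\beta$ and the drift-dominated regime $k \gtrsim a/\beta$; and (ii) a maximal inequality for the within-interval fluctuation, obtained by applying Doob's $\mathbb{L}^{2+\epsilon}$ inequality to the compensated counting-process martingales underlying $A^{(N)}$ and each $D_i$, together with control of the renewal-theoretic compensator on unit intervals. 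Both pieces yield contributions summable in $k$ and integrable in $a$, giving $\Eptil{\Qnhat} \leq C_Q$ uniformly in $N$.

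The main obstacle I anticipate is extracting enough polynomial decay from only a $(2+\epsilon)$-th moment: the naive second-moment Chebyshev bound produces a non-integrable $1/a$ tail, so the higher moment must be exploited both in the Markov bound on $M^{(N)}(k)$ and in the within-interval maximal inequality, each contributing an extra factor of $a^{-\epsilon/2}$. A conceptually cleaner alternative I would consider is to sample at the arrival epochs of $A^{(N)}$: since $\sum_i D_i$ is non-decreasing, the supremum in $\Qnhat$ is attained at an arrival epoch, and the embedded sequence $S_n := n - \sum_{i=1}^N D_i(T^{(N)}_n)$ is a random walk with stationary, ergodic increments of negative mean of order $-\beta/\sqrt{N}$ and uniformly bounded second moment; a Kingman-type bound for such walks would then give $\Eptil{\Qnhat} = \mathcal{O}(1)$ directly, at the cost of invoking a generalization of Kingman's inequality beyond the classical i.i.d.\ setting.
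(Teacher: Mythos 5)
Your second proposed route (sampling $A^{(N)} - \sum_i D_i$ at its own arrival epochs, so that the problem becomes the all-time maximum of a stationary-increment random walk $S^{(N)}_k$ with negative mean $m^{(N)} = -\beta/(\sqrt{N}-\beta)$) is exactly the reduction the paper makes. However, both of your proposed ways of finishing have genuine gaps.

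The main issue is that you never establish a maximal inequality for the centered partial sums, and without one the tail of $\widehat{L}^{(N)}$ is not integrable from only a finite $(2+\epsilon)$-th moment. In your first route, the union bound over integer $k$ with a pointwise Markov (Rosenthal) estimate $\mathbb{P}\{M^{(N)}(k) - \beta\sqrt{N}k > a\sqrt{N}\} \lesssim (Nk)^{1+\epsilon/2}(a\sqrt{N} + \beta\sqrt{N}k)^{-(2+\epsilon)}$, summed over the pre-drift regime $k \lesssim a/\beta$, produces a tail of order $a^{-\epsilon/2}$, which is not integrable unless $\epsilon > 2$. The claim that ``each piece contributes an extra $a^{-\epsilon/2}$'' is not realized by the computation: the within-interval Doob bound controls oscillation on $[k,k+1]$ but does nothing to tighten the outer sum over $k$. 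What is actually needed is an estimate of the form $\mathbb{P}\{\max_{1\leq j\leq k}(S^{(N)}_j - j m^{(N)}) > x\} \leq C\,k^{p/2}x^{-p}$ for some $p>2$, with constant uniform in $N$; combined with a dyadic partition of the index range this yields a tail of order $x^{-p/2}$, which is integrable. This is exactly \cite[equation (12)]{GamGol13}, and it is the technical crux of the paper's proof. (Note also that $M^{(N)}$ is not an $\{\filtn_t\}$-martingale---the renewal counting processes are compensated by nonlinear compensators, not by $\lambda t$---so Doob's $\mathbb{L}^{2+\epsilon}$ inequality is not directly available over long blocks either.)

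Your ``cleaner alternative'' has a more basic problem: Kingman's moment bound $\Eptil{\sup_k S_k} \leq \sigma^2/(2|\Eptil{\Upsilon_1}|)$ is a theorem about walks with i.i.d.\ increments, and it does \emph{not} generalize to merely stationary ergodic increments. A counterexample: if the increments are all equal to a single random variable $\Upsilon$ with negative mean but $\mathbb{P}\{\Upsilon > 0\} > 0$, then $\sup_k S_k = +\infty$ with positive probability. The increments $\Upsilon^{(N)}_j = 1 - \sum_{i=1}^N (D_i(\tau^{(N)}_j) - D_i(\tau^{(N)}_{j-1}))$ here are stationary but not independent (they share the renewal histories of the $D_i$), so you cannot invoke classical Kingman, and the generalization you would need is false. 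The way the dependence is tamed in \cite{GamGol13} is by exploiting the specific superposition structure (independence across servers, independence of $A^{(N)}$ from the $D_i$, stationarity of each $D_i$) to prove the moment/maximal inequality directly; the abstract ``stationary ergodic walk'' framing discards precisely the structure that makes the result true.
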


\begin{proof}
One can construct a pure (zero-delayed) renewal process $\tilde A^{(N)}$ with the same renewal distribution as $A^{(N)}$, such that $A^{(N)}(t)\leq \tilde A^{(N)}(t)+1$ for all $t\geq0$. Therefore, it is sufficient to prove the proposition for the case when $A^{(N)}$ is a pure renewal process, which we assume for the rest of the proof. Set $\tn_0=0$, and let $\tn_n$ and $\theta^{(N)}_n$, $n\geq1,$ be, respectively, the epoch times and renewal times of the process $A^{(N)}$.  Since the process $A^{(N)}-\sum_{i=1}^ND_i$ is decreasing on $[\tn_n,\tn_{n+1})$ for all $n\geq0$, $\Qnhat$ in \eqref{def_Qnhat} can be written as $\Qnhat=\qn/\sqrt{N}$, with
\begin{equation}
    \qn\doteq \sup_{k\geq0} \left( k-\sum_{i=1}^N D_i(\tn_k) \right) = \sup_{k\geq0} \left( S^{(N)}_k \right),
\end{equation}
where, since $D_i (\tn_0) = D_i(0) = 0$ for every $i$, we have
\[
    S^{(N)}_k\doteq\sum_{j=1}^k \upsn_j,\quad\quad \text{with } \quad\quad \upsn_j\doteq 1- \sum_{i=1}^N (D_i(\tn_{j})-D_i(\tn_{j-1})).
\]
By the stationarity of the process $D_i$ for each $i$, the independence of $A^{(N)}$ and $D_i$, $i=1,\dots,N$, the fact that the service distribution has unit mean, and the renewal times of $A^{(N)}$ have mean $1/\lambda^{(N)}$ by Assumption \ref{asm_arrival}, we see that $\upsn_j,j\geq1,$ are identically distributed, with common mean
\[
    \an\doteq \Ept{\upsn_1}=1-N\Ept{D_1(\theta^{(N)}_1)}=1-\frac{N}{\lambdan}= \frac{-\beta}{\sqrt{N}-\beta},
\]
where the last equality  uses \eqref{def_lambdan}.  It is shown in Appendix \ref{sec-gamgol} that the assumptions of this proposition imply those of \cite[Theorem 1]{GamGol13}. Hence, by \cite[Equation (12)]{GamGol13}\footnote{Note that in \cite[p.\ 17]{GamGol13}, the number of servers is denoted by $n$, $S^{(N)}_j$ is denoted by $W_{n,j}$, $\an$ is denoted by $a_n$, $p$ is replaced by $r$, and $C_1$ is replaced by $K_r(C_1+1)^{r/2}$.}, there exists $p>2$ and a constant $C_1<\infty$,  such that for every $k\in\N$ and $x\geq 0,$
\begin{equation}\label{gg_bound}
    \Prob{\max_{1\leq j \leq k}(S^{(N)}_j-j\an)>x}\leq C_1 k^{\frac{p}{2}}x^{-p}.
\end{equation}

Since $|\an|\sqrt{N}\to \beta$ as $N\to\infty$, \eqref{temp_Q} is equivalent to the inequality $\Eptil{|\an|\qn}\leq C$ for every $N \in \mathbb{N}$, for some  constant $C<\infty$. We then see that  for every $x\geq0$ and $N>\beta ^2$,
\begin{align*}
  \Prob{|\an|\qn>x} & = \Prob{S^{(N)}_j> \frac{x}{|\an|}\text{ for some }j\geq 0} \\
                    & = \Prob{(S^{(N)}_j-j\an)> \frac{x}{|\an|}+j|\an|\text{ for some }j\geq 0} \\
                    & \leq \sum_{k=0}^\infty\Prob{(S^{(N)}_j-j\an)> \frac{x}{|\an|}+j|\an|\text{ for some } 2^{k}\leq j < 2^{k+1} } \\
                    & \leq \sum_{k=0}^\infty\Prob{\max_{0\leq j \leq 2^{k+1}}(S^{(N)}_j-j\an)> \frac{x}{|\an|}+2^k|\an|} \\
                    & \leq C_1\sum_{k=0}^\infty  (2^{k+1})^{\frac{p}{2}}\left(\frac{x}{|\an|}+2^k|\an|\right)^{-p}\\
                    & = C_2\sum_{k=0}^\infty  \left(\frac{x}{|\an|\sqrt{2^k}}+|\an|\sqrt{2^k}\right)^{-p}, \\
\end{align*}
with $C_2\doteq \sqrt{2^p}C_1$, where the last inequality follows from \eqref{gg_bound}. Now, since for every non-negative random variable $W$, $\Ept{W}\leq 1+ \int_1^\infty \Prob{W>x}dx$, we have
\[
    \Ept{|\an|\qn}\leq 1+C_2\sum_{k=0}^\infty\int_1^\infty\left(\frac{x}{|\an|\sqrt{2^k}}+|\an|\sqrt{2^k}\right)^{-p}dx.
\]
Performing the change of variable $r=(x/|\an|\sqrt{2^k})+|\an|\sqrt{2^k},$
setting
\[
    r_0\doteq\frac{1}{|\an|\sqrt{2^k}}+|\an|\sqrt{2^k},\quad\text{ and }\quad C_3\doteq \frac{C_2}{p-1},
\]
and recalling that $p>1$, we have, for any $n\in\N$,
\begin{align*}
  \Ept{|\an|\qn}& \leq 1+C_2\sum_{k=0}^\infty|\an|\sqrt{2^k}\int_{r_0}^\infty r^{-p}dr \\
                & = 1+C_3\sum_{k=0}^\infty|\an|\sqrt{2^k}\left(\frac{1}{|\an|\sqrt{2^k}}+|\an|\sqrt{2^k}\right)^{-p+1}\\
                & \leq 1+C_3\sum_{k=0}^n|\an|\sqrt{2^k}\left(\frac{1}{|\an|\sqrt{2^k}}\right)^{-p+1} + C_3\sum_{k=n+1}^\infty|\an|\sqrt{2^k}\left(|\an|\sqrt{2^k}\right)^{-p+1} \\
                & \leq 1+C_3\sum_{k=0}^n\left(|\an|\sqrt{2^k}\right)^{p} +C_3\sum_{k=n}^\infty\left(|\an|\sqrt{2^k}\right)^{-p+2}.
\end{align*}
Now, we have $\sum_{k=0}^n\left(|\an|\sqrt{2^k}\right)^{p} = |\an|^p\sum_{k=0}^n (2^\frac{p}{2})^k \leq (|\an|\sqrt{2^n})^p$, and since $p>2$,
\begin{align*}
    \sum_{k=n}^\infty\left(|\an|\sqrt{2^k}\right)^{-p+2}& = |\an|^{-p+2}\sum_{k=n}^\infty (2^{-\frac{p-2}{2}})^k
    \leq \frac{1}{1-2^{-\frac{p-2}{2}}}(|\an|\sqrt{2^n})^{-p+2}.
\end{align*}
Combining the last three displays, for $C_4\doteq(1-2^{-(p-2)/2})^{-1}C_3$, we have
\begin{equation}\label{2}
  \Ept{|\an|\qn} \leq 1+C_4\left((|\an|\sqrt{2^n})^p+(|\an|\sqrt{2^n})^{-p+2}\right).
\end{equation}
For fixed $N$, the above inequality holds for every $n\in\N$. We now find an appropriate $n=n^*(N)$ for each $N$ to get the desirable bound. Define $z(N)\doteq \log_2(|\an|^{-2})>0$, so that $|\an|\sqrt{2^{z(N)}}=1$, and let $n^*(N)=\lceil z(N)\rceil$, so that $z(N)\leq n^*(N)\leq z(N)+1$, to obtain
\begin{eqnarray}
    \nonumber   (|\an|\sqrt{2^{n^*(N)}})^p+(|\an|\sqrt{2^{n^*(N)}})^{-p+2}
    &\leq & (|\an|\sqrt{2^{z(N)+1}})^p+(|\an|\sqrt{2^{z(N)}})^{-p+2}\\
    \label{3}
    & \leq & 2^{p/2} + 1.
\end{eqnarray}
Combining \eqref{2} and \eqref{3}, we have $\mathbb{E}[|\an|\qn] \leq 1+C_4(2^{p/2}+1)$, which completes the proof.
\end{proof}

\begin{corollary}\label{xnphat_bound}
Suppose Assumptions  \ref{asm_arrival}-\ref{asm_service} hold, the service distribution has a finite $(2+\epsilon)$ moment for some $\epsilon  > 0$, and $\vn_0$ is distributed as $\pin_*$.   Then
\begin{equation}\label{xnphat_bd_eq}
    \sup_{N\in\N,t\geq0} \; \Ept{\xnphat_t}\leq C_Q.
\end{equation}
\end{corollary}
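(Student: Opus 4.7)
The plan is to combine the stochastic domination result of Gamarnik and Goldberg \cite{GamGol13} with the uniform $\mathbb{L}^1$-bound on $\widehat{L}^{(N)}$ already obtained in Proposition \ref{prop-tempQ}. Specifically, when the initial condition is distributed as $\pi^{(N)}_*$, the arrival process $E^{(N)}$ is a stationary renewal process with inter-arrival distribution $G_E^{(N)}$, the queue is empty, and all $N$ servers are busy with residual ages so that the individual server output processes are independent stationary renewal processes with renewal distribution $G$. Under this coupling, the total number of jobs in the system never exceeds what it would be if each server were continually fed work (i.e., had an infinite backlog), so one obtains the pathwise inequality
\[
(X^{(N)}_t - N)^+ \;\leq\; \sup_{0 \leq s \leq t}\left( E^{(N)}(t) - E^{(N)}(s) - \sum_{i=1}^N (D_i(t) - D_i(s))\right),
\]
which by stationarity of the increments of $E^{(N)} - \sum_i D_i$ and by letting $s$ range over all of $[0,\infty)$ shows that $(X^{(N)}_t - N)^+$ is stochastically dominated by $\sqrt{N}\, \widehat{L}^{(N)}$, with $\widehat{L}^{(N)}$ as in \eqref{def_Qnhat}. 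This is precisely the stochastic domination established in \cite{GamGol13} and foreshadowed in the statement preceding Proposition \ref{prop-tempQ}.

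Dividing by $\sqrt{N}$ yields $\widehat{X}^{(N)+}_t \leq_{\text{st}} \widehat{L}^{(N)}$ for every $N$ and every $t \geq 0$. Since both sides are non-negative, stochastic domination preserves the expectation, giving
\[
\mathbb{E}\!\left[\widehat{X}^{(N)+}_t\right] \;\leq\; \mathbb{E}\!\left[\widehat{L}^{(N)}\right].
\]
Applying Proposition \ref{prop-tempQ} to the right-hand side yields the uniform bound $\sup_{N, t} \mathbb{E}[\widehat{X}^{(N)+}_t] \leq C_Q$, as claimed.

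There is no substantive obstacle at this stage: the delicate work (namely, the maximal inequality from \cite{GamGol13} and the dyadic-block summation controlling $\mathbb{E}[\widehat{L}^{(N)}]$ uniformly in $N$) has already been carried out in the proof of Proposition \ref{prop-tempQ}, and the stochastic domination is a standard coupling argument. The only small verification needed is that the initial condition $\pi^{(N)}_*$ is indeed the one producing the coupled stationary renewal processes $A^{(N)}$ and $D_1,\dots,D_N$ appearing in \eqref{def_Qnhat}, which follows directly from the definitions in Section \ref{sec_ic}.
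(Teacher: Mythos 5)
Your proof is correct and follows the paper's own route: both obtain $\xnphat_t \dleq \Qnhat$ from the stochastic-domination result of \cite[Theorem 3]{GamGol13} (the paper passes through the intermediate quantity $\tqn(t) \leq \Qnhat$) and then apply Proposition \ref{prop-tempQ}. The one step you leave implicit is the verification that the hypotheses H-W and $T_0$ of \cite{GamGol13} hold under the stated assumptions, which the paper carries out in Appendix \ref{sec-gamgol}.
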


\begin{proof}
Define
\[
    \tqn (t)\doteq \frac{1}{\sqrt{N}}\sup_{0\leq s\leq t}\left(A^{(N)}(s)-\sum_{i=1}^ND_i(s)\right),\quad\quad t\geq0.
\]
In light of Proposition \ref{prop-tempQ} and the fact that  $\tqn (t)\leq \Qnhat$ for all $t\geq0$, to prove the corollary  it suffices to show that for every $t\geq0$, $\xnphat_t\dleq\tqn (t),$  where recall that, for random variables $W_1$, $W_2$, $W_1 \dleq W_2$  denotes the property that $W_1$ is  stochastically dominated by the random variable $W_2$,  that is, $\Prob{W_1>x}\leq \Prob{W_2>x}$ for every $x\in\R$.   The  inequality  $\xnphat_t\dleq\tqn (t)$ follows from \cite[Theorem 3]{GamGol13} because, as shown in Appendix  \ref{sec-gamgol}, the assumptions of the corollary imply the conditions of that theorem, namely  assumptions H-W and $T_0$ in \cite[Section 2.1]{GamGol13}.
\end{proof}

\subsection{Tightness of $\{\znhat\}$.}\label{sec_Ztight}

In this section, we establish the following result.

\begin{proposition}\label{prop_Zuhb}
Suppose Assumptions \ref{asm_arrival}-\ref{asm_serviceP} hold, and  $\law(\vn_0) = \pin_*$. Then $\{\znhat_t;t\geq0,N\in \N\}$ is tight in $\Hone\ho$.
\end{proposition}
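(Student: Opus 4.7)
\textbf{Proposal for the proof of Proposition \ref{prop_Zuhb}.} The plan is to combine the explicit representations \eqref{ZhatNEQ} and \eqref{Zphat_dK} of $\znhat_t$ and $(\znhat_t)'$ with a tightness criterion for $\Hone\ho$-valued random elements. Specifically, I would establish tightness by verifying the two conditions
\[
   \sup_{N\in\N,\,t\ge0}\Ept{\|\znhat_t\|_{\Hone\ho}^2}<\infty,\qquad \lim_{L\to\infty}\sup_{N\in\N,\,t\ge0}\Ept{\int_L^\infty\!\Bigl(|\znhat_t(r)|^2+|(\znhat_t)'(r)|^2\Bigr)dr}=0,
\]
together with a uniform local bound on $\Htwo$-norms, e.g.\ $\sup_{N,t}\Ept{\|\znhat_t\|_{\Htwo[0,L]}^2}<\infty$ for each $L$, which, by Rellich--Kondrachov, provides local compactness in $\Hone[0,L]$; combined with the tail estimate, this yields relative compactness in $\Hone\ho$ and hence tightness.

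I would decompose $\znhat_t$ via \eqref{ZhatNEQ} into six terms --- an initial-condition term $\znhat_0(t+\cdot)$, a stochastic-integral (martingale) term $\Mnhat_t(\Psi_{t+\cdot}\f1)$, an arrival-driven term $\int_0^t\overline G(t-s+\cdot)d\enhat_s$, and three terms involving $\xnphat_t$, $\xnphat_0$ and a convolution $\int_0^t\xnphat_s g(t-s+\cdot)ds$ --- and treat each individually (and analogously for $(\znhat_t)'$ through \eqref{Zphat_dK}). A crucial simplification from starting in $\pin_*$ is that $\xnphat_0=0$, so the initial-queue term vanishes outright, and the initial ages $a^*_j$ are i.i.d.\ with density $\overline G$, whence the initial-condition term reduces to a centered sum over $N$ i.i.d.\ functions whose $\Hone\ho$ norm is controlled by Assumption \ref{asm_service} and the $(3+\epsilon)$ moment in Assumption \ref{asm_serviceP} (this is where Remark \ref{remark_GL2} is used: the antiderivatives of $\overline G$ lie in $\Ltwo\ho$). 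The current-queue term $\xnphat_t\,\overline G(\cdot)$ (and its derivative $\xnphat_t g(\cdot)$) is handled by the uniform $\mathbb{L}^1$-bound on $\xnphat_t$ from Corollary \ref{xnphat_bound} together with $\overline G,g,g'\in\Ltwo\ho$ (Remark \ref{remark_HH2}); for the second-derivative bound on bounded intervals, I use $g'\in\Ltwo\ho$ and $g''$ bounded by Assumption \ref{asm_serviceP}.\ref{asm_gpp}. The martingale term is controlled by the covariance identity \eqref{CoVar_mn}: a Fubini argument shows that $\Ept{\int_0^\infty|\Mnhat_t(\Psi_{t+r}\f1)|^2dr}$ and its derivative counterpart reduce to integrals of $\Ept{\nun_s((\Psi_{t+r}\f1)^2 h)}$, which are bounded using the boundedness of $h,g,g'$ and the trivial mass bound $\nun_s(\f1)\le N$. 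Finally, under the stationary arrival process prescribed by $\pin_*$, $\enhat$ has stationary centered increments, so Itô-type (or Wiener-type) $\mathbb{L}^2$-bounds on $\int_0^t\overline G(t-s+r)d\enhat_s$ give a $t$-independent estimate after exchanging integrals and using $\overline G,g\in\Ltwo\ho$.

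The genuinely hard term is the convolution $\int_0^t\xnphat_s g(t-s+r)ds$ (and $\int_0^t\xnphat_s g'(t-s+r)ds$ for the derivative), because it couples the entire queue history up to time $t$ with the kernel $g$ shifted by $r$, and one needs bounds that are simultaneously uniform in $t$ and decaying in $r$. Here my plan is to apply Cauchy--Schwarz in $s$ together with the uniform-in-$s$ bound on $\Ept{\xnphat_s}$ (indeed $\Ept{(\xnphat_s)^+}\le C_Q$ by Corollary \ref{xnphat_bound}, and a matching bound for $(\xnphat_s)^-$ follows from $\xnhat_s\ge -\sqrt{N}$ via the station-ary balance), leading to an estimate
\[
   \Ept{\Bigl|\int_0^t\xnphat_s g(t-s+r)ds\Bigr|^2}\;\le\;C\int_0^\infty g^2(u+r)du,
\]
which is integrable in $r$ by Remark \ref{remark_HH2} and Assumption \ref{asm_serviceP}, giving both the global $\Ltwo\ho$ bound and the $L\to\infty$ tail decay. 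The analogous estimate for the derivative relies on $g''$ being square-integrable and decaying like $x^{-(2+\epsilon)}$, as ensured by Assumption \ref{asm_serviceP}.\ref{asm_gpp}.

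The main obstacle, as just indicated, is the tail control in $r$ for the convolution term uniformly in $t$ --- this is precisely what forces the strengthened moment and smoothness assumptions in Assumption \ref{asm_serviceP}, and is also what distinguishes the present proposition from the simpler $\xnhat$-only tightness of \cite{GamGol13}. A secondary but real difficulty is the $\Hone\ho$ tightness criterion itself: because Rellich's embedding theorem fails on unbounded domains, the proof must cleanly separate local compactness (controlled by an $\Htwo$-type bound on finite intervals) from global tail vanishing (controlled by the $\Ltwo\ho$ bounds with weights).
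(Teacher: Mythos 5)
Your high-level plan --- decompose $\znhat_t$ via the six terms of \eqref{ZhatNEQ}, verify a Hilbert-space tightness criterion via local $\Htwo$-type control plus global tail decay, and exploit the special features of $\pin_*$ (vanishing of $\xnphat_0\overline{G}(t+\cdot)$, i.i.d.\ residual ages) --- matches the paper's structure. However, two of the term-by-term estimates you sketch would not go through as stated, and they are precisely the terms where the difficulty lies.

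First, your martingale-term bound is too crude. Using the covariance identity \eqref{CoVar_mn}, the ``trivial mass bound'' $\nunbar_s(\f1)\le1$ combined with $h\le H$ gives $\Ept{\Mnhat_t(\Psi_{t+r}\f1)^2}\le Ht$, which grows linearly in $t$ and is useless for tightness uniformly in $t\ge0$. You cannot rescue this with a pointwise inequality of the form $\overline G(s+r+x)/\overline G(x)\le\overline G(s+r)$, since that is the NBU property and fails for exactly the heavy-tailed distributions the paper aims to include. The paper instead needs a genuinely nontrivial estimate (Lemma~\ref{lem_nunbar}, equation \eqref{nunbar_int_bd}), proved from the dynamical equation \eqref{nuNfEQ} for $\nunbar$ together with the uniform $\Lone$ bound $\Ept{\xnphat_s}\le C_Q$ from Corollary~\ref{xnphat_bound}, yielding a bound $R_M(r)$ that is both $t$-uniform and integrable in $r$; it is precisely here that Assumption~\ref{asm_serviceP}.\ref{asm_moment3} is consumed.

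Second, your $\mathbb{L}^2$-moment bound for the convolution term,
\[
\Ept{\Bigl|\int_0^t\xnphat_s\,g(t-s+r)\,ds\Bigr|^2}\;\le\;C\int_0^\infty g^2(u+r)\,du,
\]
does not follow from ``Cauchy--Schwarz in $s$ together with the uniform-in-$s$ bound on $\Ept{\xnphat_s}$.'' Any application of Cauchy--Schwarz that produces a genuine $\mathbb{L}^2$ estimate will require a uniform second-moment bound on $\xnphat_s$, which is not available: Corollary~\ref{xnphat_bound} gives only $\sup_{N,t}\Ept{\xnphat_t}\le C_Q$. This is exactly why the paper's tightness criterion (Proposition~\ref{prop_Htightness}) is formulated in terms of probability tails rather than second moments; the moment-based sufficient condition Lemma~\ref{lem_Htight} is used only for the i.i.d.\ initial term, the martingale term and the arrival term, where second moments exist. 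For the two $\xnphat$-terms the paper instead bounds $|\zeta_{N,t}(r)|\le H\overline G(r)^{1/2}\int_0^t\xnphat_{t-s}\overline G(s)^{1/2}ds$ (splitting $\overline G(t-s+r)\le\overline G(r)^{1/2}\overline G(t-s)^{1/2}$, which unlike NBU is valid for all c.d.f.s), then applies Markov's inequality at the $\Lone$ level, which only needs the first-moment bound. Your parenthetical remark about controlling $(\xnhat_s)^-$ is also moot: the decomposition \eqref{ZhatNEQ} involves only $\xnphat_s=(\xnhat_s)^+$, not the negative part.

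So while your roadmap is the right one, both the martingale term and the queue-convolution term need a materially different argument than you propose; filling in those two gaps is in fact the technical core of the proposition.
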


\begin{proof}
It clearly suffices to establish the tightness of the sequences associated with each of the terms on the right-hand side of \eqref{ZhatNEQ}. For the first three terms, this is established in Lemmas \ref{lem_Mterm}-\ref{lem_Eterm} below, and for the fourth term it follows from Lemma \ref{lem_Xterm}.  When the initial condition is distributed as $\pin_*$, $\xn_0=N$ for all $N$ and hence, the fifth term vanishes.  Finally, tightness of the sixth term follows from Lemma \ref{lem_X2term} below.
\end{proof}

The rest of this section is devoted to the details of the proof of this proposition. We first describe criteria for tightness of $\Hone\ho$-valued processes in Section \ref{sec_tight_crit}, and then verify these criteria for the family $\{\znhat_t;t\geq0,N\in\N\}$ in Sections \ref{sec_Zmgale} and \ref{sec_Zothers}.

\subsubsection{Tightness criteria for $\mathbb{H}^1(0,\infty)$-valued random elements.}\label{sec_tight_crit}

\begin{proposition}\label{prop_Htightness}
    Suppose the family of random variables $\{\zeta_\alpha(r);\alpha\in\mathcal{A},r\in\ho\}$ satisfies the following properties:
    \begin{enumerate}
        \item For every   $L \in (0,\infty)$,
              \begin{equation}\label{pre_ZboundEQ1}
                \lim_{\lambda\to \infty}\sup_{\alpha\in\mathcal{A}}\Prob{\|\zeta_\alpha\|_{\Ltwo(0,L)} >\lambda }=0,
              \end{equation}
              and for every $\delta>0$,
              \begin{equation}\label{pre_ZboundEQ4}
                \lim_{L\to\infty}\sup_{\alpha\in\mathcal{A}}\Prob{\|\zeta_\alpha\|_{\Ltwo(L,\infty)} >\delta }=0.
              \end{equation}
        \item For  $\alpha\in\mathcal{A}$, almost surely,  $\zeta_\alpha(\cdot)$ has a weak derivative $\zeta^\prime_\alpha(\cdot)$ on $\ho$ such that   for  $L \in (0,\infty)$,
              \begin{equation}\label{pre_ZboundEQ2}
                \lim_{\lambda\to \infty}\sup_{\alpha\in\mathcal{A}}\Prob{\|\zeta^\prime_\alpha\|_{\Ltwo(0,L)} >\lambda }=0,
              \end{equation}
              and for every $\delta>0$,
              \begin{equation}\label{pre_ZboundEQ5}
                \lim_{L\to\infty}\sup_{\alpha\in\mathcal{A}}\Prob{\|\zeta^\prime_\alpha\|_{\Ltwo(L,\infty)} >\delta }=0.
              \end{equation}
        \item For  $\alpha\in\mathcal{A}$, almost surely, $\zeta^\prime_\alpha(\cdot)$ has itself a weak derivative $\zeta^{\prime\prime}_\alpha(\cdot)$ on $\ho$ such that for  $L \in (0,\infty)$,
              \begin{equation}\label{pre_ZboundEQ3}
                \lim_{\lambda\to \infty}\sup_{\alpha\in\mathcal{A}}\Prob{\|\zeta^{\prime\prime}_\alpha\|_{\Ltwo(0,L)} >\lambda }=0.
              \end{equation}
    \end{enumerate}
    Then $\{\zeta_\alpha(\cdot);\alpha\in\mathcal{A} \}$  is tight in $\Hone\ho$.
\end{proposition}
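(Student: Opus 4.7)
The plan is to reduce tightness in $\Hone\ho$ to joint tightness of $\{\zeta_\alpha\}$ and $\{\zeta_\alpha'\}$ in $\Ltwo\ho$, and then verify the Kolmogorov--Riesz--Fr\'echet characterization of relative compactness for both families. The key observation is that a subset $F \subset \Hone\ho$ has compact closure if and only if both $F$ and $\{f' : f \in F\}$ have compact closure in $\Ltwo\ho$, because $f \mapsto (f, f')$ embeds $\Hone\ho$ isometrically as a closed subspace of $\Ltwo\ho \times \Ltwo\ho$ (closedness being the standard fact that weak differentiation is a closed operator on $\Ltwo$). Accordingly, it suffices to construct, for each $\epsilon > 0$, a set $K_\epsilon \subset \Hone\ho$ with $\sup_\alpha \Probil{\zeta_\alpha \notin K_\epsilon} < \epsilon$ such that both $K_\epsilon$ and $\{f' : f \in K_\epsilon\}$ satisfy the three Kolmogorov--Riesz--Fr\'echet conditions in $\Ltwo\ho$: $\Ltwo$-boundedness, uniform tail decay, and uniform equicontinuity under translation.

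I would define $K_\epsilon$ as the set of $f \in \Hone\ho$ whose weak derivatives $f', f''$ exist and satisfy, for every $n$,
\begin{equation*}
\|f\|_{\Ltwo(0,L_n)} \leq \lambda_n,\ \|f'\|_{\Ltwo(0,L_n)} \leq \mu_n,\ \|f''\|_{\Ltwo(0,L_n)} \leq \nu_n,\ \|f\|_{\Ltwo(L_n,\infty)} \leq \delta_n,\ \|f'\|_{\Ltwo(L_n,\infty)} \leq \delta_n',
\end{equation*}
for sequences $L_n \uparrow \infty$, $\lambda_n,\mu_n,\nu_n \uparrow \infty$, and $\delta_n,\delta_n' \downarrow 0$ chosen via a standard union bound so that each of the five failure events has probability at most $\epsilon/(5 \cdot 2^{n+1})$ uniformly in $\alpha$; the existence of such sequences is guaranteed by assumptions (1a)--(3). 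Boundedness and uniform tail decay in $\Ltwo\ho$ of both $f$ and $f'$ on $K_\epsilon$ are then immediate by construction, and the uniform $\Ltwo$-equicontinuity of $\{f\}_{f \in K_\epsilon}$ follows from the classical bound $\|f(\cdot+h) - f\|_{\Ltwo\ho} \leq h\,\|f'\|_{\Ltwo\ho}$, obtained via Cauchy--Schwarz and Fubini applied to $f(x+h) - f(x) = \int_x^{x+h} f'(y)\,dy$, since $\|f'\|_{\Ltwo\ho}$ is uniformly bounded on $K_\epsilon$.

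The main technical obstacle is the uniform $\Ltwo$-equicontinuity of $\{f'\}_{f \in K_\epsilon}$, since assumption (3) only controls $f''$ on bounded intervals, whereas the equicontinuity must be verified on the whole half-line. To handle this I would split, for $h > 0$ and $L > 0$,
\begin{equation*}
\|f'(\cdot+h) - f'\|_{\Ltwo\ho}^2 \leq \|f'(\cdot+h) - f'\|_{\Ltwo(0,L)}^2 + 4\,\|f'\|_{\Ltwo(L-h,\infty)}^2,
\end{equation*}
and bound the interior term by $h^2 \|f''\|_{\Ltwo(0,L+h)}^2$ via the same Cauchy--Schwarz/Fubini argument applied to $f''$. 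Given $\delta > 0$, one first chooses $L$ large (using the uniform tail bound on $f'$ on $K_\epsilon$ inherited from (2b)) to make the tail term at most $\delta^2/2$, and then chooses $h$ small (using the uniform bound on $\|f''\|_{\Ltwo(0,L+1)}$ on $K_\epsilon$ inherited from (3)) to make the interior term at most $\delta^2/2$. This verifies the Kolmogorov--Riesz--Fr\'echet criterion for both $K_\epsilon$ and $\{f' : f \in K_\epsilon\}$ in $\Ltwo\ho$, so $K_\epsilon$ has compact closure in $\Hone\ho$, completing the proof.
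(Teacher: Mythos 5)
Your argument is correct, but it follows a genuinely different route from the paper's. The paper never verifies translation equicontinuity: it quotes the compact Sobolev embedding $\Hone(\Omega_0)\hookrightarrow\Ltwo(\Omega_0)$ for bounded intervals together with a criterion of Adams for upgrading precompactness on an exhausting sequence of subdomains to precompactness in $\Ltwo\ho$ under a uniform tail bound; this yields the deterministic statement (Lemma \ref{apx_Wcompact}) that uniform $\Hone(0,L)$ bounds for every $L$ plus vanishing uniform $\Ltwo(L,\infty)$ tails force compact closure in $\Ltwo\ho$, and tightness in $\Hone\ho$ is then deduced exactly as in your first paragraph, by reducing to tightness of $\zeta_\alpha$ and $\zeta_\alpha'$ in $\Ltwo\ho$ via the closed embedding $f\mapsto(f,f')$, with the elementary inequality $\mathbb{P}(A+B>2\lambda)\le\mathbb{P}(A>\lambda)+\mathbb{P}(B>\lambda)$ playing the role of your union bound over the constraints defining $K_\epsilon$. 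You instead construct the compact sets explicitly and verify the Kolmogorov--Riesz--Fr\'echet criterion by hand, deriving the $\Ltwo$-equicontinuity of $K_\epsilon$ and of $\{f':f\in K_\epsilon\}$ from the bounds on $f'$ and $f''$ --- in effect re-proving the compact embedding rather than citing it, which makes the proof more self-contained at the cost of bookkeeping the paper avoids. The one piece of bookkeeping you gloss over is the left endpoint: on the domain $\ho$, the Kolmogorov--Riesz--Fr\'echet criterion (whether applied after extension by zero to $\R$, or in its local form with translations measured on subdomains compactly contained in $\ho$) produces an extra boundary contribution, e.g.\ the term $\int_0^h|f(y)|^2\,dy$ for $h>0$ and its analogue for $f'$, which is not covered by your bound $\|f(\cdot+h)-f\|_{\Ltwo\ho}\le h\|f'\|_{\Ltwo\ho}$. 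This is harmless rather than a real gap --- on $K_\epsilon$ the $\Hone(0,L_1)$ norms of $f$ and $f'$ are uniformly bounded, so by the embedding of $\Hone(0,1)$ into $\mathbb{C}[0,1]$ these boundary terms are $O(h)$ uniformly over $K_\epsilon$ --- but it should be stated explicitly to make the verification of the criterion complete.
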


We defer the proof of Proposition \ref{prop_Htightness} to Appendix \ref{apx_Wtight}.  Now, we provide a more easily verifiable sufficient condition for the criteria of Proposition \ref{prop_Htightness} to hold.

\begin{lemma}\label{lem_Htight}
Consider a family $\{r \in \ho \mapsto \zeta_\alpha(r);\alpha\in\mathcal{A}\}$ of random functions such that for every  $\alpha\in\mathcal{A}$, almost surely, the weak derivatives $\zeta_\alpha^\prime$ and $\zeta_\alpha^{\prime\prime}$ exist.
Suppose  there exist  functions $R_1,R_2\in\Lone\ho$, and a  function $R_3\in\Lone_{\text{loc}}\ho$ such that for every ${\alpha\in\mathcal{A}}$ and $r\geq0$,
\[
    \Ept{|\zeta_\alpha(r)|^2}\leq R_1(r),\quad\quad\Ept{|\zeta^\prime_\alpha(r)|^2}\leq R_2(r),\quad\quad\Ept{|\zeta_\alpha^{\prime\prime}(r)|^2}\leq R_3(r).
\]
Then, the family $\{\zeta_\alpha(\cdot);\alpha\in\mathcal{A}\}$ is tight in $\Hone\ho$.
\end{lemma}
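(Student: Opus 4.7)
The plan is to invoke Proposition \ref{prop_Htightness} directly, verifying its three hypotheses by a routine combination of Tonelli's theorem and Markov's inequality applied to the second-moment bounds $R_1,R_2,R_3$.

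For condition 1 of Proposition \ref{prop_Htightness}, fix $L\in(0,\infty)$ and $\alpha\in\mathcal{A}$. By Tonelli,
\[
    \mathbb{E}\bigl[\|\zeta_\alpha\|_{\Ltwo(0,L)}^2\bigr] = \int_0^L \mathbb{E}\bigl[|\zeta_\alpha(r)|^2\bigr]\,dr \leq \int_0^L R_1(r)\,dr \leq \|R_1\|_{\mathbb{L}^1\ho}.
\]
Markov's inequality then gives $\mathbb{P}(\|\zeta_\alpha\|_{\Ltwo(0,L)} > \lambda) \leq \|R_1\|_{\mathbb{L}^1\ho}/\lambda^2$, which tends to $0$ as $\lambda\to\infty$ uniformly in $\alpha$, establishing \eqref{pre_ZboundEQ1}. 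For the tail bound \eqref{pre_ZboundEQ4}, the same computation on $(L,\infty)$ yields
\[
    \mathbb{E}\bigl[\|\zeta_\alpha\|_{\Ltwo(L,\infty)}^2\bigr] \leq \int_L^\infty R_1(r)\,dr,
\]
and since $R_1\in\Lone\ho$, the right-hand side tends to $0$ as $L\to\infty$ uniformly in $\alpha$; another application of Markov's inequality gives \eqref{pre_ZboundEQ4}.

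Condition 2 is handled by the identical argument, replacing $\zeta_\alpha$ by $\zeta_\alpha^\prime$ and $R_1$ by $R_2$, using that $R_2\in\Lone\ho$ to obtain both \eqref{pre_ZboundEQ2} and \eqref{pre_ZboundEQ5}. For condition 3, only the ``bounded-interval'' estimate \eqref{pre_ZboundEQ3} is needed, so the weaker hypothesis $R_3\in\Loneloc\ho$ suffices: for each $L<\infty$, Tonelli yields
\[
    \mathbb{E}\bigl[\|\zeta_\alpha^{\prime\prime}\|_{\Ltwo(0,L)}^2\bigr] \leq \int_0^L R_3(r)\,dr < \infty,
\]
and Markov's inequality produces \eqref{pre_ZboundEQ3}.

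There is essentially no difficulty; the only minor technicality is checking joint measurability of $(r,\omega)\mapsto |\zeta_\alpha(r,\omega)|^2$ (and similarly for the weak derivatives) so that Tonelli applies, which is immediate once one picks the absolutely continuous representative of $\zeta_\alpha$ guaranteed by the $\Hone\ho$ setting. Having verified all hypotheses of Proposition \ref{prop_Htightness}, tightness of $\{\zeta_\alpha\,:\,\alpha\in\mathcal{A}\}$ in $\Hone\ho$ follows, completing the proof.
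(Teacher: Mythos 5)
Your proof is correct and follows essentially the same route as the paper's: Tonelli's theorem to bound $\Ept{\|\cdot\|^2_{\Ltwo(I)}}$ by $\int_I R_j$, then Markov's inequality, applied to $\zeta_\alpha$, $\zeta_\alpha'$, $\zeta_\alpha''$ with $R_1, R_2, R_3$, to verify the five criteria of Proposition~\ref{prop_Htightness}. The only cosmetic difference is the brief remark on joint measurability, which the paper leaves implicit.
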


\begin{proof}
Fix $L>0.$ Using Markov's inequality and Tonelli's theorem, for every ${\alpha\in\mathcal{A}}$ and $\lambda\geq0$, we have
\begin{align*}
    \Prob{\|\zeta_\alpha\|_{\Ltwo(0,L)}>\lambda} &\leq \frac{1}{\lambda^2}\Ept{\|\zeta_\alpha\|^2_{\Ltwo(0,L)}} = \frac{1}{\lambda^2}\int_0^L\Ept{|\zeta_\alpha(r)|^2}dr \leq  \frac{1}{\lambda^2}\int_0^L R_1(r)dr.
\end{align*}
The right-hand side does not depend on $\alpha$, and is finite because $R_1$ is locally integrable. Thus, taking the supremum over ${\alpha\in\mathcal{A}}$, and then the limit as $\lambda\to\infty$ on both sides of the above inequality, \eqref{pre_ZboundEQ1} follows.  The proofs of properties \eqref{pre_ZboundEQ2} and \eqref{pre_ZboundEQ3}  are exactly analogous.
Similarly, for $\delta>0$,
\begin{align}
    \Prob{\|\zeta_\alpha\|_{\Ltwo(L,\infty)} >\delta } &\leq \frac{1}{\delta^2}\int_L^\infty\Ept{|\zeta_\alpha(r)|^2}dr \leq \frac{1}{\delta^2}\int_L^\infty R_1(r)dr.
\end{align}
Since $R_1$ is integrable, the right-hand side converges to zero as $L\to\infty.$ Taking the supremum over ${\alpha\in\mathcal{A}}$, and then the limit $L\to\infty$ on both sides above, \eqref{pre_ZboundEQ4} follows. The above argument, with  $R_1$ and $\zeta_\alpha$ replaced by $R_2$ and $\zeta^\prime_\alpha$, respectively, shows that  the conditions of the lemma imply \eqref{pre_ZboundEQ5}.  Thus, the lemma follows from Proposition \ref{prop_Htightness}.
\end{proof}

\subsubsection{A bound on the martingale measure stochastic integrals}\label{sec_Zmgale}

In this section we deal with the component $\Mnhat_t(\Psi_{t+\cdot}\f1)$ that arises on the right-hand side of
\eqref{ZhatNEQ}.   We start by establishing an estimate on the covariance function \eqref{CoVar_mn} of $\Mn$.

\begin{lemma}\label{lem_nunbar}
    Suppose Assumptions \ref{asm_arrival}-\ref{asm_service} hold,  $G$ has a finite $(2+\varepsilon)$ moment, and
$\law(\vn_0) = \pin_*$.  Then, for every $t\geq0$ and nonnegative, absolutely continuous, bounded function $f$, we have
    \begin{equation}\label{nunbar_bound}
        \Ept{\nunbar_t(f)}\leq \int_0^\infty (f\overline G)(x)dx+ C_Q\int_0^t |(f\overline G)^\prime(s)|ds.
    \end{equation}
    In addition, if Assumption \ref{asm_serviceP}.\ref{asm_moment3} holds, there exists a bounded function $R_M$ on $\hc$, which is also integrable on $\ho$, such that for every $t\geq0$ and $N\in\N$,
    \begin{equation}\label{nunbar_int_bd}
      \Ept{\int_0^t\nunbar_{t-s}\left( \frac{\overline G^2(\cdot+s+r)}{\overline G^2(\cdot)}\frac{g(\cdot)}{\overline G(\cdot)}\right)ds} \leq R_M(r), \quad r\geq0.
    \end{equation}
\end{lemma}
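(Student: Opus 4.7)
The plan is to start from identity \eqref{nuNfEQ} for $\nun_t(f)$ and take expectations, exploiting two special features of $\pin_*$: the initial ages $\{a_j^*\}$ are i.i.d.\ with density $\overline G$ (so the queue is empty and all $N$ servers are busy), and $\en$ is a stationary renewal process of rate $\lambdan$.

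For the first inequality, I first observe that $\Hn_t(f) = \Mn_t(\Psi_t f)$ is a mean-zero martingale (the boundedness of $h$ and of $\Psi_t f$ makes \eqref{CoVar_mn} integrable, giving a genuine $L^2$ martingale), so taking expectations in \eqref{nuNfEQ} yields
\[
  \Ept{\nun_t(f)} = \Ept{\nun_0(\Phi_t f)} + \Ept{\int_0^t (f\overline G)(t-s)\, d\kn_s}.
\]
The residual-age structure of $\pin_*$ immediately gives $\Ept{\nun_0(\Phi_t f)} = N \int_t^\infty (f\overline G)(y)\, dy$, while $\xn_0 = N$ forces $\xnp_0 = 0$ in \eqref{kNEQ}, reducing $\kn_s$ to $\en_s - \xnp_s$. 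I would split the Stieltjes integral accordingly. Stationarity of $\en$ gives $\Ept{\int_0^t (f\overline G)(t-s)\, d\en_s} = \lambdan \int_0^t (f\overline G)(u)\, du$. An integration by parts on the $\xnp$ piece produces a boundary contribution $f(0)\xnp_t$ and an interior contribution $\int_0^t \xnp_s (f\overline G)'(t-s)\, ds$; after dividing by $N$, the non-positive term $-f(0)\Ept{\xnp_t/N}$ is discarded, and Corollary \ref{xnphat_bound} bounds $\Ept{\xnp_s/N} = \Ept{\xnphat_s}/\sqrt{N} \leq C_Q$. Combined with $\lambdan/N \leq 1$, the $\int_t^\infty$ and $\int_0^t$ pieces merge into $\int_0^\infty (f\overline G)(y)\, dy$, giving the first inequality.

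For the second inequality, I would apply the first bound with $f = \psi_{s,r}$, where $\psi_{s,r}(x) \doteq \overline G^2(x+s+r)\,g(x)/\overline G^3(x)$, and then integrate over $s \in [0,t]$. The inequality $\overline G(x+s+r)/\overline G(x) \leq 1$ together with $h \leq H$ (Remark \ref{remark_HH2}) yields the pointwise estimate $(\psi_{s,r}\overline G)(x) \leq H\overline G(x+s+r)$. For the derivative, I would expand $(\psi_{s,r}\overline G)'$ via the product and quotient rules and use the identities $g = \overline G h$ and $g' = \overline G (h' - h^2)$ (both consequences of Assumption \ref{asm_service}) to obtain a pointwise bound of the form
\[
  |(\psi_{s,r}\overline G)'(x)| \leq 2H\, g(x+s+r) + C_3\, \overline G(x+s+r)
\]
for some constant $C_3$ depending only on $H$ and $\|h'\|_\infty$. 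Both double integrals in $(x,s)$ then collapse to linear combinations of the tail functionals $F(r) \doteq \int_r^\infty \overline G(y)\, dy$ and $\int_r^\infty F(u)\, du$, so one can take $R_M$ as a linear combination of these. Assumption \ref{asm_serviceP}.\ref{asm_moment3} is exactly what is needed: it ensures both $F$ and $\int_\cdot^\infty F$ are bounded (the former by $F(0)=1$, the latter by $\int_0^\infty F = \tfrac{1}{2}\mathbb{E}[v_1^2] < \infty$) and integrable on $\ho$ (since $\int_0^\infty \int_r^\infty F(u)\,du\,dr = \tfrac{1}{6}\mathbb{E}[v_1^3] < \infty$).

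The main obstacle is the derivative bookkeeping for $(\psi_{s,r}\overline G)'$ and verifying that the resulting bound is uniform in $t$ while decaying correctly in $r$. The saving grace is structural: every awkward quotient $\overline G(x+s+r)/\overline G(x)$ is bounded by $1$, and combined with the identities for $g$ and $g'$ all spatial integrals collapse into elementary tail functionals of $G$ that are precisely controlled by the $(3+\epsilon)$ moment assumption.
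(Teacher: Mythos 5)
Your proof is correct and follows essentially the same route as the paper: take expectations in \eqref{nuNfEQ} under $\pi_*^{(N)}$, evaluate the initial-condition term via the residual-age density, kill the martingale term, split $\kn$ via \eqref{kNEQ} (with $\xn_0=N$), integrate by parts, discard the nonpositive boundary term, and control the remainder by Corollary \ref{xnphat_bound} and $\lambdan/N\leq 1$; then apply the first bound with $f=\psi_{s,r}$ and integrate in $s$, with the pointwise estimates on $\psi_{s,r}\overline G$ and its derivative reducing everything to the tail functionals $\int_r^\infty\overline G$ and $\int_r^\infty\int_u^\infty\overline G$ controlled by the $(3+\epsilon)$ moment. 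The only cosmetic differences are that the paper obtains $\Ept{\int_0^t(f\overline G)(t-s)\,d\enbar_s}\leq\int_0^t(f\overline G)$ by two integrations by parts rather than a direct stationarity appeal, and the paper bounds $|(\psi_{s,r}\overline G)'|$ entirely in terms of $\overline G(\cdot+s+r)$ via $g\leq H\overline G$ and $|g'|\leq H_2\overline G$ instead of keeping a separate $g(\cdot+s+r)$ term.
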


\begin{proof}
    For the first claim, fix $f$ as in the statement of the lemma, and $t\geq 0$. Dividing both sides of \eqref{nuNfEQ} by $N$, using the fluid scaling notation from
\eqref{def-barnot} and  taking expectations, we have
    \begin{equation}\label{nu_temp0}
        \Ept{\nunbar_t(f)} = \Ept{\nunbar_0(\Phi_tf)}- \Ept{\Hnbar_t(f)}+ \Ept{\int_0^t(f\overline G)(t-s)d \knbar_s}.
    \end{equation}
    Hence, by the definition of $\pi_*^{(N)}$,  $\nun_0$ has the same distribution as $\nun_*=\sum_{i=1}^N\delta_{a_J^*}$, where $\{a_j^*,j\geq0\}$ are i.i.d with p.d.f. $\overline G$, and by the definition of $\Phi_t$ in \eqref{PhiDef}, we have
    \begin{equation}\label{nu_temp1}
        \Ept{\nunbar_0(\Phi_tf)}=\frac{1}{N}\Ept{\sum_{j=1}^{N}\frac{f(a^*_j+t)\overline G(a^*_j+t)}{\overline G(a^*_j)}}= \Ept{\frac{f(a^*_1+t)\overline G(a^*_1+t)}{\overline G(a^*_1)}} = \int_0^\infty (f\overline G)(x+t)dx.
    \end{equation}
    Next, by $\eqref{def_Hn}$, $\Hn$ is the stochastic integral of a deterministic function with respect to the martingale measure $\Mn$, and is thus centered. Hence,
    \begin{equation}\label{nu_temp2}
      \Eptil{\Hn_t(f)}=0.
    \end{equation}
    Finally, substituting $\kn$ from \eqref{kNEQ}, integrating by parts, and using the relation $\xn_0=N$ and the  nonnegativity of $f$, we have for each $t \geq 0,$
    \begin{align}\label{nu_temp3a}
        \int_0^t(f\overline G)(t-s)d \knbar_s& =  \int_0^t(f\overline G)(t-s)d \enbar_s -f(0)(\xnbar_t-1)^+ -\int_0^t  (\xnbar_s-1)^+(f\overline G)^\prime(t-s)ds\notag\\
        & \leq \int_0^t(f\overline G)(t-s)d \enbar_s +\int_0^t  (\xnbar_s-1)^+|(f\overline G)^\prime(t-s)|ds
    \end{align}
    When  $\vn_0$ is distributed as $\pin_*$, $E^{(N)}$ is a stationary renewal process and satisfies $\Eptil{\enbar(t)}=\lambdanbar t$. Therefore, using integration by parts twice,  Fubini's theorem, and the relations $\lambdanbar\leq 1$ and $f \geq 0$, we have
    \begin{align}\label{nu_temp3b}
        \Ept{\int_0^t(f\overline G)(t-s)d \enbar_s} & =
        f(0)\Ept{\enbar_t}+\int_0^t \Ept{\enbar_s}(f\overline G)'(t-s)ds \notag\\
        & = \lambdanbar f(0)t+\lambdanbar\int_0^t s(f\overline G)'(t-s)ds \notag\\
        &\leq \int_0^t(f\overline G)(s)ds.
        \end{align}
    Moreover, by Corollary \ref{xnphat_bound}, for every $s\geq0$, $\Eptil{(\xnbar_s-1)^+}=\Eptil{\xnphat_s}/\sqrt{N}\leq C_Q$, and hence,
    \begin{equation}\label{nu_temp4}
        \Ept{\int_0^t  (\xnbar_s-1)^+\big|(f\overline G)^\prime(t-s)\big|ds} \leq  \int_0^t  \Ept{(\xnbar_s-1)^+}\big|(f\overline G)^\prime(t-s)\big|ds \leq C_Q\int_0^t \big|(f\overline G)^\prime(s)\big|ds.
    \end{equation}
    Equation \eqref{nunbar_bound} then follows on substituting the relations \eqref{nu_temp1}-\eqref{nu_temp4} into \eqref{nu_temp0}.

    For the second claim, note that by Assumption \ref{asm_service},  recalling the constants $H$ and $H_2$ defined in Remark \ref{remark_HH2}, for every $s,r\geq0$,  the function $f_*\doteq\overline G^2(s+r+\cdot)g(\cdot)/\overline G^3(\cdot)$ is non-negative, absolutely continuous and bounded, and satisfies $|f_*\overline G(x)|\leq H \overline G(x+s+r).$ Also,
        \begin{align*}
            (f_*\overline G)'(x)= \frac{-2\overline G(x+s+r)g(x+s+r)g(x)+\overline G^2(x+s+r)g'(x)}{\overline G(x)^2}+\frac{2\overline G^2(x+s+r)g^2(x)}{\overline G^3(x)},
        \end{align*}
    which implies $|(f_*\overline G)'(x)|\leq (4H^2+H_2)\overline G(x+s+r)$. Therefore, replacing $f$ with $f_*$ and $t$ by $t-s$ in \eqref{nunbar_bound}, we have
    \begin{align*}
        \Ept{\nunbar_{t-s}\left( \frac{\overline G^2(\cdot+s+r)}{\overline G^2(\cdot)}\frac{g(\cdot)}{\overline G(\cdot)}\right)} \leq H\int_0^\infty \overline G(s+r+x)dx+C_Q(4H^2+H_2)\int_0^{t-s} \overline G(s+r+x)dx.
    \end{align*}
   Hence, by Tonelli's theorem, \eqref{nunbar_int_bd} holds with
   \[
        R_M(r)\doteq (H+C_Q(3H^2+H_2))\int_r^\infty\int_s^\infty \overline G(x)dx\;ds.
   \]
    The function $R_M$ is bounded and integrable by Assumption  \ref{asm_serviceP}.\ref{asm_moment3} (see Remark \ref{remark_GL2}).
\end{proof}

The next lemma shows absolute continuity of the martingale measure stochastic integral term.

\begin{lemma}\label{que_Mfubini}
Suppose  Assumption \ref{asm_service} holds. Then, for every $N\in\N$ and every $t,r\geq 0$,
\begin{equation}\label{MNderivative}
	   {\cal M}^{(N)}_t(\Psi_{t+r}\f1)= {\mathcal H}^{(N)}_t(\f1) -\int_0^r {\cal M}^{(N)}_t(\Psi_{t+u}h) du,
\end{equation}
and, with $h_2=g/\overline G$  as defined in Remark \ref{remark_HH2},
\begin{equation}\label{MNderivative2}
	   {\cal M}^{(N)}_t(\Psi_{t+r}h)= {\mathcal H}^{(N)}_t(h) +\int_0^r {\cal M}^{(N)}_t(\Psi_{t+u}h_2) du.
\end{equation}
\end{lemma}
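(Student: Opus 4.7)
The strategy is to verify both identities by showing that the right-hand sides are (stochastic) antiderivatives of the left-hand sides viewed as functions of $r$, with matching initial values at $r=0$. Since $\mathcal{H}^{(N)}_t(f)=\mathcal{M}^{(N)}_t(\Psi_t f)$ by \eqref{def_Hn}, both \eqref{MNderivative} and \eqref{MNderivative2} hold at $r=0$. For general $r$, the plan is to compute $\partial_r \Psi_{t+r} \f1$ and $\partial_r \Psi_{t+r} h$ pointwise, combine with the fundamental theorem of calculus, and then interchange the $u$-integral with the martingale-measure integral via a stochastic Fubini argument.

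For the pointwise derivative, I restrict to $(x,s)$ with $s\in[0,t]$, which is the only range that enters $\mathcal{M}^{(N)}_t$. On this range $(t+r-s)^+=t+r-s$, so by \eqref{PsiDef},
\[
(\Psi_{t+r}\f1)(x,s)=\frac{\overline G(x+t+r-s)}{\overline G(x)},\qquad (\Psi_{t+r}h)(x,s)=\frac{g(x+t+r-s)}{\overline G(x)}.
\]
Differentiating in $r$ and using $h=g/\overline G$ and $h_2=g'/\overline G$ from Remark \ref{remark_HH2}, one obtains
\[
\partial_r (\Psi_{t+r}\f1)(x,s) = -(\Psi_{t+r}h)(x,s), \qquad \partial_r (\Psi_{t+r}h)(x,s) = (\Psi_{t+r}h_2)(x,s),
\]
which by the fundamental theorem of calculus yields the deterministic identities
\[
\Psi_{t+r}\f1 - \Psi_t\f1 = -\int_0^r \Psi_{t+u} h\, du, \qquad \Psi_{t+r} h - \Psi_t h = \int_0^r \Psi_{t+u} h_2\, du,
\]
as equalities of bounded measurable functions on $[0,\infty)\times[0,t]$.

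Next, I would apply a stochastic Fubini theorem for Walsh martingale measures (see \cite[Chapter 2]{WalshBook}) to conclude
\[
\mathcal{M}^{(N)}_t\!\left(\int_0^r \Psi_{t+u} h\, du\right) = \int_0^r \mathcal{M}^{(N)}_t(\Psi_{t+u} h)\, du
\]
and an analogous identity for $h_2$. Combined with the deterministic identities displayed above, this gives \eqref{MNderivative} and \eqref{MNderivative2}. The exchange is legitimate because: (i)~$(u,x,s)\mapsto (\Psi_{t+u}h)(x,s)$ and $(u,x,s)\mapsto (\Psi_{t+u}h_2)(x,s)$ are jointly measurable and uniformly bounded by $H=\|h\|_\infty$ and $H_2$ respectively (Assumption \ref{asm_service}.\ref{asm_h} and Remark \ref{remark_HH2}); and (ii)~the covariance functional \eqref{CoVar_mn} shows that the predictable quadratic variation of each integral is a.s.\ bounded by $H^3\int_0^t \nu_s^{(N)}(\f1)\,ds \le H^3 Nt<\infty$ (by \eqref{number_in_service}), so all stochastic integrals lie in $L^2$ uniformly in $u\in[0,r]$.

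The only genuine obstacle is the justification of the Fubini exchange. If one prefers to avoid invoking a general stochastic Fubini result, an equivalent route is to approximate the $u$-integral by Riemann sums $\sum_k (\Psi_{t+u_k}h)(u_{k+1}-u_k)$, use linearity of $\mathcal{M}^{(N)}_t$ on each partition, and pass to the limit in $L^2(\Omega)$ via the Walsh isometry \eqref{CoVar_mn} together with the dominated convergence theorem, which again applies because of the uniform bounds noted above. Either approach completes the proof.
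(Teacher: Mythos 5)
Your proof is correct and takes essentially the same route as the paper: both establish the pointwise deterministic identity $\int_0^r (\Psi_{t+u}h)(x,s)\,du = (\Psi_t\f1)(x,s) - (\Psi_{t+r}\f1)(x,s)$ (and its analogue for $h_2$) and then interchange the $u$-integral with $\mathcal{M}^{(N)}_t$ via the stochastic Fubini theorem for martingale measures from Walsh. Your extra verification of the Fubini hypotheses (boundedness, $L^2$-control via the covariance functional) and the Riemann-sum alternative are sound elaborations of the same argument, not a different method.
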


\begin{proof}
Fix $t\geq0$. By Assumption \ref{asm_service}.\ref{asm_h}, the function    $(\Psi_{t+u}h)(x,s)=g(x+t+u-s)/\overline G(x)$
is bounded, continuous and satisfies
\[
    \int_0^r (\Psi_{t+u}h)(x,s)du =\frac{\overline G(x+t-s)}{\overline G(x)}-\frac{\overline G(x+t+r-s)}{\overline G(x)}= \Psi_t\f1(x,s) - \Psi_{t+r}\f1(x,s).
\]
Therefore, by the stochastic Fubini theorem for orthogonal martingale measures  \cite[Theorem 2.6]{WalshBook},
\[
    \int_0^r \Mn_t(\Psi_{t+u}h) du =\Mn_t\left(\int_0^r\Psi_{t+u}h\;du\right) =\Mn_t(\Psi_t\f1)- \Mn_t(\Psi_{t+r}\f1),
\]
and \eqref{MNderivative} follows from definition \eqref{def_Hn} of $\Hn$. Similarly, by Assumption \ref{asm_service}.\ref{asm_h},     the function $(\Psi_{t+u}h_2)(x,s)$ $=$ $g'(x+t+u-s)/\overline G(x)$ is bounded, continuous and satisfies
\[
    \int_0^r (\Psi_{t+u}h_2)(x,s)du =\frac{g(x+t+r-s)}{\overline G(x)}-\frac{g(x+t-s)}{\overline G(x)}= \Psi_{t+r}h(x,s)-\Psi_th(x,s).
\]
Applying (again) the  Fubini theorem for martingale measures and \eqref{def_Hn}, this yields \eqref{MNderivative2}.
\end{proof}

Now we can prove tightness of the martingale measure integral term.

\begin{lemma}\label{lem_Mterm}
    Suppose Assumptions \ref{asm_arrival}-\ref{asm_service} hold, $G$ has a finite $(2+\epsilon)$ moment and
$\law(\vn_0) = \pin_*$.  Then,  $\{\Mnhat_t(\Psi_{t+r}\f1);t\geq0,N\in\N\}$ is tight in $\Hone\ho$.
\end{lemma}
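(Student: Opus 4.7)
The plan is to apply Lemma \ref{lem_Htight} to the family $\{\zeta^{(N,t)}\}$ with $\zeta^{(N,t)}(r)\doteq \Mnhat_t(\Psi_{t+r}\f1)$, indexed by $\alpha=(N,t)$.  The first step is to identify the weak derivatives of $r\mapsto\zeta^{(N,t)}(r)$ using Lemma \ref{que_Mfubini}.  Dividing \eqref{MNderivative} and \eqref{MNderivative2} by $\sqrt{N}$, one sees that $\zeta^{(N,t)}$ is absolutely continuous on $\ho$ with
\[
    (\zeta^{(N,t)})'(r)=-\Mnhat_t(\Psi_{t+r}h),
\]
and that this derivative is itself absolutely continuous with
\[
    (\zeta^{(N,t)})''(r)=-\Mnhat_t(\Psi_{t+r}h_2).
\]

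The second step is to obtain integrable (or locally integrable) bounds on the second moments of these three processes.  By the martingale isometry \eqref{CoVar_mn},
\[
    \Ept{\bigl|\Mnhat_t(\Psi_{t+r}\varphi)\bigr|^2}=\Ept{\int_0^t \nunbar_s\left(\bigl(\Psi_{t+r}\varphi\bigr)^2(\cdot,s)\,h\right)ds}
\]
for any $\varphi\in\mathbb{C}_b\hc$.  For $\varphi=\f1$, after the change of variable $u=t-s$ the right-hand side becomes exactly the expression in \eqref{nunbar_int_bd}, so Lemma \ref{lem_nunbar} gives a bound by $R_M(r)$.  For $\varphi=h$ and $\varphi=h_2$, I would use the pointwise inequalities $g\le H\overline G$ and $|g'|\le H_2\overline G$ (both consequences of Assumption \ref{asm_service}, via Remark \ref{remark_HH2}) to dominate $(\Psi_{t+r}\varphi)^2(x,s)=\varphi^2(x+t+r-s)\overline G^2(x+t+r-s)/\overline G^2(x)$ by $H^2$ or $H_2^2$ times $\overline G^2(x+t+r-s)/\overline G^2(x)$ respectively, reducing once again to the integrand in \eqref{nunbar_int_bd} and yielding bounds $H^2R_M(r)$ and $H_2^2R_M(r)$.

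The third step is to invoke Lemma \ref{lem_Htight} with $R_1=R_M$, $R_2=H^2R_M$ and $R_3=H_2^2R_M$.  Since $R_M$ is bounded and integrable on $\ho$ (this is the only place Assumption \ref{asm_serviceP}.\ref{asm_moment3} enters, via the second conclusion of Lemma \ref{lem_nunbar}), all three majorants lie in $\Lone\ho\subset\Loneloc\ho$, and the hypotheses of Lemma \ref{lem_Htight} are satisfied uniformly in $(N,t)$.  The conclusion is tightness of $\{\Mnhat_t(\Psi_{t+r}\f1):t\ge0,\,N\in\N\}$ in $\Hone\ho$.

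The main obstacle is ensuring that the integrability in $r$ is uniform in $t$ and $N$.  This is precisely what Lemma \ref{lem_nunbar} delivers: the representation of $\nunbar_s$ through \eqref{nuNfEQ}, combined with the stationarity of the arrival process under $\pin_*$ and the uniform $\Lone$ bound $\Ept{\xnphat_s}\le C_Q$ from Corollary \ref{xnphat_bound}, produces the bounded and integrable envelope $R_M$ that controls the second moments independently of $(N,t)$.  Without this uniform queue-length bound, the envelope would carry a $t$-dependent factor and tightness in $\Hone\ho$ would fail.
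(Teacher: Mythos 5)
Your argument is correct and essentially identical to the paper's proof: the same identification of the weak derivatives via Lemma \ref{que_Mfubini}, the same second-moment envelopes $R_M$, $H^2R_M$, $H_2^2R_M$ obtained from \eqref{CoVar_mn} together with \eqref{nunbar_int_bd}, and the same final appeal to Lemma \ref{lem_Htight}. Your remark that integrability of $R_M$ (and hence Assumption \ref{asm_serviceP}.\ref{asm_moment3}) is what makes the bounds uniform in $(N,t)$ also mirrors the paper, whose proof likewise invokes the second conclusion of Lemma \ref{lem_nunbar} even though the lemma's hypotheses list only a finite $(2+\epsilon)$ moment.
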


\begin{proof}
By definition \eqref{PsiDef} of $\Psi_s$,  the expression for $<\Mn (\phi)>$ in  \eqref{CoVar_mn} and  the bound \eqref{nunbar_int_bd}, for every $t,r\geq0$ and $N\in \N$ we have
\begin{align}\label{Mbound1}
        \Ept{\Mnhat_t(\Psi_{t+r}\f1)^2}=   \Ept{\int_0^t\nunbar_{t-s}\left(\frac{\overline G^2(s+r+\cdot)} {\overline G^2(\cdot)}\frac{g(\cdot)}{\overline G(\cdot)}\right)ds}\leq R_M(r).
\end{align}
Furthermore, by \eqref{MNderivative}, $\Mnhat_t(\Psi_{t+\cdot}\f1)$ is (locally) absolutely continuous with density (and hence, weak derivative) $\Mnhat_t(\Psi_{t+\cdot}h)$, which satisfies the  analogous relation
\begin{align*}
        \Ept{\Mnhat_t(\Psi_{t+r}h)^2}&=  \Ept{\int_0^t\nunbar_{t-s}\left(\frac{ g^2(s+r+\cdot)} {\overline G^2(\cdot)}\frac{g(\cdot)}{\overline G(\cdot)}\right)ds} \leq H^2  \Ept{\int_0^t\nunbar_{t-s}\left(\frac{\overline G^2(s+r+\cdot)} {\overline G^2(\cdot)}\frac{g(\cdot)}{\overline G(\cdot)}\right)ds},
\end{align*}
where the inequality holds by Assumption \ref{asm_service}.\ref{asm_h}, with $H$ being the constant in Remark \ref{remark_HH2}.  Therefore, the bound \eqref{nunbar_int_bd} implies that for $t, r \geq 0$, $N \in\mathbb{N}$,
\begin{equation}\label{Mbound2}
    \Ept{\Mnhat_t(\Psi_{t+r}\;h)^2}\leq H^2R_M(r).
\end{equation}
Likewise, by \eqref{MNderivative2}, $\Mnhat_t(\Psi_{t+\cdot}h)$ is (locally) absolutely continuous with density (and hence, weak derivative)  $\Mnhat_t(\Psi_{t+\cdot}h_2)$, which, by Assumption \ref{asm_service}.\ref{asm_h},
\begin{align}\label{Mbound3}
    \Ept{\Mnhat_t(\Psi_{t+r}h_2)^2}&= \Ept{\int_0^t\nunbar_{t-s}\left(\frac{|g^\prime(s+r+\cdot)|^2}{\overline G^2(\cdot)}\frac{g(\cdot)}{\overline G(\cdot)}\right)ds}\leq H_2^2R_M(r),
\end{align}
where $H_2$ is as in  Remark \ref{remark_HH2}. The lemma follows from \eqref{Mbound1}-\eqref{Mbound3}, the integrability of the function $R_M$ guaranteed by Lemma \ref{lem_nunbar}, and the tightness criteria of Lemma \ref{lem_Htight}, with $R_1=R_M$, $R_2=H^2R_M$, and $R_3=H_2^2R_M$.
\end{proof}

\subsubsection{Tightness of other terms.}\label{sec_Zothers}
In the next four lemmas, we prove tightness of other sequences associated with each of the other components of $\znhat$ on the right-hand side of \eqref{ZhatNEQ}.

\begin{lemma}\label{lem_ICterm}
    Suppose Assumptions \ref{asm_arrival}-\ref{asm_service} hold, $G$ has a finite $(2+\epsilon)$ moment and $\vn_0$ is distributed as $\pin_*$. Then the family $\{\znhat_0(t+\cdot);t\geq0,N\in\N \}$ is tight in $\Hone\ho$.
\end{lemma}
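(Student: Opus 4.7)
The plan is to reduce the problem directly to the sufficient condition provided by Lemma \ref{lem_Htight}. Under the initial condition $\law(\vn_0) = \pin_*$, recall from Section \ref{sec_ic} that $\xn_0 = N$ and the ages $\{a^*_j\}_{j=1}^N$ are i.i.d.\ with density $\overline G$. Consequently, setting
\[
\xi_j(r) \doteq \frac{\overline G(a^*_j + r)}{\overline G(a^*_j)}, \qquad j = 1,\dots,N,
\]
one has $\zn_0(r) = \sum_{j=1}^N \xi_j(r)$, and a direct computation gives $\mathbb{E}[\xi_j(r)] = \int_0^\infty \overline G(x+r)\,dx = \overline Z(r)$. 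Therefore
\[
\znhat_0(t+r) = \frac{1}{\sqrt{N}}\sum_{j=1}^N \bigl(\xi_j(t+r) - \overline Z(t+r)\bigr)
\]
is a normalized centered sum of i.i.d.\ random functions, and the shift by $t$ affects only the argument.

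The core of the proof is to produce the three second-moment bounds required by Lemma \ref{lem_Htight}, uniformly in $N\in\N$ and $t\geq 0$. Because $\znhat_0$ is an $N^{-1/2}$-scaled sum of i.i.d.\ centered terms, its pointwise variance equals the variance of a single summand and is therefore bounded by $\mathbb{E}[\xi_1(\cdot)^2]$; the same reduction applies after termwise differentiation. Under Assumption \ref{asm_service}.\ref{asm_h} and Assumption \ref{asm_serviceP}.\ref{asm_gpp}, each $\xi_j$ is almost surely twice differentiable with $\xi_j'(r) = -g(a^*_j+r)/\overline G(a^*_j)$ and $\xi_j''(r) = -g'(a^*_j+r)/\overline G(a^*_j)$, and then a short calculation using the bounds $g \leq H\overline G$ and $|g'|\leq H_2 \overline G$ from Remark \ref{remark_HH2} yields
\[
\mathbb{E}\bigl[\xi_1(r)^2\bigr] \leq \overline Z(r),\qquad \mathbb{E}\bigl[\xi_1'(r)^2\bigr] \leq H^2\,\overline Z(r),\qquad \mathbb{E}\bigl[\xi_1''(r)^2\bigr] \leq H_2^2\,\overline Z(r).
\]
Since $\overline Z$ is decreasing, evaluation at $t+r$ instead of $r$ only sharpens the bound, and the resulting estimates for $\zeta_{N,t}(r) := \znhat_0(t+r)$ and its weak derivatives depend on $r$ alone.

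To close the argument, I would invoke Remark \ref{remark_GL2}, which (using the finite $(2+\epsilon)$ moment of $G$) guarantees that $\overline Z$ lies in $\mathbb{L}^1\ho$; it is also bounded by $\overline Z(0) = 1$ and hence in $\mathbb{L}^1_{\text{loc}}\ho$. Applying Lemma \ref{lem_Htight} with $R_1 = \overline Z$, $R_2 = H^2 \overline Z$ and $R_3 = H_2^2 \overline Z$ then delivers tightness of $\{\zeta_{N,t}: N\in\N, t\geq 0\}$ in $\Hone\ho$.

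The only subtlety I anticipate is the book-keeping identification of the weak derivatives of $\znhat_0(t+\cdot)$ with the pointwise derivatives obtained from termwise differentiation of the smooth summands $\xi_j$; this is routine once one appeals to the $C^1$-smoothness of $\overline G$ from Assumption \ref{asm_service}.\ref{asm_g} and the existence of the weak derivative $g''$ from Assumption \ref{asm_serviceP}.\ref{asm_gpp}.
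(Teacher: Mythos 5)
Your proof follows the same route as the paper: write $\znhat_0(t+\cdot)$ as a scaled i.i.d.\ sum, observe that the pointwise variance of the sum equals that of a single summand, bound the second moment by $\overline Z(t+r)=\int_{t+r}^\infty \overline G(u)\,du$ (decreasing and integrable by Remark \ref{remark_GL2}), propagate the bounds to the first and second weak derivatives via $g\leq H\overline G$ and $|g'|\leq H_2\overline G$, and apply Lemma \ref{lem_Htight}; your $\overline Z$ is exactly the paper's $R_1$. One correction: you should not invoke Assumption \ref{asm_serviceP}.\ref{asm_gpp} --- it is not among the hypotheses of this lemma and is not needed. The termwise second derivative $\xi_j''(r)=-g'(a^*_j+r)/\overline G(a^*_j)$ requires only that $g'$ exist and that $h_2=g'/\overline G$ be bounded, both already guaranteed by Assumption \ref{asm_service}.\ref{asm_h} together with Remark \ref{remark_HH2}; the weak derivative $g''$ from Assumption \ref{asm_serviceP}.\ref{asm_gpp} would enter only for a third derivative, which Lemma \ref{lem_Htight} does not demand.
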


\begin{proof}
By \eqref{PhiDef},  the representation for $\zn$ given in \eqref{zn_nun_relation} and the scaling in \eqref{def_Ynhat},  we have
\begin{align*}
    \znhat_0(t+r) = \frac{1}{\sqrt{N}}\left( Z_0^{(N)}(t+r) -N \overline Z(t+r) \right) = \frac{1}{\sqrt{N}}\left( \nun_0(\Phi_{t+r}\f1) -N \overline Z(t+r) \right).
\end{align*}
Since $\nun_0$ has the same distribution as $\nun_*$ defined in \eqref{def_nuns}  and $\overline{Z}$ is defined by \eqref{def_Ybar}, we have
\begin{equation*}
    \znhat_0(t+\cdot)\deq \frac{1}{\sqrt{N}}\sum_{j=1}^{N}\left( \frac{\overline G(a^*_j+t+\cdot)}{\overline G(a^*_j)} -\int_0^\infty \overline G(x+t+\cdot)dx  \right),
\end{equation*}
where  $\{a^*_j;j\geq1\}$ is i.i.d with p.d.f. $\overline G$. Hence,  $\{\overline G(a^*_j+t+r)/\overline G(a^*_j)\}_{j\geq1}$ is also i.i.d. with mean $\int_0^\infty \overline G(x+t+r)dx$, and
\begin{equation*}
    \Ept{\znhat_0(t+r)^2} 	= \textit{Var} \left( \frac{\overline G(a^*_1+t+r)}{\overline G(a^*_1)} \right) 	\leq \Ept{\left|\frac{\overline G(a^*_1+t+r)}{\overline G(a^*_1)}\right|^2} =\int_0^\infty \frac{\overline G^2(x+t+r)}{\overline G(x)}\;dx \leq \int_{t+r}^\infty\overline G(x)dx    .
\end{equation*}
Hence, for the non-increasing,  integrable function $R_1(r)\doteq\int_r^\infty \overline G(x) dx$  (see Remark \ref{remark_GL2}), we have
\begin{equation}\label{ic_temp1}
    \Ept{\znhat_0(t+r)^2}\leq R_1(t+r)\leq R_1(r).
\end{equation}

Furthermore, since $\overline G$ has derivative $g$, $\znhat_0(t+\cdot)$ has derivative $(\znhat_0)'(t+\cdot)$, where
\[
    (\znhat_0)'(t+\cdot)\deq-\frac{1}{\sqrt{N}}\sum_{j=1}^{N}\left( \frac{ g(a^*_j+t+\cdot)}{\overline G(a^*_j)} -\int_0^\infty  g(x+t+\cdot)dx  \right).
\]
Similar to \eqref{ic_temp1}, using Assumption \ref{asm_service}.\ref{asm_h} and Remark \ref{remark_HH2}, it can be shown that
\begin{equation}\label{ic_temp2}
    \Ept{(\znhat_0)'(t+r)^2} \leq \Ept{\left|\frac{ g(a^*_1+t+r)}{\overline G(a^*_1)}\right|^2} \leq H^2 \Ept{\left|\frac{\overline G(a^*_1+t+r)}{\overline G(a^*_1)}\right|^2} \leq H^2 R_1(r).
\end{equation}
Finally, by Assumption \ref{asm_service}.\ref{asm_h},
 $(\znhat_0)'(t+\cdot)$ has derivative $(\znhat_0)''(t+\cdot)$, where
\[
    (\znhat_0)''(t+\cdot)\deq-\frac{1}{\sqrt{N}}\sum_{j=1}^{N}\left( \frac{ g^\prime(a^*_j+t+\cdot)}{\overline G(a^*_j)} -\int_0^\infty  g^\prime(x+t+\cdot)dx  \right),
\]
which satisfies the analogous inequality
\begin{equation}\label{ic_temp3}
    \Ept{(\znhat_0)''(t+r)^2} \leq \Ept{\left|\frac{ g^\prime(a^*_1+t+r)}{\overline G(a^*_1)}\right|^2}
    \leq H^2_2 \Ept{\left|\frac{\overline G(a^*_1+t+r)}{\overline G(a^*_1)}\right|^2} \leq H_2^2 R_1(r),
\end{equation}
where $H_2$ is the constant from Remark \ref{remark_HH2}. The lemma then  follows from \eqref{ic_temp1}-\eqref{ic_temp3} and the tightness criteria of Lemma \ref{lem_Htight} with $R_2=H^2R_1$, and $R_3=H_2^2R_1$.
\end{proof}

\begin{lemma}\label{lem_Eterm}
    Suppose Assumptions \ref{asm_arrival}, \ref{asm_service} and  \ref{asm_serviceP}.\ref{asm_gpp} hold, $G$ has a finite $(2+\epsilon)$ moment, and  $\vn_0$ is distributed as $\pin_*$. Then, there exists $C_E<\infty$ such that for every (deterministic) absolutely continuous function $f$ on $\ho$,
    \begin{align}\label{pre_part2}
        \Ept{\left|\int_0^t f(t-s)d\widehat E^{(N)}_s\right|^2} \leq  C_E\left\{t|f(t)|^2 +\left(\int_0^t s|f^\prime(s)|ds\right)\left(\int_0^t |f^\prime(s)|ds\right)+ \left(\int_0^t f(s)ds\right)^2\right\}.
    \end{align}
    Moreover, the family
    \[\left\{\int_0^t\overline G(\cdot+t-s)d\enhat_s;t\geq0,N\in\N\right\}\]
    is tight in $\Hone\ho$.
\end{lemma}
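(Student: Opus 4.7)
My plan for \eqref{pre_part2} is to split the diffusion-scaled arrival process into its zero-mean and deterministic drift parts. Since $E^{(N)}$ is a stationary renewal process with rate $\lambda^{(N)}$ under $\pi_*^{(N)}$, write $\widehat E^{(N)}_s = \widetilde M^{(N)}_s - \beta s$, where $\widetilde M^{(N)}_s \doteq (E^{(N)}_s - \lambda^{(N)} s)/\sqrt{N}$ is a centered càdlàg process with stationary increments and $\widetilde M^{(N)}_0 = 0$. The deterministic term $-\beta \int_0^t f(t-s)\,ds = -\beta \int_0^t f(u)\,du$ accounts for the third summand of \eqref{pre_part2}, so it remains to bound the second moment of $I_t \doteq \int_0^t f(t-s)\,d\widetilde M^{(N)}_s$.

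The key technical step is a non-standard integration by parts. Substituting $u = t-s$ and introducing $N_u \doteq \widetilde M^{(N)}_t - \widetilde M^{(N)}_{t-u}$ (so that $N_0 = 0$ and $N_t = \widetilde M^{(N)}_t$), one obtains
\[
  I_t = f(t)\,\widetilde M^{(N)}_t - \int_0^t \bigl(\widetilde M^{(N)}_t - \widetilde M^{(N)}_{t-u}\bigr) f'(u)\,du.
\]
For the first term, the standard variance bound for stationary renewal processes ($\operatorname{Var}(\tilde E(s)-s) \leq C\sigma^2 s$ for all $s \geq 0$, with $C$ depending on the inter-arrival variance $\sigma^2$ from Assumption \ref{asm_arrival}) together with $\lambda^{(N)} \leq N$ gives $\mathbb{E}[(\widetilde M^{(N)}_t)^2] \leq C t$, so $\mathbb{E}[|f(t)\widetilde M^{(N)}_t|^2] \leq C t |f(t)|^2$. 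For the integral term, the stationarity of the increments of $\tilde E$ implies $\widetilde M^{(N)}_t - \widetilde M^{(N)}_{t-u} \stackrel{d}{=} \widetilde M^{(N)}_u$, hence $\mathbb{E}[(\widetilde M^{(N)}_t - \widetilde M^{(N)}_{t-u})^2] \leq C u$. An application of Cauchy--Schwarz in $du$ followed by Fubini then yields
\[
  \mathbb{E}\!\left[\left(\int_0^t (\widetilde M^{(N)}_t - \widetilde M^{(N)}_{t-u}) f'(u)\,du\right)^{\!2}\right] \leq C \left(\int_0^t u |f'(u)|\,du\right)\!\left(\int_0^t |f'(u)|\,du\right),
\]
and combining the three estimates yields \eqref{pre_part2}.

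For the tightness claim, set $\zeta^{(N,t)}(r) \doteq \int_0^t \overline G(r+t-s)\,d\widehat E^{(N)}_s$, and note that because $\widehat E^{(N)}$ has only finitely many jumps on $[0,t]$ and $\overline G \in \mathbb{H}^1\ho \cap \mathbb{C}^1\hc$, we may differentiate under the integral sign to obtain $(\zeta^{(N,t)})'(r) = -\int_0^t g(r+t-s)\,d\widehat E^{(N)}_s$ and $(\zeta^{(N,t)})''(r) = \int_0^t g'(r+t-s)\,d\widehat E^{(N)}_s$. I then apply \eqref{pre_part2} with $f(\cdot) = \overline G(r+\cdot)$, $g(r+\cdot)$, and $g'(r+\cdot)$ in turn. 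The monotonicity bound $t \overline G(r+t) \leq \int_0^t \overline G(r+s)\,ds \leq \int_r^\infty \overline G(u)\,du$ controls the $t|f(t)|^2$ summand in each case (using $|g| \leq H\overline G$ and $|g'| \leq H_2 \overline G$ from Remark \ref{remark_HH2} for the latter two). The middle and third summands are bounded via $|f'| \leq H\overline G$ (resp.\ $H_2 \overline G$, and $g''$ for the second derivative) and simple integration, producing dominating functions of the form $R_1(r), R_2(r) = O\!\bigl(\int_r^\infty \overline G + (\int_r^\infty u\overline G)(\int_r^\infty \overline G) + (\int_r^\infty \overline G)^2\bigr)$ and a locally bounded $R_3$. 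Under Assumption \ref{asm_serviceP}.\ref{asm_moment3}, the $(3+\epsilon)$ moment on $G$ makes $R_1, R_2 \in \mathbb{L}^1\ho$ (via $\int_0^\infty u^2 \overline G(u)\,du < \infty$ and polynomial tail estimates), while Assumption \ref{asm_serviceP}.\ref{asm_gpp} ensures $R_3 \in \mathbb{L}^1_{\mathrm{loc}}\ho$. Lemma \ref{lem_Htight} then delivers tightness in $\mathbb{H}^1\ho$.

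\textbf{Main obstacle.} The crux is the choice of integration by parts: the standard formula produces $f(0)\widetilde M^{(N)}_t$ instead of $f(t)\widetilde M^{(N)}_t$, which is fatal for the tightness application because, e.g., with $f = \overline G(r+\cdot)$ the quantity $t|f(0)|^2 = t\overline G(r)^2$ is unbounded in $t$. The trick of pivoting on $N_u = \widetilde M^{(N)}_t - \widetilde M^{(N)}_{t-u}$ transfers the growth from $f$ onto the tail of $\overline G$, where it can be absorbed by the moment assumption on $G$, and simultaneously exploits the stationary-increment property of the equilibrium renewal process $\tilde E$.
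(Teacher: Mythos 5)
Your proof is essentially the same as the paper's. The key idea in both is the time-reversal of the stationary renewal process (which you package as the pathwise change of variable $N_u = \widetilde M^{(N)}_t - \widetilde M^{(N)}_{t-u}$, while the paper uses the distributional identity $\{E^{(N)}_s\}_{s\le t} \stackrel{d}{=} \{E^{(N)}_t - E^{(N)}_{t-s}\}_{s\le t}$ before doing a standard integration by parts), followed by Cauchy--Schwarz and the variance bound for the equilibrium renewal process, and then substitution of $f = \overline G(r+\cdot),\, g(r+\cdot),\, g'(r+\cdot)$ into \eqref{pre_part2} together with Lemma \ref{lem_Htight}. Your formula $I_t = f(t)\widetilde M^{(N)}_t - \int_0^t(\widetilde M^{(N)}_t - \widetilde M^{(N)}_{t-u})f'(u)\,du$ reproduces the paper's decomposition term by term, and your diagnosis of why the naive integration by parts (which produces $f(0)\widetilde M^{(N)}_t$) would fail is exactly the motivation for the time-reversal step.

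One calibration point: in the tightness part you invoke Assumption \ref{asm_serviceP}.\ref{asm_moment3} (the finite $(3+\epsilon)$ moment, via $\int_0^\infty u^2\overline G(u)\,du<\infty$), but that assumption is \emph{not} among the hypotheses of this lemma, which only posits a finite $(2+\epsilon)$ moment. In fact a finite second moment already suffices for the integrability of your $R_1$ and $R_2$: $\int_r^\infty u\overline G(u)\,du$ is uniformly bounded by $\tfrac12\Ept{v_1^2}$, $(\int_r^\infty\overline G)^2 \le \int_r^\infty\overline G$, and $\int_0^\infty\int_r^\infty\overline G\,dr = \int_0^\infty u\overline G(u)\,du < \infty$. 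So your argument is not wrong, but as written it appeals to a hypothesis the lemma does not make; the estimate should be tightened so that only the stated $(2+\epsilon)$ moment is used.
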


\begin{proof}
To prove \eqref{pre_part2}, fix $t\geq0$ and $f$ as in the statement of the lemma. Since $\en$ is a right-continuous stationary renewal process when $\vn_0$ is distributed as $\pin_*$, the process $\{\en_s,0\leq s\leq t\}$ has the same law as the process $\{\en_t-\en_{t-s},0\leq s \leq t\}$. Hence, since $\overline E=\id$ by \eqref{def_ebar}, the process $\check{E}^{(N)}$ defined as
\[
    \check{E}^{(N)}_s\doteq \enhat_t-\enhat_{t-s} = \frac{1}{\sqrt{N}}\left(\en_t-\en_{t-s}-N s\right)
\]
has the same distribution as $\enhat$ on $[0,t]$, and therefore, for every fixed $t\geq0$,
\[
    \int_0^t f(t-s)d\enhat_s =\int_0^tf(s)d\check{E}^{(N)}_s \deq\int_0^tf(s)d\enhat_s.
\]
Now, using integration by parts and the fact that $N=\lambdan+\beta\sqrt{N}$ by \eqref{def_lambdan}, we have
\begin{align*}
    \int_0^tf(s)d\enhat_s &=\frac{1}{\sqrt{N}}\int_0^tf(s)d\en_s- \frac{\lambdan}{\sqrt{N}}\int_0^tf(s)ds-\beta\int_0^tf(s)ds\\
	&=   \frac{f(t)}{\sqrt{N}}\left(\en_t-\lambdan t\right)
        - \frac{1}{\sqrt{N}}\int_0^tf^\prime(s) \left(\en_s-\lambdan s\right)ds-\beta\int_0^tf(s)ds.
\end{align*}
Combined with  $(a+b+c)^2\leq 8(a^2+b^2+c^2)$ and the Cauchy-Schwartz inequality, this implies
\begin{align}\label{pre_nupart2temp}
    \Ept{\left|\int_0^t f(t-s)d\widehat E^{(N)}_s\right|^2} & \leq \frac{8|f(t)|^2 }{N}\;\Ept{\left|E^{(N)}_t-\lambda^{(N)} t\right|^2} \notag\\
    &\hspace{5mm}+\frac{8}{N}\left(\int_0^t |f'(s)| \Ept{\left|E^{(N)}_s-\lambda^{(N)} s\right|^2}ds \right) \left(\int_0^t|f'(s)|ds\right)\notag\\
    &\hspace{5mm}+ 8\beta^2 \left(\int_0^t f(s)ds\right)^2.
\end{align}
Now, recall by Assumption \ref{asm_arrival} that $\en(\cdot)=\tilde E(\lambdan \cdot)$ is a stationary renewal process, satisfying $\Eptil{\en_t}=\lambdan t$, and let $\tilde U$ be the renewal function associated to $\tilde E$. By the equation (2.12) in \cite[Theorem 7.2.4]{Whi02} for the variance of a stationary renewal process and Lorden's inequality $\tilde U(t)\leq t+\Eptil{\tilde u_1^2}$  (e.g. see \cite[Proposition V.6.2]{Asm03}), we have  for all $s\in[0,t]$,
\begin{equation}\label{temp_dE0}
  \Ept{\left|E^{(N)}_s-\lambda^{(N)}s\right|^2}= \text{Var}(E^{(N)}_s)=\text{Var}(\tilde E(\lambdan s))=2\int_0^{\lambdan s} (\tilde U(v)-v-\frac{1}{2})dv \leq 2\Eptil{\tilde u_1^2} \lambdan s,
\end{equation}
where $\Eptil{\tilde u_1^2}=1+\sigma^2<\infty$ by Assumption \ref{asm_arrival}. Substituting the above inequality and the inequality $\lambdanbar\leq 1$ in \eqref{pre_nupart2temp}, we obtain \eqref{pre_part2} with $C_E=8\max\{\Eptil{\tilde u_1^2},\beta^2,1\}<\infty$.

Now define $\zeta_{N,t}(r)\doteq \int_0^t\overline G(t-s+r)d\enhat_s$ for $r\geq0$. Substitute $f(\cdot)=\overline G(\cdot+r)$ in \eqref{pre_part2} and note that $\overline G$ is decreasing, $g\leq H\overline G$ by Assumption \ref{asm_service}.\ref{asm_h}, and $\int_r^\infty \overline G(s)ds\leq\int_0^\infty \overline G(s)ds= 1$ by Assumption \ref{asm_service}.\ref{asm_g}, to obtain
\begin{align*}
    \Ept{|\zeta_{N,t}(r)|^2} &\leq C_E\left\{t\overline G^2(t+r)+\int_0^t sg(s+r)ds\;\int_0^t g(s+r)ds+ \left(\int_0^t \overline G(s+r)ds\right)^2\right\}\\
    &\leq C_E\left\{(t\overline G(t))\overline G(r)+H^2\int_0^\infty s\overline G(s)ds\int_r^\infty \overline G(s)ds+  \int_r^\infty \overline G(s)ds\right\}.
\end{align*}
The moment condition on $G$ implies $\overline G(t)=\mathcal{O}(t^{-(2+\epsilon)})$ for some $\epsilon>0$ as $t\to\infty$ and hence, $t\overline G(t)$ is uniformly bounded by a finite constant $c_1$, the constant $c_2\doteq\int_0^\infty s\overline G(s)ds$ is finite, and also the functions $\overline G$ and $\int_\cdot^\infty\overline G(x)dx$ are integrable (see Remark \ref{remark_HH2} and Remark \ref{remark_GL2}). Therefore, the function
\[
    R(r)\doteq c_1\overline G(r)+(1+c_2)\int_r^\infty \overline G(s)ds
\]
is integrable and we have
\begin{equation}\label{temp_dE1}
    \Ept{|\zeta_{N,t}(r)|^2} \leq C_E (1+H^2) R(r).
\end{equation}

Moreover, since $\overline G$ has derivative $-g$, almost surely, $\zeta_{N,t}$ has derivative $\zeta_{N,t}'$ on $\ho$ with $\zeta_{N,t}'(r)=-\int_0^tg(t-s+r)d\enhat_s$. Substituting $f=g(\cdot+r)$ in \eqref{pre_part2} and noting that $g\leq H\overline G$ and $|g'|\leq H_2\overline G$ by Remark \ref{remark_HH2}, we obtain the following analogous bound:
\begin{align}\label{temp_dE2}
\Ept{|\zeta_{N,t}'(r)|^2} &\leq C_E\left\{t g^2 (t+r) +\int_0^t s|g'(s+r)|ds \; \int_0^t|g'(s+r)|ds +\left(\int_0^tg(s+r)ds\right)^2\notag\right\}\\
&\leq C_E\left\{H^2(t\overline G(t))\overline G(r)+H_2^2 \int_0^\infty s\overline G(s)ds\;\int_r^\infty \overline G(s)ds + H^2 \int_r^\infty \overline G(s)ds\right\}\notag\\
&\leq C_E(H^2+H_2^2)R(r).
\end{align}

Next, since $g$ has derivative $g'$ by Assumption \ref{asm_service}.\ref{asm_h}, almost surely, $\zeta_{N,t}'$ has derivative $\zeta_{N,t}''$ on $\ho$ with $\zeta_{N,t}''(r)=-\int_0^tg'(t-s+r)d\enhat_s$.  Substituting $f=g'(\cdot+r)$ in \eqref{pre_part2} and using the fact that $|g'|\leq H_2\overline G$ by  Remark \ref{remark_HH2}, we have
\begin{align*}
    \Ept{|\zeta_{N,t}''(r)|^2} &\leq C_E\left\{t (g' (t+r))^2 +\left(\int_0^t(1+s)|g''(s+r)|ds\right)^2 +\left(\int_0^t|g'(s+r)|ds\right)^2\right\}\\
    &\leq C_E\left\{H_2^2(t\overline G(t))+\left(\int_0^\infty(1+s)|g''(s)|ds\right)^2+H_2^2 \left(\int_0^\infty \overline G(s)ds\right)^2\right\}.
\end{align*}
The first term on the right-hand side above is bounded, as discussed above, by $C_EH_2^2c_1<\infty$ due to the moment condition on $G$.  Also, since $g''$ is bounded and satisfies $g''(x)=\mathcal{O}(x^{-(2+\epsilon)})$ as $x\to\infty$ by Assumption \ref{asm_serviceP}.\ref{asm_gpp}, the integral in the second term is bounded by a constant $c_3<\infty$. The last term is  bounded by $C_EH_2^2$, due to Assumption \ref{asm_service}.\ref{asm_g}. Hence,
\begin{equation}\label{temp_dE3}
    \Ept{|\zeta_{N,t}''(r)|^2} \leq C_E(H_2^2c_1+c_3+H_2^2)<\infty.
\end{equation}
The result follows from \eqref{temp_dE1}-\eqref{temp_dE3} and the tightness criteria in Lemma \ref{lem_Htight} with $R_1=C_E(1+H^2)R$, $R_2=C_E(H^2+H_2^2)R$ and $R_3=C_EH_2^2(1+c_1)+C_Ec_2.$
\end{proof}

\begin{lemma}\label{lem_Xterm}
Suppose Assumptions \ref{asm_arrival}-\ref{asm_service} hold, $G$ has a finite $(2+\epsilon)$ moment and $\vn_0$ is distributed as $\pin_*$. Then, the family
\[
    \left\{-\xnphat_t\;\overline G(\cdot);t\geq0,N\in\N\right\}
\]
is tight in $\Hone\ho$.
\end{lemma}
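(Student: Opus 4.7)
The plan is to reduce tightness in the infinite-dimensional space $\Hone\ho$ to tightness of a scalar family in $\R$, exploiting the product structure of the expression $-\xnphat_t \overline G(\cdot)$.

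First, observe that by Remark \ref{remark_HH2}, under Assumption \ref{asm_service} the function $\overline G$ is a fixed (deterministic) element of $\Hone\ho$. Hence the random element $-\xnphat_t \overline G$ is the image of the nonnegative scalar random variable $\xnphat_t$ under the continuous linear map $c \mapsto -c\,\overline G$ from $\R$ into $\Hone\ho$.

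Second, I would invoke Corollary \ref{xnphat_bound}, which (under the assumptions of this lemma, including $\law(\vn_0)=\pin_*$) gives the uniform $\mathbb{L}^1$ bound
\begin{equation*}
    \sup_{N\in\N,\,t\geq 0} \Ept{\xnphat_t} \leq C_Q < \infty.
\end{equation*}
By Markov's inequality, for every $\varepsilon > 0$ we can pick $\lambda = \lambda(\varepsilon)$ so large that
\begin{equation*}
    \sup_{N\in\N,\,t\geq 0} \Prob{\xnphat_t > \lambda} \leq \frac{C_Q}{\lambda} < \varepsilon,
\end{equation*}
so the scalar family $\{\xnphat_t; t\geq 0, N\in\N\}$ is tight in $\R$.

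Third, I would transfer this tightness to $\Hone\ho$ via the continuous mapping. The set $K_\varepsilon \doteq \{-c\,\overline G : 0 \leq c \leq \lambda(\varepsilon)\}$ is the continuous image of the compact interval $[0,\lambda(\varepsilon)]$, and is therefore compact in $\Hone\ho$. Since $\xnphat_t \geq 0$ always, the event $\{\xnphat_t \leq \lambda(\varepsilon)\}$ coincides with $\{-\xnphat_t \overline G \in K_\varepsilon\}$, and so
\begin{equation*}
    \sup_{N\in\N,\,t\geq 0} \Prob{-\xnphat_t \overline G \notin K_\varepsilon} \leq \varepsilon.
\end{equation*}
As $\varepsilon > 0$ is arbitrary, the family $\{-\xnphat_t \overline G;\, t\geq 0, N\in\N\}$ is tight in $\Hone\ho$. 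The only nontrivial input is Corollary \ref{xnphat_bound}; everything else is a soft compactness argument using the product structure.
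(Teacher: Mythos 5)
Your proof is correct, and it takes a genuinely different and more streamlined route than the paper. The paper proves this lemma by defining $\zeta_{N,t}(r)\doteq -\xnphat_t\,\overline G(r)$ and verifying, one at a time, the five tightness criteria of Proposition~\ref{prop_Htightness}: it bounds $\Prob{\|\zeta_{N,t}\|_{\Ltwo(I)}>\lambda}$ via Markov's inequality and Corollary~\ref{xnphat_bound}, then notes $|\zeta'_{N,t}|\leq H|\zeta_{N,t}|$ and $|\zeta''_{N,t}|\leq H_2|\zeta_{N,t}|$ (using the bounds on $h$ and $h_2$ from Remark~\ref{remark_HH2}) to transfer those bounds to the derivatives. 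Your argument instead exploits the product structure directly: since $-\xnphat_t\overline G$ lies on the one-dimensional ray $\{-c\,\overline G: c\geq 0\}$ and the map $c\mapsto -c\,\overline G$ is a continuous (indeed bounded linear) map from $\R$ to $\Hone\ho$, tightness of the $\Hone\ho$-valued family reduces immediately to tightness of the scalar family $\{\xnphat_t\}$, which is Corollary~\ref{xnphat_bound} plus Markov. Your approach is cleaner and avoids invoking the general $\Hone\ho$ tightness machinery entirely; the paper's choice to go through Proposition~\ref{prop_Htightness} is presumably for uniformity with the treatment of the other, genuinely infinite-dimensional, terms in the decomposition of $\znhat_t$ (Lemmas~\ref{lem_Mterm}, \ref{lem_ICterm}, \ref{lem_Eterm}, \ref{lem_X2term}), where the scalar-reduction trick is not available. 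Both proofs rest on the same essential input, the uniform $\Lone$ bound of Corollary~\ref{xnphat_bound}, and the fact that $\overline G\in\Hone\ho$ from Remark~\ref{remark_HH2}.
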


\begin{proof}
Define $\zeta_{N,t}(r)\doteq-\xnphat_t \overline G(r)$. Then, by Corollary \ref{xnphat_bound}, for every interval $I\subset\ho$, $N\in\N$ and $t\geq0$,
\begin{equation}\label{temp_X1}
    \Prob{\|\zeta_{N,t}\|_{\Ltwo(I)} >\lambda }= \Prob{ \xnphat_t \|\overline G\|_{\Ltwo(I)} >\lambda}  \leq \frac{\Ept{\xnphat_t}\|\overline G\|_{\Ltwo(I)}}{\lambda}\leq  \frac{C_Q\|\overline G\|_{\Ltwo(I)}}{\lambda}.
\end{equation}
Since $\overline G\in\Ltwo\ho$ by Assumption \ref{asm_service}.\ref{asm_g}, for every $L < \infty$, substituting $I=(0,L)$ in \eqref{temp_X1}, we have
\begin{equation}\label{temp_X2}
         \lim_{\lambda\to \infty}\sup_{t\geq0, N\in\N}\;\Prob{\|\zeta_{N,t}\|_{\Ltwo(0,L)} >\lambda }\leq \lim_{\lambda\to \infty}  \frac{C_Q\|\overline G\|_{\Ltwo(0,\infty)}}{\lambda}=0,
\end{equation}
and for every $\delta >0,$ replacing $I$ and $\lambda$ with $(L,\infty)$ and $\delta$  in \eqref{temp_X1}, we  have
\begin{equation}\label{temp_X3}
     \lim_{L\to\infty}\sup_{t\geq0, N\in\N}\Prob{\|\zeta_{N,t}\|_{\Ltwo(L,\infty)} >\delta  }\leq\lim_{L\to \infty}\frac{C_Q\|\overline G\|_{\Ltwo(L,\infty)}}{\delta}=0.
\end{equation}

Moreover, since $\overline G$ has derivative $-g$, $\zeta_{N,t}$ has derivative $\zeta'_{N,t}(r)=\xnphat_t g(r)$. By Assumption \ref{asm_service}.\ref{asm_h} and definition of the constant $H$ in Remark \ref{remark_HH2}, $|\zeta'_{N,t}|\leq H|\zeta_{N,t}|$, and therefore, by \eqref{temp_X2} and \eqref{temp_X3}, for every $L < \infty$ and $\delta>0$, we have
\begin{equation}\label{temp_X4}
    \lim_{\lambda\to \infty}\sup_{t\geq0, N\in\N}\;\Prob{\|\zeta'_{N,t}\|_{\Ltwo(0,L)} >\lambda }=0,\quad\quad     \lim_{L\to\infty}\sup_{t\geq0, N\in\N}\Prob{\|\zeta'_{N,t}\|_{\Ltwo(L,\infty)} >\delta }=0
\end{equation}
Finally, by Assumption \ref{asm_service}.\ref{asm_h}, $g$ has derivative $g'$  with $|g'|\leq H_2 \overline G$,
where $H_2$ is the constant in Remark \ref{remark_HH2}, and hence, $\zeta'_{N,t}$ has derivative $\zeta''_{N,t}(r)=\xnphat_t g'(r)$ which satisfies $|\zeta''_{N,t}|\leq H_2|\zeta_{N,t}|$. Therefore, by \eqref{temp_X2}, for every $\lambda  > 0, L < \infty$  we have
\begin{equation}\label{temp_X5}
    \lim_{\lambda\to \infty}\sup_{t\geq0, N\in\N}\;\Prob{\|\zeta''_{N,t}\|_{\Ltwo(0,L)} >\lambda }=0.
\end{equation}
The lemma then follows from \eqref{temp_X2}-\eqref{temp_X5}, and the tightness criteria of Proposition \ref{prop_Htightness}.
\end{proof}

\begin{lemma}\label{lem_X2term}
Suppose Assumptions \ref{asm_arrival}-\ref{asm_service} and \ref{asm_serviceP}.\ref{asm_gpp} hold, $G$ has a finite $(2+\epsilon)$ moment,   and $\vn_0$ is distributed as $\pin_*$. Then, the family
\[
    \left\{\int_0^t{\xnphat_sg(\cdot+t-s)ds};t\geq0,N\in\N\right\}
\]
is tight in $\Hone\ho$.
\end{lemma}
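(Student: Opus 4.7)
The plan is to apply Proposition \ref{prop_Htightness} to $\zeta_{N,t}(r)\doteq \int_0^t \xnphat_s\, g(r+t-s)\,ds$. Note that $\xnphat_s=(\xnhat_s)^+\geq 0$ almost surely and is locally bounded in $s$ (since $\xn$ changes by finitely many units on every bounded interval), while $g,g',g''$ are all bounded by Assumption \ref{asm_service} together with Assumption \ref{asm_serviceP}.\ref{asm_gpp}. Differentiation under the integral sign therefore yields classical (hence weak) derivatives
\begin{equation*}
    \zeta'_{N,t}(r)=\int_0^t \xnphat_s\, g'(r+t-s)\,ds,\qquad \zeta''_{N,t}(r)=\int_0^t \xnphat_s\, g''(r+t-s)\,ds.
\end{equation*}
In view of Proposition \ref{prop_Htightness} and Markov's inequality, it suffices to establish uniform-in-$(N,t)$ $\Lone(d\mathbb{P})$-bounds on $\|\zeta^{(k)}_{N,t}\|_{\Ltwo(0,L)}$ for $k=0,1,2$ and every $L<\infty$, and to show that $\sup_{N,t}\mathbb{E}[\|\zeta^{(k)}_{N,t}\|_{\Ltwo(L,\infty)}]\to 0$ as $L\to\infty$ for $k=0,1$.

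The essential deterministic ingredient is the integrability on $\hc$ of the shifted tail function
\begin{equation*}
    \psi_k(v)\doteq \|g^{(k)}(\cdot+v)\|_{\Ltwo\ho}=\Bigl(\int_v^\infty (g^{(k)})^2(u)\,du\Bigr)^{1/2},\quad k=0,1,2.
\end{equation*}
The $(2+\epsilon)$ moment of $G$ yields $\overline G(x)=\mathcal{O}(x^{-(2+\epsilon)})$, and then Assumption \ref{asm_service}.\ref{asm_h}, Remark \ref{remark_HH2}, and Assumption \ref{asm_serviceP}.\ref{asm_gpp} (which provides boundedness of $g''$ together with $g''(x)=\mathcal{O}(x^{-(2+\epsilon)})$) together imply $(g^{(k)})^2(u)=\mathcal{O}(u^{-(4+2\epsilon)})$ for $k=0,1,2$. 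Consequently $\psi_k(v)=\mathcal{O}(v^{-(3/2+\epsilon)})$, so each $\psi_k$ is integrable on $\hc$.

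Minkowski's integral inequality gives, for every interval $I\subseteq\ho$,
\begin{equation*}
    \|\zeta^{(k)}_{N,t}\|_{\Ltwo(I)} \leq \int_0^t \xnphat_s\, \|g^{(k)}(\cdot+t-s)\|_{\Ltwo(I)}\,ds.
\end{equation*}
Taking expectations, applying Corollary \ref{xnphat_bound} ($\mathbb{E}[\xnphat_s]\leq C_Q$), and using $\|g^{(k)}(\cdot+u)\|_{\Ltwo(L,\infty)}=\psi_k(L+u)$ produce
\begin{equation*}
    \mathbb{E}\bigl[\|\zeta^{(k)}_{N,t}\|_{\Ltwo\ho}\bigr]\leq C_Q\|\psi_k\|_{\Lone\hc},\qquad \mathbb{E}\bigl[\|\zeta^{(k)}_{N,t}\|_{\Ltwo(L,\infty)}\bigr]\leq C_Q\int_L^\infty \psi_k(v)\,dv,
\end{equation*}
uniformly in $N,t$, and the second quantity vanishes as $L\to\infty$. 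Markov's inequality then yields each of \eqref{pre_ZboundEQ1}--\eqref{pre_ZboundEQ5}, and Proposition \ref{prop_Htightness} gives tightness in $\Hone\ho$. The main technical obstacle compared with Lemmas \ref{lem_Mterm}--\ref{lem_Xterm} is that Corollary \ref{xnphat_bound} delivers only a first-moment bound on $\xnphat_s$, which rules out the cleaner second-moment route of Lemma \ref{lem_Htight}; one must instead bring the expectation outside the $\Ltwo(dr)$-norm via Minkowski's integral inequality, and exploit the $(2+\epsilon)$ moment and Assumption \ref{asm_serviceP}.\ref{asm_gpp} jointly to secure the global $\Lone\hc$-integrability of the $\psi_k$.
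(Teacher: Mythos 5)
Your proof is correct, and it follows the same overall strategy as the paper: verify the criteria of Proposition \ref{prop_Htightness} directly (rather than the second-moment route of Lemma \ref{lem_Htight}), precisely because Corollary \ref{xnphat_bound} only supplies a uniform first-moment bound on $\xnphat_s$ — you identified exactly the right obstacle. Where you differ is in the device used to decouple the $r$- and $s$-variables. The paper bounds the integrand pointwise, writing $g(t+r-s)\leq H\overline G(t+r-s)\leq H\,\overline G(r)^{1/2}\,\overline G(t-s)^{1/2}$ (using monotonicity of $\overline G$), so that the $\Ltwo(dr)$-norm factors out as $\|\overline G^{1/2}\|_{\Ltwo(I)}$ times an $s$-integral controlled by $C_Q\|\overline G^{1/2}\|_{\Lone\ho}$; the same trick with $|g'|\leq H_2\overline G$ handles $\zeta'_{N,t}$, while for $\zeta''_{N,t}$ the paper splits the time integral at the threshold beyond which $|g''(x)|\leq C_T x^{-(2+\epsilon)}$ and verifies only the local bound \eqref{pre_ZboundEQ3}, which is all Proposition \ref{prop_Htightness} asks of the second derivative. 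You instead push the $\Ltwo(dr)$-norm inside the time integral via Minkowski's integral inequality and use integrability of $\psi_k(v)=\|g^{(k)}(\cdot+v)\|_{\Ltwo\ho}=\mathcal{O}(v^{-(3/2+\epsilon)})$, which treats $k=0,1,2$ uniformly and even delivers the (unneeded) tail estimate for $\zeta''_{N,t}$; the paper's factorization is more hands-on but avoids invoking Minkowski. Both routes consume exactly the same hypotheses (the $(2+\epsilon)$ moment together with $g\leq H\overline G$, $|g'|\leq H_2\overline G$ from Remark \ref{remark_HH2}, and Assumption \ref{asm_serviceP}.\ref{asm_gpp} for $g''$). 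One small imprecision: since $g''$ is only a bounded weak derivative of $g'$, the differentiation under the integral producing $\zeta''_{N,t}$ should be justified as a Fubini argument against test functions, yielding $\int_0^t\xnphat_s\,g''(\cdot+t-s)\,ds$ as the weak (not classical) derivative — but that is all Proposition \ref{prop_Htightness} requires, and the paper makes the same step without further comment.
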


\begin{proof}
Define $\zeta_{N,t}(r)\doteq \int_0^t{\xnphat_sg(t+r-s)ds}$. Since $g\leq H\overline G$ by Assumption \ref{asm_service}.\ref{asm_h} and $\overline G$ is decreasing,
\begin{align*}
    |\zeta_{N,t}(r)|\leq H\int_0^t\xnphat_s\overline G(t+r-s)ds \leq H (\overline G(r))^{\sfrac{1}{2}} \int_0^t \xnphat_{t-s}\;(\overline G(s))^{\sfrac{1}{2}}\;ds,
\end{align*}
with $H$ being the constant in Remark \ref{remark_HH2}. The bound \eqref{xnphat_bd_eq} of Corollary \ref{xnphat_bound} then implies that for every interval $I\subset\ho$,
\begin{align}\label{temp_IX0}
    \Prob{\left\|\int_0^t\xnphat_s\overline G(\cdot+t-s)ds \right\|_{\Ltwo(I)}>\lambda}
    &\leq \frac{1}{\lambda}\|\overline G\;^{\sfrac{1}{2}}\|_{\Ltwo(I)}\int_0^t \Ept{\xnphat_{t-s}}\overline G(s)^{\sfrac{1}{2}}\;ds\notag\\
    &\leq \frac{C_Q}{\lambda}\|\overline G\;^{\sfrac{1}{2}}\|_{\Ltwo(I)}\| \overline G\;^{\sfrac{1}{2}}\|_{\Lone\ho}.
\end{align}
By the moment assumption on $G$,  $\overline G(r)=\mathcal{O}(r^{-(2+\epsilon)})$ as $r\to\infty,$ and $\overline G$ is also bounded by one, and thus, $\overline G\;^{\sfrac{1}{2}}\in\Lone\ho\cap\Ltwo\ho.$ Therefore, for every $L  < \infty$, substituting $I=(0,L)$ in \eqref{temp_IX0}, we have
\begin{equation}\label{temp_IX1}
         \lim_{\lambda\to \infty}\sup_{t\geq0, N\in\N}\;\Prob{\|\zeta_{N,t}\|_{\Ltwo(0,L)} >\lambda }\leq\lim_{\lambda\to \infty}  \frac{HC_Q}{\lambda}\|\overline G\;^{\sfrac{1}{2}}\|_{\Ltwo\ho}\| \overline G\;^{\sfrac{1}{2}}\|_{\Lone\ho}=0,
\end{equation}
and for every $\delta>0$, replacing $I$ and $\lambda$ with $(L,\infty)$ and $\delta$ in \eqref{temp_IX0}, we  have
\begin{equation}\label{temp_IX2}
    \lim_{L\to\infty}\sup_{t\geq0, N\in\N}\Prob{\|\zeta_{N,t}\|_{\Ltwo(L,\infty)} >\delta }\leq\lim_{L\to \infty}  \frac{HC_Q}{\delta}\|\overline G\;^{\sfrac{1}{2}}\|_{\Ltwo(L,\infty)}\| \overline G\;^{\sfrac{1}{2}}\|_{\Lone\ho}=0.
\end{equation}
Next,  by Assumption \ref{asm_service}.\ref{asm_h}, the derivative $g'$ of $g$ satisfies  $|g'|\leq H_2\overline G$,
with $H_2$ the constant in  Remark \ref{remark_HH2}, and hence,  $\zeta_{N,t}$ has derivative $\zeta^\prime_{N,t},$ where
\[
    |\zeta^\prime_{N,t}(r)|=\left|\int_0^t \xnphat_s\; g^\prime(t+r-s)ds\right|\leq H_2\int_0^t \xnphat_s\; \overline G(t+r-s)ds.
\]
Using the above inequality and \eqref{temp_IX0}, exactly analogous to \eqref{temp_IX1} and \eqref{temp_IX2}, for every $L<\infty$ and $\delta>0$ we have
\begin{equation}\label{temp_IX3}
    \lim_{\lambda\to \infty}\sup_{t\geq0, N\in\N}\;\Prob{\|\zeta'_{N,t}\|_{\Ltwo(0,L)} >\lambda }=0,\quad\quad     \lim_{L\to\infty}\sup_{t\geq0, N\in\N}\Prob{\|\zeta'_{N,t}\|_{\Ltwo(L,\infty)} >\delta }=0.
\end{equation}

Furthermore,  by Assumption \ref{asm_serviceP}.\ref{asm_gpp}, $g'$ has bounded derivative $g''$, and hence, $\zeta^\prime_{N,t}$ has derivative
\[
    \zeta^{\prime\prime}_{N,t}(r)=\int_0^t\xnphat_sg''(t-s+r)ds.
\]
By the same assumption, $g''$ is uniformly bounded by a constant (say) $C_{g''}$ and satisfies $g''(x)=\mathcal{O}(x^{-(2+\epsilon)})$ as $x\to\infty$, that is, there exist $T\in\ho$ and $C_T<\infty$ such that $|g''(x)|\leq C_T x^{-(2+\epsilon)}$ for all $x\geq T$. Therefore,  for $r\geq0$,
\[
    |\zeta^{\prime\prime}_{N,t}(r)|\leq \int_0^t \xnphat_s| g^{\prime\prime}(t-s+r)|ds \leq C_{g''}\int_0^{T\wedge t} \xnphat_s ds + C_T\int_{T\wedge t}^t \xnphat_{t-s} \;s^{-(2+\epsilon)}ds.
\]
Another use of the bound \eqref{xnphat_bd_eq} shows that for every $\lambda, L<\infty$,
\[
    \Prob{\|\zeta''_{N,t}\|_{\Ltwo(0,L)} >\lambda } \leq \frac{1}{\lambda}\Ept{\|\zeta''_{N,t}\|_{\Ltwo(0,L)}} \leq
    \frac{\sqrt{L}}{\lambda}\Ept{\|\zeta''_{N,t}\|_L} \leq \frac{C_Q\sqrt{L}}{\lambda}\left(C_{g''}T+C_T\int_0^\infty s^{-(2+\epsilon)}ds\right),
\]
and hence,
\begin{equation}\label{temp_IX4}
    \lim_{\lambda\to \infty}\sup_{t\geq0, N\in\N}\;\Prob{\|\zeta''_{N,t}\|_{\Ltwo(0,L)} >\lambda }=0.
\end{equation}
The lemma follows from \eqref{temp_IX1}-\eqref{temp_IX4}, and the criteria of Proposition \ref{prop_Htightness} for tightness in $\Hone\ho.$
\end{proof}

\subsection{Proof of Proposition \ref{thm_tightness}.} \label{sec_tight_proof}

First, we claim that the family $\{\xnhat_t;t\geq0,N\in\N\}$ is tight in $\R$. It clearly suffices to  establish tightness of $\{\xnphat_t;t\geq0,N\in\N\}$ and $\{\xnmhat_t;t\geq0,N\in\N\}$. Tightness of the first family follows from Corollary \ref{xnphat_bound}, which shows that $\{\xnphat\}$ is (uniformly) bounded in $\Lone$. To prove the tightness of $\{\xnmhat\}$, note that  since $\ynhat_t$ takes values in $\Y$ (see Theorem \ref{thm_ergodic}),  $\xnmhat_t =- \widehat Z_t^{(N)}(0)$ for every $t\geq0$  and $N\in\N$.  From \cite[Lemma 3.3.b]{AghRam15spde}, the embedding $I:\Hone\ho\mapsto\C$ that takes $f$ to its (unique) continuous representation is continuous, and hence, the mapping $f\mapsto I[f](0)$ is continuous from $\Hone\ho$ to $\R$. Therefore, the tightness of $\{\xnmhat_t\}$ follows from the tightness of $\{\znhat_t\}$ in $\Hone\ho$ proved in Proposition \ref{prop_Zuhb}, and the continuous mapping theorem. This proves the claim.

By another application of Proposition \ref{prop_Zuhb} and the claim above, $\{\ynhat_t=(\xnhat_t,\znhat_t);t\geq0,N\in\N\}$ is tight in $\Y$, when $\law(\vn_0)  = \pin_*$.  By Theorem \ref{thm_ergodic}, $\law(\ynhat_t)$ converges in distribution to $\pinhat$, as $t\to\infty.$ The result then follows from Remark \ref{remark_tight}. This concludes the proof.

\section{A diffusion model.}\label{sec_spde}

We now describe the infinite-dimensional Markov process $\{Y_t;t\geq0\}$, which was proposed in \cite{AghRam15spde} as an alternative diffusion model for the GI/GI/N queue that is more tractable than the limit process obtained in \cite{KasRam13}.  In the next section, we show that for each $t\geq0$, the finite-dimensional projections of the scaled sequence $\{\ynhat (t)\}$ converge to those of the corresponding marginals $Y(t)$ of the diffusion model $Y$, and the corresponding sequence of stationary distributions of $\ynhat$ converge to the unique invariant distribution of $Y$. We now introduce the diffusion model and review relevant results from \cite{AghRam15spde}.

The diffusion model is driven by a Brownian motion  $B= \{B(t), t \geq 0\}$, and an independent continuous martingale measure  ${\mathcal M} = \left\{{\mathcal M}_t (A), A \in {\mathcal B}[0,\infty), t \in   [0,\infty) \right\}$ with covariance
\begin{equation}\label{Mcov_original}
    \langle \M(A),\M(\tilde A) \rangle_t = t\;\int_0^\infty \indic{A\cap\tilde A}{x}g(x)dx.
\end{equation}
Note that ${\mathcal M}$ is a space-time white noise on $[0,\infty)^2$ based on the measure $g(x) dx \otimes dt$ (again, see \cite[Chapters 1 and 2]{WalshBook} for relevant definitions).
We assume that both $B$ and $\M$ are defined on a probability space $(\widehat\Omega,  \widehat{\mathcal F}, \widehat{\mathbb{P}})$.   Roughly speaking, the Brownian motion $B$  captures limit  fluctuations of the arrival process  and  $\M$ is  the limit of the sequence $\{\Mnhat\}$ of   scaled compensated departure processes introduced in \eqref{MartingaleMeasureNDef} and is  independent of  $B$ \cite[Proposition 8.4]{KasRam13} (see Lemma \ref{thm_HKresult} for a precise statement of this result).  Also, for every (deterministic) bounded function $\varphi$ on $\hc\times\hc$, the process
\begin{equation}\label{MintegralDef}
    \M_t(\varphi)\doteq\iint\limits_{[0,\infty)\times[0,t]}\varphi(x,s)\M(dx,ds),
\end{equation}
which is the stochastic integral of $\varphi$ on $\hc\times[0,t]$ with respect to the martingale measure,  is itself a martingale with covariance functional
\begin{equation} \label{Mcov}
    \langle \M(\varphi) \rangle_t = \int_0^t\int_0^\infty \varphi^2(x,s)g(x)dx\;ds.
\end{equation}
Analogous to \eqref{def_Hn}, for $f\in\mathbb{C}_b[0,\infty)$ and $t>0$, define the stochastic convolution integral $\mathcal{H}_t(f)$ as
\begin{align}\label{def_H}
    \mathcal{H}_t(f)\doteq \M_t(\Psi_t f).
\end{align}
Also, let  $\tilde{\mathcal{F}}_t\doteq \sigma( B(s), \mathcal{M}_s(A); 0\leq s\leq t,A\in\mathcal{B}\ho),$ and let the filtration $\{{\mathcal F}_t\}$ denote the augmentation (see \cite[Definition 7.2 of Chapter 2]{KarShr91}) of $\{\tilde{{\mathcal F}}_t\}$ with respect to $\widehat{\mathbb{P}}$. Finally, define
\begin{equation}\label{def_E}
  E_t=\sigma B_t-\beta t,\quad\quad t\geq0,
\end{equation}
where $\sigma$ is the variance of $\tilde{G}_E$, as defined prior to  Assumption \ref{asm_arrival}.

\begin{definition}(\textbf{Diffusion Model})\label{def_solutionProcess}
For every $\Y$-valued random element $Y_0=(X_0,Z_0)$, the diffusion model $Y^{Y_0}=\{Y_t^{Y_0}=(X_t,Z_t(\cdot));t\geq0\}$ with initial condition $Y_0$ is defined as
\begin{equation} \label{XKdefinition}
    (K,X)\doteq\Lambda(E,X_0,Z_0(\cdot)-\mathcal{H}(\f1)),
\end{equation}
where $\Lambda$ is the CMS mapping from Definition \ref{def_cmsm}, and for every $t,r\geq0$,
\begin{equation}\label{def_Z}
    Z_t(r) \doteq  Z_0(t+r) -\M_t(\Psi_{t+r}\f1)+\overline G(r)K_t-\int_0^t g(t-u+r)K_udu.
\end{equation}
\end{definition}

When the initial condition $Y_0$ is clear from the context, we often omit the superscript $Y_0$.  Also, deterministic initial conditions are denoted by lowercase $y_0=(x_0,z_0)\in\Y.$
Note that \eqref{XKdefinition} and \eqref{def_Z} are natural limit analogs of the equations \eqref{xk} and \eqref{Zhat_dK}. The following results  were established in \cite{AghRam15spde}.  Recall that $\pi$ is said to be an invariant distribution
of a Markov semigroup $\{{\mathcal P}_t\}$ on $\Y$ if for every $t \geq 0$, $\pi {\mathcal P}_t = \pi$.

\begin{proposition}\label{thm_spde}
Suppose Assumptions \ref{asm_arrival}-\ref{asm_service} hold, and $G$ has a finite $(2+\epsilon)$ moment for some $\epsilon > 0$.  Then the following statements are true:
\begin{enumerate}
    \item for every initial condition $Y_0$, the process $\{Y^{Y_0}_t;t\geq0\}$ has a continuous $\Y$-valued version;
    \item If $P^y$ denotes the law of $Y^y$ then $\{P^y, y \in \Y\}$ is a
time-homogeneous Feller Markov family;
    \item  The transition semigroup $\{{\mathcal P}_t\}$ associated with
$\{P^y, y \in \Y\}$ has at most one invariant distribution.
\end{enumerate}
\end{proposition}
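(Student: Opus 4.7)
My plan is to handle the three parts of Proposition \ref{thm_spde} separately, with the uniqueness of the invariant distribution being the main difficulty. The first two parts are soft consequences of Lemma \ref{lem_Lambda} and standard stochastic analysis, so I would keep their treatment brief and reserve effort for part 3.

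For part 1, the goal is to produce a continuous $\Y$-valued version of $Y^{Y_0}$. I would first establish continuous versions of the stochastic integrals that serve as inputs to the CMS map $\Lambda$. The covariance formula \eqref{Mcov} combined with Burkholder--Davis--Gundy and the boundedness of $\Psi_t \mathbf{1}$ (which holds under Assumption \ref{asm_service}) gives moment estimates of the form $\Ept{|\mathcal{H}_t(\mathbf{1}) - \mathcal{H}_s(\mathbf{1})|^{2p}} = \mathcal{O}(|t-s|^p)$, so Kolmogorov's continuity criterion yields continuity of $t \mapsto \mathcal{H}_t(\mathbf{1})$; $E$ is continuous since $B$ is. Lemma \ref{lem_Lambda} then gives continuity of $(K,X)$. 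Continuity of $t \mapsto Z_t$ in $\Hone\ho$ would be shown term-by-term in \eqref{def_Z}: the translation semigroup is strongly continuous on $\Hone\ho$ (handling $Z_0(t+\cdot)$); the $r$-dependent martingale integrals admit weak derivatives via the limiting analogue of \eqref{MNderivative}, yielding jointly continuous versions in $(t,r)$; and the $K$-dependent terms are handled by standard convolution estimates against $\overline G, g, g' \in \Ltwo\ho$ guaranteed by Remark \ref{remark_HH2}. Evaluating \eqref{def_Z} at $r=0$ and using $\overline G(0) = 1$ together with \eqref{cms_abstract1} gives $Z_t(0) = X_t \wedge 0$, so the sample paths lie in $\Y$.

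For part 2, time-homogeneity and the Markov property follow by a shift argument: restricting $Y^y$ to $[s,\infty)$ yields a solution driven by the shifted noise $(B_{s+\cdot}-B_s, \mathcal{M}_{s+\cdot}(\cdot)-\mathcal{M}_s(\cdot))$, which is independent of $\mathcal{F}_s$ and equal in law to $(B,\mathcal{M})$. Pathwise uniqueness of \eqref{XKdefinition}--\eqref{def_Z} (inherited from the single-valuedness of $\Lambda$ on its domain) then yields $Y^y_{s+\cdot}\mid\mathcal{F}_s \deq Y^{Y_s^y}$. The Feller property reduces to continuity of $y \mapsto \Ept{f(Y_t^y)}$ for $f \in \mathbb{C}_b(\Y)$; since $\Lambda$ is continuous on its domain (Lemma \ref{lem_Lambda}) and translation and convolution act continuously on $\Hone\ho$, $Y_t^y$ depends continuously on $y$, and dominated convergence closes the argument.

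The hard part is the uniqueness claim in part 3. My plan is to run an asymptotic coupling argument: construct two copies $Y^1, Y^2$ of the diffusion model on the same probability space, driven by the \emph{same} realization of $(B,\mathcal{M})$ but with (possibly random) initial conditions $Y_0^1, Y_0^2$. Because the driving noise is common, $E$ and $\mathcal{H}(\mathbf{1})$ are common to both processes, so the only source of discrepancy is the initial data, which enters through $X_0^i$ in \eqref{XKdefinition} and through $Z_0^i(t+\cdot)$ in \eqref{def_Z}. The latter decays in $\Hone\ho$ as $t \to \infty$ since every element of $\Hone\ho$ vanishes at infinity together with its weak derivative in an appropriate $\Ltwo$ sense. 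I would then transfer this decay through the CMS map: the dissipative convolution term $-\int_0^t g(t-s)\kappa(s)\,ds$ in \eqref{cms_abstract1}, together with a monotonicity property of $\Lambda$ in its $\zeta$-input, forces $\|Y^1_t - Y^2_t\|_{\Y} \to 0$ in probability. If $\pi_1, \pi_2$ are two invariant distributions realized by $\law(Y_0^i) = \pi_i$, then for every $f \in \mathbb{C}_b(\Y)$ one has $\int f\,d\pi_1 = \Ept{f(Y^1_t)} \to \Ept{f(Y^2_t)} = \int f\,d\pi_2$, forcing $\pi_1 = \pi_2$. The main technical obstacle is the nonlinearity introduced by the $x^+$ term in \eqref{cms_abstract2}, which blocks a naive subtraction estimate; the argument has to be split according to the signs of $X^1_t, X^2_t$, exploiting the renewal-type kernel $g$ (whose $\Ltwo$ and $\Lone$ tails are controlled under Assumptions \ref{asm_service} and \ref{asm_serviceP}) to close the bound uniformly in $t$.
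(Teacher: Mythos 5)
The paper does not prove this proposition internally: all three parts are cited verbatim from the companion paper \cite{AghRam15spde} (Proposition 4.14, Theorem 3.7, and Theorem 3.8, respectively). So there is no internal proof to compare against; your attempt to reconstruct the arguments from scratch is necessarily a different route from what the paper does.

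Your sketches for parts 1 and 2 are in the right spirit (Kolmogorov continuity for $\mathcal{H}(\f1)$, strong continuity of translation on $\Hone\ho$, shift-invariance of the driving noise for the Markov property, continuity of $\Lambda$ for Feller), and I have no substantive objection there, though each of these is a real technical lemma in \cite{AghRam15spde} that your two sentences compress heavily. For part 3, the synchronous-coupling strategy you propose — run $Y^1,Y^2$ off the same $(B,\mathcal{M})$ and show the discrepancy from the initial data decays — is plausibly the same general strategy as the cited Theorem 3.8, so the idea is aligned with the paper's source.

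The gap is that your part-3 argument does not actually close. You correctly identify the central obstruction (the nonlinearity $x^+$ in \eqref{cms_abstract2}) and then wave at a resolution ("split according to the signs of $X^1_t,X^2_t$ and use the kernel $g$ to close the bound uniformly in $t$") without giving it. This is precisely the hard content of the theorem you are trying to reprove. Concretely: the fact that $\|Z_0(t+\cdot)\|_{\Hone}\to 0$ deterministically does not by itself force $X^1_t-X^2_t\to 0$, because the CMS map $\Lambda$ feeds $x^+$ back into the convolution nonlinearly and there is no obvious Lipschitz contraction; the map is at best nonexpansive, which gives no decay, and whether the convolution term is genuinely dissipative against this nonlinearity is exactly what needs proving. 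A further mismatch: your final sentence invokes Assumption \ref{asm_serviceP}, but Proposition \ref{thm_spde} is stated only under Assumptions \ref{asm_arrival}--\ref{asm_service} plus a $(2+\epsilon)$ moment, so an argument that genuinely needs \ref{asm_serviceP} would not establish the proposition as stated. Until the sign-splitting/dissipativity step is made precise, part 3 of the proposal should be regarded as a strategy outline rather than a proof.
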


\begin{proof}
Part a, b, and c are proved in Proposition 4.14, Theorem 3.7, and Theorem 3.8 of \cite{AghRam15spde}, respectively.
\end{proof}

In what follows, by an invariant distribution of the diffusion model, we will mean an invariant distribution of its  associated semigroup $\{{\mathcal P}_t\}$.

\section{Convergence of subsequential limits.}\label{sec_conv}

In this section we establish convergence of  the sequence of stationary distributions $\{\pinhat\}$, as stated in Theorem \ref{thm_interchange}. Since $\{\pinhat\}$ is tight by Proposition \ref{thm_tightness}, it suffices to uniquely characterize subsequential limits of $\{\pinhat\}$.   We will show that every subsequential limit of $\{\pinhat\}$  is an  invariant distribution (and therefore equal to the unique invariant distribution) of the diffusion model introduced in Section \ref{sec_spde}. As outlined in Fig.\ \ref{Figure:clt}, the proof follows three main steps. First, we show in Section \ref{sec_conv_CLT} (see Proposition \ref{prop_subseq}) that under suitable conditions on (the laws of) the initial conditions $\{V^{(N)}_0\}$, for every $t \geq 0$, finite-dimensional projections of $\ynhat_t$ converge weakly to those of $Y_t$ as $N \rightarrow \infty$.   Then, in Section \ref{sec_conv_proof} we verify that these conditions are satisfied when $V^{(N)}_0 = V^{(N)}_\infty$, which is the invariant distribution of $V^{(N)}$ from  Proposition \ref{prop_Vergodic}.  Also, by Proposition \ref{prop_Vergodic}, when $\vn_0 =  \vn_\infty$, for any $t \geq 0$, we see that $\ynhat_t = \widehat{\Psi} (V^{(N)}_t)$, with $\widehat{\Psi}$ given as in \eqref{ynhat_representation},  has the same distribution as $\ynhat_0 = \widehat{\Psi} (\vn_\infty)$, and hence, by \eqref{def_pnhat}, must have  law  $\pinhat$. Thus, the limit  of any convergent subsequence of $\{\pinhat\}$ must be an invariant distribution of the diffusion model.  Theorem \ref{thm_interchange} follows from Corollary \ref{cor_exists}.

\begin{figure}
\centering
\includegraphics[height=3.75cm]{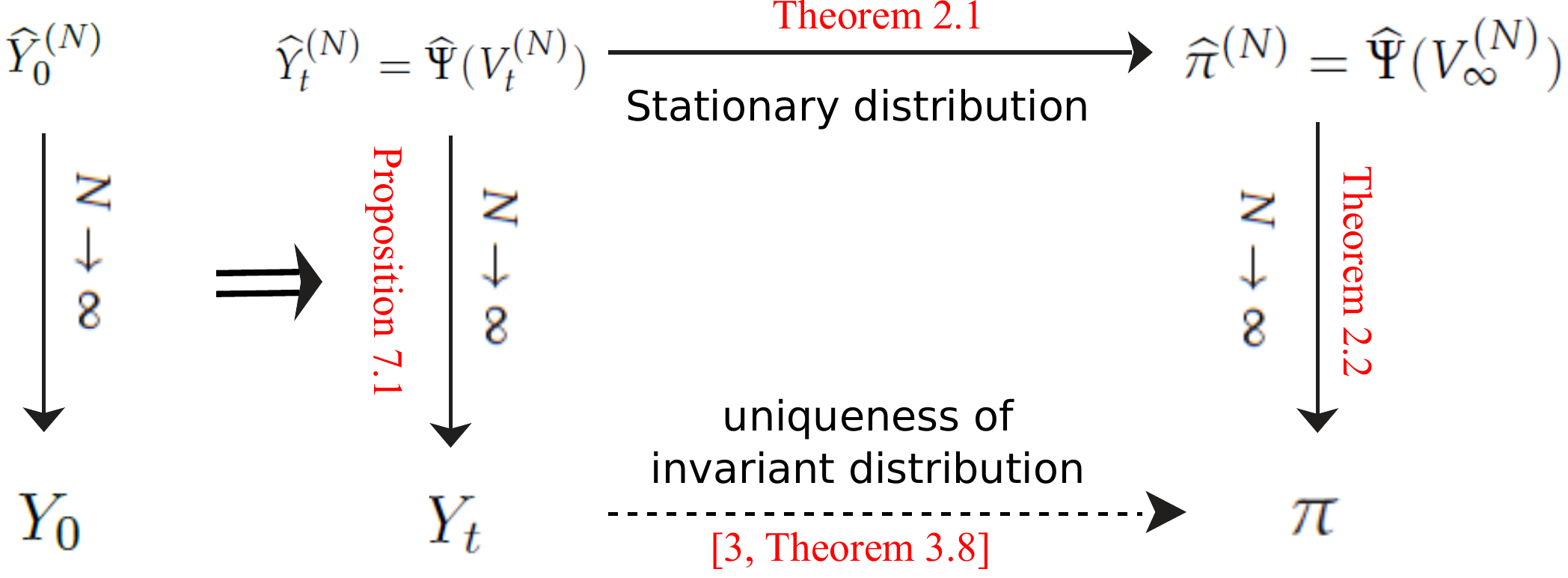}
\caption{Schematic for Proof of Theorem \ref{thm_interchange}}
\label{Figure:clt}
\end{figure}

\subsection{A  central limit theorem.}\label{sec_conv_CLT}

Recall from Section \ref{sec_spde} that $\{B_t;t\geq0\}$ is a Brownian motion, $E_t= \sigma B_t-\beta t$ for $t\geq0$, and $\M$ is an independent  martingale measure with covariance functional given by  \eqref{Mcov_original}. We now state  a result from
\cite{KasRam13} that, in particular, shows that $\enhat$ and $\Mnhat$ are asymptotically  independent and converge in distribution to $E$ and $\M$, respectively.

\begin{lemma} \label{thm_HKresult}
Suppose Assumptions \ref{asm_arrival}-\ref{asm_service} hold, $G$ has a finite $(2+\epsilon)$ moment for some $\epsilon > 0$,  and the sequence of initial conditions $\{\vn_0= (\ren_0,\xn_0,\nun_0)\}$ satisfies condition \eqref{asm_ic}.  Moreover, for $N\in\N$, define  $\ynhat_0\doteq\widehat\Psi(\vn_0)$,  with $\widehat\Psi$ given by \eqref{def_hPhi}, and assume that there exists a $\Y$-valued random element $Y_0=(X_0,Z_0)$ independent of $\M$ and $B$ such that, as $N\to\infty$,
\begin{equation}\label{HKresult_conditionY}
        \ynhat_0\Rightarrow Y_0,\quad\quad \text{ in }\Y.
\end{equation}
Then, for every $t \geq 0, k\in\N$, and  $\varphi_1,...,\varphi_k \in  \mathbb{C}_b(\hc\times\hc; \R)$, we have
\begin{equation}\label{HKconv}
    \left(\enhat,\xnhat_0,\znhat_0,\Hnhat(\f1), \Mnhat_t(\varphi_1),...,\Mnhat_t(\varphi_k)\right)\Rightarrow \left(E,X_0,Z_0,\mathcal{H}(\f1), \M_t(\varphi_1),...,\M_t(\varphi_k)\right),
\end{equation}
weakly in $\mathbb{D}[0,\infty)\times \Y   \times \mathbb{D}[0,\infty)\times \R^{k},$ where $\mathbb{D}[0,\infty)$ is equipped with the topology of uniform convergence on compact sets.
\end{lemma}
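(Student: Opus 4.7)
The plan is to derive this convergence essentially from Proposition 8.4 in \cite{KasRam13}, which establishes exactly this kind of joint distributional convergence of the scaled arrival process together with martingale-measure integrals, under hypotheses compatible with ours. The first ingredient is a standard renewal FCLT: by Assumption \ref{asm_arrival} and the finite $(2+\epsilon)$ moment condition on $\tilde G_E$, Donsker's theorem for delayed renewal processes gives $\enhat \Rightarrow \sigma B - \beta\,\id = E$ in $\mathbb{D}[0,\infty)$ with the uniform-on-compacts topology, the limit being continuous.

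The second ingredient is a martingale-measure CLT. By \eqref{CoVar_mn}, the predictable quadratic variation of $\Mnhat(\varphi)$ is
\[
\langle \Mnhat(\varphi)\rangle_t = \int_0^t \nunbar_s(\varphi^2(\cdot,s)\,h)\,ds,
\]
and by Lemma \ref{prop_fluid}.b together with \eqref{def_nubar} and \eqref{def_h}, $\nunbar_s \Rightarrow \overline\nu$, so
\[
\langle \Mnhat(\varphi)\rangle_t \ \Rightarrow\ \int_0^t \overline\nu(\varphi^2(\cdot,s)\,h)\,ds = \int_0^t\int_0^\infty \varphi^2(x,s)g(x)\,dx\,ds,
\]
which matches the covariance \eqref{Mcov} of $\M(\varphi)$. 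Since $\Mnhat(\varphi)$ has jumps bounded by $\|\varphi\|_\infty/\sqrt{N}$, the Lindeberg condition is automatic, and a martingale CLT (in Walsh's framework) yields the joint convergence of $(\Mnhat_t(\varphi_1),\dots,\Mnhat_t(\varphi_k))$ to $(\M_t(\varphi_1),\dots,\M_t(\varphi_k))$. Asymptotic independence of $\enhat$ from this family holds because $\enhat$ is measurable with respect to the arrival primitives, which by construction are independent of the i.i.d.\ service times driving $\Mnhat$, so that in the limit $E$ is independent of $\M$.

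Convergence of $\Hnhat(\f1)$ to $\mathcal{H}(\f1)$ in $\mathbb{D}[0,\infty)$ with the uniform topology is then obtained from the same martingale-measure CLT by invoking the representation $\Hnhat_t(\f1) = \Mnhat_t(\Psi_t\f1)$ from \eqref{def_Hn}, noting that $(x,s,t)\mapsto (\Psi_t\f1)(x,s) = \overline G(x+(t-s)^+)/\overline G(x)$ is bounded and jointly continuous under Assumption \ref{asm_service}, and that the limit $\mathcal{H}(\f1)$ has continuous sample paths. For the initial condition, $\ynhat_0 = \widehat\Psi(\vn_0)$ is $\filtn_0$-measurable, whereas the post-$0$ increments of $\enhat$ and of $\Mnhat$ are driven by future inter-arrival times and future service times, which are independent of $\filtn_0$. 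Hence $\ynhat_0$ is independent of $(\enhat,\Mnhat(\varphi_1),\dots,\Mnhat(\varphi_k))$ for each $N$, and the hypothesis $\ynhat_0 \Rightarrow Y_0$ in $\Y$, combined with the previous joint convergence and a Skorokhod representation argument, delivers the full convergence \eqref{HKconv} with the claimed product-independence structure.

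The main obstacle I expect is the bookkeeping needed to transfer the joint convergence into the uniform topology on $\mathbb{D}[0,\infty)$ for the process-valued components $\enhat$ and $\Hnhat(\f1)$ (as opposed to merely finite-dimensional distributions), and to carefully articulate the asymptotic independence of $\ynhat_0$, $\enhat$, and $\Mnhat$; both steps are essentially executed in \cite{KasRam13}, so the bulk of the proof here reduces to verifying that the present assumptions imply those required in that reference and translating notation accordingly.
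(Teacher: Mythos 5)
Your proposal takes essentially the same approach as the paper: both reduce the lemma to verifying the hypotheses of the prior work \cite{KasRam13} and then invoking the joint convergence established there (the paper cites Corollary 8.7 of \cite{KasRam13} and a display inside its proof, while you point to Proposition 8.4, but these sit in the same cluster of results). The paper's verification is slightly more explicit that Assumptions 2 and 4 of \cite{KasRam13} follow from boundedness of $h$ under Assumption \ref{asm_service} and that (part a of) Assumption 3 therein reduces to the Lindeberg condition you outline, but your sketch covers the same ground.
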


\begin{proof}
We first show that the conditions of the lemma imply   Assumptions 1-4 in \cite{KasRam13}. As discussed earlier, Assumption 1 of \cite{KasRam13} holds by condition \eqref{asm_ic} and Lemma \ref{prop_fluid}.\ref{eLLN}.  Assumptions 2 and 4 of \cite{KasRam13} on the service distribution hold because Assumption \ref{asm_service} implies that $h$ is bounded.  Moreover, by Assumption \ref{asm_arrival}, $\en$ is a renewal process with inter-arrival times $\{u^{(N)}_n;n\geq1\}$ with $u^{(N)}_n = \tilde u_n/\lambdan$, where $\{\tilde u_n;n\in\N\}$ is an i.i.d. sequence with mean $1$ and variance $\sigma^2$. Hence, $u^{(N)}_1$ satisfies
\[
    \Ept{u^{(N)}_1}=\frac{\Ept{\tilde u_1}}{\lambdan}=\frac{1}{\lambdan},\quad \text{Var}(u^{(N)}_1)=\frac{\text{Var}(\tilde u_1)}{(\lambdan)^2}=\frac{\sigma^2}{(\lambdan)^2},
\]
and since $\Eptil{\tilde u_1^2}<\infty$ and $\lambdan/\sqrt{N}\to\infty$ as $N\to\infty$, the Lindeberg condition
\[
    \lim_{N\to\infty} N^2\Ept{(u^{(N)}_1)^2\indic{(-\infty, u^{(N)}_1\sqrt{N})}{\epsilon}} =\lim_{N\to\infty} \frac{N^2}{(\lambdan)^2}\Ept{(\tilde u_1)^2\indic{(\infty, \tilde u_1)}{\frac{\lambdan\epsilon}{\sqrt{N}}}} =0,
\]
holds. Therefore, (part a. of) Assumption 3 of \cite{KasRam13} is also satisfied with $\overline \lambda=1$.

Then  \eqref{HKconv}  follows from  \eqref{HKresult_conditionY}, Corollary 8.7 in \cite{KasRam13} and the second display in the proof of that corollary with $\tilde f_j$ replaced by $\varphi_j$ and $f_1\doteq\f1$. Note that Assumption 5 in \cite{KasRam13} is not required for our assertion to hold,  as it is only used to establish convergence of  $\nunhat,$ which
we do not claim.
\end{proof}

Next, we use the result of Lemma \ref{thm_HKresult} to establish convergence of the sequence of finite-dimensional projections of $\ynhat_t$ for any $t \geq 0$. Let $\{e_k;k\in\N\}$ be a sequence of  functions with compact support in $\Hone\ho$  that form a countable orthonormal basis for the Hilbert space $\Hone\ho$ with inner product denoted by $\langle \cdot ,\cdot\rangle_{\Hone}$  (e.g., Daubechies Wavelets, see \cite[Section 9.2]{Wal13}), and let $e_j'$ denote the weak derivative of $e_j, j\in\N.$

\begin{proposition}\label{prop_subseq}
Suppose the conditions of Lemma \ref{thm_HKresult} hold, let $Y_0$ be the limit in \eqref{HKresult_conditionY}
and let $Y=(X,Z)$ be the diffusion model associated with the initial condition $Y_0$, as specified in Definition \ref{def_solutionProcess}.  Then, for every $t\geq0$ and $k\in \N$, as $N\to\infty$,
\begin{equation}\label{subseq}
    \left(\xnhat_t,\langle \znhat_t,e_1\rangle_{\Hone},...,\langle \znhat_t,e_k\rangle_{\Hone}\right)\Rightarrow \left(X_t,\langle Z_t,e_1\rangle_{\Hone} ,...,\langle Z_t,e_k\rangle_{\Hone}\right).
\end{equation}
\end{proposition}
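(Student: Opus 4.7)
My plan is to express each coordinate on the left-hand side of \eqref{subseq} as a continuous functional of the joint process whose convergence is supplied by Lemma \ref{thm_HKresult}, and then invoke the continuous mapping theorem. Recall from \eqref{H_ip} that
\[
    \langle \znhat_t, e_j\rangle_{\Hone} = \int_0^\infty \znhat_t(r)\, e_j(r)\,dr + \int_0^\infty (\znhat_t)'(r)\, e_j'(r)\,dr,
\]
so via the dynamical equations \eqref{Zhat_dK} and \eqref{Zphat_dK} we can decompose this inner product into four summands coming from the initial condition, the martingale term, the $\knhat_t\overline G(r)$ term, and the convolution $\int_0^t \knhat_s g(t-s+r)\,ds$ term (plus the analogous four pieces with $g$, $g'$ in place of $\overline G$, $g$ and $e_j'$ in place of $e_j$).

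The first step is to produce the correct test functions for Lemma \ref{thm_HKresult}. For each $j=1,\dots,k$, define
\[
    \varphi_j(x,s) \doteq \int_0^\infty (\Psi_{t+r}\f1)(x,s)\, e_j(r)\,dr, \qquad \tilde\varphi_j(x,s) \doteq \int_0^\infty (\Psi_{t+r} h)(x,s)\, e_j'(r)\,dr,
\]
for $(x,s)\in\hc\times[0,t]$ (and set them to zero for $s>t$). Because each $e_j$ has compact support and $\Psi_{t+r}\f1$, $\Psi_{t+r}h$ are uniformly bounded and continuous (using Assumption \ref{asm_service}.\ref{asm_h}), both $\varphi_j$ and $\tilde\varphi_j$ lie in $\mathbb{C}_b(\hc\times\hc)$. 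Stochastic Fubini for orthogonal martingale measures (\cite[Theorem 2.6]{WalshBook}, already invoked in Lemma \ref{que_Mfubini}) then yields
\[
    \int_0^\infty \Mnhat_t(\Psi_{t+r}\f1)\, e_j(r)\,dr = \Mnhat_t(\varphi_j), \qquad \int_0^\infty \Mnhat_t(\Psi_{t+r} h)\, e_j'(r)\,dr = \Mnhat_t(\tilde\varphi_j),
\]
and analogously with $\M_t$ in place of $\Mnhat_t$ for the limit.

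Lemma \ref{thm_HKresult} applied with the test functions $\{\varphi_j,\tilde\varphi_j\}_{j=1}^{k}$ gives the joint convergence of $(\enhat, \xnhat_0, \znhat_0, \Hnhat(\f1), \Mnhat_t(\varphi_j), \Mnhat_t(\tilde\varphi_j))$ to the corresponding limit in the product space. By Lemma \ref{lem_Lambda}, the CMS mapping $\Lambda$ is continuous in the topology of uniform convergence on compact sets, so the representation \eqref{xk} together with the continuous mapping theorem yields joint convergence of $(\knhat, \xnhat) \Rightarrow (K, X)$ along with the other coordinates, and in particular $\xnhat_t \Rightarrow X_t$. The remaining pieces of $\langle \znhat_t, e_j\rangle_{\Hone}$ are handled by ordinary Fubini: the $\knhat_t\overline G$ term becomes $\knhat_t \langle \overline G, e_j\rangle_{\Ltwo}$, and the convolution term becomes $\int_0^t \knhat_s\,\psi_j(t-s)\,ds$ for the continuous bounded function $\psi_j(u)\doteq \int_0^\infty g(u+r)e_j(r)\,dr$, each a continuous functional of $\knhat\!\!\restriction_{[0,t]}$. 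The initial-condition piece $\int_0^\infty \znhat_0(t+r)e_j(r)\,dr + \int_0^\infty (\znhat_0)'(t+r)e_j'(r)\,dr$ is, by construction, a bounded linear functional of $\znhat_0 \in \Hone\ho$, hence continuous on $\Y$. Assembling the four types of terms (and the corresponding four for the derivative part via \eqref{Zphat_dK}, using that $g'/\overline G$ is bounded), we realize $(\xnhat_t, \langle \znhat_t, e_1\rangle_{\Hone},\dots, \langle \znhat_t, e_k\rangle_{\Hone})$ as a continuous functional of the jointly convergent tuple, and the continuous mapping theorem delivers \eqref{subseq}, once we observe that the limiting expression is precisely $(X_t, \langle Z_t, e_1\rangle_{\Hone},\dots, \langle Z_t, e_k\rangle_{\Hone})$ by comparing with \eqref{def_Z}.

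The main technical obstacle is the martingale term: we must be sure that stochastic Fubini is legitimately applicable to both $\Mnhat_t(\Psi_{t+r}\f1)$ and $\Mnhat_t(\Psi_{t+r}h)$ against $e_j(r)$ and $e_j'(r)$ respectively, and that the same identity passes to the limit $\M_t$ so that the continuous-mapping image genuinely reproduces $\langle Z_t, e_j\rangle_{\Hone}$. Here the compact support of $e_j$ is crucial—it makes $\varphi_j$ and $\tilde\varphi_j$ bounded continuous, which is exactly the class of integrands Lemma \ref{thm_HKresult} accommodates and for which the Walsh stochastic Fubini theorem holds.
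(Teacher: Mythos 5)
Your proposal is correct and follows essentially the same route as the paper: decompose $\langle \znhat_t, e_j\rangle_{\Hone}$ via \eqref{Zhat_dK}--\eqref{Zphat_dK}, pass the $\Ltwo$-inner products through the martingale integrals by stochastic Fubini using the same test functions $\varphi_j,\tilde\varphi_j$, treat the $\knhat$-driven and initial-condition pieces as continuous functionals, and invoke Lemma \ref{thm_HKresult} plus the continuous mapping theorem. One small slip: extending $\varphi_j,\tilde\varphi_j$ by zero for $s>t$ would destroy continuity at $s=t$, which Lemma \ref{thm_HKresult} requires; instead simply keep the $\Psi_{t+r}$-based definition for all $(x,s)\in\hc\times\hc$ (its built-in $(t+r-s)^+$ makes it continuous everywhere), exactly as in \eqref{tdef_phi}--\eqref{tdef_tphi}.
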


\begin{proof}
For this proof, we use the convention that all continuity results on $\mathbb{D}[0,\infty)$ are with respect to the topology of uniform convergence on compact sets.    Fix $t\geq0$ and $k\in\N$. First, recall that by equations \eqref{xk} and \eqref{XKdefinition} for $(\kn,\xn)$ and $(K,X)$, respectively, we have
    \begin{equation}\label{temp_CLT0}
      (\knhat,\xnhat)=\Lambda(\enhat, \xnhat_0,\znhat_0-\Hnhat(\f1)),\quad\quad         (K,X)=\Lambda(E,X_0,Z_0-{\cal H}(\f1)),
    \end{equation}
    where,  by Lemma \ref{lem_Lambda}, the centered many-server mapping $\Lambda$ is continuous.

    Next, we compute the projections of $\znhat_t$ and $Z_t$ onto the directions $e_1,...,e_k$. Recall the representations \eqref{Zhat_dK} of $\znhat_t$ and \eqref{Zphat_dK} of $(\znhat)'$. By definition \eqref{PsiDef} of $\Psi_s,s\geq0$, $Z_t$ satisfies  \eqref{def_Z} and by equation (3.19) of \cite[Proposition 4.14]{AghRam15spde}, $Z_t$ has weak derivative $Z'_t$ where
    \begin{equation}\label{Zp}
    	Z'_t(r) \doteq  Z'_0(t+r) +\M_t(\Psi_{t+r}h)-K_tg(r)-\int_0^t K_sg'(t-s+r)ds.
    \end{equation}
    First, since the translation mapping $f\mapsto f(t+\cdot)$ is continuous from $\Ltwo\ho$ to itself, and for any $u\in\Ltwo$, the projection mapping $f\mapsto\langle f,u\rangle_{\Ltwo}$  is continuous from $\Ltwo\ho$ to $\R$,  there exist continuous mappings $F^1_j,F^2_j:\Ltwo\ho\mapsto\R, j=1,...,k,$ such that
    \begin{equation}\label{Temp_CLT1}
        \langle\znhat_0(t+\cdot),e_j\rangle_{\Ltwo} =F^1_j(\znhat_0), \quad  \langle Z_0(t+\cdot),e_j\rangle_{\Ltwo}=F^1_j(Z_0),
    \end{equation}
    and
    \begin{equation}
         \langle(\znhat_0)'(t+\cdot),e'_j\rangle_{\Ltwo} =F^2_j((\znhat_0)'),\quad\langle Z'_0(t+\cdot),e'_j\rangle_{\Ltwo}=F^2_j(Z'_0).
    \end{equation}

    Next, for every $j=1,\dots,k$ define the function $\varphi_j:\hc\times\hc\mapsto\R$ as
    \begin{equation}\label{tdef_phi}
      \varphi_j(x,s)\doteq \langle \Psi_{t+\cdot}\f1(x,s),e_j\rangle_{\Ltwo} = \int_0^\infty \Psi_{t+r}\f1(x,s)e_j(r)\;dr= \int_0^\infty \frac{\overline G(x+t-s+r)}{\overline G(x)}e_j(r)dr,
    \end{equation}
    which is bounded by $\|e_j\|_{\Lone}< \infty$ (recall that $e_j$ has compact support by choice). Also, by the continuity of $\overline G$ and boundedness of $1/\overline G$ on finite intervals,  the dominated convergence theorem shows that $\varphi_j$ is continuous. Since $\varphi_j\in\mathbb{C}_b(\hc\times\hc)$, by the stochastic Fubini theorem for orthogonal martingale measures $\Mnhat$ (see e.g. \cite[Theorem 2.6]{WalshBook}), we have
    \begin{equation}
        \langle\Mnhat_t(\Psi_{t+\cdot}\f1),e_j \rangle_{\Ltwo} = \int_0^\infty  \Mnhat_t(\Psi_{t+r}\f1)e_j(r) dr =\Mnhat_t\left(\int_0^\infty\Psi_{t+r}\f1 e_j(r) dr\right)= \Mnhat_t(\varphi_j).
    \end{equation}
    Similarly, the function $\tilde \varphi_j:\hc\times\hc\mapsto\R$ defined as
    \begin{equation}\label{tdef_tphi}
        \tilde \varphi_j(x,s)\doteq \langle \Psi_{t+\cdot}h(x,s),e^\prime_j\rangle_{\Ltwo}= \int_0^\infty \Psi_{t+r}h(x,s)e'_j(r)\;dr= \int_0^\infty \frac{g(x+t-s+r)}{\overline G(x)}e^\prime_j(r)dr,
    \end{equation}
    is also continuous and bounded (by $H\|e'_j\|_{\Lone}<\infty$) due to  Assumption \ref{asm_service}.\ref{asm_h}.  Another application of the stochastic Fubini theorem for orthogonal martingales shows that
    \begin{equation}
        \langle\Mnhat_t(\Psi_{t+\cdot}h),e'_j \rangle_{\Ltwo} = \int_0^\infty  \Mnhat_t(\Psi_{t+r}h)e'_j(r) dr =\Mnhat_t\left(\int_0^\infty\Psi_{t+r}h e'_j(r) dr\right)= \Mnhat_t(\tilde\varphi_j).
    \end{equation}
    Similarly, replacing $\Mnhat$ with $\M_t$, we have
    \begin{equation}
        \langle\M_t(\Psi_{t+\cdot}\f1),e_j \rangle_{\Ltwo} = \M_t(\varphi_j),\quad\quad\langle\M_t(\Psi_{t+\cdot}h),e'_j \rangle_{\Ltwo} = \M_t(\tilde\varphi_j).
    \end{equation}

    Finally, since $\knhat$ is almost surely bounded on finite intervals, by Fubini's theorem,
    \[
        \langle \knhat_t\overline G(\cdot)-\int_0^t\knhat_sg(t-s+\cdot) ds,e_j\rangle_{\Ltwo} = \knhat_t \langle \overline G,e_j\rangle_{\Ltwo} -\int_0^t \knhat_s \langle g(t-s+\cdot),e_j\rangle_{\Ltwo} ds
    \]
for  $j=1,\dots,k$.
    By Assumptions \ref{asm_service}.\ref{asm_g} and \ref{asm_service}.\ref{asm_h}, both $\overline G$ and $g$ lie in $\Ltwo\ho$ (see Remarks \ref{remark_HH2} and \ref{remark_GL2}), and hence $\langle \overline G,e_j\rangle_{\Ltwo}$ is finite, and the function $s\mapsto \langle g(t-s+\cdot),e_j\rangle$ is bounded and continuous on $[0,t]$. Moreover, for every $t\geq0$ and $u \in \mathbb{C}_b[0,\infty)$, the mapping $f\mapsto\int_0^tf(s)u(s)ds$ from $\D$ to $\R$ is continuous. Therefore, by equation \eqref{temp_CLT0}, there exist continuous functions $\tilde F_j^3:\D\mapsto\R$ and $F_j^3:\D \times \R \times \D\mapsto\R$, such that
    \begin{equation}
      \langle \knhat_t\overline G(\cdot)-\int_0^t\knhat_sg(t-s+\cdot) ds,e_j\rangle_{\Ltwo} =\tilde F_j^3(\knhat)=F_j^3(\enhat, \xnhat_0,\znhat_0-\Hnhat(\f1)).
    \end{equation}
    By the same argument,
    \begin{equation}
      \langle K_t\overline G(\cdot)-\int_0^tK_sg(t-s+\cdot) ds,e_j\rangle_{\Ltwo} =F_j^3(E, X_0,Z_0-\mathcal{H}(\f1)).
    \end{equation}
    Similarly, by Assumption \ref{asm_service}.\ref{asm_h}, $g'$ also lies in $\Ltwo,$ and hence  $\langle g,e'_j\rangle$ is finite and the mapping $s\mapsto\langle g^\prime(t-s+\cdot)e^\prime_j\rangle$ is bounded and continuous on $[0,t]$. Therefore, there exists a continuous function $F^4_j:\D \times \R \times \D\mapsto\R$ such that
    \begin{equation}
      \langle \knhat_t g(\cdot)+\int_0^t\knhat_sg'(t-s+\cdot) ds,e'_j\rangle_{\Ltwo} =F_j^4(\enhat, \xnhat_0,\znhat_0-\Hnhat(\f1)),
    \end{equation}
    and
    \begin{equation}\label{Temp_CLT2}
      \langle K_t g(\cdot)+\int_0^tK_sg'(t-s+\cdot) ds,e'_j\rangle_{\Ltwo} =F_j^4(E, X_0,Z_0-\mathcal{H}(\f1)).
    \end{equation}

    Now, combining equation \eqref{temp_CLT0}, representations \eqref{Zhat_dK} of $\znhat_t$, \eqref{Zphat_dK} of $(\znhat)'$,  \eqref{def_Z}  of $Z_t$ and \eqref{Zp} of $Z'_t$, equations \eqref{Temp_CLT1}-\eqref{Temp_CLT2}, and the fact that addition is continuous on $\D$, we conclude that there exists a continuous  function \[F:\I\doteq\R\times\Hone\ho\times \D\times\D\times \R^{2k}\mapsto\R^{k+1},\]
    such that
    \begin{equation}\label{inner_temp1}
        \left(\xn_t,\langle \znhat_t,e_1\rangle_{\Hone},...,\langle \znhat_t,e_k\rangle_{\Hone}\right)=F\left(\inhat\right),\quad\text{ and }\quad
        \left(X_t,\langle Z_t,e_1\rangle_{\Hone},...,\langle Z_t,e_k\rangle_{\Hone}\right)=F(I),
    \end{equation}
    where
    \[
        \inhat = \left(\xnhat_0,\znhat_0,\enhat,\Hnhat(\f1),\Mnhat_t(\varphi_j),\Mnhat(\tilde\varphi_k);j=1,\dots,k\right),
    \]
    and
    \[
        I= \left(X_0,Z_0,E,\mathcal{H}(\f1),\M_t(\varphi_j),\M_t(\tilde\varphi_j);j=1,\dots k\right).
    \]
    Thus,  by Lemma \ref{thm_HKresult} with  $\varphi_j$ and $\tilde \varphi_j ;j=1,...,k,$ defined in \eqref{tdef_phi} and \eqref{tdef_tphi},  we have
    \begin{equation}\label{inner_temp2}
        \inhat\Rightarrow I,
    \end{equation}
    in  $\I$. The result in \eqref{subseq} follows from \eqref{inner_temp1}, \eqref{inner_temp2}, and the continuous mapping theorem.
\end{proof}

\subsection{A fluid limit theorem for $\vn_\infty$.}\label{sec_conv_ic}

In order to use Proposition  \ref{prop_subseq} with the sequence of initial conditions $\vn_0=\vn_\infty,N\in\N$, we verify the conditions of that proposition. Recall that
\[
    \vn_\infty=(\ren_\infty,\xn_\infty,\nun_\infty),\quad\text{ and } \quad\vnbar_\infty = (\renbar_\infty,\xnbar_\infty,\nunbar_\infty) =\frac{1}{N}(\ren_\infty,\xn_\infty,\nun_\infty).
\]

\begin{lemma}\label{lem_nunbarTight}
Suppose Assumptions \ref{asm_arrival} and \ref{asm_service} hold.  Then, $\{\nunbar_\infty\}_{N\in\N}$ is tight in $\emF$.
\end{lemma}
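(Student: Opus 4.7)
The plan is to verify the two standard Prohorov-type criteria for tightness of a sequence of random elements in the Polish space $\emF$: (i) uniform boundedness of the total mass in probability, and (ii) uniform tightness of the tail, i.e., for every $\delta > 0$,
\[
\lim_{L\to\infty}\sup_{N\in\N} \Prob{\nunbar_\infty([L,\infty)) > \delta} = 0.
\]
Criterion (i) is immediate from the non-idling identity \eqref{number_in_service}: $\nunbar_\infty(\f1) = (\xn_\infty\wedge N)/N \leq 1$ almost surely, for every $N$, so the total mass is in fact uniformly bounded by $1$ pointwise.

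For (ii), the strategy is to dominate the indicator $\indicone{[L,\infty)}$ by a smooth bounded cutoff $f_L \in \mathbb{C}_b\hc$ so that Lemma \ref{lem_nunbar} applies, obtain a finite-horizon bound on $\Eptil{\nunbar_t(f_L)}$ that is uniform in both $N$ and $t$, and then transfer it to stationarity by invoking the ergodicity of $\vn$. Concretely, for each $L\geq 1$ I would take the Lipschitz cutoff
\[
f_L(x) \doteq \big((x - (L-1))^+\big)\wedge 1,
\]
which is non-negative, absolutely continuous, bounded by $1$, and satisfies $\indicone{[L,\infty)} \leq f_L \leq \indicone{[L-1,\infty)}$. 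A direct calculation using $\overline G' = -g$ yields
\[
\int_0^\infty (f_L\overline G)(x)\,dx \leq \int_{L-1}^\infty \overline G(x)\,dx, \qquad \int_0^\infty |(f_L\overline G)'(x)|\,dx \leq 2\,\overline G(L-1).
\]

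Initializing the $N$-th system with $\law(\vn_0) = \pin_*$ and applying Lemma \ref{lem_nunbar} with $f = f_L$ then gives, for every $t\geq 0$ and $N\in\N$,
\[
\Eptil{\nunbar_t(f_L)} \leq \int_{L-1}^\infty \overline G(x)\,dx + 2 C_Q\,\overline G(L-1) \doteq r_L,
\]
where $r_L \to 0$ as $L\to\infty$ by the integrability of $\overline G$ (Assumption \ref{asm_service}) and the vanishing of $\overline G$ at infinity. To transfer this bound to stationarity, Proposition \ref{prop_Vergodic} ensures $\vn_t \Rightarrow \vn_\infty$ in $\V$, whence $\nunbar_t \Rightarrow \nunbar_\infty$ in $\emF$ by continuity of the projection onto the measure component; since $f_L \in \mathbb{C}_b\hc$, the map $\mu\mapsto\mu(f_L)$ is continuous on $\emF$, so the continuous mapping theorem gives $\nunbar_t(f_L)\Rightarrow\nunbar_\infty(f_L)$ in $\R$. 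Fatou's lemma (applied via the Skorokhod representation, using $f_L\geq 0$) then produces
\[
\Eptil{\nunbar_\infty(f_L)} \leq \liminf_{t\to\infty}\Eptil{\nunbar_t(f_L)} \leq r_L,
\]
and Markov's inequality combined with $\indicone{[L,\infty)} \leq f_L$ yields the $N$-uniform tail bound $\Prob{\nunbar_\infty([L,\infty)) > \delta} \leq r_L/\delta$, which vanishes as $L\to\infty$.

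The principal difficulty is producing a tail bound on the \emph{stationary} measure that is uniform in $N$ rather than a bound valid only on a transient time slice for a single system. This is handled by the two-step chain: a time- and $N$-uniform finite-horizon bound under the convenient $\pin_*$ initialization (via Lemma \ref{lem_nunbar}, which itself rests on the $\mathbb{L}^1$-bound of Corollary \ref{xnphat_bound}), and then the weak ergodic convergence from Proposition \ref{prop_Vergodic} combined with lower semicontinuity to push the bound to the invariant measure. The cutoff $f_L$ must be calibrated so that both $(f_L\overline G)$ and $(f_L\overline G)'$ can be controlled by $\overline G$-quantities that vanish as $L\to\infty$ using only the finite-mean property of $G$; the Lipschitz choice above achieves this.
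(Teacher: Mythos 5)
Your proof is correct and follows essentially the same approach as the paper: you apply the bound from Lemma \ref{lem_nunbar} to a Lipschitz cutoff function dominating $\indicone{[L,\infty)}$ to obtain a time- and $N$-uniform bound on expectations under the $\pin_*$ initialization, then transfer to $\nunbar_\infty$ via the weak convergence from Proposition \ref{prop_Vergodic}. The paper uses a cutoff transitioning over $[c,2c]$ with slope $1/c$ rather than your $[L-1,L]$ with slope $1$ (both work), and transfers tightness via its abstract Remark \ref{remark_tight} rather than via Fatou followed by the Kallenberg criterion, but these are just two organizations of the same argument; your final Markov-inequality step is superfluous since the criterion in Proposition \ref{apx_Mtight} is phrased in expectation and you already have the required expectation bound.
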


\begin{proof}
For  $N\in\N$, consider the process $\{V_t^{(N)}=(\ren_t,\xn_t,\nun_t),t\geq0\}$ initialized at $\vn_*$, as defined
in Section \ref{sec_ic}.  Then, for every $t\geq0$, $\nunbar_t$ is a sub-probability measure and hence
\begin{equation}\label{temp_nuntight1}
        \sup_{N\in\N,\;t\geq0}\Ept{\nunbar_t(\f1)}\leq 1.
\end{equation}
    Also,  for every $c\geq0$, consider the piecewise linear function $f_c$ defined as
    \[
        f_c(x)\doteq\threepartdef{0}{0\leq x < c,}{\frac{1}{c}(x-c)}{c\leq x < 2c,}{1}{x\geq 2c,}
    \]
    which is non-negative and bounded, has weak derivative $f'_c=\frac{1}{c}\indicone{(c,2c)}$, and satisfies $f_c(0)=0$ and $\indicone{[2c,\infty)}\leq f_c\leq \indicone{[c,\infty)}$. By \eqref{nunbar_bound}, with $f$ replaced by $f_c$, and the definition of $\pi^{(N)}_*$ in \eqref{def_pis}, we have
    \begin{align*}
    \Ept{\nunbar_t(f_c)} & \leq   \int_0^\infty f_c(x)\overline G(x)dx+ C_Q\int_0^t (f'_c(x)\overline G(x) + f_c(x)g(x))dx\\
     & \leq (1+\frac{C_Q}{c})\int_c^\infty \overline G(x)dx + C_Q\int_c^\infty g(x)dx.
    \end{align*}
    The right-hand side of the equation above does not depend on $N$ or $t$, and converges to zero as $c\to\infty$ because $g$ and $\overline G$ are integrable by Assumption \ref{asm_service}.\ref{asm_g}. This implies that
    \begin{equation}\label{temp_nuntight2}
      \lim_{c\to\infty}\sup_{N\in\N,\;t\ge0}\Ept{\nunbar_t(\indicone{[2c,\infty)})}\leq \lim_{c\to\infty}\sup_{N\in\N,\;t\ge0}\Ept{\nunbar_t(f_c)} =0.
    \end{equation}
    Therefore, by \eqref{temp_nuntight1}-\eqref{temp_nuntight2}, the family $\{\nunbar_t;N\in\N,t\geq0\}$ satisfies the criteria in Proposition \ref{apx_Mtight}, and hence, is tight in $\emF$. Finally,  by Proposition \ref{prop_Vergodic}, $\nunbar_t$ converges in distribution to $\nunbar_\infty$ in $\emF$ as $t\to\infty$, and so tightness of the sequence $\{\nunbar_\infty\}_{N\in\N}$ follows from Remark \ref{remark_tight}.
\end{proof}

\begin{proposition}\label{prop_yinfbar}
Suppose Assumptions  \ref{asm_arrival}-\ref{asm_serviceP} hold. Then the sequence $(\xnbar_0,\nunbar_0) \doteq (\xnbar_\infty,\nunbar_\infty),N\in\N$, satisfies condition \eqref{asm_ic}.
\end{proposition}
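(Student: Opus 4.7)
The plan is to verify the two conditions in \eqref{asm_ic}, namely the joint weak convergence $(\xnbar_\infty, \nunbar_\infty) \Rightarrow (1, \overline{\nu})$ in $\R \times \emF$ and the mean convergence $\Ept{\xnbar_\infty} \to 1$.

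For the $\xnbar_\infty$-component, Proposition \ref{thm_tightness} yields tightness of $\{\xnhat_\infty\}_N$ in $\R$, and so $\xnbar_\infty = 1 + \xnhat_\infty/\sqrt{N} \Rightarrow 1$. The mean convergence follows once I derive a uniform $\Lone$ bound on $\xnhat_\infty$. For the positive part, Corollary \ref{xnphat_bound} (applied with $\vn_0 = \vn_*$) combined with the convergence $\xnphat_t \Rightarrow (\xnhat_\infty)^+$ from Theorem \ref{thm_ergodic} and Fatou's lemma for weakly convergent nonnegative random variables gives $\Ept{(\xnhat_\infty)^+} \leq C_Q$. For the negative part, a flow-balance/Little's law argument in stationarity---service entries occur at rate $\lambdan$ (by \eqref{kNEQ} and stationarity) and have mean sojourn time $1$---yields $\Ept{\nun_\infty(\f1)} = \lambdan$; together with \eqref{number_in_service} and \eqref{def_lambdan} this gives $\Ept{(\xnhat_\infty)^-} = \beta$. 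Hence $\Ept{|\xnhat_\infty|} \leq C_Q + \beta$ uniformly in $N$, and $\Ept{\xnbar_\infty} = 1 + \Ept{\xnhat_\infty}/\sqrt{N} \to 1$.

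For $\nunbar_\infty$, tightness in $\emF$ is provided by Lemma \ref{lem_nunbarTight}, so by Prohorov's theorem it suffices to show that every subsequential weak limit of $\{\nunbar_\infty\}$ equals $\overline{\nu}$ a.s. Suppose $\nunbar_\infty^{(N_k)} \Rightarrow \nu^*$ along a subsequence; combined with the previous step one has $(\xnbar_\infty^{(N_k)}, \nunbar_\infty^{(N_k)}) \Rightarrow (1, \nu^*)$ and $\Ept{\xnbar_\infty^{(N_k)}} \to 1$. Applying the general fluid limit of \cite[Theorem 3.7]{KasRam11} with this (random) initial condition---after passing to a Skorokhod representation so that the initial convergence is almost sure---yields, for each fixed $t > 0$,
\[
    (\xnbar_t^{(N_k)}, \nunbar_t^{(N_k)}) \Rightarrow (\overline{x}(t), \overline{\mu}(t)),
\]
where $(\overline{x}(t), \overline{\mu}(t))$ is the (random, given $\nu^*$) fluid solution starting from $(1, \nu^*)$. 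On the other hand, stationarity of $\vn$ initialized at $\vn_\infty$ gives $(\xnbar_t^{(N)}, \nunbar_t^{(N)}) \overset{d}{=} (\xnbar_\infty^{(N)}, \nunbar_\infty^{(N)})$ for every $t \geq 0$, so along the same subsequence $(\xnbar_t^{(N_k)}, \nunbar_t^{(N_k)}) \Rightarrow (1, \nu^*)$. Consequently $(\overline{x}(t), \overline{\mu}(t)) = (1, \nu^*)$ for every $t > 0$, i.e., $(1, \nu^*)$ is a fluid fixed point.

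The final step is to show that the unique fluid fixed point with total mass $1$ in the critical regime is $(1, \overline{\nu})$. At such a fixed point the critical-regime fluid mass-balance equation (with $d\overline{K}_s = ds$) reduces to $\nu(f) = \nu(\Phi_t f) + \int_0^t (f \overline{G})(t-s)\,ds$ for all $t \geq 0$ and bounded absolutely continuous $f$. Differentiating at $t = 0^+$ yields the infinitesimal identity $\nu(f') = \nu(f h) - f(0)$ for all $f \in \mathbb{C}_c^1\hc$, which by integration by parts forces $\nu$ to have a density $\phi$ solving $\phi' = -h\phi$ on $\ho$ with $\phi(0) = 1$; the unique solution is $\phi = \overline{G}$, so $\nu = \overline{\nu}$. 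This identifies $\nu^* = \overline{\nu}$ a.s.\ and completes the proof. The main technical subtleties are the careful justification of Little's law in stationarity and the application of the fluid limit with a random initial condition, but both are handled by standard tools (ergodicity of $\vn$ from Proposition \ref{prop_Vergodic} and the Skorokhod representation theorem).
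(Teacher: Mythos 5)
Your proof takes a genuinely different route from the paper's, and while the strategy for the $\xnbar_\infty$ component is essentially sound, the argument for $\nunbar_\infty$ has a real gap. The paper's proof is a direct moment computation: starting from the dynamical identity \eqref{nuNfEQ} with $\nun_0\deq\nun_*$, it establishes the uniform bound $\sup_{t,N}\frac{1}{\sqrt{N}}\Eptil{|\nun_t(f)-N\overline\nu(f)|}\leq C_f$ for every $f\in\mathbb{C}_b^1\hc$ (using the bounds of Section \ref{sec_tight}), passes to $t\to\infty$ via the portmanteau theorem, and then reads off both $\nunbar_\infty(f)\Rightarrow\overline\nu(f)$ and $\xnbar_\infty\to 1$ in $\mathbb{L}^1$ directly from that bound divided by $\sqrt{N}$. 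No Little's-law argument and no fluid-flow invariance argument is used.

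Your Little's-law computation $\Eptil{\nun_\infty(\f1)}=\lambdan$, and hence $\Eptil{(\xnhat_\infty)^-}=\beta$ exactly, is an attractive and more elementary route; it can probably be made rigorous using \eqref{kNEQ}, stationarity, and the ergodicity in Proposition \ref{prop_Vergodic}, and it gives the exact value rather than just the bound $C_{\f1}$ that the paper obtains. But as you note, $L=\lambda W$ for the in-service population does require a careful argument, and the paper avoids it entirely.

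The fluid-fixed-point argument for $\nunbar_\infty$, however, contains a genuine logical gap. What you get from stationarity and the fluid limit is a distributional identity $\Phi_t(1,\nu^*)\deq(1,\nu^*)$, where $\Phi_t$ is the deterministic fluid flow; you then assert "$(\overline x(t),\overline\mu(t))=(1,\nu^*)$ for every $t>0$" as though it were an a.s.\ equality, and conclude that $(1,\nu^*)$ is a.s.\ a fixed point. Distributional invariance of a probability measure under a deterministic flow does not, by itself, force the measure to be concentrated on the fixed-point set; you would additionally need to argue that the critical fluid flow converges pointwise to the unique fixed point from every (sub-probability) initial condition and then deduce concentration of the invariant measure from that. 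Two further issues: (i) you apply \cite[Theorem 3.7]{KasRam11} with a random subsequential initial condition $(1,\nu^*)$ different from $(\overline X,\overline\nu)$, which goes beyond the form used in Lemma \ref{prop_fluid}.b and requires checking that the hypotheses of \cite{KasRam11} permit it; (ii) your ODE uniqueness argument begins "$\nu$ has a density $\phi$", but the a priori subsequential limit $\nu^*$ is only a random element of $\emF$, so absolute continuity must itself be derived from the distributional identity $\nu(fh)-\nu(f')=f(0)$ before the integration-by-parts step is legitimate. Each of these can likely be repaired, but together they make this branch of the proposal substantially less complete than the paper's direct argument via \eqref{temp_claim2} and Lemma \ref{random_measure_limit}.
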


\begin{proof}
    First, we claim that when $\vn_0 \deq \pin_*$, for every $f\in\mathbb{C}_b^1\hc$, there exists a constant $C_f<\infty$ such that
    \begin{equation}\label{temp_claim}
        \sup_{t\geq0,N\in\N} \frac{1}{\sqrt{N}} \Ept{|\nun_t(f)-N\overline\nu(f)|}\leq C_f.
    \end{equation}
  To prove the claim,   fix $f\in\mathbb{C}_b^1\hc$, $t\geq0$ and $N\in\N$. By the definition of $\overline \nu$ given in \eqref{def_nubar},
    \[\overline\nu(f)=\int_0^\infty f(s)\overline G(s)ds=\int_0^\infty (f\overline G)(s+t)ds+\int_0^t(f\overline G)(s)ds=\overline \nu(\Phi_tf)+\int_0^t(f\overline G)(t-s)ds.\]
   Substituting $\kn$ from \eqref{kNEQ} into the equation \eqref{nuNfEQ} for $\nun$, using integration by parts, the fact that $\xn_0=N$, $\overline{X} = 1$ and recalling the diffusion scaling form of \eqref{def_Ynhat},  we have
    \begin{align}\label{nuconf}
         \frac{1}{\sqrt{N}}\left(\nun_t(f)-N\overline \nu(f)\right)  &= \frac{1}{\sqrt{N}} \left(\nun_0(\Phi_tf)-N\overline\nu(\Phi_tf)\right) -\Hnhat_t(f) +\int_0^t (f\overline G)(t-s) d\enhat_s \notag\\
        &\hspace{5mm} -f(0)\xnphat_t -\int_0^t (f\overline G)'(t-s)\xnphat_sds,
    \end{align}
where    we used the fact that $\xnphat_0=0$ because $\xn_0 \stackrel{(d)}{=} X_*$. Since $\nun_0 \stackrel{(d)}{=} \nu_*$, with $\nu_*$ defined in \eqref{def_nuns}, $\nun_0(\Phi_tf)-N\overline\nu(\Phi_tf)\deq \sum_{j=1}^N\zeta^{N,t}_j$, where
    \[
       \zeta^{N,t}_j \doteq \delta_{a_j}(\Phi_t f)-\overline \nu(\Phi_tf)= f(a_j+t)\frac{\overline G(a_j+t)}{\overline G(a_j)}-\int_0^\infty f(x+t)\overline G(x+t)dx,
    \]
    and $\{a_j;j\geq1\}$ are i.i.d. with p.d.f $\overline G$. Hence, $\zeta^{N,t}_j;j=1,...,N,$ are also i.i.d. with $\Eptil{\zeta^{N,t}_1}=0$ and
    \[\text{Var}(\zeta^{N,t}_1)\leq \Ept{\delta_{a_1}(\Phi_tf)^2}=\int_0^\infty \frac{f^2(x+t)\overline G^2(x+t)}{\overline G^2(x)}\overline G(x) dx\leq \|f\|_\infty^2.\]
    Moreover, $\sum_{j=1}^N\zeta^{N,t}_j$ has zero mean and finite variance equal to $N\text{Var}(\zeta^{N,t}_1)$, and hence,
    \begin{align}\label{nuconf_temp1}
        \frac{1}{\sqrt{N}} \Ept{\left|\nun_0(\Phi_tf)-N\overline\nu(\Phi_tf)\right|}
        &\leq \frac{1}{\sqrt{N}} \left(\Ept{\big(\sum_{j=1}^N\zeta^{N,t}_j\big)^2}\right)^\half = \text{Var}(\zeta^{N,t}_1)^\half \leq \|f\|_\infty.
    \end{align}

    Next, by definition \eqref{def_Hn} of $\Hn_t$,  \eqref{CoVar_mn} and the bound \eqref{nunbar_int_bd}, we have
    \begin{align*}
        \Ept{\Hnhat_t(f)^2}&=\Ept{\int_0^t\nunbar_{t-s}\left( \frac{f^2(\cdot+s)\overline G^2(\cdot+s)}{\overline G^2(\cdot)}\frac{g(\cdot)}{\overline G(\cdot)}\right)\;ds}\leq \|f\|_\infty^2 R_M(0),
    \end{align*}
    with $R_M(0)<\infty$ by Lemma \ref{lem_nunbar}. Therefore,
    \begin{equation}\label{nuconf_temp2}
      \Ept{\left|\Hnhat_t(f)\right|}\leq\left(\Ept{\Hnhat_t(f)^2}\right)^\half \leq \|f\|_\infty \sqrt{R_M(0)}.
    \end{equation}
    Moreover, note that under Assumption \ref{asm_service}, for $f\in\mathbb{C}_b^1\hc$, $f\overline G$ is absolutely continuous with
    \begin{equation}\label{temp_vinf}
        f\overline G)'(s)
\leq (1+H)(\|f\|_\infty+\|f'\|_\infty) \overline{G}(s) \leq (1+H)(\|f\|_\infty+\|f'\|_\infty).
    \end{equation}
    Therefore, replacing $f$ with $f\overline G$ in \eqref{pre_part2} of Lemma \ref{lem_Eterm}, we have
    \begin{align*}
      \Ept{\left|\int_0^t(f\overline G)(t-s)d\enhat_s\right|^2}&\leq C_E \|f\|_\infty^2t\overline G(t)+C_E(1+H)^2(\|f\|_\infty+\|f'\|_\infty)^2 \left(\int_0^ts\overline G(s)ds\right)\left(\int_0^t\overline G(s)ds\right)\\
      &\hspace{5mm}+C_E\|f\|_\infty^2\left(\int_0^t\overline G(s)ds\right)^2.
    \end{align*}
    Since $\overline G(t)=\mathcal{O}(t^{-(3+\epsilon)})$ by Assumption \ref{asm_serviceP}.\ref{asm_moment3}, $t\overline G(t)$, $\int_0^t\overline G(s)ds$ and $\int_0^ts\overline G(s)ds$ are all uniformly bounded in $t$. Therefore, there exists a constant $C<\infty$ such that
    \begin{equation}\label{nuconf_temp3}
      \Ept{\left|\int_0^t(f\overline G)(t-s)d\enhat_s\right|} \leq \left(\Ept{\left|\int_0^t(f\overline G)(t-s)d\enhat_s\right|^2} \right)^\half
      \leq C (\|f\|_\infty+\|f'\|_\infty).
    \end{equation}

    Finally, using the bounds \eqref{xnphat_bd_eq} and \eqref{temp_vinf}, we have
    \begin{align}\label{nuconf_temp4}
        \Ept{\left|-f(0)\xnphat_t -\int_0^t (f\overline G)'(t-s)\xnphat_sds\right|}&\leq |f(0)|\Ept{\xnphat_t}+\int_0^t|(f\overline G)'|(t-s)\Ept{\xnphat_s}ds\notag\\
        &\leq C_Q|f(0)|+C_Q(1+H)(\|f\|_\infty+\|f'\|_\infty) \int_0^t\overline G(s)ds\notag\\
        &\leq 2C_Q(1+H)(\|f\|_\infty+\|f'\|_\infty).
    \end{align}
    The claim \eqref{temp_claim} follows from the equation \eqref{nuconf} and the bounds \eqref{nuconf_temp1}-\eqref{nuconf_temp2} and   \eqref{nuconf_temp3}-\eqref{nuconf_temp4}. In addition, since by Proposition \ref{prop_Vergodic} for every $N\to\infty$ $\nun_t\Rightarrow \nun_\infty$ in $\emF$ as $t\to\infty$, and for every $f\in\mathbb{C}_b^1\hc$, $\nun_t(f)\Rightarrow \nun_\infty(f)$, by the portmanteau theorem,
    \begin{equation}\label{temp_claim2}
      \sup_{N\in\N}\frac{1}{\sqrt{N}}\Ept{|\nun_\infty(f)-N\overline \nu(f)|}\leq \sup_{N\in\N}\liminf_{t\to\infty} \frac{1}{\sqrt{N}}\Ept{|\nun_t(f)-N\overline \nu(f)|}\leq C_f.
    \end{equation}

    Now, we return to the proof of the proposition. Since the limit $(\overline X,\overline \nu)$ is deterministic, to show that $(\xnbar_0,\nunbar_0)\Rightarrow (\overline X,\overline \nu)$, it is enough to show the convergence of each component. For  $N\in\N$,
    \begin{equation}\label{temp_xinf}
    \Ept{\left|\xnbar_\infty-1\right|}= \frac{1}{\sqrt{N}}\Ept{\left|\xnhat_\infty\right|}\leq \frac{1}{\sqrt{N}}\left(\Ept{\xnphat_\infty} +\Ept{\xnmhat_\infty}\right).
    \end{equation}
    By Proposition \ref{prop_Vergodic},
    $\vn_t\Rightarrow\vn_\infty$ in $\Vn$ as $t\to\infty$, and in particular, $\xn_t\Rightarrow\xn_\infty$. Hence, by Corollary \ref{xnphat_bound} and the portmanteau theorem,
    \begin{equation}\label{temp_xpinf}
        \Ept{\xnphat_\infty}\leq \liminf_{t\to\infty}\Ept{\xnphat_t}\leq C_Q.
    \end{equation}
    Moreover, recall that by the non-idling condition \eqref{number_in_service}, $\nun_t(\f1)=N\wedge\xn_t$, and  by another use of Proposition \ref{prop_Vergodic}, as $t\to\infty$,
    \[
        \nun_t(\f1)-N\wedge\xn_t\Rightarrow \nun_\infty(\f1)-N\wedge\xn_\infty.
    \]
    Therefore, since $\overline\nu(\f1)=\overline{X} = 1$,
    \[
        \xnmhat_\infty=-\frac{(\xn_\infty-N)}{\sqrt{N}} \wedge0 =-\frac{\xn_\infty\wedge N-N}{\sqrt{N}} =-\frac{\nun_\infty(\f1)-N\overline\nu(\f1)}{\sqrt{N}}.
    \]
    Hence, substituting $f=\f1$ in \eqref{temp_claim2} we obtain
    \begin{equation}\label{temp_xminf}
      \Ept{\xnmhat_\infty} \leq \frac{1}{\sqrt{N}}\Ept{|\nun_\infty(\f1)-N\overline \nu(\f1)|}\leq C_\f1.
    \end{equation}
    We conclude from \eqref{temp_xinf}-\eqref{temp_xminf} that
    \[
        \lim_{N\to\infty}\Ept{\left|\xnbar_\infty-1\right|}= \lim_{N\to\infty}\frac{1}{\sqrt{N}}\Ept{\left|\xnhat_\infty\right|} \leq \lim_{N\to\infty}\frac{1}{\sqrt{N}}(C_Q+C_\f1)=0.
    \]
    In other words,  as $N\to\infty$, $\xnbar_\infty$ converges to $\overline X\equiv1$ in $\mathbb{L}^1$, which in turn implies both $\xnbar_\infty \Rightarrow\overline X$ and $\Eptil{\xnbar_\infty}\to\overline X$.
    Finally, by another use of \eqref{temp_claim2}, for every $f\in\mathbb{C}_b^1\hc$,
    \[\lim_{N\to\infty}\Ept{\nunbar_\infty(f)-\overline \nu(f)} \leq \lim_{N\to\infty}\frac{1}{\sqrt{N}}C_f=0,\]
    which implies $\nunbar_\infty(f)\Rightarrow\overline \nu(f)$. The result then follows from the tightness of the sequence $\{\nunbar_\infty\}_{N\in\N}$ from Lemma \ref{lem_nunbarTight} and  Lemma \ref{random_measure_limit}.
\end{proof}

\subsection{Convergence of stationary distributions.} \label{sec_conv_proof}

We start with a preliminary result.

\begin{lemma}\label{lem_subseq}
Suppose Assumptions \ref{asm_arrival}-\ref{asm_serviceP} hold. If along some subsequence $\{N_j\}$, as $j\to\infty$, $\widehat\pi^{(N_j)}\Rightarrow \pi_0$ in $\Y$, then  $\pi_0$ is an invariant distribution of the semigroup $\{{\mathcal P}_t\}$ associated with the  diffusion model $Y$ defined in Section \ref{sec_spde}.
\end{lemma}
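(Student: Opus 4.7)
The plan is to initialize each $N$-server system at stationarity and then compare two descriptions of the marginal law of $\ynhat_t$ for an arbitrary fixed $t \geq 0$: one coming from the stationarity of $\vn$, and the other from the finite-dimensional CLT of Proposition \ref{prop_subseq}. Concretely, take $\vn_0 \deq \vn_\infty$ as in Proposition \ref{prop_Vergodic}, so that $\{\vn_t;t\geq 0\}$ is stationary and $\ynhat_t = \widehat\Psi(\vn_t) \deq \widehat\Psi(\vn_\infty)$ has law $\pinhat$ for every $t \geq 0$ and $N \in \N$ by \eqref{ynhat_representation} and \eqref{def_pnhat}. Along the prescribed subsequence $\ynhat_0 \deq \widehat\pi^{(N_j)} \Rightarrow \pi_0$; by a standard enlargement of the underlying probability space this can be realized as $\ynhat_0 \Rightarrow Y_0$ in $\Y$ with $Y_0 \sim \pi_0$ independent of the Brownian motion $B$ and the space-time white noise $\M$ driving the diffusion model. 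The independence is legitimate because $\vn_\infty$ is built from primitives of jobs present or arriving by time zero, whereas $B$ and $\M$ arise as limits of the post-zero arrival and departure fluctuations, which are independent of the initial configuration.

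Next, I would verify the hypotheses of Proposition \ref{prop_subseq}: the fluid convergence \eqref{asm_ic} for $(\xnbar_\infty,\nunbar_\infty)$ is exactly the content of Proposition \ref{prop_yinfbar}, and $\ynhat_0 \Rightarrow Y_0$ holds by construction. Proposition \ref{prop_subseq} then yields, for every $k \in \N$ and any $e_1, \ldots, e_k$ from the chosen orthonormal basis of $\Hone \ho$,
\[
\bigl(\xnhat_t, \langle \znhat_t, e_1\rangle_{\Hone}, \ldots, \langle \znhat_t, e_k\rangle_{\Hone}\bigr) \;\Rightarrow\; \bigl(X_t, \langle Z_t, e_1\rangle_{\Hone}, \ldots, \langle Z_t, e_k\rangle_{\Hone}\bigr),
\]
where $Y=(X,Z)$ is the continuous version (Proposition \ref{thm_spde}.a) of the diffusion model launched from $Y_0$. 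At the same time, stationarity gives $\ynhat_t \deq \widehat\pi^{(N_j)} \Rightarrow \pi_0$ in $\Y$, so the continuous mapping theorem identifies the weak limit of the same finite-dimensional vector on the left with the corresponding projections of a $\pi_0$-distributed random element of $\Y$.

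By uniqueness of weak limits, the finite-dimensional projections of $\law(Y_t)$ and $\pi_0$ coincide for all $k$ and all choices of basis elements. Since $\{e_k\}_{k\in\N}$ is an orthonormal basis of $\Hone\ho$, the family of maps $(x,f)\mapsto \bigl(x, \langle f, e_1\rangle_{\Hone}, \ldots, \langle f, e_k\rangle_{\Hone}\bigr)$, $k\in\N$, separates points of $\Y\subset \R\times\Hone\ho$ and generates its Borel $\sigma$-algebra, so this forces $\law(Y_t)=\pi_0$. Equivalently, $\pi_0\mathcal{P}_t = \pi_0$ for every $t\geq 0$, and $\pi_0$ is invariant for $\{\mathcal{P}_t\}$.

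The substantive technical input is Proposition \ref{prop_yinfbar}, which supplies the fluid-limit hypothesis at stationarity; once that is in hand, the rest is a clean combination of stationarity, the subsequential CLT of Proposition \ref{prop_subseq}, and the separating property of Hilbert-basis projections on $\Y$. The only other subtlety worth checking carefully is the independence of $Y_0$ from the driving pair $(B,\M)$, which is handled by the probability space enlargement sketched above and is exactly what lets Lemma \ref{thm_HKresult} (and thus Proposition \ref{prop_subseq}) apply in this non-product initial condition setting.
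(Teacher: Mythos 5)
Your proposal is correct and follows essentially the same route as the paper: set $\vn_0 \deq \vn_\infty$ so that $\law(\ynhat_t) = \pinhat$ for all $t$, invoke Proposition \ref{prop_yinfbar} to verify the fluid-limit hypothesis \eqref{asm_ic}, apply Proposition \ref{prop_subseq} along the subsequence to obtain finite-dimensional convergence of $\ynhat_t$ to $Y_t$, and then match the fixed finite-dimensional projections of $\law(Y_t)$ with those of $\pi_0$ and pass to $\law(Y_t)=\pi_0$ via the separating family of Hilbert-basis projections. Your explicit aside on realizing $Y_0\sim\pi_0$ independently of $(B,\M)$ correctly identifies the hypothesis of Lemma \ref{thm_HKresult} that needs to be satisfied; since $\pi_0$ is a fixed law, this is immediate by construction on an enlarged space, as you note, and the paper leaves it implicit.
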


\begin{proof}
Set $\vn_0 = \vn_\infty$ for  $N\in\N$, let $\law (Y_0) = \pi_0$ and  let $Y = Y^{Y_0}=(X,Z)$ be the diffusion model with initial condition $Y_0$,  as specified in Definition \ref{def_solutionProcess}.  Since  $\ynhat_0=\widehat{\Psi}(\vn_0)$ has distribution $\pinhat=\law(\widehat{\Psi}(\vn_\infty))$ and $\pi^{(N_j)} \Rightarrow \pi_0$,  by continuity of the projection map on the Hilbert space
$\Hone \ho$ and an application of the continuous mapping theorem we have, as $j\to\infty$,
\begin{equation*}
    \left(\widehat X_0^{(N_j)},\langle \widehat Z^{(N_j)}_0,e_1\rangle_{\Hone},...,\langle \widehat Z^{(N_j)}_0,e_k\rangle_{\Hone}\right)\Rightarrow\left( X_0,\langle  Z_0,e_1\rangle_{\Hone} ,...,\langle  Z_0,e_k\rangle_{\Hone}\right),
\end{equation*}
    Next, since $\vn_0 = \vn_\infty$, by stationarity (see Proposition \ref{prop_Vergodic}) for every $t\geq0,$ $\vn_t\deq\vn_\infty$, and by measurability of the mapping $\widehat\Psi$ and the projection mappings on Hilbert spaces, we have
    \[\left(\xnhat_0,\langle \znhat_0,e_1\rangle_{\Hone},...,\langle \znhat_0,e_k\rangle_{\Hone}\right) \deq \left(\xnhat_t,\langle \znhat_t,e_1\rangle_{\Hone},...,\langle \znhat_t,e_k\rangle_{\Hone}\right).\]
    Moreover, since $(\xn_0,\nun_0) \deq(\xn_\infty,\nun_\infty)$, by Proposition \ref{prop_yinfbar} the sequence $\{(\xnbar_0,\nunbar_0)\}$ satisfies the condition \eqref{asm_ic}, and $\widehat{Y}^{(N_j)}_0 \Rightarrow Y_0$ as $j \rightarrow \infty$  by assumption. Hence,  the conditions of \eqref{thm_HKresult} hold and so,  by Proposition \ref{prop_subseq}, for any $t>0$ we have
    \begin{equation*}
        \left(\widehat X_t^{(N_j)},\langle \widehat Z^{(N_j)}_t,e_1\rangle_{\Hone},...,\langle \widehat Z^{(N_j)}_t,e_k\rangle_{\Hone}\right)\Rightarrow\left( X_t,\langle  Z_t,e_1\rangle_{\Hone} ,...,\langle  Z_t,e_k\rangle_{\Hone}\right)
    \end{equation*}
as $j\to\infty$.
    The last three displays imply that for every $t\geq0$,
    \[\left( X_0,\langle  Z_0,e_1\rangle_{\Hone} ,...,\langle  Z_0,e_k\rangle_{\Hone}\right)\deq\left( X_t,\langle  Z_t,e_1\rangle_{\Hone} ,...,\langle  Z_t,e_k\rangle_{\Hone}\right),\]
    and hence, the distributions of the projections of $Y_0$ and $Y_t$ onto a chain of increasing finite-dimensional subspaces $\mathbb{R} \otimes \mathbb{L}_k$, where $\mathbb{L}_k=\text{span}\{e_1, \ldots, e_k\}$ coincide.  Therefore, $Y_0 \deq Y_t$ (e.g., see the discussion in \cite[Chapter I.1]{Sko74}), and  $\pi_0=\law(Y_0)$ is an invariant distribution of the diffusion model.
\end{proof}

As a corollary of the previous lemma and Proposition \ref{thm_tightness}, we obtain the existence of an invariant distribution for the diffusion model $Y$.

\begin{corollary}\label{cor_exists}
    Under Assumptions \ref{asm_arrival}-\ref{asm_serviceP}, the Markov transition semigroup $\{{\mathcal P}_t\}$ associated with the  diffusion model $Y$ has a unique invariant distribution $\pi$.
\end{corollary}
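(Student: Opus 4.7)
The plan is to combine the tightness result of Proposition \ref{thm_tightness} with Lemma \ref{lem_subseq} to deduce existence of an invariant distribution, and then invoke Proposition \ref{thm_spde}(c) to upgrade existence to uniqueness. Concretely, I would proceed as follows.

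First, by Proposition \ref{thm_tightness}, the sequence $\{\widehat\pi^{(N)}\}_{N\in\N}$ of stationary distributions of the centered and scaled state processes is tight in the Polish space $\Y$. Prokhorov's theorem then guarantees the existence of a subsequence $\{N_j\}\subset\N$ and a probability measure $\pi_0$ on $\Y$ such that $\widehat\pi^{(N_j)}\Rightarrow\pi_0$ in $\Y$ as $j\to\infty$.

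Next, I would apply Lemma \ref{lem_subseq} to this convergent subsequence: by that lemma, any weak limit of a subsequence of $\{\widehat\pi^{(N)}\}$ is an invariant distribution of the semigroup $\{{\mathcal P}_t\}$ associated with the diffusion model $Y$. Therefore $\pi_0$ is itself an invariant distribution, which establishes the existence of at least one invariant distribution.

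Finally, uniqueness is immediate from Proposition \ref{thm_spde}(c), which asserts that the semigroup $\{{\mathcal P}_t\}$ admits at most one invariant distribution. Combined with the existence just established, this yields the unique invariant distribution $\pi$ claimed in the corollary. There is no substantive obstacle in this argument, since all the hard work (the tightness estimate, the finite-dimensional CLT used inside Lemma \ref{lem_subseq}, and the uniqueness result for the diffusion model from \cite{AghRam15spde}) has already been carried out; the corollary is purely an assembly of these ingredients.
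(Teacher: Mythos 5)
Your proposal is correct and follows exactly the same three-step route as the paper's own proof: tightness of $\{\pinhat\}$ plus Prokhorov gives a convergent subsequence, Lemma \ref{lem_subseq} identifies the subsequential limit as an invariant distribution of $\{{\mathcal P}_t\}$, and Proposition \ref{thm_spde}(c) supplies uniqueness. Nothing to add.
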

\begin{proof}
    By Proposition \ref{thm_tightness}, the sequence $\{\pinhat\}$ is tight in $\Y$. Therefore, there exists a subsequence $\{N_j\}$ such that $\{\widehat\pi^{(N_j)}\}$ converges in distribution, as $j \rightarrow \infty$,  to a probability distribution $\pi$ on $\Y$.
 By Lemma \ref{lem_subseq}, this subsequential limit is an invariant distribution of the transition semigroup $\{{\mathcal P}_t\}$.  This  proves existence, and  uniqueness follows from Proposition \ref{thm_spde}.
\end{proof}

To conclude the proof of Theorem \ref{thm_interchange}, we use a standard argument by contradiction.

\begin{proof}[Proof of Theorem \ref{thm_interchange}]
Let $\pi$ be the unique invariant distribution of the transition semigroup $\{{\mathcal P}_t\}$ associated with the diffusion model.  Suppose the sequence $\{\pinhat\}$ does not converge to $\pi$.  Then, there exists a subsequence $\{N_j\}$ such that $\lim_{j\to\infty}\:d_p(\widehat\pi^{(N_j)},\pi)>0,$ where $d_p$ represents the Prohorov metric on the space of probability measures on $\Y$.By the  tightness of $\{\pinhat\}$ established in Proposition \ref{thm_tightness}, there exists a further subsequence, which we denote again by $\{N_j\}$, such that  $\widehat\pi^{(N_j)}\Rightarrow \pi_0$ in $\Y$, for some probability distribution $\pi_0$ on $\Y$. By Lemma \ref{lem_subseq}, $\pi_0$ is an invariant distribution of $\{{\mathcal P}_t\}$. But, since $\{{\mathcal P}_t\}$  has a unique invariant distribution $\pi$ by Proposition \ref{thm_spde}, it follows that $\pi_0=\pi$, and hence, that $\lim_{j\to\infty}\:d_p(\widehat\pi^{(N_j)},\pi) = 0,$ which is a contradiction.  Thus, we have shown that $\pinhat \Rightarrow \pi$.
\end{proof}

\appendix

\section{Proof of ergodicity of the process $\vn$.} \label{apx_vergodic}

\begin{proof}[Proof of Proposition \ref{prop_Vergodic}]
First note that by \eqref{def_lambdan}, the traffic intensity $\rho^{(N)}=\lambdan/N$ satisfies $\rho^{(N)}<1$.  As discussed in Section XII of \cite{Asm03}, the queueing system described in this paper can be viewed as an $N$ server queueing system where each server has its own queue, and each job, upon arrival, is routed to the queue with the least remaining workload (i.e., the residual service time of the job in service, plus the sum of service times of all jobs in that queue). The stability of this queueing system under the condition $\rho^{(N)}<1$ is studied in \cite{Asm03}, using a different Markovian representation and a different filtration. However,  some of their results are still relevant for our representation.

For the rest of the proof, we fix $N\in\N$, and suppress the superscript $(N)$ for brevity. Let $\tilde R$ be the backward recurrence time of the arrival process, that is,
\[
    \tilde R_t\doteq t-\sup\{s<t:E_s<E_t\};\quad\quad t\geq0.
\]
The process $\{\tilde V_t=(\tilde R_t,X_t,\nu_t);t\geq0\}$ is also a c\`adl\`ag Markov process. Let $\{\sigma(k);k\geq1\}$ be the sequence of jobs such that the system is empty right before their arrival time $T_{k}\doteq u_0+\dots+ u_{\sigma(k)-1}$, that is, $\tilde V_{T_k-}=(0,0,\mathbf{0})$, where $\mathbf{0}$ is the zero measure. Note that $\beta_-\doteq\sup\{x:G(x)=0\}$ is finite because $\lim_{x\to\infty} G(x)=1$, and $\alpha_+\doteq\sup\{x:\overline G_E(x)>0\}=\infty$ because $\tilde G_E$ (or equivalently, $G_E$) has full support on $\hc$  by Assumption \ref{asm_arrival}. Therefore, $\beta_-<\alpha_+$, and since $\rho<1$, we can apply Corollary XII.2.5 of \cite{Asm03} and conclude that $\{\sigma(k);k\geq1\}$ is an aperiodic, nonterminating (discrete-time) renewal process with finite mean renewal times. Moreover, $\tilde G_E$, and thus, $G_E$, are non-lattice by Assumption \ref{asm_arrival}, and another application of the fact that $\beta_-<\alpha_+$ established above implies $\Probil{v_1<u_1}>0$. Therefore, as shown in the proof of Corollary XII.2.8.(a) in \cite[p. 348]{Asm03}, we have $m\doteq\mathbb{E}_0\{T_1\}<\infty$, where $\mathbb{E}_0$ is the expectation under the initial condition $\tilde V_0=(0,0,\mathbf{0}).$ Therefore, $\tilde V$ is a c\`adl\`ag regenerative Markov process, with regeneration set $\{(0,0,\mathbf{0})\}$, regenerative points $T_k$, and cycle lengths $T_k-T_{k-1}$ with non-lattice distribution and finite mean $m$. Therefore, by \cite[Theorem VI.1.2]{Asm03}, there exists a random variable $V_\infty=(R_\infty,X_\infty,\nu_\infty)$ such that as $t\to\infty,$ $\tilde V_t\Rightarrow V_\infty$ in $\V$, and the law $\pi_\infty$ of $V_\infty$ is given by
\[
    \pi_\infty(f)=\frac{1}{m}\mathbb{E}_0\left\{\int_0^{T_1}f(\tilde V_s)ds\right\},\quad\quad \forall f\in\mathbb{C}_b(\mathcal V).
\]
Moreover, by \cite[Theorem VII.3.2]{Asm03}, $\pi_\infty$ is a stationary distribution for the Markov process  $\tilde V$. In particular, the law $\alpha_\infty$ of $R_\infty$ is the stationary distribution for the backward recurrence time of the arrival process, and thus has the representation $\alpha_\infty(dx)=1/\lambda\overline G_E(x)dx$ by \cite[Proposition V.3.5]{Asm03}.

Now recall from (3.2) in \cite[P. 151]{Asm03} that the distributions of $R_t$ and $\tilde R_t$ satisfy the relation
\[
    \Prob{R_t>y|\tilde R_t=x}=\frac{\overline G_E(x+y)}{\overline G_E(x)}.
\]
Hence, for every bounded and continuous function $f$,
\[
    \Ept{f(R_t)}=\int_0^\infty f(y)\Prob{R_t=dy}=\int_0^\infty\int_0^\infty f(y)\frac{g_E(x+y)}{\overline G_E(x)}dy\;\Prob{\tilde R_t=dx}=\Ept{\tilde f(\tilde R_t)},
\]
where $\tilde f(x)=\int_0^\infty g_E(x+y)f(y)dy/\overline G_E(x)$ is continuous and bounded by $\|f\|_\infty$.  Therefore, for every $f\in\mathbb{C}_b\hc$, recalling the expression for $\alpha_\infty$, we see that as $t\to\infty$,
\[
    \Ept{f(R_t)}\to\Ept{\tilde f(R_\infty)}=\frac{1}{\lambda}\int_0^\infty\int_0^\infty f(y)\frac{g_E(x+y)}{\overline G_E(x)}dy\overline G_E(x)dx=\frac{1}{\lambda}\int_0^\infty f(y)\overline G_E(y)dy=\Ept{f(R_\infty)}.
\]
Therefore, $R_t\Rightarrow R_\infty$, and the law $\alpha_\infty$ of $R_\infty$ is also a stationary distribution for the forward recurrence time  $R.$ We conclude that $\pi_\infty$ is a stationary distribution for $V$, and for every initial condition $V_0$, $V_t$ converges in distribution to $V_\infty$ as $t\to\infty.$
\end{proof}

\section{Properties of the mapping $\mathcal{T}$.}\label{apx_props}

\begin{proof}[Proof of Lemma \ref{lem_Tprops}]
For the first assertion, let $\mu=\sum_{j=1}^{m}\delta_{a_j}$, for some $m\in\N$ and $a_j\in\R_+$, $j=1,...,m$, and let $z=\mathcal{T}\mu$. Then we have
\[
    z(r)= \mu(\Phi_r\f1)=\sum_{j=1}^{m}\delta_{a_j}(\Phi_r\f1)=\sum_{j=1}^{m}\frac{\overline G(a_j+r)}{\overline G(a_j)},\quad\quad r\geq 0.
\]
Since $\overline G$ has derivative $g$, $z$ has derivative $z'$ given by
\[
    z^\prime(r) = -\sum_{j=1}^{m}\frac{g(a_j+r)}{\overline G(a_j)}=-\sum_{j=1}^m\delta_{a_j}(\Phi_rh)=-\mu(\Phi_rh),\quad\quad r\geq0.
\]
Both $\overline G$ and $g$ are in $\Ltwo\ho$ by Assumption \ref{asm_service}  (see Remark \ref{remark_HH2}), and hence,  both $z$ and $z^\prime$ also lie in $\Ltwo\ho$; thus $z\in\Hone\ho$.

For the second claim, let $\{\mu^k\}$ be a sequence in $\emD$ converging weakly to $\mu=\sum_{j=1}^{m}\delta_{a_j}$.
Since $\emD$ consists of sums of unit Dirac masses, for all sufficiently large $k$, $\mu^k$ must have a representation $\mu^k=\sum_{j=1}^{m}\delta_{a^k_j}$ for some $a^k_j>0$, with $a^k_j\to a_j$ as $k\to\infty$. Therefore, we have
\begin{align*}
    \|\mathcal{T}\mu^k-\mathcal{T}\mu\|_{\Ltwo} &= \left|\left| \sum_{j=1}^{m}\frac{\overline G(a^k_j+\cdot)}{\overline G(a^k_j)}-\frac{\overline G(a_j+\cdot)}{\overline G(a_j)}\right|\right|_{\Ltwo} \\
    &\leq \sum_{j=1}^{m}\left|\left| \frac{\overline G(a^k_j+\cdot)}{\overline G(a^k_j)}-\frac{\overline G(a_j+\cdot)}{\overline G(a^k_j)}+\frac{\overline G(a_j+\cdot)}{\overline G(a^k_j)}-\frac{\overline G(a_j+\cdot)}{\overline G(a_j)}\right|\right|_{\Ltwo} \\
    &\leq\sum_{j=1}^{m}\frac{\|\overline G(a^k_j+\cdot)-\overline G(a_j+\cdot)\|_{\Ltwo}}{\overline G(a^k_j)}+\|\overline G\|_{\Ltwo}\sum_{j=1}^{m}\left|\frac{1}{\overline G(a_j^k)}-\frac{1}{\overline G(a_j)}\right|,
\end{align*}
which converges to zero as $k\to\infty$ by the continuity of $\overline G$,  the continuity of the translation mapping in the $\Ltwo$ norm (see e.g. \cite[Theorem 20.1]{Dib02}) and the fact that $\overline{G} \in \mathbb{L}^2$. Similarly, we see that
\begin{align*}
    \|\left(\mathcal{T}\mu^k\right)^\prime-&\left(\mathcal{T}\mu\right)^\prime\|_{\Ltwo}
    \leq\sum_{j=1}^{m}\frac{\|g(a^k_j+\cdot)-g(a_j+\cdot)\|_{\Ltwo}}{\overline G(a^k_j)}+\|g\|_{\Ltwo}\sum_{j=1}^{m}\left|\frac{1}{\overline G(a_j^k)}-\frac{1}{\overline G(a_j)}\right|,
\end{align*}
which, since  $g\in\Ltwo\ho$, again  converges to zero as $k\to\infty$ by the same argument.

Finally, note that given the moment assumption, the function $\overline Z(r)=\int_r^\infty\overline G(x)dx$ lies in $\Ltwo\ho$, and has weak derivative $-\overline G(\cdot),$ which  also  lies in $\Ltwo\ho$ (see Remark \ref{remark_GL2}), thus showing that $\overline{Z} \in \Hone \ho$.  When combined with the first two assertions of this lemma and the fact that $\R$ and $\Hone \ho$ are vector spaces, this shows that $\Psi$ is a continuous map from $\cup_{N} \V^N$ to $\R \times \Hone \ho$.
\end{proof}

\section{A ramification of our assumptions.}\label{sec-gamgol}

In this section, we verify that the assumptions of Proposition \ref{prop-tempQ} (and Corollary \ref{xnphat_bound}) imply the assumptions H-W and $T_0$ of \cite{GamGol13}. Recalling the latter assumptions in our notation, H-W requires that $\Probil{v_j=0}=\Probil{\tilde u_n=0}=0$, and that $\Ept{v_j}=\Ept{\tilde u_n}=1$, where recall that $v_j,j\geq1$ are the service times, and $\tilde u_n,n\geq1$ are the inter-arrival times of the process $\tilde E$, and inter-arrival times of the arrival process $\en$ must satisfy $u^{(N)}_n=\tilde u_n/\lambdan$, with $\lambdan$ defined in \eqref{def_lambdan}.  This follows from our Assumptions \ref{asm_arrival} and \ref{asm_service}.\ref{asm_g}. Next, condition $T_0$ of \cite{GamGol13}, again in our notation, requires the following.
\renewcommand{\theenumi}{\roman{enumi}}
\begin{enumerate}
      \item There exists $\epsilon>0$ such that $\Eptil{\tilde u_1^{(2+\epsilon)}},\Eptil{v_1^{(2+\epsilon)}}<\infty.$
      \item $v_j$ has a non-zero variance.
      \item $\lim_{t\to 0}{t}^{-1}G(t)<\infty$.
      \item For every $N\in\N$, the number of jobs in system $\xn_t$ (which is denote by $Q^N(t)$ in \cite{GamGol13}), satisfies $\xn_t\Rightarrow \xn_\infty$ as $t\to\infty$.
\end{enumerate}
First note that (i) holds by Assumption \ref{asm_arrival} and the explicit moment assumption of Corollary \ref{xnphat_bound}. Conditions (ii) and (iii) hold because the service time distribution has a p.d.f. $g$ by Assumption \ref{asm_service}.\ref{asm_g} (see also the discussion below assumption $T_0$ on page 4 of \cite{GamGol13}), and (iv) follows from  Proposition \ref{prop_Vergodic}.

\section{Various tightness criteria.}\label{APXtight}

\subsection{Tightness and convergence of random measures.}
A criteria for tightness of a family of $\emF$-valued random variables is given in \cite[Exercise 4.11 on p.\ 39]{Kalbook}.

\begin{proposition}\label{apx_Mtight}
    A family $\{\mu_\alpha;\alpha\in\mathcal{A}\}$ of $\mathbb{M}_F[0,\infty)$-valued random variables is tight if and only if the following two conditions hold:
    \begin{enumerate}
    \renewcommand{\labelenumi}{\alph{enumi}. }
        \item $\sup_{\alpha\in\mathcal{A}} \Ept{|\mu_\alpha(\f1)|} < \infty$;
        \item $\lim_{c\to\infty} \sup_{\alpha\in\mathcal{A}} \Ept{|\mu_\alpha(\indicone{[c,\infty)})|} = 0$.
    \end{enumerate}
\end{proposition}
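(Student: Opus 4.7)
The plan is to reduce the proposition to the standard Prohorov-type characterization of relatively compact sets in $\emF$: a set $K\subset\emF$ is relatively compact in the weak topology if and only if $\sup_{\mu\in K}\mu(\f1)<\infty$ and for every $\eta>0$ there exists $c<\infty$ with $\sup_{\mu\in K}\mu(\indicone{[c,\infty)})\leq \eta$. This characterization is standard (see, e.g., \cite{Kalbook}); combined with Markov's inequality it yields the result in essentially one pass.

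For the sufficiency direction (the one actually invoked in the paper, in Lemma \ref{lem_nunbarTight}), I would fix $\epsilon>0$ and use (a) to choose $M_\epsilon<\infty$ with $\sup_{\alpha\in\mathcal{A}}\Eptil{\mu_\alpha(\f1)}\leq M_\epsilon\epsilon/2$, and (b) to choose, for each $k\geq1$, a constant $c_k=c_k(\epsilon)<\infty$ with $\sup_{\alpha\in\mathcal{A}}\Eptil{\mu_\alpha(\indicone{[c_k,\infty)})}\leq 4^{-k}\epsilon$. I would then define
\[
  K_\epsilon\doteq\left\{\mu\in\emF:\mu(\f1)\leq M_\epsilon,\;\mu(\indicone{[c_k,\infty)})\leq 2^{-k}\text{ for all }k\geq1\right\},
\]
which has compact closure in $\emF$ by the criterion recalled above. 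Markov's inequality and a union bound give
\[
  \Prob{\mu_\alpha\notin K_\epsilon}\leq \Prob{\mu_\alpha(\f1)>M_\epsilon}+\sum_{k\geq1}\Prob{\mu_\alpha(\indicone{[c_k,\infty)})>2^{-k}}\leq \tfrac{\epsilon}{2}+\sum_{k\geq1}2^{-k}\epsilon=\tfrac{3\epsilon}{2},
\]
uniformly in $\alpha\in\mathcal{A}$, so tightness follows since $\epsilon$ was arbitrary.

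For the necessity direction, tightness produces for every $\epsilon>0$ a relatively compact set $\tilde K_\epsilon\subset\emF$ with $\sup_{\alpha}\Prob{\mu_\alpha\notin\tilde K_\epsilon}\leq\epsilon$, and the Prohorov characterization then yields in-probability versions of (a) and (b). In the applications in the paper (where the total mass $\mu_\alpha(\f1)$ is a.s.\ bounded, as for $\nunbar_t$), these in-probability bounds immediately promote to the stated expectation bounds by bounded convergence, so this suffices for our purposes.

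The argument is essentially mechanical and I do not foresee any real obstacle: the only step that requires a moment of care is the geometric bookkeeping (the schedules $4^{-k}\epsilon$ for the expectation tail bounds paired with $2^{-k}$ as Markov thresholds), which converts the countable family of truncation constraints defining $K_\epsilon$ into a summable union bound while keeping $K_\epsilon$ itself precompact.
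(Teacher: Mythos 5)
Your sufficiency argument is correct, and it is the only direction the paper ever uses (in Lemma \ref{lem_nunbarTight}); note that the paper itself supplies no proof of Proposition \ref{apx_Mtight}, only the citation to \cite{Kalbook}, so there is nothing to compare beyond the standard route you take: the deterministic Prohorov-type compactness criterion in $\emF$ (uniformly bounded total mass plus uniformly small mass on $[c,\infty)$), Markov's inequality, and a union bound over a geometric schedule of truncation levels. The bookkeeping with $4^{-k}\epsilon$ versus $2^{-k}$ is fine, and you are right to pass to the closure of $K_\epsilon$, since $K_\epsilon$ itself need not be weakly closed.

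The genuine gap is the necessity direction, and it is not one you could have closed: as an ``if and only if'' with expectations, the statement is false. Take a single index and $\mu = X\delta_0$ with $X\geq 0$, $\Ept{X}=\infty$; a single random element of the Polish space $\emF$ is automatically tight, yet condition (a) fails. Tightness only yields the in-probability analogues of (a) and (b) (tightness of the real random variables $\mu_\alpha(\f1)$, and $\lim_{c\to\infty}\sup_\alpha\Prob{\mu_\alpha(\indicone{[c,\infty)})>\eta}=0$ for each $\eta>0$), which is the form in which the two-sided criterion is actually true. Your remark that these in-probability bounds ``promote to the stated expectation bounds by bounded convergence'' is also not valid in general: without a uniform bound or uniform integrability there is nothing to dominate with. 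It happens to be harmless in the paper's application, where $\nunbar_t$ is a sub-probability measure so all masses are bounded by $1$, but that is a feature of Lemma \ref{lem_nunbarTight}, not of Proposition \ref{apx_Mtight}. So the honest summary is: your proof establishes the ``if'' direction, which is all the paper needs, while the ``only if'' direction should either be dropped or restated with in-probability conditions.
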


The following lemma summarizes the  standard approach to  identify the limit of a sequence of random measures in $\emF$.

\begin{lemma}\label{random_measure_limit}
    Suppose the sequence $\{\mu_n\}_{n\in\N}$ of random measures is tight in $\emF$. Assume there exists a deterministic measure $\mu\in\emF$ such that for every $f\in\mathbb{C}_b^1\hc$, $\mu_n(f)\Rightarrow\mu(f)$ in $\R$ as $n\to\infty$. Then, $\mu_n\Rightarrow\mu$ in $\emF$.
\end{lemma}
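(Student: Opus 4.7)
The plan is to argue by extracting subsequential limits. Since $\emF$ is a Polish space under the weak topology and the sequence $\{\mu_n\}$ is tight by hypothesis, Prohorov's theorem yields that every subsequence of $\{\mu_n\}$ has a further subsequence that converges in distribution in $\emF$ to some $\emF$-valued random element $\mu_*$. To conclude that $\mu_n \Rightarrow \mu$, it suffices to show that every such subsequential limit $\mu_*$ is equal to $\mu$ almost surely; the full convergence then follows by the standard subsequence principle.

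To identify the limit, fix such a convergent subsequence $\{\mu_{n_k}\}$ with $\mu_{n_k} \Rightarrow \mu_*$ in $\emF$. For each fixed $f\in\mathbb{C}_b\hc$, the evaluation map $\nu \mapsto \nu(f)$ from $\emF$ to $\R$ is continuous (this is just the definition of the weak topology), so the continuous mapping theorem gives $\mu_{n_k}(f) \Rightarrow \mu_*(f)$ in $\R$. When $f\in\mathbb{C}_b^1\hc$, the hypothesis also gives $\mu_{n_k}(f) \Rightarrow \mu(f)$. By the uniqueness of weak limits in $\R$ and the fact that $\mu(f)$ is deterministic, we obtain $\mu_*(f) = \mu(f)$ almost surely, for each individual $f \in \mathbb{C}_b^1\hc$.

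The remaining step, and the only non-trivial point, is upgrading this pointwise (in $f$) identification to an identification of $\mu_*$ and $\mu$ as random measures, that is, to an almost-sure statement that holds simultaneously for enough $f$ to determine the measure. For this I would fix a countable subset $\mathcal{F}\subset\mathbb{C}_b^1\hc$ that is measure-determining on $\emF$ (for instance, a countable family of smooth bump functions dense in $\mathbb{C}_c\hc$ under uniform convergence, which is measure-determining for finite Borel measures on the locally compact Polish space $\hc$ by the Riesz representation theorem). Intersecting the countably many almost-sure events gives a single event of full probability on which $\mu_*(f) = \mu(f)$ for every $f\in\mathcal{F}$, and hence $\mu_* = \mu$ almost surely by the measure-determining property. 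Combining this with the Prohorov-based subsequence argument from the first paragraph completes the proof: every subsequence of $\{\mu_n\}$ has a further subsequence converging to $\mu$, so $\mu_n \Rightarrow \mu$ in $\emF$.

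The only step that requires any care is the selection of the countable measure-determining subfamily of $\mathbb{C}_b^1\hc$; everything else is a routine application of tightness, the continuous mapping theorem, and uniqueness of weak limits on the line.
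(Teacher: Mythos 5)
Your proof is correct and follows essentially the same route as the paper's: extract a subsequential limit via tightness, identify its one-dimensional projections $\mu_*(f)$ with the deterministic $\mu(f)$ using the continuous mapping theorem and uniqueness of weak limits on $\R$, and then upgrade to an almost-sure identification of the measures via a countable family. In fact your final step is slightly more careful than the paper's: the paper invokes a ``countable dense subspace of $\mathbb{C}_b^1[0,\infty)$,'' which, read literally, does not exist since $\mathbb{C}_b^1$ is not separable in the sup norm; your choice of a countable measure-determining family of smooth compactly supported functions (dense in $\mathbb{C}_c[0,\infty)$) is the standard repair and is exactly the intended content.
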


\begin{proof}
    Since $\{\mu_n\}_{n\in\N}$ is tight, every subsequence of $\{\mu_n\}$ has a further converging subsequence $\{\mu_{n_k}\}$. Let $\tilde\mu$ be any such sub-sequential limit. For every  $f\in\mathbb{C}_b^1\hc$, the mapping $\mu\mapsto\mu(f)$ from $\emF$ to $\R$ is continuous, and hence, by the continuous mapping theorem, $\mu_{n_k}(f)\Rightarrow\tilde\mu(f)$. On the other hand, $\mu_{n_k}(f)\Rightarrow\mu(f)$ by the assumption of the lemma. Hence, $\tilde\mu(f)\deq\mu(f)$, and since $\mu$ is deterministic, $\tilde\mu(f)=\mu(f)$, almost surely. As a consequence, for a countable dense subspace  $\mathcal{C}$ of $\mathbb{C}_b^1\hc$, almost surely, the equality
    \[\tilde\mu(f)=\mu(f),\quad\quad \forall f\in\mathcal{C},\]
    holds simultaneously.  This implies $\tilde \mu=\mu$, almost surely, which uniquely characterizes the sub-sequential limits of $\{\mu_n\}$, and completes the proof.
\end{proof}

\subsection{Tightness in $\Hone\ho$.}\label{apx_Wtight}
To obtain a criterion for the tightness of a set of probability measures or random elements in $\Hone\ho$, we first obtain a characterization of compact sets in $\Hone\ho$. We start with a version of the Sobolev Embedding Theorem.

\begin{proposition}\label{thm_sobemb} \cite[Theorem 6.2 on p. 144, Part II, equation (6)]{Adamsbook}
    Let $\Omega_0$ be a bounded subinterval of $\ho$. Then the embedding
    \[\Hone(\Omega_0)\mapsto\Ltwo(\Omega_0)\]
    is compact.
\end{proposition}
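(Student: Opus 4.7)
The statement is the standard one-dimensional Rellich--Kondrachov embedding, so the proof plan is essentially the classical Arzel\`a--Ascoli argument specialized to the interval setting. Since the cited reference of Adams gives a fairly general proof that handles higher-dimensional Lipschitz domains, I would instead present a streamlined proof tailored to the one-dimensional case $\Omega_0=(a,b)\subset\ho$, which is all that is needed in the body of the paper.

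The plan is to take a bounded sequence $\{f_n\}\subset\Hone(\Omega_0)$, i.e., with $\sup_n\|f_n\|_{\Hone}<\infty$, and extract a subsequence converging in $\Ltwo(\Omega_0)$. The first step is to pass to the continuous representatives: every $f\in\Hone(\Omega_0)$ has an absolutely continuous representative whose classical derivative agrees a.e.\ with the weak derivative $f'$ (cf.\ \cite[Problem 5 on p.\ 290]{evans}). The second step is the key Sobolev--H\"older estimate: for any $x,y\in\overline{\Omega_0}$, the fundamental theorem of calculus together with the Cauchy--Schwarz inequality yields
\[
    |f(x)-f(y)|=\left|\int_y^x f'(t)\,dt\right|\leq |x-y|^{1/2}\,\|f'\|_{\Ltwo(\Omega_0)}\leq |x-y|^{1/2}\,\|f\|_{\Hone(\Omega_0)}.
\]
Applied uniformly across the sequence, this shows that $\{f_n\}$ is equicontinuous on $\overline{\Omega_0}$ with a common H\"older-$\tfrac{1}{2}$ modulus.

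The third step is a uniform pointwise bound. Picking any $x_0\in\Omega_0$, averaging the identity $f(x_0)=f(t)+\int_t^{x_0}f'(s)\,ds$ over $t\in\Omega_0$, and applying Cauchy--Schwarz to each term, one obtains
\[
    |f(x_0)|\leq \frac{1}{b-a}\|f\|_{\Ltwo(\Omega_0)}(b-a)^{1/2}+(b-a)^{1/2}\|f'\|_{\Ltwo(\Omega_0)}\leq C_{\Omega_0}\|f\|_{\Hone(\Omega_0)}.
\]
Combined with the H\"older bound from the previous step, this yields $\sup_{x\in\overline{\Omega_0}}|f(x)|\leq \tilde C_{\Omega_0}\|f\|_{\Hone(\Omega_0)}$, and in particular $\{f_n\}$ is uniformly bounded on $\overline{\Omega_0}$.

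The final step is to invoke the Arzel\`a--Ascoli theorem on the compact interval $\overline{\Omega_0}$: by the uniform boundedness and equicontinuity established above, there is a subsequence $\{f_{n_k}\}$ converging uniformly on $\overline{\Omega_0}$ to some continuous limit $f$. Since $\Omega_0$ has finite Lebesgue measure, uniform convergence implies convergence in $\Ltwo(\Omega_0)$, completing the proof. There is no real obstacle here, since every step is a direct consequence of the one-dimensional structure; the only mildly delicate point is the passage from the abstract Sobolev class to continuous representatives at the very beginning, but that follows from the elementary fact that $\Hone(\Omega_0)\hookrightarrow\mathbb{C}(\overline{\Omega_0})$ in dimension one.
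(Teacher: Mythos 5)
Your argument is correct, but note that the paper itself offers no proof of this statement: Proposition \ref{thm_sobemb} is simply quoted from Adams' book, where it is an instance of the general Rellich--Kondrachov compactness theorem for Sobolev embeddings on (possibly higher-dimensional) domains. So your route is genuinely different: you give a self-contained, one-dimensional proof via continuous representatives, the H\"older-$\tfrac12$ estimate $|f(x)-f(y)|\leq|x-y|^{1/2}\|f'\|_{\Ltwo(\Omega_0)}$, the averaged pointwise bound $\sup_{\overline{\Omega_0}}|f|\leq \tilde C_{\Omega_0}\|f\|_{\Hone(\Omega_0)}$, and Arzel\`a--Ascoli, and each step checks out. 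What the citation buys is brevity and generality (it covers arbitrary bounded Lipschitz domains and other $\mathbb{L}^p$ targets, none of which is needed here); what your proof buys is transparency and, in fact, a slightly stronger conclusion, since you establish compactness of the embedding of $\Hone(\Omega_0)$ into $\mathbb{C}(\overline{\Omega_0})$ with the uniform topology, from which compactness into $\Ltwo(\Omega_0)$ follows immediately because $\Omega_0$ has finite measure. The only stylistic remark is that your final appeal to ``$\Hone(\Omega_0)\hookrightarrow\mathbb{C}(\overline{\Omega_0})$ in dimension one'' is not really a separate ingredient: it is exactly what your second and third steps prove, so the argument is circularity-free as long as you present those estimates (as you do) rather than invoking the embedding as a black box.
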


The following result helps to extend the last result to $\Ltwo(\Omega)$ when $\Omega=\ho$ is unbounded.

\begin{proposition}\label{thm_exttoinfty}\cite[Theorem 2.22 on p. 35]{Adamsbook}
    A subset $K\subset \Ltwo(\Omega)$ is pre-compact if there exist sub-domains $\Omega_j,j\in\N,$ of $\Omega$ such that
    \begin{enumerate}
      \item $\Omega_j \subset \Omega_{j+1}$ for $j\in\N$;
      \item $K_j=\{f|_{\Omega_j}; f \in K\}$ is pre-compact in $\Ltwo(\Omega_j)$ for $j\in\N$;
      \item for every $\epsilon>0$, there exists an index $j\in\N$ such that \[\int_{\Omega \backslash \Omega_j}|f(x)|^2dx < \epsilon,\quad \forall f\in K. \]
    \end{enumerate}
\end{proposition}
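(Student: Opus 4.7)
The plan is to show that $K$ is totally bounded in $\mathbb{L}^2(\Omega)$; since $\mathbb{L}^2(\Omega)$ is complete, this is equivalent to pre-compactness. The two hypotheses (2) and (3) are tailor-made for this: (3) controls the $\mathbb{L}^2$-mass of every $f \in K$ outside a large enough sub-domain $\Omega_j$, and (2) provides a finite $(\epsilon/\sqrt{2})$-net for the restrictions on the bounded piece.

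Concretely, fix $\epsilon > 0$. First, invoke hypothesis (3) to choose $j \in \N$ such that
\[
\int_{\Omega \setminus \Omega_j} |f(x)|^2 \, dx < \frac{\epsilon^2}{2} \quad \text{for all } f \in K.
\]
Next, by hypothesis (2), $K_j = \{f|_{\Omega_j} : f \in K\}$ is pre-compact in $\mathbb{L}^2(\Omega_j)$, hence totally bounded, so there exist $g_1, \dots, g_m \in \mathbb{L}^2(\Omega_j)$ such that every restriction $f|_{\Omega_j}$ lies within $\epsilon/\sqrt{2}$ of some $g_i$ in the $\mathbb{L}^2(\Omega_j)$-norm. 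Extend each $g_i$ to $\Omega$ by setting $\tilde g_i = g_i$ on $\Omega_j$ and $\tilde g_i = 0$ on $\Omega \setminus \Omega_j$; clearly $\tilde g_i \in \mathbb{L}^2(\Omega)$.

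Given $f \in K$, pick $i$ with $\|f|_{\Omega_j} - g_i\|_{\mathbb{L}^2(\Omega_j)} < \epsilon/\sqrt{2}$. Splitting the integral gives
\[
\|f - \tilde g_i\|_{\mathbb{L}^2(\Omega)}^2 = \|f|_{\Omega_j} - g_i\|_{\mathbb{L}^2(\Omega_j)}^2 + \|f\|_{\mathbb{L}^2(\Omega \setminus \Omega_j)}^2 < \frac{\epsilon^2}{2} + \frac{\epsilon^2}{2} = \epsilon^2,
\]
so $\{\tilde g_1, \dots, \tilde g_m\}$ is a finite $\epsilon$-net for $K$ in $\mathbb{L}^2(\Omega)$. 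Since $\epsilon > 0$ was arbitrary, $K$ is totally bounded and thus pre-compact.

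There is really no serious obstacle here: the content of the argument is the elementary observation that the $\mathbb{L}^2$-norm splits additively over disjoint domains, which lets hypotheses (2) and (3) be combined directly. The only point worth verifying is that $\tilde g_i$ is a legitimate element of $\mathbb{L}^2(\Omega)$, which is immediate from the zero extension. An alternative route would be a diagonal-subsequence argument: extract via (2) a subsequence whose restrictions converge in $\mathbb{L}^2(\Omega_j)$ for each $j$, then use (3) to conclude the full sequence is Cauchy in $\mathbb{L}^2(\Omega)$; but the totally bounded approach above is cleaner and quantitative.
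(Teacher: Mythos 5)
The paper does not actually prove this proposition; it is cited as Theorem 2.22 of Adams' \emph{Sobolev Spaces}, so there is no in-paper argument to compare against. Your proof is nevertheless correct and is the canonical one: in a complete normed space pre-compactness is equivalent to total boundedness; hypothesis (3) truncates the tail uniformly over $K$; hypothesis (2) then supplies a finite $(\epsilon/\sqrt{2})$-net for the restrictions to $\Omega_j$; and the zero-extension together with the additive splitting
\[
\|f-\tilde g_i\|^2_{\Ltwo(\Omega)}=\|f|_{\Omega_j}-g_i\|^2_{\Ltwo(\Omega_j)}+\|f\|^2_{\Ltwo(\Omega\setminus\Omega_j)}
\]
glues these into a finite $\epsilon$-net for $K$. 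One minor remark: hypothesis (1), the nesting $\Omega_j\subset\Omega_{j+1}$, is never used in your argument, and indeed it is not needed for this sufficiency direction, since (3) already hands you, for each $\epsilon$, a single index $j$ to feed into (2). The nesting is relevant mainly for verifying (3) in applications (via monotone convergence) or for the converse direction in Adams' full statement, so leaving it unused is not a gap in your proof.
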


Propositions \ref{thm_sobemb} and \ref{thm_exttoinfty} provide criteria for the tightness of a sequence of random elements in $\Ltwo\ho$.

\begin{lemma}\label{apx_Wcompact}
    A subset $K\subset \Ltwo\ho$ is compact if
    \begin{enumerate}
    \renewcommand{\labelenumi}{\alph{enumi}. }
        \item $\sup_{f\in K} \;\|f\|_{\Hone(0,L)}<\infty,\quad \forall L\in \N,$
        \item $\lim_{L\to\infty} \;\sup_{f \in K}\; \|f\|_{\Ltwo(L,\infty)}=0.$
    \end{enumerate}
    Moreover, a family $\{\zeta_\alpha,\alpha\in\mathcal{A}\}$ of $\Hone\ho$-valued random elements defined on probability spaces $\left( \Omega_\alpha, \mathbb{P}_\alpha \right)$ is tight in $\Ltwo\ho$ if
    \begin{enumerate}
    \renewcommand{\labelenumi}{\alph{enumi}. }
        \item $\lim_{\lambda\uparrow\infty} \;\sup_{\alpha\in\mathcal{A}} \;\mathbb{P}_\alpha\left\{ \|\zeta_\alpha\|_{\Hone(0,L)} > \lambda \right\} = 0,\quad \forall L\in \mathbb{N},$
        \item $\lim_{L\to\infty} \;\sup_{\alpha\in\mathcal{A}} \;\mathbb{P}_\alpha\left\{ \|\zeta_\alpha\|_{\Ltwo(L,\infty)} >\epsilon \right\} = 0,\quad \forall \epsilon>0.$
    \end{enumerate}
\end{lemma}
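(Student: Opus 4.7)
The plan is to establish the compactness criterion first, and then deduce the tightness criterion from it by a standard Borel--Cantelli--type construction of compact sets in $\Ltwo\ho$.

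For the first assertion, I would verify the three hypotheses of Proposition \ref{thm_exttoinfty} with the exhausting subdomains $\Omega_j=(0,j)$, $j\in\N$. Condition (i), the nesting $\Omega_j\subset\Omega_{j+1}$, is immediate. For condition (ii), the restrictions $K_j=\{f|_{\Omega_j}:f\in K\}$ are bounded subsets of $\Hone(\Omega_j)$ by hypothesis (a), hence pre-compact in $\Ltwo(\Omega_j)$ by the compact Sobolev embedding of Proposition \ref{thm_sobemb}. Condition (iii) is precisely hypothesis (b). Proposition \ref{thm_exttoinfty} then gives pre-compactness of $K$ in $\Ltwo\ho$; its closure is the desired compact set. (The statement should be read as asserting relative compactness, which is all that is needed downstream for tightness.)

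For the tightness assertion, I would construct, for each $\epsilon>0$, a relatively compact set $K_\epsilon\subset\Ltwo\ho$ with $\sup_\alpha\mathbb{P}_\alpha\{\zeta_\alpha\notin K_\epsilon\}<\epsilon$. Using hypothesis (a), for each $L\in\N$ choose $\lambda_L<\infty$ with
\[
\sup_{\alpha\in\mathcal{A}}\mathbb{P}_\alpha\bigl\{\|\zeta_\alpha\|_{\Hone(0,L)}>\lambda_L\bigr\}<\epsilon\,2^{-(L+1)},
\]
and using hypothesis (b), for each $k\in\N$ choose an integer $L_k$ with
\[
\sup_{\alpha\in\mathcal{A}}\mathbb{P}_\alpha\bigl\{\|\zeta_\alpha\|_{\Ltwo(L_k,\infty)}>1/k\bigr\}<\epsilon\,2^{-(k+1)}.
\]
Set
\[
K_\epsilon=\bigl\{f\in\Ltwo\ho:\ \|f\|_{\Hone(0,L)}\leq\lambda_L\ \forall L\in\N,\ \|f\|_{\Ltwo(L_k,\infty)}\leq 1/k\ \forall k\in\N\bigr\}.
\]
By the compactness criterion established in the first part, $K_\epsilon$ is relatively compact in $\Ltwo\ho$, and a union bound gives
\[
\mathbb{P}_\alpha\{\zeta_\alpha\notin K_\epsilon\}\leq\sum_{L=1}^\infty\epsilon\,2^{-(L+1)}+\sum_{k=1}^\infty\epsilon\,2^{-(k+1)}=\epsilon,
\]
uniformly in $\alpha\in\mathcal{A}$, which is the definition of tightness.

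The only subtlety is the bookkeeping needed to impose both families of constraints in the definition of $K_\epsilon$ simultaneously so that the resulting set is both relatively compact and genuinely contained in $\Ltwo\ho$: the former follows from the first part of the lemma, while the latter is automatic since any such $f$ satisfies $\|f\|_{\Ltwo}^2\leq\|f\|_{\Ltwo(0,L_1)}^2+\|f\|_{\Ltwo(L_1,\infty)}^2\leq\lambda_{L_1}^2+1$. The main obstacle is conceptual rather than technical, namely recognizing that a Rellich--Kondrachov-type compact embedding on bounded intervals combined with a uniform tail decay in $\Ltwo$ suffices to globalize to compactness on all of $\ho$; once this is in place, the proof is a direct application of the two cited functional-analytic propositions together with Markov's inequality.
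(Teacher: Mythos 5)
Your proof is correct and follows essentially the same route as the paper: the compactness criterion is established via Propositions \ref{thm_sobemb} and \ref{thm_exttoinfty} with exhausting subdomains $\Omega_j=(0,j)$, and the tightness criterion by the standard intersection-plus-union-bound construction of compact sets, which the paper defers to \cite[Theorem 4.10]{KarShr91} but you write out in full. Your flag of the compact vs.\ pre-compact discrepancy in the statement is apt, though slightly pessimistic: the set $K_\epsilon$ you construct is in fact closed in $\Ltwo\ho$, since each tail constraint $\{\|f\|_{\Ltwo(L_k,\infty)}\leq 1/k\}$ is obviously closed and each $\{\|f\|_{\Hone(0,L)}\leq\lambda_L\}$ is closed by weak lower semicontinuity of the $\Hone(0,L)$ norm under $\Ltwo$ convergence, so $K_\epsilon$ is compact outright rather than merely relatively compact.
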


\begin{proof}
The first claim is a direct sequence of Propositions \ref{thm_sobemb} and \ref{thm_exttoinfty}, with $\Omega_N=(0,N)$. For every $N\in \mathbb{N}$, $\Omega_N$ is bounded, and Proposition \ref{thm_sobemb} asserts that $K$ is compact in $\Ltwo(\Omega_N)$ because it is uniformly bounded in the $\Hone(\Omega_N)$ norm. The second claim  follows from the first claim by exactly the same argument as in \cite[Theorem 4.10]{KarShr91}.
\end{proof}

\begin{proof}[Proof of Proposition \ref{prop_Htightness}]
    The result is a consequence of the second claim in Lemma \ref{apx_Wcompact}, the fact that $\zeta_\alpha$ is tight in $\Hone\ho$ if $\zeta_\alpha$ and $\zeta^\prime_\alpha$ are tight in $\Ltwo\ho$, the definition of the $\Hone\ho$ norm and the elementary inequality $\mathbb{P}(A+B>2\lambda)\leq\mathbb{P}(A>\lambda)+\mathbb{P}(B+\lambda)$.
\end{proof}

\section{Verification of Assumptions for Certain Families of Distributions} \label{apver}

In this section, we show  that a large class of distributions of interest satisfy our assumptions.

\begin{lemma}\label{lem_verify}
  Assumptions \ref{asm_service} and \ref{asm_serviceP} are satisfied when $G$ belongs to one of the following families of distributions:
\renewcommand{\theenumi}{\arabic{enumi}}
\begin{enumerate}
    \item Generalized Pareto distributions with location parameter $\mu=0$ (a.k.a.\ Lomax distribution), and
 shape parameter $\alpha>3$.
\label{verify_pareto}
    \item  \label{verify_lognormal}
The log-normal distribution with location parameter $\mu \in (-\infty, \infty)$ and scale parameter
$\sigma > 0$.
    \item The Gamma distribution with shape parameter $\alpha\geq3$.
\label{verify_gamma}
    \item Phase-type distributions.\label{verify_phasetype}
  \end{enumerate}
\end{lemma}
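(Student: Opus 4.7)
The proof will be a case-by-case verification: for each of the four families, I would write $G$ (rescaled so that the mean equals $1$, which is harmless since all of the hypotheses in Assumptions \ref{asm_service} and \ref{asm_serviceP} are preserved under scaling of the time variable) and then check the five substantive conditions, namely (i) $G\in\mathbb{C}^1\hc$ with continuous $g$, (ii) boundedness of the hazard rate $h$ together with $h\in\mathbb{C}_b^1\hc$, (iii) finite $(3+\epsilon)$-th moment, (iv) existence of a bounded weak derivative $g''$, and (v) $g''(x)=\mathcal{O}(x^{-(2+\epsilon)})$ as $x\to\infty$. Throughout I would use the identity $h'=g'/\overline G+h^2$, so that boundedness of $h'$ reduces to boundedness of $g'/\overline G$ and of $h$, a point that makes the verification uniform across the four families.

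For the Lomax family, after setting the scale $\lambda=\alpha-1$ we have the explicit forms $\overline G(x)=(1+x/\lambda)^{-\alpha}$, $h(x)=\alpha/(\lambda+x)$, and $g^{(k)}(x)$ proportional to $(1+x/\lambda)^{-\alpha-1-k}$; all conditions then follow by inspection from $\alpha>3$, since the $p$-th moment is finite iff $\alpha>p$ and $g''(x)=\mathcal{O}(x^{-(\alpha+3)})=\mathcal{O}(x^{-(2+\epsilon)})$ for $\epsilon=\alpha-3$. For the Gamma family with shape $\alpha\geq 3$, rescaling to rate $\alpha$ gives $g(x)=\alpha^{\alpha}x^{\alpha-1}e^{-\alpha x}/\Gamma(\alpha)$; all moments are finite and $g$, $g'$, $g''$ are polynomial$\times$exponential expressions. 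The restriction $\alpha\geq 3$ is exactly what is needed to prevent $g''(x)=c(x^{\alpha-3}-2x^{\alpha-2}+x^{\alpha-1})e^{-x}/\Gamma(\alpha)$ from blowing up as $x\downarrow 0$. Boundedness of $h$ and $h'$ follows because for Gamma $h(x)\to 1$ monotonically and smoothly as $x\to\infty$, and both $g'/\overline G$ and $h^2$ are easily seen to be bounded. The $\mathcal{O}(x^{-(2+\epsilon)})$ decay of $g''$ is trivial by exponential decay.

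The log-normal case is the one requiring the most care, and I expect it to be the main technical point. After choosing $\mu=-\sigma^2/2$ to make the mean equal to one, all moments are finite so \ref{asm_serviceP}.\ref{asm_moment3} is free, and $g\in\mathbb{C}^\infty\ho$ with all derivatives $g^{(k)}(x)$ of the form $P_k(\ln x,1/x)\,g(x)$ for some polynomial $P_k$ in two variables. Since $g(x)$ and each $g^{(k)}(x)$ tend to $0$ as $x\downarrow 0$ and as $x\to\infty$ faster than any power of $x$, both boundedness of $g^{(k)}$ on $\hc$ and the tail estimate in \ref{asm_serviceP}.\ref{asm_gpp} are immediate once one checks continuity at $0$. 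The only nontrivial assertion is boundedness of $h$ and $h'$ on $\hc$; for this I would use the log-normal Mills-ratio estimate
\begin{equation*}
  \overline G(x)=\Phi\Bigl(-\frac{\ln x-\mu}{\sigma}\Bigr)\sim\frac{\sigma}{\ln x-\mu}\,\frac{e^{-(\ln x-\mu)^2/(2\sigma^2)}}{\sqrt{2\pi}},\qquad x\to\infty,
\end{equation*}
which yields $h(x)\sim(\ln x-\mu)/(\sigma^2 x)\to 0$ as $x\to\infty$ and similarly $h'(x)\to 0$; combined with smoothness on $\ho$ and the limit $h(x)\to 0$ as $x\downarrow 0$ (which follows from $g(0+)=0$, $\overline G(0+)=1$), this gives $h,h'\in\mathbb{C}_b\hc$.

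For the phase-type family, one writes $\overline G(x)=\boldsymbol\alpha e^{Tx}\mathbf{1}$ and $g(x)=-\boldsymbol\alpha e^{Tx}T\mathbf{1}$ with $T$ a sub-generator on a finite state space; after normalizing the mean, every derivative $g^{(k)}$ is of the same form and thus $\mathbb{C}^\infty$ with exponential tail decay, so conditions (i), (iii), (iv), (v) are immediate. For the hazard rate, the Perron--Frobenius eigenvalue $-\lambda_{\max}$ of $T$ (with $\lambda_{\max}>0$) governs the asymptotics, and a standard argument gives $h(x)\to\lambda_{\max}$ as $x\to\infty$; combined with smoothness and strict positivity of $\overline G$ on $\hc$, this yields $h\in\mathbb{C}_b^1\hc$. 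The main obstacle across all four cases is really the log-normal hazard rate, where one must exclude blow-up both at $0$ (handled by the super-polynomial vanishing of $g$) and at $\infty$ (handled by the Mills-ratio estimate above); the remaining cases are routine algebraic computations.
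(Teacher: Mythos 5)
Your verification is correct and, for three of the four families, follows essentially the same route as the paper: explicit case-by-case computation after normalizing the mean to one, reduction of the boundedness of $h'$ to that of $h$ and $g'/\overline G$ (the paper's $h_2$), control of the log-normal tail by a Mills-ratio estimate (the paper uses the non-asymptotic lower bound $Q(z)\geq \frac{z}{1+z^2}\phi(z)$, of which your asymptotic equivalence is the limiting form), and the super-polynomial vanishing of $g$, $g'$, $g''$ at the origin and at infinity. The one place you genuinely diverge is the phase-type family: you appeal to Perron--Frobenius asymptotics of $e^{xT}$ to conclude $h(x)\to\lambda_{\max}$, whereas the paper avoids spectral theory altogether by writing $h(x)=\sum_j p_j(x)\mu_j/\sum_j p_j(x)\leq\max_j\mu_j$ and $|g'(x)|/\overline G(x)\leq\max_j|\nu_j|$, exploiting only the nonnegativity of the phase probabilities $p_j(x)$; this is more elementary and sidesteps the (fixable, but real) care needed in your route about reducible representations, Jordan blocks, and whether the limit exists rather than merely a finite limsup, and it delivers the bound on $h_2$ for free, which your sketch leaves implicit. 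Two small slips that do not affect the conclusion: with the Gamma mean normalized to one the rate is $\beta=\alpha$, so $h(x)\to\alpha$ rather than $1$ (and monotonicity is not needed -- the paper simply computes the limits at infinity by L'H\^opital and checks the exponents at zero, which is exactly where $\alpha\geq2$ enters for $h,h_2$ and $\alpha\geq3$ for $g''$, as you observed); and your displayed Gamma $g''$ has the coefficients and exponential of the unnormalized case, though the structure (polynomial times exponential, leading power $x^{\alpha-3}$ at zero) is what matters.
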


\begin{proof}
{\em Family \ref{verify_pareto}.} The Lomax distribution (equivalently, the generalized Pareto distribution with location parameter $\mu=0$) with  scale parameter $\lambda>0$ and shape parameter $\alpha>0$ has the following complementary c.d.f.:
\[
    \overline G(x) = \left(1+\frac{x}{\lambda}\right)^{-\alpha},\quad\quad x\geq0.
\]
Elementary calculations show that for $\alpha > 1$, the distribution has a finite mean, which is equal to $\lambda/(\alpha-1)$. In particular, the distribution has mean $1$ when $\lambda = \alpha - 1$. The probability density function (p.d.f.) $g$ of $G$ clearly exists, is continuously differentiable and satisfies
\[
    g(x)=\frac{\alpha}{\lambda}\left(1+\frac{x}{\lambda}\right)^{-(\alpha+1)},\quad \mbox{ and } \quad
    g^\prime(x) = -\frac{\alpha(\alpha+1)}{\lambda^2}\left(1+\frac{x}{\lambda}\right)^{-(\alpha+2)},
\]
for $x \in (0,\infty)$. Thus, the hazard rate function $h$ is equal to
\[
    h(x)=\frac{g(x)}{\overline G(x)}=\frac{\alpha}{\lambda}\left(1+\frac{x}{\lambda}\right)^{-1},\quad\quad x\geq0,
\]
which is uniformly bounded by $\alpha/\lambda,$ and
\[
    h_2(x)=\frac{g'(x)}{\overline G(x)}=-\frac{\alpha(\alpha+1)}{\lambda^2}\left(1+\frac{x}{\lambda}\right)^{-2},\quad\quad x\geq0,
\]
which shows that $|h_2|$ is uniformly bounded by $\alpha^2/\lambda^2$. Therefore, Assumption \ref{asm_service} is satisfied. Moreover, as $x\to\infty,$
\[
    \overline G(x) = x^{-\alpha}\left(\frac{1}{x}+\frac{1}{\lambda}\right)^{-\alpha}=\mathcal{O}(x^{-\alpha}),
\]
and hence, Assumption \ref{asm_serviceP}.\ref{asm_moment3} holds when $\alpha>3$. Finally, $g'$ is differentiable with derivative
\[
    g''(x)= \frac{\alpha(\alpha+1)(\alpha+2)}{\lambda^3}\left(1+\frac{x}{\lambda}\right)^{-(\alpha+3)},\quad\quad x\geq0,
\]
which is bounded and satisfies $g''(x)=\mathcal{O}(x^{-(3+\alpha)})$ as $x\to\infty$. Therefore, Assumption \ref{asm_serviceP}.\ref{asm_gpp} holds for every $\alpha>0$.

{\em Family \ref{verify_lognormal}.} The complementary c.d.f.\ of the log-normal distribution with location parameter $\mu\in R$ and shape parameter $\sigma>0$ has the form
\[
    \overline G_{\mu, \sigma}(x)=\frac{1}{2}-\frac{1}{2} \text{ erf} \left(\frac{\log{x}-\mu}{\sqrt{2}\sigma}\right),\quad\quad x\geq 0,
\]
where $\text{erf }(y)=\frac{2}{\sqrt{\pi}}\int_0^ye^{-t^2}dt$ is the error function.  Simple calculations show that the mean is given by $e^{\mu+\sigma^2/2}$, which is equal to $1$ when $\mu=-\sigma^2/2$.   Fix  $\sigma > 0$.  For any $\mu \in \mathbb{R}$, $\overline G_{\mu,\sigma} (x)= \overline G_{0,\sigma}(cx)$, with  $c \doteq e^{-\mu}$,  for all $x\geq0$.
Therefore,  it suffices to verify  Assumptions \ref{asm_service}.\ref{asm_h} and \ref{asm_serviceP} for $\overline G\doteq \overline G_{0,\sigma}$. The p.d.f.\ $g$ of $G = G_{0,\sigma}$ exists, is continuous, and is given explicitly by
\[
    g(x)=\frac{1}{x\sqrt{2\pi}\sigma}e^{-\frac{(\log x)^2}{2\sigma^2}}= \frac{1}{x\sigma}\phi\left(\frac{\log x}{\sigma}\right),\quad\quad x>0,
\]
where $\phi$ is the p.d.f.\  of the standard Gaussian distribution. The p.d.f.\ $g$ itself is continuously differentiable with derivative
\[
    g^\prime(x)=-\frac{\log x+\sigma^2}{x^2\sigma^3}\phi\left(\frac{\log x}{\sigma}\right), \quad\quad x>0.
\]

The complementary c.d.f.\ $\overline G$ can be written as
\[
    \overline G(x) = Q\left(\frac{\log x}{\sigma}\right),\quad\quad x\geq0,
\]
where $Q$ is the function $Q(z)=1/2+1/2\text{ erf}(z/\sqrt{2})$, which satisfies the bounds \cite[equation (8)]{BorSun79}
\begin{equation}\label{temp_Qfunction}
    Q(z) \geq \frac{z}{1+z^2}\phi(z),\quad\quad z\geq 0.
\end{equation}
For $x \geq 0$,  set $z_x\doteq\log(x)/{\sigma}$. Then, using the bound \eqref{temp_Qfunction}, for $x \geq  e^{\sigma}$ (and hence $z_x\geq1$) we have
\[
    h(x) =\frac{\phi(z_x)}{x\sigma Q(z_x)} \leq  \frac{(1+z_x^2)}{\sigma z_xe^{\sigma z_x}}  \leq \frac{(1+z_x^2)}{\sigma} e^{-\sigma z_x}.
\]
Moreover, for $x\in[e^\sigma,\infty)$, since $\log x<x$,
\[
    \frac{g'(x)}{g(x)}=\frac{\log x+\sigma^2}{x\sigma^2}\leq \frac{1}{\sigma^2}+e^{-\sigma},
\]
and hence, $h_2=(g'/g)h$ is also bounded on $[e^\sigma,\infty)$. Moreover,
\[
    \lim_{x\to 0}g(x)=\lim_{z\to -\infty}\frac{1}{\sqrt{2\pi}\sigma}e^{-\sigma z-\frac{z^2}{2}} =0,
\]
and
\[
    \lim_{x\to 0}g^\prime(x)=\lim_{z\to-\infty}-\frac{z + \sigma}{\sqrt{2\pi}\sigma^2}   e^{-2 \sigma z-\frac{z^2}{2}}=0.
\]
Since $g$ and $g'$ are continuous and $\overline G$ is decreasing, it follows from the last two displays that $h$ and $h_2$ are also bounded on $(0,e^\sigma)$. Therefore, Assumption \ref{asm_service}.\ref{asm_h} holds.

Moreover, it is straightforward to see that for a random variable $X$ with log-normal distribution,
\[
    \Ept{X^n}=e^{n\mu+\frac{n^2\sigma^2}{2}}<\infty.
\]
In other words,  all moments of the log-normal distribution exist, and in particular, Assumption \ref{asm_serviceP}.\ref{asm_moment3} holds.

Finally, $g'$ is continuously differentiable with derivative
\[
g''(x)=\frac{(\log x)^2+3\sigma^2\log x+2\sigma^4-\sigma^2}{x^3\sigma^5}\phi(\frac{\log x}{\sigma}).
\]
Since $g''$ is continuous and satisfies
\[
    \lim_{x\to 0,\infty}g''(x)=\lim_{z\to\pm\infty}-\frac{z^2+3\sigma z+2\sigma^2-1}{\sqrt{2\pi}\sigma^3}  e^{-3 \sigma z-\frac{z^2}{2}}=0,
\]
it is also bounded. Moreover, for every $\epsilon=1$
\[
  \lim_{x\to\infty}\frac{g''(x)}{x^{-2-\epsilon}}=\lim_{z\to\infty}-\frac{z^2+3\sigma z+2\sigma^2-1}{\sqrt{2\pi}\sigma^3}  e^{-\frac{z^2}{2}}=0,
\]
and therefore, $g''(x)=\mathcal{O}(x^{-(2+\epsilon)})$ as $x\to\infty$. This shows that Assumption \ref{asm_serviceP}.\ref{asm_gpp} holds as well.

{\em Family \ref{verify_gamma}.}  The complementary c.d.f.\ of a Gamma distribution with  shape parameter $\alpha>0$ and rate parameter $\beta>0$ is given by
    \[\overline G(x)=\frac{1}{\Gamma(\alpha)}\Gamma(\alpha,\beta x),\quad\quad\quad x>0,\]
where $\Gamma(\cdot,\cdot)$ is the upper incomplete Gamma function. The mean is $\alpha/\beta$, which is equal to one when $\alpha=\beta$. The p.d.f.\  $g$ is equal to
\[
    g(x)=\frac{\beta^\alpha}{\Gamma(\alpha)}x^{\alpha-1}e^{-\beta x},\quad\quad x>0,
\]
which is itself continuously differentiable on $\ho$, with derivative
\[
    g^\prime(x)=\frac{\beta^\alpha}{\Gamma(\alpha)}(\alpha-1-\beta x)x^{\alpha-2}e^{-\beta x},\quad\quad x>0.
\]
When $\alpha \geq 2$,
\[
    \lim_{x\to 0}h(x)=\lim_{x\to 0}g(x)=\frac{\beta^\alpha}{\Gamma(\alpha)}\lim_{x\to 0} x^{\alpha-1} = 0.
\]
Also, by L'H\^{o}pital's rule,
    \begin{align*}
        \lim_{x\to\infty} h(x) =   \lim_{x\to\infty}\frac{\beta^\alpha x^{\alpha-1}e^{-\beta x}}{\int_{\beta x}^\infty t^{\alpha-1}e^{-t}dt}=   \lim_{x\to\infty}  \beta^\alpha\frac{(\alpha-1)x^{\alpha-2}e^{-\beta x} -\beta x^{\alpha-1}e^{-\beta x} }{-\beta^\alpha x^{\alpha-1}e^{-\beta x}} = \beta.
    \end{align*}
Moreover, again when $\alpha\geq2$,
\[
    \lim_{x\to 0}h_2(x)=\lim_{x\to 0}g^\prime(x)=\frac{\beta^\alpha}{\Gamma(\alpha)}(\alpha-1)\lim_{x\to0}x^{\alpha-2}<\infty
\]
and by L'H\^{o}pital's rule,
\begin{align*}
    \lim_{x\to\infty} h_2(x) &=   \lim_{x\to\infty} \frac{\beta^\alpha (\alpha-1-\beta x)x^{\alpha-2}e^{-\beta x}}{\int_{\beta x}^\infty t^{\alpha-1}e^{-t}dt} \\
    &=   \lim_{x\to\infty} \frac{\beta^\alpha \left( \beta^2x^{\alpha-1}-2\beta(\alpha-1)x^{\alpha-2} + (\alpha-1)(\alpha-2)x^{\alpha-3}\right) e^{-\beta x} }{-\beta^\alpha x^{\alpha-1}e^{-\beta x}}\\
     &=  - \beta^2.
    \end{align*}
Since $h$ and $h_2$ are continuous on $\ho$, the last four displays show that Assumption \ref{asm_service} holds for $\alpha\geq 2$.

Moreover, the Gamma distribution has finite exponential moments in a neighborhood of the origin, and in particular, Assumption \ref{asm_serviceP}.\ref{asm_moment3} holds.

Finally, $g'$ is continuously differentiable on $\ho$ with derivative
\[
    g''(x)=\frac{\beta^\alpha}{\Gamma(\alpha)}\big((\alpha-1)(\alpha-\beta-2)+x\beta(\beta-\alpha+3)\big)x^{\alpha-3}e^{-\beta x},\quad\quad x>0.
\]
When $\alpha\geq3$, $g''$ satisfies
\[
    \lim_{x\to 0} g''(x)= \frac{\beta^\alpha}{\Gamma(\alpha)}\big((\alpha-1)(\alpha-\beta-2)\big) \lim_{x\to 0}x^{\alpha-3}<\infty.
\]
Moreover, for every $\epsilon>0$,
\[
    \lim_{x\to\infty}\frac{g''(x)}{x^{-2-\epsilon}}= 0.
\]
Since $g''$ is continuous on $\ho$, the last two displays show that when $\alpha\geq 3$, $g''$ is bounded and satisfies $g''(x)=\mathcal{O}(x^{-(2+\epsilon)})$. Therefore Assumption \ref{asm_serviceP}.\ref{asm_gpp} holds.

{\em Family  \ref{verify_phasetype}.}   A phase-type distribution with size $m$, an $m\times m$-subgenerator matrix $\bs$ (which has eigenvalues with negative real part) and probability row vector  $\ba$, the complementary c.d.f.\ function has the representation
\[
    \overline G(x)= \sum_{j=1}^mp_j(x)=\ba e^{x\bs}\f1,\quad\quad x\geq 0,
\]
where $\f1$ is an $m\times 1$ column vector of ones and
\[
    \bp(x)= [p_1(x),...,p_m(x)] \doteq \ba e^{x\bs},\quad\quad\quad x\geq 0
.\]
Note that the vector $\ba$ can be chosen such that the mean $-\ba \bs^{-1}\f1$ is set to one. Defining $\bmu=[\mu_1,...,\mu_m]\doteq-\bs\f1,$ the probability density function $g$ can be written as
\[
    g(x) = -\ba e^{x\bs}\bs\f1 = \ba e^{x\bs}\bmu =\sum_{j=1}^mp_j(x)\mu_j,
\]
and is continuous on $\ho$. Therefore, Assumption \ref{asm_service}.\ref{asm_g} holds. The p.d.f.\ $g$ is continuously differentiable with derivative
\[
    g^\prime(x) = -\ba e^{x\bs}\bs^2\f1 = \ba e^{x\bs} \bnu,\quad\quad x>0,
\]
where $\bnu=[\nu_1,...,\nu_m]^T \doteq -\bs^2\f1.$ The hazard rate function $h$ satisfies
\[
    h(x)= \frac{\sum_{j=1}^mp_j(x)\mu_j}{\sum_{j=1}^mp_j(x)}\leq  \max_{j=1,...,m}\mu_j<\infty,\quad\quad x\geq0.
\]
Hence, $h$ is uniformly bounded on $\hc$. Moreover,
\[
    |h_2(x)|= \frac{\left|\sum_{j=1}^mp_j(x)\nu_j\right|}{\sum_{j=1}^mp_j(x)}\leq  \max_{j=1,...,m}|\nu_j|<\infty,\quad\quad x>0.
\]
Therefore,  Assumption \ref{asm_service}.\ref{asm_h} holds.

Moreover, for a random variable $X$ with a phase-type distribution,
\[
    \Ept{X^{n}}=(-1)^{n}n!\ba{S}^{-n}\f1<\infty, \quad n \in \mathbb{N}.
\]
Therefore, all moments are finite and in particular, Assumption \ref{asm_serviceP}.\ref{asm_moment3} holds.

Finally, $g'$ is continuously differentiable with derivative
\[
    g''(x) = -\ba e^{x\bs}\bs^3\f1 ,\quad\quad x>0,
\]
The function $g''$ satisfies
\[
    \lim_{x\to 0} g''(x)= -\ba\bs^3\f1,
\]
and since all eigenvalues of $\bs$ have negative real parts, $g''(x)$ decays exponentially when $x\to\infty$. Therefore, Assumption \ref{asm_serviceP}.\ref{asm_gpp} holds.
\end{proof}

\section*{Acknowledgments.}
RA was partially supported by NSF grants CMMI-1234100 and DMS-1407504 and the Charles Lee Powell Foundation; and KR was partially supported by the grant AFOSR FA9550-12-1-0399.

\bibliographystyle{plain} 
\bibliography{reference}

\end{document}